\titleformat{\subsection}[runin]
       {\normalfont\bfseries}
       {\thesubsection}
       {0.5em}
       {}
       [.]
\newtheorem{thrm}{Theorem}[section]
\newtheorem{lem}[thrm]{Lemma}
\newtheorem{prop}[thrm]{Proposition}
\newtheorem{cor}[thrm]{Corollary}
\newtheorem{obs}[thrm]{Observation}
\theoremstyle{definition}
\newtheorem{defn}[thrm]{Definition}
\theoremstyle{definition}
\theoremstyle{definition}
\theoremstyle{definition}
\newtheorem{fct}[thrm]{Fact}
\theoremstyle{definition}
\newcommand{\Z}{\mathbb{Z}}
\newcommand{\N}{\mathbb{N}}
\newcommand{\Q}{\mathbb{Q}}
\newcommand{\R}{\mathbb{R}}
\newcommand{\A}{\mathbb{A}}
\newcommand{\Rp}{\mathbb{R}_{\geq 0}}
\newcommand{\Rpp}{\mathbb{R}_{>0}}
\newcommand{\Qpp}{\mathbb{Q}_{>0}}
\newcommand{\Zp}{\mathbb{Z}_{\geq 0}}
\newcommand{\Zpp}{\mathbb{Z}_{>0}}
\newcommand{\SL}{\mathsf{SL}}
\newcommand{\GL}{\mathsf{GL}}
\newcommand{\conv}{\operatorname{conv}}
\newcommand{\inte}{\operatorname{int}}
\newcommand{\lat}{\operatorname{Lat}}
\newcommand{\gq}{\Gamma_{\Q}}
\newcommand{\hG}{\widehat{\Gamma}}
\newcommand{\init}{\operatorname{in}}
\newcommand{\mG}{\mathcal{G}}
\newcommand{\sgmG}{\langle \mathcal{G} \rangle}
\newcommand{\mM}{\mathcal{M}}
\newcommand{\mH}{\mathcal{H}}
\newcommand{\mY}{\mathcal{Y}}
\newcommand{\mP}{\mathcal{P}}
\newcommand{\mL}{\mathcal{L}}
\newcommand{\mA}{\mathcal{A}}
\newcommand{\mB}{\mathcal{B}}
\newcommand{\bg}{\boldsymbol{g}}
\newcommand{\bm}{\boldsymbol{m}}
\newcommand{\bn}{\boldsymbol{n}}
\newcommand{\tby}{\widetilde{\boldsymbol{y}}}
\newcommand{\tbf}{\widetilde{\boldsymbol{f}}}
\newcommand{\tf}{\widetilde{f}}
\newcommand{\tc}{\widetilde{c}}
\newcommand{\bF}{\boldsymbol{F}}
\newcommand{\balpha}{\boldsymbol{\alpha}}
\newcommand{\oX}{\mkern 1.5mu\overline{\mkern-1.5mu X \mkern-1.5mu}\mkern 1.5mu}
\newcommand{\bff}{\boldsymbol{f}}
\newcommand{\ApK}{\left(\A^+\right)^K}
\newcommand{\Rns}{\left(\R^n\right)^*}
\newcommand{\ncl}{\operatorname{ncl}}
\newcounter{ProblemCounter}
\begin{document}

\title{Semigroup algorithmic problems in metabelian groups}
\author{Ruiwen Dong\footnote{Department of Computer Science, University of Oxford, Oxford, OX1 3QD, United Kingdom, email: \url{ruiwen.dong@kellogg.ox.ac.uk}}}
\date{}
\maketitle
\begin{abstract}
    We consider semigroup algorithmic problems in finitely generated metabelian groups.
    Our paper focuses on three decision problems introduced by Choffrut and Karhum\"{a}ki (2005): the \emph{Identity Problem} (does a semigroup contain a neutral element?), the \emph{Group Problem} (is a semigroup a group?) and the \emph{Inverse Problem} (does a semigroup contain the inverse of a generator?).
    We show that all three problems are decidable for finitely generated sub-semigroups of finitely generated metabelian groups.
    In particular, we establish a correspondence between polynomial semirings and sub-semigroups of metabelian groups using an interaction of graph theory, convex polytopes, algebraic geometry and number theory.
    
    Since the \emph{Semigroup Membership} problem (does a semigroup contain a given element?) is known to be undecidable in finitely generated metabelian groups, our result completes the decidability characterization of semigroup algorithmic problems in metabelian groups.
\end{abstract}

\newpage

\section{Introduction}
\subsection{Algorithmic problems in groups and semigroups}
In 1911 Max Dehn formulated three basic problems which would become the foundation of computational group theory.
Given a finite presentation of a group $G$, it is asked whether there are algorithms that solve the \emph{Word Problem} (whether an element is the neutral element), the \emph{Conjugacy Problem} (whether two elements are conjugate), and the \emph{Isomorphism Problem} (whether $G$ is isomorphic to another finitely presented group).
All three problem are later shown to be undecidable in general groups~\cite{adyan1955algorithmic,novikov1955algorithmic}, providing the first examples of an undecidable problem not coming from the theory of computation.

Since the 1940s, due to their connection with mathematical logic, \emph{membership problems} became the centre of active research in computational group theory.
For these problems, we work in a fixed group $G$.
The input is a finite set of elements $\mG = \{g_1, \ldots, g_K\}$ in $G$ and a target element $g \in G$.
Denote by $\sgmG$ the semigroup generated by $\mG$, and by $\langle\mG\rangle_{grp}$ the group generated by $\mG$.
\begin{enumerate}[nosep, label = (\roman*)]
    \item \textit{(Semigroup Membership)} decide whether $\sgmG$ contains $g$.
    \item \textit{(Group Membership)} decide whether $\langle\mG\rangle_{grp}$ contains $g$.
    \setcounter{ProblemCounter}{\value{enumi}}
\end{enumerate}
In the seminal work of Markov~\cite{markov1947certain}, it is shown that Semigroup Membership is undecidable for matrix groups of dimension six.
Mikhailova~\cite{mikhailova1966occurrence} later showed undecidability of Group Membership in the group $\SL(4, \Z)$ of $4 \times 4$ integer matrices with determinant one.

As some of the oldest and most well-developed problems of computational algebra, membership problems play an essential role in analysing system dynamics, and has numerous applications in automata theory, program analysis, and interactive proof systems~\cite{beals1993vegas, blondel2005decidable, derksen2005quantum, hrushovski2018polynomial}.
In most classes of groups, Group Membership tends to be much more tractable than Semigroup Membership.
For example, Group Membership is decidable in the class of \emph{polycyclic groups} by a classic result of Kopytov~\cite{kopytov1968solvability}; whereas Semigroup Membership is undecidable even in the subclass of \emph{nilpotent groups}~\cite{roman2022undecidability}.
This gap motivated the introduction of a series of intermediate problems by Choffrut and Karhum\"{a}ki~\cite{choffrut2005some} in 2005:
\begin{enumerate}[nosep, label = (\roman*)]
    \setcounter{enumi}{\value{ProblemCounter}}
    \item \textit{(Identity Problem)} decide whether $\sgmG$ contains the neutral element of $G$.
    \item \textit{(Group Problem)} decide whether $\langle\mG\rangle$ is a group.
    \item \textit{(Inverse Problem)} given $a \in \mG$, decide whether $a^{-1} \in \langle\mG\rangle$.
    \setcounter{ProblemCounter}{\value{enumi}}
\end{enumerate}

Apart from being some of the essential special cases of Semigroup Membership, these intermediate problems are crucial in determining structural properties of semigroups, and motivated the development of numerous tools in the study of semigroups, ranging from automata theory and compressed words~\cite{bell2017identity, bell2010undecidability} to Lie algebra~\cite{https://doi.org/10.48550/arxiv.2208.02164}.
It is not difficult to see that decidability of the Group Problem subsumes decidability of the Identity Problem and the Inverse Problem (see Section~\ref{sec:prelim}).
A recent result of Bell and Potapov showed undecidability of all three problems in $\SL(4, \Z)$~\cite{bell2010undecidability}; whereas the Identity Problem in $\SL(2, \Z)$ is NP-complete~\cite{bell2017identity}.
In~\cite{babai1996multiplicative}, Babai et al.\ famously reduced algorithmic problems in abelian matrix groups to computation on lattices, thus all three problems in abelian matrix groups are decidable in PTIME by solving homogeneous linear Diophantine equations.
However, decidability of these intermediate problems remains open for larger classes of groups, notably nilpotent groups, polycyclic groups and metabelian groups, where decidability of membership problems have definitive answers.

\subsection{Metabelian groups and main result}
In this paper we study algorithmic problems in \emph{metabelian groups}.
Metabelian groups are groups whose \emph{commutator} is abelian.
Recall that for a group $G$, its \emph{commutator} $[G, G]$ is defined as the subgroup of $G$ generated by the elements $g h g^{-1} h^{-1}$, $g, h \in G$.
Developing a complete algorithmic theory for finitely generated metabelian groups has been the focus of intense research since the 1950s.
For a surveys of recent developments, see~\cite{baumslag1994algorithmic,kharlampovich1995algorithmic}.
As the convention in computational group theory, a finitely generated metabelian group $G$ is always given as a part of the input by a \emph{finite metabelian presentation} (see Section~\ref{sec:prelim} for its definition).
Every finitely generated metabelian group admits a finite metabelian presentation, making them a natural target for algorithmic methods~\cite[p.629]{baumslag1994algorithmic}.

Among the classic Max Dehn problems for finitely generated metabelian groups, decidability of the Word Problem is known since the 1950s following the seminal work of Hall~\cite{hall1954finiteness}.
The Conjugacy Problem is shown to be decidable by Noskov~\cite{noskov1982conjugacy}.
The Isomorphism Problem remains an outstanding open problem~\cite{baumslag2017localization}.
We may note that in the hierarchy of solvable groups, metabelian groups (also known as 2-step solvable groups) are on the fringe of decidability. By a celebrated result of Kharlampovich, all three Max Dehn problems are undecidable in 3-step solvable groups (these are groups whose commutator is metabelian)~\cite{kharlampovich1981finitely}, \cite[Theorem~6.17, Section~6.8]{kharlampovich1995algorithmic}.

In finitely generated metabelian groups, decidability of Group Membership is a classic result of Romanovskii~\cite{romanovskii1974some};
whereas Semigroup Membership is undecidable for many instances such as large direct powers of the Heisenberg group~\cite{roman2022undecidability} and the wreath product $\Z \wr \Z$~\cite{lohrey2015rational}.
Decidability of the Identity Problem, the Group Problem and the Inverse Problem remained open.
A recent result by Dong~\cite{dong2023identity} showed decidability of the Group Problem in an important example of metabelian groups, the \emph{wreath product} $\Z \wr \Z$.
This hinted at the possibility of decidability results for other finitely generated metabelian groups.
Our main results solve these open problems:

\begin{restatable}{thrm}{thmmain}\label{thm:main}
    The Group Problem, the Identity Problem and the Inverse Problem are decidable in all finitely generated metabelian groups.
\end{restatable}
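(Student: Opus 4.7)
The plan is to exploit the extension structure enjoyed by every finitely generated metabelian group $G$, namely $1 \to A \to G \to Q \to 1$ where $A = [G, G]$ is the abelian commutator and $Q = G/[G, G]$ is a finitely generated abelian group on which $A$ carries a natural $\Z Q$-module structure via conjugation. Since the Group Problem entails the Identity and Inverse Problems, I focus on it. The first step is to project to the abelianization: letting $\pi : G \to Q$, the induced semigroup problem in $Q$ is an abelian one, hence decidable in polynomial time by solving linear Diophantine systems (Babai et al.), so the necessary condition that $\pi(\mG)$ semigroup-generates a subgroup of $Q$ containing $0$ can be checked immediately. The substantive difficulty is then the remaining condition living inside the commutator $A$.

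The next step is to reformulate identities inside $A$ in the language of polynomial semirings. Picking a set-theoretic splitting $g_k \mapsto (q_k, a_k)$, the word $g_{i_1} g_{i_2} \cdots g_{i_N}$ evaluates to a pair whose $A$-component is $\sum_{k=1}^{K} P_k \cdot a_k$, where each $P_k$ lies in the polynomial semiring $\Z_{\geq 0}[Q]$ and is obtained by summing the monomials $X^{q_{i_1} + q_{i_2} + \cdots + q_{i_{j-1}}}$ over those positions $j$ at which the generator $g_k$ is used. Thus deciding Identity/Group reduces to deciding when a tuple $(P_1, \ldots, P_K) \in (\Z_{\geq 0}[Q])^K$ can simultaneously be realized by some legal word and made to satisfy the module-vanishing relation $\sum_k P_k \cdot a_k = 0$ in $A$. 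This is precisely the correspondence between polynomial semirings and sub-semigroups of metabelian groups announced in the abstract.

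The heart of the argument is then a two-part analysis. First, one reduces to the case $Q = \Z^d$ via a finite-index subgroup trick so that $\Z Q$ becomes the Laurent polynomial ring $\Z[X_1^{\pm 1}, \ldots, X_d^{\pm 1}]$, and analyses $A$ as a finitely generated module over it through primary decomposition; this converts the module-vanishing relation into vanishing conditions on the associated algebraic varieties, which is where algebraic geometry and number theory enter (via heights of torsion points, $S$-unit-type arguments, and Laurent-polynomial factorisation). Second, one must characterise which non-negative polynomial tuples $(P_1, \ldots, P_K)$ are actually \emph{realisable} by a legal sequence of generators: since the monomials appearing in $P_k$ record prefix sums of the generator projections in $Q$, realisability is governed by combinatorial constraints encoded by a directed graph whose vertices are monomials and whose edges record admissible consecutive positions in a word, together with convex-polytope conditions on the Newton polytopes of the $P_k$.

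I expect the realisability characterisation to be the principal obstacle: simple necessary conditions on supports and multiplicities are \emph{not} sufficient, because the linear ordering of positions imposes subtle constraints tying together the supports of different $P_k$. The natural route is to first establish a clean polyhedral description of the $\Rp$-closure of the realisable set and then use number-theoretic rounding, combined with graph-theoretic path constructions, to promote rational solutions to genuine integer witnesses. Once such a characterisation is in place, all three problems reduce to deciding non-emptiness of an effectively computable set of integer points carved out by linear and algebraic conditions, which is decidable.
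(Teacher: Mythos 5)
Your proposal captures the right high-level skeleton (metabelian $=$ abelian-by-abelian, encoding $A$-components via polynomial semirings $\N[Q]$, and a graph/polytope view of realisability), but it has genuine gaps at exactly the points that are hard, and it misidentifies several of the tools.

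First, the final reduction ``to deciding non-emptiness of an effectively computable set of integer points carved out by linear and algebraic conditions, which is decidable'' is a non-sequitur: solving even homogeneous linear systems over the semiring $\N[X_1^{\pm},\ldots,X_n^{\pm}]$ is \emph{undecidable} (Narendran), so merely writing down a positivity condition on polynomial coefficients does not give an algorithm. The paper's route is to prove a local-global principle (generalising Einsiedler--Mouat--Tuncel and Dong) and then run a non-trivial induction on $n$ with two parallel semi-procedures — enumeration on one side and falsification of a ``local at infinity'' condition on the other. You have nothing in place of this; ``linear and algebraic conditions $\Rightarrow$ decidable'' is the step that needs proof.

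Second, you correctly flag realisability as ``the principal obstacle,'' but the proposal does not resolve it. Realisability of a tuple of position polynomials amounts to the associated $\mG$-graph being Eulerian (symmetric \emph{and} connected), and it is connectivity — a global property — that resists any clean description in terms of supports/Newton polytopes. The paper's key technical idea, which your outline lacks, is to replace connectivity by a \emph{local} polytopal condition called face-accessibility, and then to show (via a delicate scaling/covering argument in convex geometry) that a symmetric, face-accessible, $\Z^n$-generating graph becomes Eulerian after taking a suitable finite union of its integer translates. Without this substitute, your ``clean polyhedral description of the $\Rp$-closure of the realisable set'' does not exist, and the bare polyhedral conditions you propose are indeed insufficient, as you yourself suspected.

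Third, two smaller issues. The ``finite-index subgroup trick'' to pass from a general finitely generated abelian $Q$ to $\Z^d$ is not how the paper proceeds (and for semigroup problems, passing to finite-index subgroups is not an innocent operation); instead the paper uses Baumslag's Magnus-type embedding into a quotient $(\mY \rtimes \Z^n)/H$ with $H$ central in $\Z^n$, and lifts the Group Problem along that quotient. And the number-theoretic tools you invoke — primary decomposition of $A$ with vanishing on associated varieties, heights of torsion points, $S$-unit arguments — are not what the argument actually uses; the machinery is Handelman's positivity theorem, Gr\"obner bases over modules, Tarski's theorem, and integer programming over cell decompositions. The abstract's mention of number theory refers to this $\Q$-linear-independence/Diophantine side, not to height or unit-equation techniques.
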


\subsection{Related work}
It has been noticed since the work of Hall that metabelian groups have natural connections with polynomials rings.
Indeed, this connection is the key to deciding many \emph{group} algorithmic problems in metabelian groups.
However, a corresponding theory for \emph{semigroups} has yet to be developed.
Recent work by Dong~\cite{dong2023identity} suggested there are connections between certain semigroups, directed graphs and polynomial semirings.
However, such connections are highly sophisticated and a satisfactory characterization is yet to be obtained.
We build on Dong's work to establish a full connection between sub-\emph{semigroups} of metabelian groups and polynomial \emph{semirings}.
Many of our ideas are inspired by~\cite{dong2023identity}, notably the use of \emph{$\mG$-graphs} to describe words over metabelian groups, as well as exploiting the interaction between semigroups, graphs and algebraic geometry.
Here are our main new contributions.
\begin{enumerate}[noitemsep, leftmargin=*, wide, labelindent=0pt]
    \item We introduce a \emph{systematic} way of translating from words over metabelian groups to elements in polynomial semirings. We introduce the notion of ``position polynomials'' to describe the associated $\mG$-graph. These polynomials are powerful enough to describe interesting properties of the graph such as ``full-image'' and symmetry.
    In Dong's work this translation was done using an \emph{ad hoc} method that decomposes a walk over $\Z$ into ``primitive circuits''.
    As noted there, that method could not be generalized to walks over $\Z^n$. The new method we develop here overcomes this difficulty.
    \item The drawback of using ``position polynomials'' instead of decomposition into primitive circuits is the inability to express the connectivity property of the graph.
    Our second main idea is to introduce a new property of the graph called ``face-accessibility''.
    This property is weaker then connectivity but has the advantage of being describable by position polynomials.
    We then show that together with symmetry, face-accessibility is enough to characterize Eulerian graphs up to taking unions of translations.
    This is done using a series of manipulation in convex geometry and graph theory.
    \item Thanks to the two previous ideas we are able to reduce semigroup problems to decision problems over polynomial semirings.
    Our third new idea is a simultaneous generalization of two deep results by Einsiedler, Mouat and Tuncel~\cite[Theorem~1.3]{einsiedler2003does} and by Dong~\cite[Proposition~3.4]{dong2023identity}.
    This is mathematically the deepest part of our paper: it includes highly intricate applications of algebraic geometry tools such as Gr\"{o}bner basis over modules, as well as various ideas from number theory.
\end{enumerate}

\section{Preliminaries}\label{sec:prelim}
\subsection{Words, semigroups and groups}
All omitted proofs of this section can be found in Appendix~\ref{app:prelim}.
Let $G$ be an arbitrary group.
Let $\mG = \{g_1, \ldots, g_K\}$ be a set of elements in $G$.
Considering $\mG$ as an alphabet, denote by $\mG^*$ the set of words over $\mG$.
For an arbitrary word $w = g_{i_1} g_{i_2} \cdots g_{i_m} \in \mG^*$, by multiplying consecutively the elements appearing in $w$, we can evaluate $w$ as an element $\pi(w)$ in $G$.
We say that the word $w$ \emph{represents} the element $\pi(w)$.
The semigroup $\langle \mG \rangle$ generated by $\mG$ is hence the set of elements in $G$ that are represented by \emph{non-empty} words in $\mG^*$.
A word $w$ over the alphabet $\mG$ is called \emph{full-image} if every letter in $\mG$ has at least one occurrence in $w$.

\begin{restatable}[{\cite[Lemma~2.1]{dong2023identity}}]{lem}{lemword}\label{lem:word}
The semigroup $\langle \mG \rangle$ is a group if and only if the neutral element of $G$ is represented by a full-image word over $\mG$.
\end{restatable}

The following lemma is a classic reduction between the algorithmic problems we consider.
By Lemma~\ref{lem:subsume}, we can focus solely on the Group Problem throughout this paper.

\begin{restatable}{lem}{lemsubsume}\label{lem:subsume}
Let $G$ be a group.
If the Group Problem is decidable in $G$, then the Identity Problem and the Inverse Problem are also decidable in $G$.
\end{restatable}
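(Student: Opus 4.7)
The plan is to reduce both the Identity Problem and the Inverse Problem to finitely many instances of the Group Problem, using Lemma~\ref{lem:word} as the bridge. The key observation is that asking whether a specific target element is represented by some word over $\mG$ is equivalent to asking whether it is represented by a \emph{full-image} word over some subset $\mG' \subseteq \mG$, since one may always restrict attention to the letters that actually occur in a witness word. By Lemma~\ref{lem:word}, the existence of a full-image word representing the identity over $\mG'$ is exactly the statement that $\langle \mG' \rangle$ is a group.

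For the Identity Problem, I would first argue that the neutral element of $G$ lies in $\langle \mG \rangle$ if and only if there exists a non-empty subset $\mG' \subseteq \mG$ such that $\langle \mG' \rangle$ is a group. The forward direction: if a non-empty word $w \in \mG^*$ evaluates to the identity, take $\mG'$ to be the set of letters occurring in $w$; then $w$ is full-image over $\mG'$, and Lemma~\ref{lem:word} yields that $\langle \mG' \rangle$ is a group. The converse is immediate from $\langle \mG' \rangle \subseteq \langle \mG \rangle$. Since $\mG$ has only finitely many (non-empty) subsets, enumerating them and invoking the Group Problem oracle on each gives a decision procedure.

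For the Inverse Problem with input $(\mG, a)$, the analogous reduction is: $a^{-1} \in \langle \mG \rangle$ if and only if there exists a subset $\mG' \subseteq \mG$ with $a \in \mG'$ such that $\langle \mG' \rangle$ is a group. Indeed, if $w \in \mG^*$ represents $a^{-1}$, then the word $wa$ represents the neutral element and contains an occurrence of $a$; restricting to the letters actually used in $wa$ produces the required subset $\mG'$ containing $a$. Conversely, if $\langle \mG' \rangle$ is a group containing $a$, then $a^{-1} \in \langle \mG' \rangle \subseteq \langle \mG \rangle$. Again this is a finite enumeration over subsets, calling the Group Problem oracle at most $2^{K-1}$ times.

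There is no genuine obstacle in either reduction: Lemma~\ref{lem:word} already supplies the nontrivial direction (producing a subset $\mG'$ whose generated semigroup is a group from a word representing the target), and the finiteness of $\mG$ makes the enumeration effective. Thus both decidability claims follow at once from the assumed decidability of the Group Problem.
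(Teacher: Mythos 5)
Your proposal is correct and matches the paper's own proof essentially line for line: both reduce the Identity Problem to checking whether $\langle \mH \rangle$ is a group over all non-empty subsets $\mH \subseteq \mG$, and both reduce the Inverse Problem for $a$ to checking all subsets containing $a$, by appending $a$ to a witness word and invoking Lemma~\ref{lem:word} (the paper prepends $a$ rather than appends it, a cosmetic difference).
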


\subsection{Polytopes, Laurent polynomials and modules}
For a detailed reference on convex polytopes, see~\cite{alexandrov2005convex}.
Let $C$ be a (closed) convex polytope. A \emph{face} $F$ of $C$ is the intersection of $C$ with any closed halfspace whose boundary is disjoint from the interior of $C$.
A \emph{strict face} is a face of $C$ that is not the empty set or $C$ itself.
For example, if $C$ is of dimension two, then the strict faces of $C$ are its edges and its vertices.

Let $R$ be a commutative ring (such as $\Z$ or $\R$) or semiring (such as $\N$ or $\Rp$).
Denote by $R[X_1^{\pm}, \ldots, X_n^{\pm}]$ the Laurent polynomial ring or semiring over $R$ with $n$ variables: this is the set of polynomials of variables $X_1, X_1^{-1}, \ldots, X_n, X_n^{-1}$ with coefficients in $R$.
When $n$ is fixed, we denote
\[
R[\oX^{\pm}] \coloneqq R[X_1^{\pm}, \ldots, X_n^{\pm}], \quad R[\oX^{\pm}]^* \coloneqq R[\oX^{\pm}] \setminus \{0\}.
\]
For a vector $a = (a_1, \ldots, a_n) \in \Z^n$, denote by $\oX^a$ the monomial $X_1^{a_1} X_2^{a_2} \cdots X_n^{a_n}$.
Let $\cdot$ denote the dot product in $\R^n$.
Given $f \in R[\oX^{\pm}]$ and a vector $v \in \Rns \coloneqq \R^n \setminus \{0\}$, define the \emph{weighted degree} 
\[
\deg_v(f) \coloneqq \max\{v \cdot  a \mid a \in \Z^n, c_a \neq 0\}, \quad \text{ where } f = \sum c_a \oX^a \neq 0.
\]
Additionally, define $\deg_v(0) = -\infty$ for all $v \in \Rns$.

Let $R$ be a commutative ring. An $R[\oX^{\pm}]$-module is an abelian group $(M, +)$ along with an operation $\cdot \;\colon R[\oX^{\pm}] \times M \rightarrow M$ satisfying $f \cdot (a+b) = f \cdot a + f \cdot b$, $(f + g) \cdot a = f \cdot a + g \cdot a$, $fg \cdot a = f \cdot (g \cdot a)$ and $1 \cdot a = a$.
For example, for any $d \in \N$, $R[\oX^{\pm}]^d$ is an $R[\oX^{\pm}]$-module by $f \cdot (g_1, \ldots, g_d) = (fg_1, \ldots, fg_d)$.

\underline{Throughout this paper, we use the bold symbol $\bff$ to denote a vector $(f_1, \ldots, f_d) \in R[\oX^{\pm}]^d$.}

Given $\bg_1, \ldots, \bg_m \in R[\oX^{\pm}]^d$, we say they \emph{generate} the $R[\oX^{\pm}]$-module $\sum_{i=1}^m R[\oX^{\pm}] \cdot \bg_i \coloneqq \{\sum_{i=1}^m p_i \cdot \bg_i \mid p_1, \ldots, p_m \in R[\oX^{\pm}] \}$.
A module is called \emph{finitely generated} if it can be generated by a finite number of elements.
Given two finitely generated submodules $N, M$ of $R[\oX^{\pm}]^d$ such that $N \subseteq M$, we can define the quotient $M/N \coloneqq \{\overline{m} \mid m \in M\}$ where $\overline{m_1} = \overline{m_2}$ iff $m_1 - m_2 \in N$.
This quotient is also an $R[\oX^{\pm}]$-module.
We say that an $R[\oX^{\pm}]$-module $\mY$ is \emph{finitely presented} if it can be written as a quotient $M/N$ for two finitely generated submodules $N \subseteq M$ of $R[\oX^{\pm}]^d$ for some $d \in \N$.
We call a \emph{finite presentation} of $\mY$ the respective generators of such $M, N$.

\subsection{Representing a metabelian group}
Metabelian groups are usually represented by a \emph{finite metabelian presentation}.
We recall here its formal definition.
Understanding the technical details in the definition is not essential, since we will only be using the more intuitive representation given by Equations~\eqref{eq:defsemi}, \eqref{eq:defsemi2} and Proposition~\ref{prop:metatoZ} throughout this paper.

Let $F_s$ be the free group over $s \geq 2$ generators.
The quotient 
$
M_s \coloneqq F_s/[[F_s, F_s],[F_s, F_s]]
$
is metabelian and is called the \emph{free metabelian group} over $s$ generators.
Let $\{x_1, \ldots, x_s\}$ be the generators of $F_s$, then their equivalence classes $\{\overline{x}_1, \ldots, \overline{x}_s\}$ are the generators of $M_s$.
An element of $M_s$ is represented as a word over $\{\overline{x}_1, \ldots, \overline{x}_s\}$.

\begin{defn}[Finite metabelian presentation]
    Let $G$ be a metabelian group. A \emph{finite metabelian presentation} of $G$ is the generators of a free metabelian group $M_s, s \geq 2$, along with a finite set of elements $r_1, \ldots, r_m$, such that
    $
    G = M_s / \ncl_{M_s}(r_1, \ldots, r_m)
    $.
    Here, $\ncl_{M_s}(r_1, \ldots, r_m)$ denotes that \emph{normal closure} of $\{r_1, \ldots, r_m\}$, that is, the smallest normal subgroup of $M_s$ containing $\{r_1, \ldots, r_m\}$.
\end{defn}

By~\cite[Corollary~1]{hall1954finiteness} or \cite[p.629]{baumslag1994algorithmic}, every finitely generated metabelian group admits a finite metabelian presentation. 

Given a finitely presented $\Z[X_1^{\pm}, \ldots, X_n^{\pm}]$-module $\mY$, define the following semidirect product:
\begin{equation}\label{eq:defsemi}
\mY \rtimes \Z^n \coloneqq \{(y, a) \mid y \in \mY, a \in \Z^n\};
\end{equation}
this is a group where multiplication and inversion are defined by
\begin{equation}\label{eq:defsemi2}
(y, a) \cdot (y', a') = (y + \oX^a \cdot y', a + a'), \quad (y, a)^{-1} = (- \oX^{-a} \cdot y, -a).
\end{equation}
The neutral element of $\mY \rtimes \Z^n$ is $(0, 0)$.
Intuitively, the element $(y, a)$ can be seen as a $2 \times 2$ matrix
$
\begin{pmatrix}
\oX^{a} & y \\
0 & 1 \\
\end{pmatrix}
$, where group multiplication is represented by matrix multiplication.\footnote{When $n = 0$, the polynomial ring $\Z[X_1^{\pm}, \ldots, X_n^{\pm}]$ becomes $\Z$, and the group $\mY \rtimes \Z^n$ degenerates into the $\Z$-module $\mY$, which is an abelian group.}
The following proposition shows that it suffices to solve the Group Problem in groups of the form $\mY \rtimes \Z^n$.

\begin{restatable}{prop}{propmetatoZ}\label{prop:metatoZ}
    Suppose we are given a finite metabelian presentation of a group $G$ as well as a finite set $\mG \subseteq G$.
    One can effectively construct a finitely presented $\Z[X_1^{\pm}, \ldots, X_n^{\pm}]$-module $\mY$ for some $n \in \N$, as well as a subset $\widetilde{\mG}$ of the group $\mY \rtimes \Z^n$, such that $\sgmG$ is a group if and only if $\langle \widetilde{\mG} \rangle$ is a group.
    Furthermore, the constructed set $\widetilde{\mG}$ satisfies $\pi(\langle \widetilde{\mG} \rangle_{grp}) = \Z^n$ under the canonical projection $\pi \colon \mY \rtimes \Z^n \rightarrow \Z^n$.
\end{restatable}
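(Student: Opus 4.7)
The plan is a two-stage reduction. The first stage will pass to the finite-index subgroup of $G$ whose image in $G^{\mathrm{ab}}$ is torsion-free, so that a semidirect-product structure $A \rtimes \Z^m$ becomes available. The second stage will restrict both the $\Z^m$-coordinate and the module $A$ to the sublattice of $\Z^m$ actually generated in the image of the reformulated generating set, which is what enforces the surjectivity condition demanded by the proposition.

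To carry out the first stage, compute $G^{\mathrm{ab}}$ by applying Smith normal form to the abelianization matrix of the given metabelian presentation, and write $G^{\mathrm{ab}} = \Z^m \oplus T$ with $T$ finite. Let $\psi : G \to T$ denote the composite $G \twoheadrightarrow G^{\mathrm{ab}} \twoheadrightarrow T$, and set $G_0 := \ker \psi$, a normal subgroup of index $|T|$. By Hall's theorem the commutator $A := [G,G]$ is a finitely generated $\Z[G^{\mathrm{ab}}]$-module; since $\Z[G^{\mathrm{ab}}]$ is free of rank $|T|$ over $\Z[\Z^m]$, $A$ is also finitely generated over $\Z[\Z^m]$, and therefore finitely presented because $\Z[\Z^m]$ is Noetherian. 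An explicit $\Z[\Z^m]$-presentation of $A$ can be read off from the metabelian presentation by a standard Fox-calculus / Magnus-embedding computation. Because $\Z^m$ is free, the extension $1 \to A \to G_0 \to \Z^m \to 1$ splits, yielding an effective isomorphism $G_0 \cong A \rtimes \Z^m$.

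Next, define the reformulated generating set via \emph{primitive cycles}. Call a non-empty word $w$ over $\mG$ a primitive cycle if $\psi(\pi(w)) = 1_T$ but no non-empty proper prefix of $w$ has this property. Pigeonhole on the sequence of prefix images in the finite group $T$ forces $|w| \leq |T|$, so the set $W := \{\pi(w) : w \text{ is a primitive cycle}\} \subseteq G_0$ is finite and effectively computable. Splitting any word whose evaluation lies in $G_0$ at its first prefix with that property shows $\sgmG \cap G_0 = \langle W \rangle$ as semigroups. From this I would check that $\sgmG$ is a group if and only if $\langle W \rangle$ is: the forward direction follows because a group-valued subsemigroup of $G$ intersected with a subgroup is again a group; conversely, if $k$ denotes the order of $\psi(g_i)$ in $T$ then $g_i^k \in \langle W\rangle$, so when $\langle W\rangle$ is a group one recovers $g_i^{-1} = g_i^{k-1}(g_i^k)^{-1} \in \sgmG$ for every $i$.

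Finally, for the second stage let $\pi_0 : A \rtimes \Z^m \to \Z^m$ be the canonical projection, set $\Lambda := \langle \pi_0(W)\rangle_{grp} \subseteq \Z^m$, and fix a $\Z$-basis identifying $\Lambda \cong \Z^n$. Take $\mY$ to be the $\Z[\Lambda]$-submodule of $A$ generated by the $A$-coordinates of the elements of $W$. A direct computation in the semidirect product (using that $\mY$ is closed under the $\Z[\Lambda]$-action) shows $\langle W\rangle_{grp} \subseteq \mY \rtimes \Lambda$, so setting $\widetilde{\mG} := W$ viewed inside $\mY \rtimes \Z^n$ satisfies both required conditions: the equivalence of group-ness is the one just proved, and $\pi(\langle \widetilde{\mG}\rangle_{grp}) = \Lambda = \Z^n$ by construction. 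The hardest part will be producing a finite presentation of $\mY$ as a $\Z[\Lambda]$-module effectively: $\Z[\Z^m]$ is not finitely generated over $\Z[\Lambda]$ when $\mathrm{rank}(\Lambda) < m$, so one cannot merely restrict scalars, and must instead intersect the computable $\Z[\Z^m]$-syzygy module of the chosen generators with $\Z[\Lambda]^{|W|}$ by a Gr\"{o}bner-basis / elimination argument adapted to the sublattice $\Lambda \subseteq \Z^m$.
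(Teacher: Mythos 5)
Your proposal rests on two claims that are false, and each is essential to the argument. First, you assert that because ``$\Z^m$ is free'' the extension $1 \to A \to G_0 \to \Z^m \to 1$ splits, giving $G_0 \cong A \rtimes \Z^m$. But $\Z^m$ is free \emph{abelian}, not free, and group extensions with free-abelian quotient need not split. The discrete Heisenberg group $H$ of $3\times 3$ upper-unitriangular integer matrices is a counterexample: it is metabelian with $H^{\mathrm{ab}} \cong \Z^2$ torsion-free (so $T$ is trivial, $G_0 = H$, $m = 2$, $A = [H,H] \cong \Z$), yet the central extension $1 \to \Z \to H \to \Z^2 \to 1$ does not split, since a splitting would have to have trivial action (as $A$ is central) and would force $H \cong \Z^3$ to be abelian. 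This non-splitting is precisely the obstruction that drives the paper to the Magnus embedding: $G$ is realized not as a semidirect product but as a subgroup of a \emph{quotient} $(\mY \rtimes \Z^n)/H$ by a central copy $H \le \Z^n$, and $H$ is then absorbed into the generating set $\widetilde{\mG}$ (Lemmas~\ref{lem:embed} and~\ref{lem:GH}).

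Second, the pigeonhole claim that every primitive cycle has length $\le |T|$ is wrong, so $W$ need not be finite. A repeated prefix image in $T$ only yields an internal \emph{factor} of $w$ sent by $\psi$ to $1_T$, not a proper prefix, so primitivity is not violated. Concretely, with $T = \Z/3\Z$ and two generators $a, b$ with $\psi(a) = 1$, $\psi(b) = 2$, the words $aa(ba)^k bb$ for $k \ge 0$ are all primitive cycles: their running prefix images alternate between $1$ and $2$ and reach $0$ only at the end. They have unbounded length, and for a generic $G$ they evaluate to pairwise distinct elements (already distinguished in $G^{\mathrm{ab}}$), so $W$ is an infinite set and the reformulated instance is not a well-posed finite input. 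The parts of your argument that are sound --- splitting a word at its first prefix in $\ker\psi$ to get $\sgmG \cap G_0 = \langle W\rangle$, the forward/backward equivalence of group-ness, and passing to the sublattice $\Lambda$ for surjectivity --- are all conditional on the two false claims. The paper instead first replaces $G$ by $\langle\mG\rangle_{grp}$ using Baumslag's algorithm (Lemma~\ref{lem:subgroppres}), which makes your entire second stage unnecessary since surjectivity onto $\Z^n$ then comes directly from the embedding together with the generators of $H$; and it uses the Magnus embedding rather than any splitting.
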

\begin{proof}[Sketch of proof]
    (The full proof is given in Appendix~\ref{app:metatoZ}.)
    By~\cite[Theorem~3.3]{baumslag1994algorithmic}, we can compute a presentation for the group $\sgmG_{grp}$, so without loss of generality we can suppose $G = \sgmG_{grp}$.
    By~\cite[Lemma~3]{baumslag1973subgroups}, $G$ can be embedded as a subgroup of a quotient $\left(\mY \rtimes \Z^n\right)/H$, where $H$ is a subgroup of $\Z^n \leq \mY \rtimes \Z^n$, and elements of $H$ commute with all elements of $\mY \rtimes \Z^n$.
    We can hence suppose $G$ is given as a subgroup of $\left(\mY \rtimes \Z^n\right)/H$ and the generator set $\mG$ is given as $\{g_1 H, \ldots, g_k H\}$ where $g_1, \ldots, g_k \in \mY \rtimes \Z^n$.
    Let $h_1, \ldots, h_M$ be the generators of $H \subseteq \mY \rtimes \Z^n$ as a \emph{semi}group.
    Then $\sgmG$ is a group if and only if the semigroup generated by $\widetilde{\mG} \coloneqq \{g_1, \ldots, g_k, h_1, \ldots, h_M \} \subseteq \mY \rtimes \Z^n$ is a group.
    It is not hard to show from the construction in~\cite[Lemma~3]{baumslag1973subgroups} that $\pi(\langle \widetilde{\mG} \rangle_{grp}) = \Z^n$.
    Finally, it suffices to retrace the proof of \cite[Theorem~3.3]{baumslag1994algorithmic} and \cite[Lemma~3]{baumslag1973subgroups} to show effectiveness of this construction.
\end{proof}

By Proposition~\ref{prop:metatoZ}, we can now focus on solving the Group Problem in $\mY \rtimes \Z^n$.

\subsection{Graph theory and $\mG$-graphs}
We now fix the set of elements 
$
\mG \coloneqq \{(y_1, a_1), \ldots, (y_K, a_K)\} \subseteq \mY \rtimes \Z^n
$.
Similar to \cite[Definition~4.1]{dong2023identity}, we define the notion of $\mG$-graphs.
\begin{defn}[$\mG$-graphs]
A \emph{$\mG$-graph} is a directed multigraph $\Gamma$, whose set of vertices is a finite subset of $\Z^n$, each connected to at least one edge. The edges of $\Gamma$ are each labeled with an index in $\{1, \ldots, K\}$.
Furthermore, if an edge from vertex $v$ to vertex $w$ has label $i$, then $v = w + a_i$.
\end{defn}

For a graph $\Gamma$, we denote by $V(\Gamma)$ its set of vertices and by $E(\Gamma)$ its set of edges.
For a (directed) edge $e$, we denote by $s(e)$ its starting vertex and by $d(e)$ its destination vertex.
We call a graph \emph{Eulerian} if it contains an Euler circuit.
A directed graph is called \emph{symmetric} if for each vertex, its out-degree equals its in-degree.
A directed graph is Eulerian if and only if it is symmetric and connected.

Given $z \in \Z^n$ and a $\mG$-graph $\Gamma$, its \emph{translation} $\Gamma + z$ is a graph obtained by moving everything in $\Gamma$ by a vector $z$. See Figure~\ref{fig:translation} for an illustration.

\begin{figure}[h!]
    \centering
    \begin{minipage}[t]{.47\textwidth}
        \centering
        \includegraphics[width=0.6\textwidth,height=1.0\textheight,keepaspectratio, trim={5.5cm 0.8cm 5cm 0cm},clip]{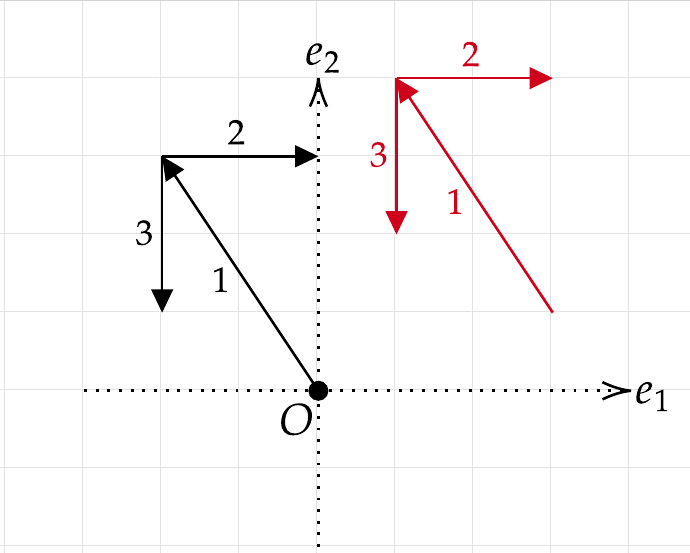}
        \caption{A $\mG$-graph $\Gamma$ (in black) and its translation $\Gamma + (3, 1)$ (in red).}
        \label{fig:translation}
    \end{minipage}
    \hfill
    \begin{minipage}[t]{0.47\textwidth}
        \centering
        \includegraphics[width=0.6\textwidth,height=1.0\textheight,keepaspectratio, trim={5.5cm 0.8cm 5cm 0cm},clip]{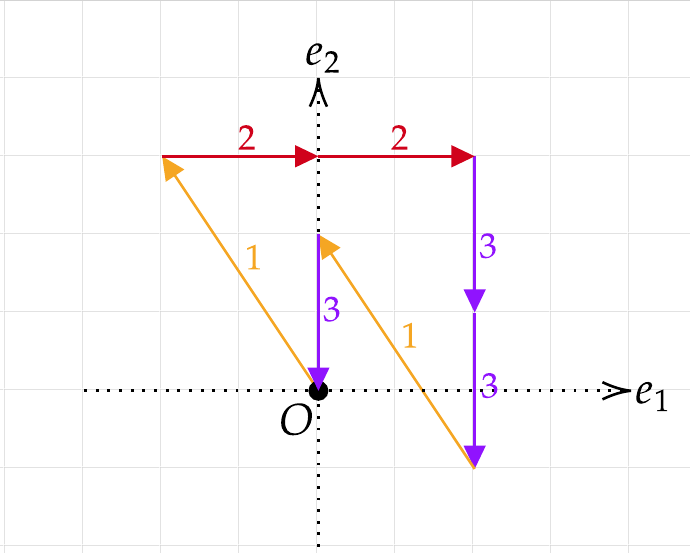}
        \caption{The graph $\Gamma(w)$ where $a_1 = (-2, 3), a_2 = (2, 0), a_3 = (0, -2)$ and $w = (y_1, a_1) (y_2, a_2) (y_2, a_2) (y_3, a_3) (y_3, a_3) (y_1, a_1) (y_3, a_3)$.}
        \label{fig:Gammaw}
    \end{minipage}
\end{figure}

\begin{defn}[Element represented by a $\mG$-graph]
For an edge $e \in E(\Gamma)$, denote by $\ell(e)$ the label of $e$.
We say $\Gamma$ \emph{represents} the following element of $\mY \rtimes \Z^n$:
\begin{equation}\label{eq:edges}
\left(\sum_{e \in E(\Gamma)} \oX^{s(e)} \cdot y_{\ell(e)}, \; \sum_{e \in E(\Gamma)} a_{\ell(e)}\right).
\end{equation}
\end{defn}

For a word $w$ over the alphabet $\mG$, we associate to it a unique $\mG$-graph $\Gamma(w)$, defined as follows.
Write $w = (y_{i_1}, a_{i_1}) (y_{i_2}, a_{i_2}) \cdots (y_{i_p}, a_{i_p})$. For each $j = 0, \ldots, p-1$, we add an edge starting at the vertex $a_{i_1} + \cdots + a_{i_{j}}$, ending at the vertex $a_{i_1} + \cdots + a_{i_{j+1}}$, with the label $i_j$. (If $j = 0$ then the edge starts at $0$ and ends at $a_{i_1}$.)
The graph $\Gamma(w)$ is then obtained by taking the connected component of the vertex $0$.
See Figure~\ref{fig:Gammaw} for an illustration.

\begin{fct}
For a word $w$ over the alphabet $\mG$, the element of $\mY \rtimes \Z^n$ represented by its associated graph $\Gamma(w)$ is equal to the element of $\mY \rtimes \Z^n$ represented by the word $w$.
\end{fct}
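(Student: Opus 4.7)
The plan is to prove the Fact by a straightforward induction on the length $p$ of the word $w = (y_{i_1}, a_{i_1}) (y_{i_2}, a_{i_2}) \cdots (y_{i_p}, a_{i_p})$, by comparing the closed formula for $\pi(w)$ obtained from the semidirect product multiplication rule~\eqref{eq:defsemi2} with the edge-sum formula~\eqref{eq:edges} evaluated on $\Gamma(w)$.

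First, I would establish by induction on $p$ that
\[
\pi(w) \;=\; \left( \sum_{k=1}^{p} \oX^{a_{i_1} + \cdots + a_{i_{k-1}}} \cdot y_{i_k}, \;\; \sum_{k=1}^{p} a_{i_k} \right),
\]
where the empty sum in the exponent (for $k=1$) is taken to be $0 \in \Z^n$. The base case $p=1$ gives $\pi(w) = (y_{i_1}, a_{i_1})$, which matches. For the inductive step, writing $w = w' \cdot (y_{i_p}, a_{i_p})$ and applying~\eqref{eq:defsemi2} together with the action $f \cdot g \cdot y = f \cdot (g \cdot y)$ on the module $\mY$, one immediately obtains the formula for $w$ from the formula for $w'$.

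Next, I would inspect the construction of $\Gamma(w)$. By definition, for each $k = 1, \ldots, p$ there is exactly one edge $e_k$ of $\Gamma(w)$, starting at $s(e_k) = a_{i_1} + \cdots + a_{i_{k-1}}$, ending at $a_{i_1} + \cdots + a_{i_k}$, with label $\ell(e_k) = i_k$. Taking the connected component of $0$ does not discard any of these edges, because each $e_k$ is connected to $0$ via the path $e_1, e_2, \ldots, e_{k-1}$. Substituting these data into~\eqref{eq:edges} yields
\[
\left(\sum_{k=1}^{p} \oX^{s(e_k)} \cdot y_{\ell(e_k)}, \; \sum_{k=1}^{p} a_{\ell(e_k)}\right) \;=\; \left(\sum_{k=1}^{p} \oX^{a_{i_1} + \cdots + a_{i_{k-1}}} \cdot y_{i_k}, \; \sum_{k=1}^{p} a_{i_k}\right),
\]
which coincides with the expression derived above for $\pi(w)$.

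There is no real obstacle here; the only mild point of care is the bookkeeping around the indexing of the edges (the excerpt lets $j$ range from $0$ to $p-1$ with the convention that $j=0$ corresponds to the edge from $0$ to $a_{i_1}$), and the observation that taking the connected component of $0$ does not throw anything away since the edges form a single directed walk based at $0$. Both can be handled in one line each.
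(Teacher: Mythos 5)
Your proof is correct, and it takes the only natural route: unwind the multiplication rule~\eqref{eq:defsemi2} into a closed formula for $\pi(w)$ by induction, then read off the edge data of $\Gamma(w)$ and compare with~\eqref{eq:edges}. The paper states this as a \emph{Fact} without proof, treating it as an immediate consequence of the definitions, so there is no proof to compare against; your write-up simply makes the routine verification explicit. One small remark: you have silently corrected a typographical slip in the paper's description of $\Gamma(w)$ — the edge indexed by $j$ (running from $0$ to $p-1$) should carry label $i_{j+1}$ rather than $i_j$ (for $j=0$ the latter is not even defined), and you appropriately shift to the index $k = j+1 \in \{1,\ldots,p\}$ with label $i_k$. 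Also your observation that taking the connected component of $0$ discards nothing, because the $p$ edges form a single directed walk anchored at $0$, is exactly the needed justification and is stated correctly.
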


By reading the letters in $w$ one by one and tracing the corresponding edges of $\Gamma(w)$, we obtain an Euler path of $\Gamma(w)$.
Furthermore, if the word $w$ represents the neutral element (or any element of the form $(y, 0)$), then this Euler path is an Euler circuit.
Conversely, given an Eulerian $\mG$-graph $\Gamma$ containing the vertex 0, we can follow an Euler circuit starting from 0 and read a word $w$ such that $\Gamma(w) = \Gamma$.

A $\mG$-graph $\Gamma$ is called \emph{full-image} if it contains an edge with label $i$ for every $i \in \{1, \ldots, K\}$.
Note that for a word $w$ over the alphabet $\mG$, its associated graph $\Gamma(w)$ is full-image if and only if the word $w$ is full-image.
Combining Lemma~\ref{lem:word} with the above correspondence between words and Eulerian graphs, we immediately obtain the following lemma. 
Note that we can always translate a $\mG$-graph so that it contains the vertex 0; translating a $\mG$-graph by a vector $z$ multiplies the first entry of its represented element by $\oX^z$.
\begin{restatable}{lem}{lemgrp}\label{lem:grptoeul}
The semigroup $\sgmG$ is a group if and only if there exists a full-image Eulerian $\mG$-graph that represents the neutral element.
\end{restatable}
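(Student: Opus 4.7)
The plan is to use Lemma~\ref{lem:word} as a bridge, translating the question of whether $\sgmG$ is a group into the existence of a full-image word representing the neutral element, and then to transport this between words and $\mG$-graphs via the correspondence $w \leftrightarrow \Gamma(w)$ already discussed above the lemma statement. The nontrivial content has effectively been set up: the equality of elements represented by $w$ and $\Gamma(w)$, the fact that a word representing any element of the form $(y, 0)$ yields an Euler \emph{circuit} rather than merely an Euler path, and the preservation of full-imageness under the $w \leftrightarrow \Gamma(w)$ correspondence.

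For the forward direction, I would assume $\sgmG$ is a group and invoke Lemma~\ref{lem:word} to obtain a full-image word $w \in \mG^*$ representing the neutral element $(0, 0) \in \mY \rtimes \Z^n$. The associated $\mG$-graph $\Gamma(w)$ then represents $(0, 0)$ as well, and it is full-image since every letter of $\mG$ occurs in $w$. Because the second coordinate of the represented element is $0$, the Euler path of $\Gamma(w)$ obtained by reading the letters of $w$ in order is actually a closed walk, hence an Euler circuit, so $\Gamma(w)$ is Eulerian. This furnishes the desired graph.

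For the backward direction, I would start from a full-image Eulerian $\mG$-graph $\Gamma$ representing $(0, 0)$. To apply the word-level statement I need the underlying Euler circuit to start at the vertex $0 \in \Z^n$, so if $0 \notin V(\Gamma)$ I first translate: pick any $v \in V(\Gamma)$ and replace $\Gamma$ by $\Gamma - v$. Translation by $z \in \Z^n$ preserves the edge labels, preserves the full-image and Eulerian properties, and multiplies the first coordinate of the represented element by $\oX^z$, so the translated graph still represents $(\oX^{-v} \cdot 0, 0) = (0, 0)$ and contains the vertex $0$. Now choose an Euler circuit starting at $0$ and read off the labels to produce a word $w$ with $\Gamma(w)$ equal to the translated graph; this $w$ is full-image (since the graph is) and represents $(0, 0)$ (since the graph does). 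Applying Lemma~\ref{lem:word} then concludes that $\sgmG$ is a group.

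In truth there is no real obstacle: the lemma is a packaging of the word/graph dictionary already established in the preceding paragraphs together with Lemma~\ref{lem:word}. The only mildly delicate point, which I would be careful to spell out, is the translation step in the backward direction — one must verify both that translation does not move the represented element away from the neutral element and that translation preserves the full-image and Eulerian properties, so that reading an Euler circuit based at $0$ legitimately yields a full-image word representing $(0, 0)$.
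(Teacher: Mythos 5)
Your proposal is correct and takes exactly the route the paper does: forward via Lemma~\ref{lem:word} and the word-to-graph map $w \mapsto \Gamma(w)$, backward by translating $\Gamma$ so it contains the vertex $0$, reading off an Euler circuit, and applying Lemma~\ref{lem:word} again. The extra care you take to check that translation preserves full-imageness, the Eulerian property, and the represented neutral element is implicit in the paper's proof, so there is no substantive difference.
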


\section{Proof of main technical result}\label{sec:main}

By Lemma~\ref{lem:subsume} and Proposition~\ref{prop:metatoZ}, our main result (Theorem~\ref{thm:main}) boils down to proving the following technical theorem.

\begin{restatable}{thrm}{thmtec}\label{thm:tec}
    Let $\mY$ be a $\Z[X_1^{\pm}, \ldots, X_n^{\pm}]$-module with a given finite presentation.
    Suppose we are given a finite subset $\mG$ of the semidirect product $\mY \rtimes \Z^n$, such that the subgroup $\sgmG_{grp}$ of $\mY \rtimes \Z^n$ admits the image $\Z^n$ under the canonical projection $\mY \rtimes \Z^n \rightarrow \Z^n$.
    It is decidable whether the semigroup $\sgmG$ is a group.
\end{restatable}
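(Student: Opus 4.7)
The plan is to apply Lemma~\ref{lem:grptoeul} to reformulate the problem as the existence of a full-image Eulerian $\mG$-graph representing the neutral element, and then to encode this combinatorial condition algebraically via \emph{position polynomials}. Given a $\mG$-graph $\Gamma$, associate to each label $i \in \{1, \ldots, K\}$ the polynomial $p_i := \sum_{v \in \Z^n} n_{i,v} \oX^v \in \N[\oX^{\pm}]$, where $n_{i,v}$ is the number of label-$i$ edges of $\Gamma$ starting at vertex $v$. Then ``$\Gamma$ is full-image'' becomes $p_i \ne 0$ for every $i$; ``$\Gamma$ is symmetric'' (in-degree equals out-degree at every vertex) becomes the identity $\sum_{i=1}^K (1 - \oX^{a_i})\, p_i = 0$; and ``$\Gamma$ represents the neutral element'' becomes the pair $\sum_i p_i \cdot y_i = 0$ in $\mY$ together with $\sum_i p_i(1, \ldots, 1)\, a_i = 0$ in $\Z^n$. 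The only missing ingredient is connectedness of $\Gamma$, which is not directly expressible in terms of the $p_i$.

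To bypass this obstruction, I would introduce a weaker condition that I will call \emph{face-accessibility}: informally, for every direction $v \in \Rns$, the portion of $\Gamma$ lying on the face of $\conv(V(\Gamma))$ maximizing the linear functional $w \mapsto v\cdot w$ should be reachable from the rest of $\Gamma$ in a way that can be certified by the \emph{initial forms} $\init_v(p_i)$ (the monomials of $p_i$ attaining the weighted degree $\deg_v(p_i)$). The key geometric lemma then states: if $(p_1, \ldots, p_K)$ satisfies the above algebraic conditions together with symmetry and face-accessibility, then one can find translation vectors $z_1, \ldots, z_N \in \Z^n$ such that $\bigl(\sum_{j=1}^N \oX^{z_j} p_i\bigr)_{i=1}^K$ corresponds to a full-image Eulerian $\mG$-graph that still represents the neutral element. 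The proof glues translated copies of $\Gamma$ along shared faces: face-accessibility ensures the pieces meet in a connected fashion, while the hypothesis $\pi(\sgmG_{grp}) = \Z^n$ lets one choose translations so that the second-coordinate sum stays at $0$ (the first coordinate is automatically preserved, since $\oX^{z_j} \cdot \sum_i p_i \cdot y_i = 0$). Convex-geometric book-keeping on how faces of $\conv(V(\Gamma) + z)$ evolve with $z$ will be the main content.

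The final and deepest step is to decide, given the input data $(\mY, \mG)$, whether there exist $p_1, \ldots, p_K \in \N[\oX^{\pm}]^*$ satisfying all of the algebraic constraints \emph{together with} face-accessibility. The $\Z$-linear and module-theoretic equations cut out a finitely generated submodule of $\Z[\oX^{\pm}]^K$ for which one can compute a Gr\"obner basis starting from the given presentation of $\mY$. The hard part is to couple this computation with the positivity requirement $p_i \in \N[\oX^{\pm}]^*$ and with face-accessibility. This requires a simultaneous generalization of \cite[Theorem~1.3]{einsiedler2003does} and \cite[Proposition~3.4]{dong2023identity}: one enumerates finitely many cases indexed by rational directions $v \in \Qns$ and by the compatible initial-form patterns, and in each case reduces existence to a combination of module-theoretic Gr\"obner basis arguments and effective Diophantine statements about the loci where prescribed support/sign conditions on polynomial evaluations are simultaneously realizable.

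I expect step three to be the principal obstacle: in the one-variable case of \cite{dong2023identity} the analogous statement already occupies the technical core of the paper, and extending it to several variables demands additional number-theoretic input (effective results on torsion and algebraic subvarieties of tori over cyclotomic extensions). A secondary challenge is to formulate face-accessibility in a way that is simultaneously (i) weak enough to be expressible in terms of the initial forms $\init_v(p_i)$, (ii) strong enough to make the gluing lemma of step two work, and (iii) compatible with the algorithmic treatment of step three.
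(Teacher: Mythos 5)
Your proposal follows essentially the same route as the paper: reduce to finding a full-image symmetric face-accessible $\mG$-graph representing the identity (Proposition~\ref{prop:grouptoeuler}), encode this via position polynomials and characterize face-accessibility using initial forms (Proposition~\ref{prop:eulertoeq}), and decide the resulting positivity-constrained module-membership problem via a local-global principle simultaneously generalizing Einsiedler--Mouat--Tuncel and Dong (Theorems~\ref{thm:locglob} and~\ref{thm:dec}). One small imprecision: the hypothesis $\pi(\sgmG_{grp}) = \Z^n$ is not needed to keep the second coordinate at $0$ (this is automatic for any union of translates of a symmetric graph representing the identity, since $\oX^{z}\cdot 0 = 0$); rather, it ensures the graph is $\Z^n$-generating, which is the hypothesis that makes the gluing argument from face-accessibility to connectivity (Theorem~\ref{thm:acctocon}) work.
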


In this section we outline the proof of Theorem~\ref{thm:tec}.
In Subsection~\ref{subsec:grptograph} we introduce the notion of ``face-accessibility'' of a $\mG$-graph to replace the property of being Eulerian.
In Subsection~\ref{subsec:graphtopoly} we introduce ``position polynomials'' to reduce problems on $\mG$-graphs to algorithmic problems over polynomial semirings.
In Subsection~\ref{subsec:locglob} we state a local-global principle concerning our algorithmic problem over polynomial semirings.
In Subsection~\ref{subsec:dec} we state our decidability result over polynomial semirings.
Proofs of several stated theorems will be given in the later Sections~\ref{sec:graph},\ref{sec:locglob} and \ref{sec:dec}.
Other omitted proofs can be found in Appendix~\ref{app:main}.

\subsection{From semigroups to face-accessible graphs}\label{subsec:grptograph}
We now fix the set of elements 
\[
\mG \coloneqq \{(y_1, a_1), \ldots, (y_K, a_K)\} \subseteq \mY \rtimes \Z^n.
\]
By Lemma~\ref{lem:grptoeul}, deciding the Group Problem boils down to finding an Eulerian $\mG$-graph.
However, being Eulerian is hard to characterize as a graph theory property.
In fact, being Eulerian is equivalent to being symmetric and connected. While symmetry is easy to describe \emph{locally} (i.e. at each vertex), connectivity of a graph is a \emph{global} property and is hence hard to describe.
The key idea of this subsection is to introduce a \emph{local} property called ``face-accessibility'' to replace connectivity.

Let $\Gamma$ be a $\mG$-graph. 
Denote by $C$ the convex hull of $V(\Gamma)$.
A strict face $F$ of $C$ is called \emph{accessible} if there exists an edge starting inside the face and ending outside of it.
That is, $F$ is accessible if there is an edge $e \in E(\Gamma)$ starting from a vertex $s(e) \in F \cap V(\Gamma)$ and ending at a vertex $d(e) \in (C \setminus F) \cap V(\Gamma)$.
See Figure~\ref{fig:accessible} for an example.

The graph $\Gamma$ is called \emph{face-accessible} if every strict face of $C$ is accessible.

\begin{figure}[ht]
    \centering
    \includegraphics[width=0.55\textwidth,height=1.0\textheight,keepaspectratio, trim={3cm 0cm 1cm 0cm},clip]{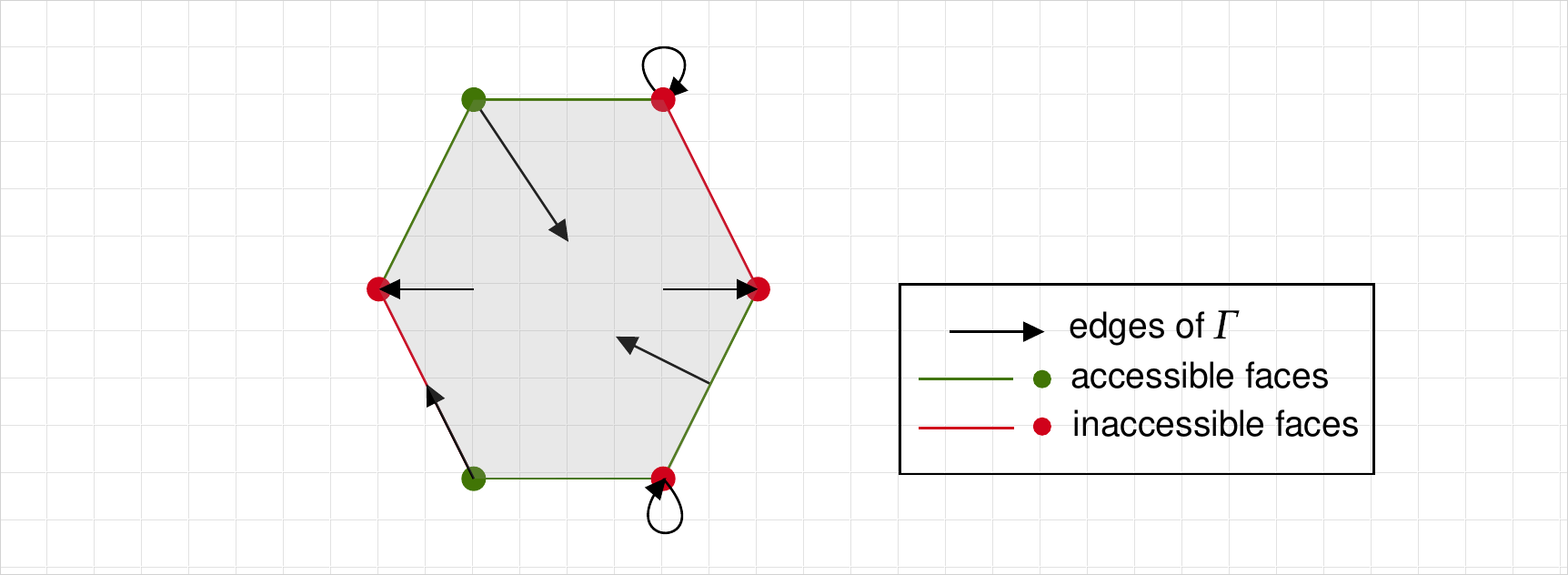}
    \caption{Accessible and inaccessible faces.}
    \label{fig:accessible}
\end{figure}

\begin{obs}\label{obs:contoacc}
    An Eulerian graph is face-accessible.
\end{obs}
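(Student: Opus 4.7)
The plan is to fix an arbitrary strict face $F$ of $C := \conv(V(\Gamma))$ and produce a directed edge of $\Gamma$ from $V(\Gamma) \cap F$ into $V(\Gamma) \setminus F$. The argument divides naturally into three short steps: first, showing that the partition $V(\Gamma) = (V(\Gamma) \cap F) \sqcup (V(\Gamma) \setminus F)$ is nontrivial on both sides; second, using the \emph{local} property that every vertex of an Eulerian graph has equal in- and out-degree to balance the number of crossing edges in the two directions; and third, using the \emph{global} connectivity of $\Gamma$ to force at least one crossing edge to exist at all.

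For the nonemptiness of both sides, note that $C$ is the convex hull of the finite set $V(\Gamma)$, so every extreme point of $C$ lies in $V(\Gamma)$; since the nonempty face $F$ of the polytope $C$ contains at least one vertex of $C$, we get $V(\Gamma) \cap F \neq \emptyset$. Conversely, if $V(\Gamma) \setminus F$ were empty then $V(\Gamma) \subseteq F$ would yield $C = \conv V(\Gamma) \subseteq F$, contradicting $F \neq C$. Next, let $m_1$ denote the number of directed edges from $V(\Gamma) \cap F$ to $V(\Gamma) \setminus F$ and $m_2$ the number going the other way. Summing out-degrees over $V(\Gamma) \cap F$ counts (edges internal to $V(\Gamma) \cap F) + m_1$, while summing in-degrees over the same set counts (edges internal to $V(\Gamma) \cap F) + m_2$; by symmetry these totals are equal, so $m_1 = m_2$. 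Finally, because $\Gamma$ is Eulerian it is (weakly) connected, so the bipartition must have at least one crossing edge in \emph{some} direction, giving $m_1 + m_2 \geq 1$; together with $m_1 = m_2$ this forces $m_1 \geq 1$, and any such edge $e$ has $s(e) \in V(\Gamma) \cap F$ and $d(e) \in V(\Gamma) \setminus F \subseteq (C \setminus F) \cap V(\Gamma)$, witnessing accessibility of $F$.

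I do not expect a serious obstacle here. The point most prone to oversight is the nonemptiness of $V(\Gamma) \cap F$, which rests on the elementary but essential fact that the extreme points of the convex hull of a finite set belong to that set, applied to the vertices of $C$ lying on $F$. Once the bipartition is in place, the balance $m_1 = m_2$ together with $m_1 + m_2 \geq 1$ is essentially the familiar observation underlying the directed Euler circuit characterization, and its translation into accessibility is immediate from the definition.
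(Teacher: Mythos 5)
Your proof is correct. The paper states Observation~\ref{obs:contoacc} without proof, evidently regarding it as immediate from the definitions, and your argument is exactly the natural one filling in that gap: the balance condition (in-degree $=$ out-degree) forces the number of edges crossing out of $F \cap V(\Gamma)$ to equal the number crossing in, while connectedness guarantees the total crossing count is nonzero, so at least one edge exits the face. The one point that genuinely requires care, and which you handle correctly, is that $F \cap V(\Gamma)$ is nonempty because the extreme points of any nonempty face of $C = \conv(V(\Gamma))$ are extreme points of $C$ and hence lie in the finite generating set $V(\Gamma)$.
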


On the contrary, a symmetric face-accessible graph is not necessarily Eulerian, as it may not be connected.
Moreover, a symmetric graph need not be face-accessible.
See Figure~\ref{fig:accnoteul} and \ref{fig:symnotacc} for counterexamples.

\begin{figure}[h!]
    \centering
    \begin{minipage}[t]{.47\textwidth}
        \centering
        \includegraphics[width=0.5\textwidth,height=0.6\textheight,keepaspectratio, trim={5cm 0cm 5cm 0cm},clip]{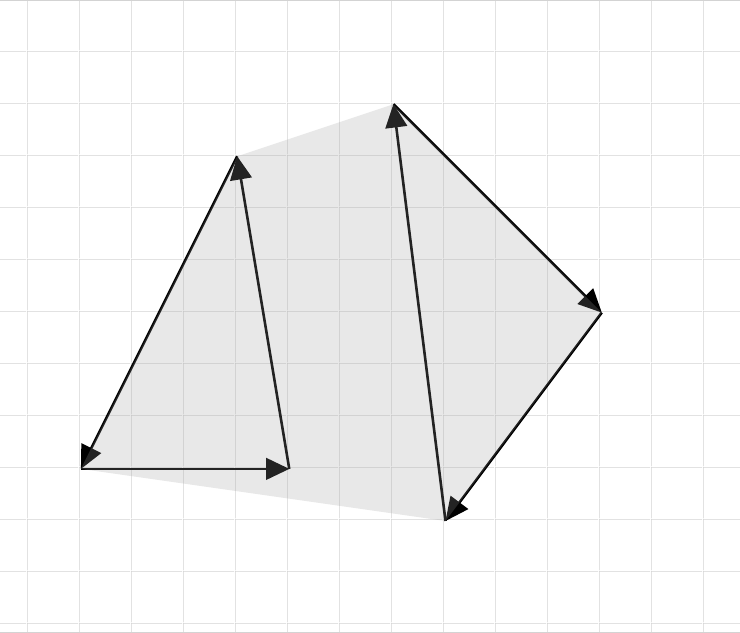}
        \caption{A symmetric face-accessible graph that is not Eulerian (due to connectivity).}
        \label{fig:accnoteul}
    \end{minipage}
    \hfill
    \begin{minipage}[t]{0.47\textwidth}
        \centering
        \includegraphics[width=0.5\textwidth,height=0.6\textheight,keepaspectratio, trim={5cm 0cm 5cm 0cm},clip]{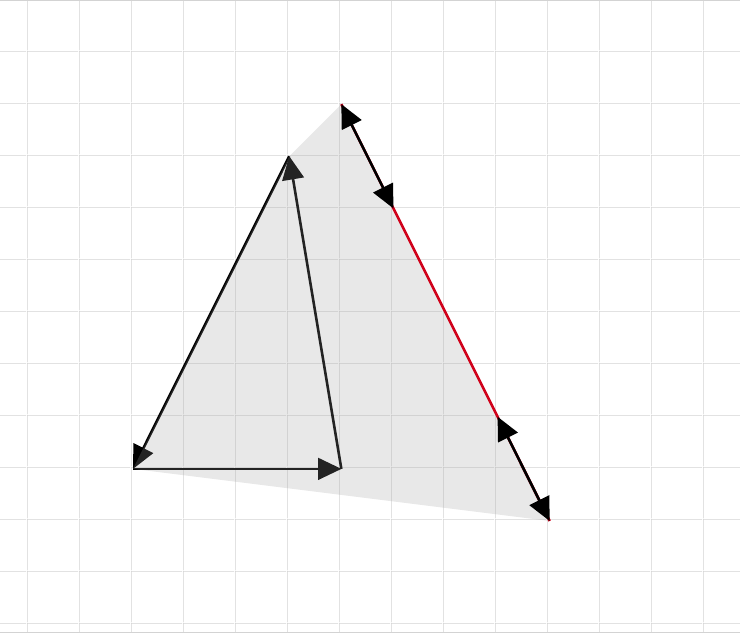}
        \caption{A symmetric graph that is not face-accessible (the red face is not accessible).}
        \label{fig:symnotacc}
    \end{minipage}
\end{figure}

A graph $\Gamma$ with vertices in $\Z^n$ is called $\Z^n$-generating if the set of vectors $\{d(e) - s(e) \mid e \in E(\Gamma)\}$ generates $\Z^n$ as a semigroup.
If $\Gamma$ is symmetric, this semigroup is a group.
Hence, a full-image symmetric $\mG$-graph is $\Z^n$-generating if and only if $\{a_1, \ldots, a_K\}$ generates $\Z^n$ as a group, which is true by the assumption in Theorem~\ref{thm:tec}.

The main theorem of this subsection is the following, it shows that face-accessibility can characterize connectivity up to taking a finite union of translations. 

\begin{restatable}{thrm}{thmacctoeul}\label{thm:acctocon}
Let $\Gamma$ be a $\mG$-graph that is symmetric, face-accessible and $\Z^n$-generating.
Then there exist $z_1, \ldots, z_m \in \Z^n$, such that the union of translations $\hG \coloneqq \bigcup_{i = 1}^m \left( \Gamma + z_i \right)$ is an Eulerian graph.
\end{restatable}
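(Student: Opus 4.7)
The objective is to find finite translations $z_1,\ldots,z_m \in \Z^n$ such that $\hG = \bigcup_{i=1}^m (\Gamma + z_i)$ is Eulerian. Since the multiset union of symmetric multigraphs is symmetric, and for finite directed multigraphs ``Eulerian'' means symmetric plus connected, the task reduces to forcing connectivity of $\hG$.

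My plan is to first pass to the infinite periodic extension $\widetilde{\Gamma} := \bigcup_{z \in \Z^n}(\Gamma + z)$, which has vertex set $\Z^n$ and, at each vertex $v$, an outgoing edge $v \to v - a_\ell$ for every label $\ell$ appearing in $\Gamma$ (following the paper's convention $v = w + a_\ell$ for an edge $v \to w$ of label $\ell$). By the $\Z^n$-generating hypothesis combined with symmetry, the set $\{a_\ell\}$ generates $\Z^n$ as a group, so $\widetilde{\Gamma}$ is connected. I would then carve out a finite window $W \subset \Z^n$, the natural candidate being $W = -(Q \cap \Z^n)$ with $Q$ a sufficiently large integer dilate of $C := \conv V(\Gamma)$, and argue that $\hG := \bigcup_{z \in W}(\Gamma + z)$ inherits connectivity from $\widetilde{\Gamma}$.

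Face-accessibility is precisely what makes this inheritance work. For a vertex $v$ on the boundary of $\conv V(\hG)$, I would select a supporting hyperplane with normal direction $w$, identify the boundary face $F \ni v$, and then locate a translate $\Gamma + z$ with $z \in W$ inside which $v$ lies on the face of $C + z$ supported in direction $w$. Face-accessibility of $\Gamma$ hands us an edge leaving the corresponding face of $C$ inward; translating by $z$ yields an edge of $\hG$ out of $v$ whose endpoint strictly decreases the value of the functional $x \mapsto w \cdot x$. Iterating, every boundary vertex is connected to a ``central'' region of $\hG$ in which the translates overlap densely enough to reproduce $\widetilde{\Gamma}$ locally, and so the entire $\hG$ is connected.

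The hard part will be making this inward-propagation step rigorous. Three ingredients need to be controlled: (i) matching boundary faces of the Minkowski-sum polytope $\conv V(\hG) = C + \conv W$ with appropriate faces of $C$, so that face-accessibility of $\Gamma$ produces the desired inward edge at $v$; (ii) ensuring the endpoint of the inward edge still lies in $V(\hG)$, which dictates a lower bound on the size of $W$ in terms of $V(\Gamma)$ and the $a_\ell$'s; and (iii) exhibiting a strict monovariant --- such as the value of a supporting linear functional --- guaranteeing termination of the inward walk at the center. This is where the ``series of manipulations in convex geometry and graph theory'' foreshadowed in the introduction is carried out.
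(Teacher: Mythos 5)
Your inward-propagation step has a genuine gap, and it is precisely the gap that forces the paper to introduce scalings rather than only translations. Face-accessibility gives, for each face $F_C$ of $C$, an inward edge $e$ from \emph{one particular vertex} $s(e)\in F_C\cap V(\Gamma)$; it does not give an inward edge at every vertex of $F_C$. A boundary vertex $v=u+z$ of $\hG$ (with $u\in V(\Gamma)\cap F_C$, $z\in W\cap F_W$ for the matching face $F_W$ of $\conv W$) may therefore have no $\hG$-edge leaving the face $F'$ at all: the edges of $\hG$ at $v$ contributed by $\Gamma+z$ are exactly the edges of $\Gamma$ at $u$, and $u$ need not be $s(e)$. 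Switching to the translate $z':=v-s(e)=z+(u-s(e))$ that would graft the accessibility edge onto $v$ shifts $z$ tangentially within the affine span of $F_W$, and this shift escapes $W$ whenever $z$ is near a lower-dimensional sub-face of $\conv W$ (equivalently, $v$ is near a low-dimensional face of $\conv V(\hG)$). Nor can you first walk along $F'$ inside $\hG$ to reach the right vertex: the restriction of $\Gamma$ to a face $F_C$ is in general neither symmetric (out-edges leaving $F_C$ have $w\cdot a_\ell>0$, in-edges entering from outside have $w\cdot a_\ell<0$, and these counts need not balance at a given vertex) nor face-accessible, so connectivity within the face is not available. Finally, your proposed monovariant $x\mapsto w\cdot x$ is not strict: the only guaranteed edges may move $v$ within the face without decreasing $w\cdot v$.

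The paper avoids all of this by replacing translations with scalings. It builds the continuous auxiliary graph $\gq$ from all rational scaled-down copies $scale(\Gamma,c,r)$, $c\in C$, $r\in(0,1)\cap\Q$, contained in $C$. Choosing the center $c$ freely in $\inte(F_x)$ near a point $x$ lets the accessibility edge be placed so that it starts exactly at $x$ and, by Lemma~\ref{lem:cx}, ends in the relative interior of a strictly larger face; the correct monovariant is the face in the face lattice (Lemma~\ref{lem:congq}), not a linear functional. Passing from $\gq$ back to the lattice graph $\Gamma_N$ is then the technical heart of Section~\ref{sec:graph}: Lemma~\ref{lem:coverPv} shows that each scaled edge of a suitably chosen path, once blown up by a factor $N$ with $N_0\mid N$, decomposes into a concatenation of $rN$ integer translates $r_k+e$, each verified to lie in $\Gamma_N$ because $r_k\in S_N$. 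This ``continuous, then discretize'' move is the ingredient your sketch is missing, and I do not see how the translation-only propagation can be closed without something of comparable strength.
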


See Figures~\ref{fig:acctocon} and \ref{fig:acctocon2} for an illustration of Theorem~\ref{thm:acctocon}.
The proof of Theorem~\ref{thm:acctocon} uses a sophisticated combination of convex geometry and graph theory, and will be given in Section~\ref{sec:graph}.
Note that the face-accessibility condition in Theorem~\ref{thm:acctocon} is necessary. Figures~\ref{fig:notcon} and \ref{fig:notcon2} show that Theorem~\ref{thm:acctocon} does not hold without face-accessibility.

\begin{figure}[h!]
    \centering
    \begin{minipage}[t]{.47\textwidth}
        \centering
        \includegraphics[width=0.5\textwidth,height=1.0\textheight,keepaspectratio, trim={5.0cm 0cm 4.5cm 0cm},clip]{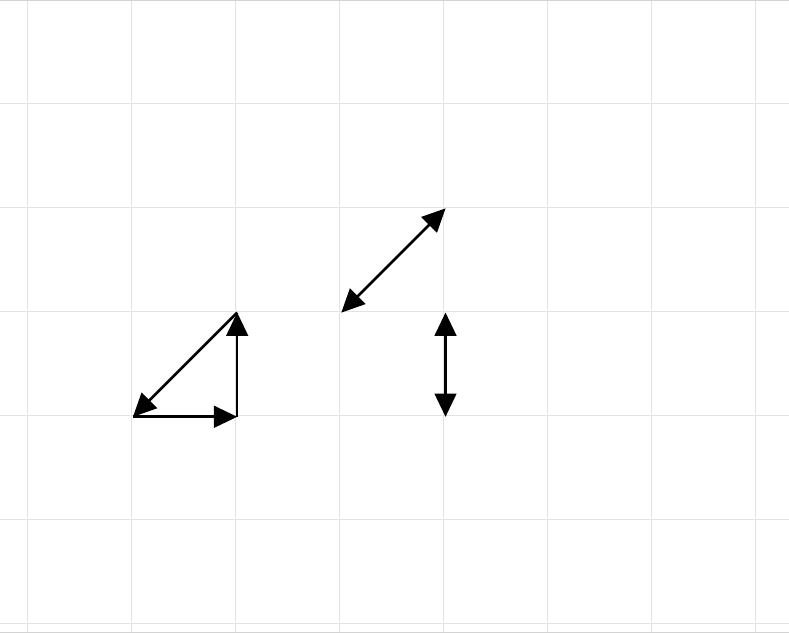}
        \caption{A symmetric face-accessible and $\Z^n$-generating graph $\Gamma$ from Theorem~\ref{thm:acctocon}.}
        \label{fig:acctocon}
    \end{minipage}
    \hfill
    \begin{minipage}[t]{0.47\textwidth}
        \centering
        \includegraphics[width=0.5\textwidth,height=1.0\textheight,keepaspectratio, trim={5.0cm 0cm 4.5cm 0cm},clip]{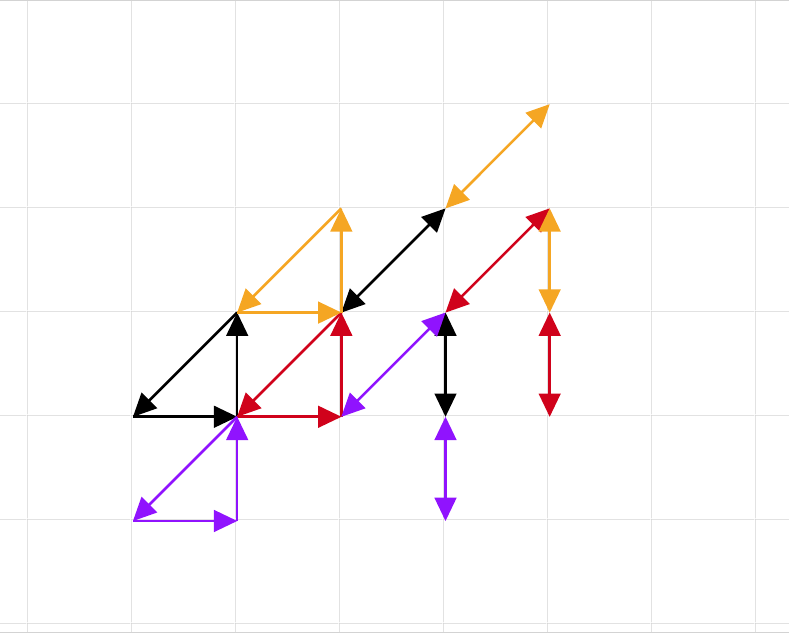}
        \caption{An Eulerian graph $\hG$ constructed in Theorem~\ref{thm:acctocon}, consisting of four translations of $\Gamma$, each noted with a different colour.}
        \label{fig:acctocon2}
    \end{minipage}

    \vspace{0.5cm}
    \begin{minipage}[t]{.47\textwidth}
        \centering
        \includegraphics[width=0.5\textwidth,height=1.0\textheight,keepaspectratio, trim={5.0cm 1cm 4.5cm 1cm},clip]{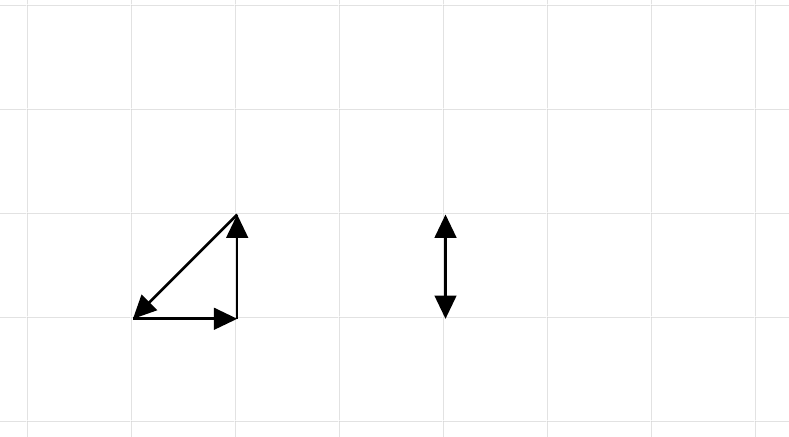}
        \caption{A $\Gamma$ that is not face-accessible.}
        \label{fig:notcon}
    \end{minipage}
    \hfill
    \begin{minipage}[t]{0.47\textwidth}
        \centering
        \includegraphics[width=0.5\textwidth,height=1.0\textheight,keepaspectratio, trim={5.0cm 1cm 4.5cm 1cm},clip]{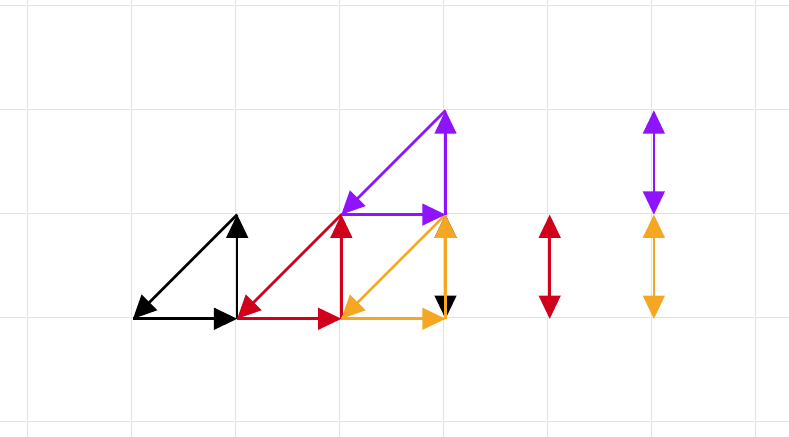}
        \caption{A union of translations of $\Gamma$ cannot be connected.}
        \label{fig:notcon2}
    \end{minipage}
\end{figure}

If $\Gamma$ represents the neutral element, then the union $\hG \coloneqq \bigcup_{i = 1}^m \left( \Gamma + z_i \right)$ represents the element $(\sum_{i = 1}^m \oX^{z_i} \cdot 0 , \sum_{i = 1}^m 0) = (0, 0)$.
Therefore, from Lemma~\ref{lem:grptoeul} and Theorem~\ref{thm:acctocon} we immediately obtain the following.

\begin{restatable}{prop}{propgtoe}\label{prop:grouptoeuler}
The semigroup $\sgmG$ is a group if and only if there exists a full-image symmetric face-accessible $\mG$-graph that represents the neutral element.
\end{restatable}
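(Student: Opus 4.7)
The plan is to combine the two results immediately preceding the statement: Lemma~\ref{lem:grptoeul} together with Observation~\ref{obs:contoacc} will handle the easy direction, while Theorem~\ref{thm:acctocon} handles the nontrivial one.

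For the forward direction, suppose $\sgmG$ is a group. Lemma~\ref{lem:grptoeul} supplies a full-image Eulerian $\mG$-graph $\Gamma$ representing the neutral element. An Eulerian graph is automatically symmetric, and by Observation~\ref{obs:contoacc} it is face-accessible as well, so $\Gamma$ already witnesses the right-hand side of the biconditional.

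For the backward direction, let $\Gamma$ be a full-image symmetric face-accessible $\mG$-graph representing the neutral element $(0,0)$. The first step is to verify that $\Gamma$ is $\Z^n$-generating, which is precisely where the standing hypothesis of Theorem~\ref{thm:tec} — that $\sgmG_{grp}$ projects onto $\Z^n$ — is consumed: this hypothesis forces $\{a_1,\ldots,a_K\}$ to generate $\Z^n$ as a group, and full-imageness together with symmetry of $\Gamma$ then transfers this to the edge differences $\{d(e)-s(e) \mid e \in E(\Gamma)\}$, as observed in the paragraph just before Theorem~\ref{thm:acctocon}. Applying Theorem~\ref{thm:acctocon} now yields translation vectors $z_1,\ldots,z_m \in \Z^n$ such that $\hG \coloneqq \bigcup_{i=1}^m (\Gamma + z_i)$ is Eulerian. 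It remains to check that $\hG$ satisfies the hypotheses of Lemma~\ref{lem:grptoeul}: full-imageness is inherited from $\Gamma$, and formula~\eqref{eq:edges} shows that translating a graph by $z$ multiplies its first coordinate by $\oX^z$ and leaves its second coordinate fixed, so each $\Gamma + z_i$ represents $(\oX^{z_i} \cdot 0, 0) = (0,0)$ and hence $\hG$ represents $(0,0)$ as well. Lemma~\ref{lem:grptoeul} applied to $\hG$ then delivers that $\sgmG$ is a group.

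There is no genuine obstacle here once Theorem~\ref{thm:acctocon} is granted; the proposition is essentially a clean packaging of that theorem with Lemma~\ref{lem:grptoeul}. The only subtle point is confirming that the input-level hypothesis $\pi(\sgmG_{grp}) = \Z^n$ is exactly the ingredient that promotes full-image plus symmetry to the $\Z^n$-generating condition demanded by Theorem~\ref{thm:acctocon}, so that the theorem is actually applicable. All the deep content has been absorbed into Theorem~\ref{thm:acctocon} itself, which is proved separately in Section~\ref{sec:graph}.
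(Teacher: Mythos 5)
Your proof is correct and matches the paper's argument: the easy direction is Lemma~\ref{lem:grptoeul} plus Observation~\ref{obs:contoacc}, and the hard direction is Theorem~\ref{thm:acctocon} together with the observation that translations preserve both full-imageness and the represented element $(0,0)$. Your extra care in spelling out why the $\Z^n$-generating hypothesis of Theorem~\ref{thm:acctocon} is met (full-image plus symmetry plus $\pi(\sgmG_{grp})=\Z^n$) is exactly the remark the paper makes in the paragraph preceding Theorem~\ref{thm:acctocon}, so nothing is missing.
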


\subsection{From face-accessible graphs to positive polynomials}\label{subsec:graphtopoly}
In this subsection we introduce ``position polynomials'' to describe a $\mG$-graph, and reduce our graph theory problem to a computational problem over polynomials.
Recall that for an edge $e$ in a $\mG$-graph, $\ell(e)$ denotes the label of $e$, and $s(e) \in \Z^n$ denotes the starting vertex of $e$.
Given a $\mG$-graph $\Gamma$, define its tuple of \emph{position polynomials} $\bff = (f_1, \ldots, f_K)$ in the following way:
\begin{equation}\label{eq:pospoly}
f_i \coloneqq \sum_{e \in E(\Gamma), \ell(e) = i} \oX^{s(e)}, \quad i = 1, \ldots, K.
\end{equation}
That is, $f_i$ is the sum of monomials $\oX^{s}$, where $s$ ranges over the starting vertex of all label $i$ edges in $\Gamma$.
These polynomials have only non-negative coefficients, hence are in $\N[\oX^{\pm}]$. See Figure~\ref{fig:pospoly}.

\begin{figure}[h!]
    \centering
    \begin{minipage}[t]{.47\textwidth}
        \centering
        \includegraphics[width=0.5\textwidth,height=1.0\textheight,keepaspectratio, trim={6.3cm 0.8cm 5cm 0cm},clip]{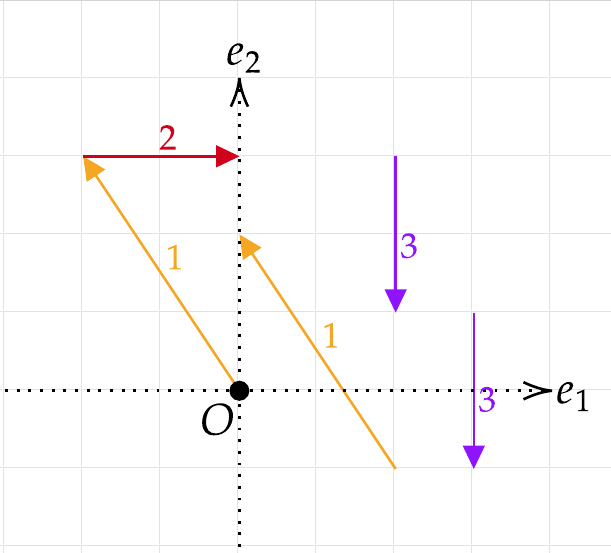}
        \caption{In this example, $f_1 = 1 + X_1^2 X_2^{-1}$, $f_2 = X_1^{-2} X_2^{3}$, $f_3 = X_1^{2} X_2^{3} + X_1^{3} X_2$.}
        \label{fig:pospoly}
    \end{minipage}
    \hfill
    \begin{minipage}[t]{0.47\textwidth}
        \centering
        \includegraphics[width=0.5\textwidth,height=1.0\textheight,keepaspectratio, trim={6.3cm 0.8cm 5cm 0cm},clip]{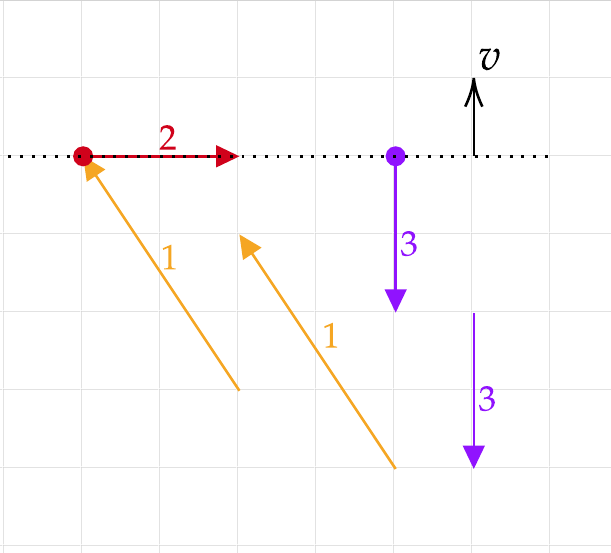}
        \caption{Let $v = (0, 1)$, we have \\ $M_{v}(\{1, 2, 3\}, \bff) = \{2, 3\}$, $O_{v} = \{1, 3\}$.}
        \label{fig:pospoly2}
    \end{minipage}
\end{figure}

Conversely, given any tuple of polynomials $\bff = (f_1, \ldots, f_K) \in \N[\oX^{\pm}]^K$, one can construct a $\mG$-graph $\Gamma$ such that $\bff$ is exactly its position polynomials.
Indeed, for each monomial $c X^{b}$ of $f_i$, we draw $b$ edges of label $i$ starting at vertex $b$.
Note that it is crucial for $f_i$ to be an element of $\N[\oX^{\pm}]$ instead of $\Z[\oX^{\pm}]$, so that all monomials have non-negative coefficients.

For $v \in \Rns$ and $I \subseteq \{1, \ldots, K\}$, define
\[
M_{v}(I, \bff) \coloneqq \left\{i \in I \;\middle|\; \deg_{v}(f_i) = \max_{i' \in I}\{\deg_{v}(f_{i'})\} \right\}.
\]
This is the set of indices $i \in I$ such that $\deg_{v}(f_i)$ is maximal among $i \in I$.
Define
\[
O_{v} \coloneqq \{i \in \{1, \ldots, K\} \mid a_i \not\perp v\}.
\]
This is the set of indices $i \in \{1, \ldots, K\}$ such that $a_i$ is not orthogonal to $v$.
See Figure~\ref{fig:pospoly2}.

The following proposition shows we can completely characterize the graph theoretic properties from Proposition~\ref{prop:grouptoeuler} using position polynomials.
The key point is how we characterize face-accessibility in (iii).
As a comparison, no good characterization of \emph{connectivity} can be obtained from position polynomials.

\begin{restatable}{prop}{propeulertoeq}\label{prop:eulertoeq}
Let $\Gamma$ be a $\mG$-graph with position polynomials $f_i \in \N[\oX^{\pm}], i = 1, \ldots, K$.
\begin{enumerate}[noitemsep, label = (\roman*)]
    \item $\Gamma$ is full-image if and only if 
    $
        f_i \neq 0 \text{ for } i = 1, \ldots, K
    $.
    \item $\Gamma$ is symmetric if and only if
    $
        \sum_{i = 1}^K f_i \cdot (\oX^{a_i} - 1) = 0
    $.
    \item $\Gamma$ is face-accessible if and only if
    \begin{equation}\label{eq:acccond}
        O_{v} \cap M_{v}(\{1, \ldots, K\}, \bff) \neq \emptyset \quad \text{ for every } v \in \Rns.
    \end{equation}
    \item Suppose $\Gamma$ is symmetric. $\Gamma$ represents the neutral element if and only if
    $
        \sum_{i = 1}^K f_i \cdot y_i = 0
    $.
\end{enumerate}
\end{restatable}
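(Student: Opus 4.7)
The plan is to prove each of (i)--(iv) separately as a direct dictionary between a graph-theoretic property of $\Gamma$ and a polynomial identity for $\bff=(f_1,\ldots,f_K)$. Parts (i) and (ii) are essentially local. For (i), the definition $f_i=\sum_{e:\ell(e)=i}\oX^{s(e)}$ gives $f_i\ne 0$ iff $\Gamma$ has at least one edge of label $i$, which is the definition of full-image. For (ii), I would compute in- and out-degrees vertex-by-vertex: at $w\in\Z^n$ the label-$i$ contribution to the out-degree equals $[\oX^w]f_i$, while the label-$i$ contribution to the in-degree equals the number of label-$i$ edges starting at $w-a_i$, i.e., $[\oX^{w-a_i}]f_i=[\oX^w](\oX^{a_i}f_i)$. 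Equating total in- and out-degree at every vertex is then the single Laurent-polynomial identity $\sum_i f_i=\sum_i \oX^{a_i}f_i$, which rearranges to $\sum_i f_i(\oX^{a_i}-1)=0$.

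Part (iii) is the substantive step. I would use the standard fact that every strict face of $C=\conv V(\Gamma)$ has the form $F_v\coloneqq\arg\max_{x\in C}v\cdot x$ for some direction $v\in\Rns$ that is non-constant on $V(\Gamma)$, and the vertices of $\Gamma$ lying in $F_v$ are exactly those achieving $\max_{w\in V(\Gamma)}v\cdot w$. The index set $M_v(\{1,\ldots,K\},\bff)$ then consists of precisely those labels $i$ such that some label-$i$ edge starts in $F_v$. For such an edge $e$, the endpoint $s(e)+a_i$ lies in $V(\Gamma)\subseteq C$, so $v\cdot a_i\le 0$; this endpoint leaves $F_v$ iff the inequality is strict, equivalently iff $v\cdot a_i\ne 0$, i.e., $i\in O_v$. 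Hence $F_v$ is accessible iff $M_v\cap O_v\ne\emptyset$, and quantifying over the relevant $v$ yields the equivalence. Finally for (iv), symmetry forces $\sum_{e\in E(\Gamma)}(d(e)-s(e))=\sum_{w\in V(\Gamma)}(\operatorname{indeg}(w)-\operatorname{outdeg}(w))\,w=0$, so the $\Z^n$-coordinate of the element represented by $\Gamma$ vanishes automatically; the $\mY$-coordinate equals $\sum_e \oX^{s(e)}y_{\ell(e)}=\sum_i f_i\cdot y_i$, so the element is trivial iff $\sum_i f_i y_i=0$ in $\mY$.

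The chief obstacle is part (iii): turning the geometric notion of face-accessibility into a transparently algebraic condition. The clean point is that for $i\in M_v$ the inequality $v\cdot a_i\le 0$ is automatic from $d(e)\in C$, so ``the edge leaves $F_v$'' collapses into the non-orthogonality condition $i\in O_v$. A small subtlety I would flag is that $\max_i\deg_v(f_i)$ agrees with $\max_{w\in V(\Gamma)}v\cdot w$ only when every vertex is the source of some edge; this is automatic when $\Gamma$ is symmetric (no pure sinks exist), which is the situation in which (iii) will be combined with (ii) in the subsequent applications.
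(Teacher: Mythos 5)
Your proposal follows the same route as the paper's proof: (ii) by pairing $\oX^{s(e)}$ with $\oX^{d(e)}$, (iii) by translating ``edge starting in $F_v$ and leaving $F_v$'' into membership in $M_v$ and $O_v$ respectively, and (iv) by isolating the $\mY$-coordinate once symmetry kills the $\Z^n$-coordinate. No new decomposition or auxiliary lemma appears.

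However, the subtlety you flag for (iii) is a genuine gap in the published argument, not a side remark. The paper shows that any edge starting in $F$ has a label in $M_v$, and identifies ``the edge leaves $F$'' with ``$\ell(e)\in O_v$''; but the concluding sentence ``Therefore, $F$ is accessible if and only if $O_v\cap M_v\neq\emptyset$'' silently uses the converse, namely that any $i\in M_v$ is realized by an edge \emph{starting in $F$}. This converse fails when the vertices of $V(\Gamma)$ lying on $F$ are all pure sinks, since then $\max_i\deg_v(f_i)=\max_{e\in E(\Gamma)}v\cdot s(e)$ is strictly less than $\max_{w\in V(\Gamma)}v\cdot w$. A concrete counterexample: take $n=1$, $K=1$, $a_1=1$, and let $\Gamma$ be the single edge from $0$ to $1$; then $f_1=1$, $C=[0,1]$, and $O_v\cap M_v=\{1\}\neq\emptyset$ for every $v\in\Rns$, yet the strict face $\{1\}$ has no outgoing edge, so $\Gamma$ is not face-accessible. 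Your repair is exactly right: if $\Gamma$ is symmetric then every vertex has out-degree at least one (symmetry together with the requirement that each vertex be incident to an edge forces both in- and out-degree to be $\geq 1$), so the two maxima coincide and the paper's reasoning goes through. Since (iii) is only applied downstream jointly with (ii), nothing later in the paper breaks, but the statement of (iii) should either carry the symmetry hypothesis or be restricted to graphs in which every vertex has an outgoing edge.
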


\begin{proof}
    (i) and (iv) are easy and follow by direct computation using the definition~\eqref{eq:pospoly} of position polynomials (see Appendix~\ref{app:main} for details).

    (ii) We have 
    \begin{multline*}
        \sum_{i = 1}^K f_i \cdot (\oX^{a_i} - 1) = \sum_{i = 1}^K \sum_{e \in E(\Gamma), \ell(e) = i} \oX^{s(e)} (\oX^{a_i} - 1)
        = \sum_{i = 1}^K \sum_{e \in E(\Gamma), \ell(e) = i} (\oX^{s(e) + a_i} - \oX^{s(e)}) \\
        = \sum_{e \in E(\Gamma)} (\oX^{d(e)} - \oX^{s(e)}) = \sum_{e \in E(\Gamma)} \oX^{d(e)} - \sum_{e \in E(\Gamma)} \oX^{s(e)}.
    \end{multline*}
    This is equal to zero if and only if the in-degree equals the out-degree at every vertex.

    (iii) Let $C$ be the convex hull of $V(\Gamma)$. 
    For every strict face $F$ of $C$ there is a vector $v \in \Rns$ such that $F$ consists of all points $x$ in $C$ where $v \cdot  x$ is maximal.
    Conversely, for every vector $v \in \Rns$, the set $F$ of all points $x$ in $C$ such that $v \cdot  x$ is maximal forms a strict face of $C$.

    Let $F$ be a strict face, then $F$ is accessible if and only if some edge starting in $F$ does not end in $F$.
    Let $e$ be an edge starting in $F$, with label $\ell(e)$.
    Then $v \cdot  s(e)$ is maximal among all $e \in E(\Gamma)$.
    Since the monomial $\oX^{s(e)}$ is contained in $f_{\ell(e)}$, this means $\ell(e) \in M_{v}(\{1, \ldots, K\}, \bff)$.
    
    Observe that $d(e) \in F$ if and only if $a_{\ell(e)} = d(e) - s(e)$ is orthogonal to $v$, which is equivalent to $\ell(e) \not\in O_{v}$.
    Therefore, $F$ is accessible if and only if an edge $e$ exists such that $\ell(e) \in M_{v}(\{1, \ldots, K\}, \bff)$ and $\ell(e) \in O_{v}$;
    that is, $O_{v} \cap M_{v}(\{1, \ldots, K\}, \bff) \neq \emptyset$.

    By the definition of face-accessibility, $\Gamma$ is face-accessible if and only if $O_{v} \cap M_{v}(\{1, \ldots, K\}, \bff) \neq \emptyset$ holds for every $v \in \Rns$.
\end{proof}

Let $\mM_{\Z}$ be the $\Z[\oX^{\pm}]$-module consisting of all $\bff \in \Z[\oX^{\pm}]^K$ satisfying $\sum_{i = 1}^K f_i \cdot (\oX^{a_i} - 1) = 0$ and $\sum_{i = 1}^K f_i \cdot y_i = 0$:
\begin{equation}\label{eq:MZ}
\mM_{\Z} \coloneqq \left\{\bff \in \Z[\oX^{\pm}]^K \;\middle|\; \sum_{i = 1}^K f_i \cdot (\oX^{a_i} - 1) = 0 \text{ and } \sum_{i = 1}^K f_i \cdot y_i = 0 \right\}.
\end{equation}
Then Proposition~\ref{prop:eulertoeq} shows the following: there exists a full-image symmetric face-accessible $\mG$-graph that represents the neutral element if and only if $\mM_{\Z}$ contains an element $\bff \in \left(\N[\oX^{\pm}]^*\right)^K$ satisfying Property~\eqref{eq:acccond}.
Using linear algebra over $\Z[\oX^{\pm}]$, generators of $\mM_{\Z}$ can be effectively computed (a simple proof is given in Appendix~\ref{app:main}):

\begin{restatable}{lem}{lemMZ}\label{lem:MZ}
    A finite set of generators $\bg_1, \ldots, \bg_m \in \Z[\oX^{\pm}]^K$ of the $\Z[\oX^{\pm}]$-module $\mM_{\Z}$ can be computed from $y_i, a_i, i = 1, \ldots, K$.
\end{restatable}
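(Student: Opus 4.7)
My plan is to realize $\mM_{\Z}$ as the kernel of an explicit $\Z[\oX^{\pm}]$-module homomorphism between effectively presented modules, and then appeal to the classical computability of syzygies over (Laurent) polynomial rings via Gr\"obner bases.

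First I would unwind the given finite presentation of $\mY$ as $\mY = M/N$, where $M = \Z[\oX^{\pm}]^d$ and $N$ is the submodule generated by explicit elements $r_1, \ldots, r_s \in \Z[\oX^{\pm}]^d$, and lift each $y_i$ to a representative $\tilde{y}_i \in \Z[\oX^{\pm}]^d$. Unfolding~\eqref{eq:MZ}, a vector $\bff = (f_1, \ldots, f_K)$ lies in $\mM_{\Z}$ if and only if $\sum_{i=1}^K f_i (\oX^{a_i} - 1) = 0$ in $\Z[\oX^{\pm}]$ and $\sum_{i=1}^K f_i \tilde{y}_i \in N$. The latter condition is equivalent to the existence of $h_1, \ldots, h_s \in \Z[\oX^{\pm}]$ such that, in $\Z[\oX^{\pm}]^{d+1}$,
\[
\sum_{i=1}^K f_i \cdot (\oX^{a_i} - 1,\; \tilde{y}_i) \; + \; \sum_{j=1}^s h_j \cdot (0,\; -r_j) \; = \; 0.
\]
Hence $\mM_{\Z}$ is the projection onto the first $K$ coordinates of the syzygy module of the finite tuple $(\oX^{a_1}-1, \tilde{y}_1), \ldots, (\oX^{a_K}-1, \tilde{y}_K), (0, -r_1), \ldots, (0, -r_s) \in \Z[\oX^{\pm}]^{d+1}$.

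Second, I would invoke the classical algorithmic theory of Gr\"obner bases: syzygies of a finite list of vectors in $R^{d+1}$ are effectively computable whenever $R$ is a computable polynomial ring over a computable coefficient ring. For $R = \Z[\oX^{\pm}]$ this reduces to the polynomial case $\Z[X_1, \ldots, X_n]$ by clearing denominators through multiplication by a single common monomial $\oX^{\bm}$ for $\bm \in \Zp^n$ chosen large enough that all entries become polynomials; a syzygy over $\Z[\oX^{\pm}]$ then corresponds, up to rescaling of components by monomials, to a syzygy of the rescaled tuple over $\Z[X_1, \ldots, X_n]$, and the latter can be produced by Buchberger's algorithm. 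Projecting the resulting finite generating set of the syzygy module onto the first $K$ coordinates yields the desired $\bg_1, \ldots, \bg_m$.

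The only mild subtlety, rather than a real obstacle, is that the coefficient ring is $\Z$ rather than a field, so one must cite the version of Buchberger's algorithm valid over Euclidean domains (or equivalently a packaged result on computability of kernels of homomorphisms between finitely presented $\Z[\oX^{\pm}]$-modules). Everything else in the argument is routine bookkeeping.
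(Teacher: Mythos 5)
Your proposal is correct and follows essentially the same route as the paper: both realize $\mM_{\Z}$ as the projection onto the first $K$ coordinates of a syzygy module built from the defining equations $\sum_i f_i(\oX^{a_i}-1)=0$ and $\sum_i f_i \tilde y_i \in N$, and then invoke effective computability of syzygies over $\Z[\oX^{\pm}]$. The only superficial difference is that you concatenate the two constraints into a single vector relation in $\Z[\oX^{\pm}]^{d+1}$ where the paper keeps them as a two-equation system, and you spell out the monomial-clearing step from $\Z[\oX^{\pm}]$ to $\Z[X_1,\ldots,X_n]$ that the paper leaves to the cited references.
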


\subsection{Local-global principle for positive polynomials}\label{subsec:locglob}
We now start to construct an algorithm that decides whether $\mM_{\Z}$ contains an element $\bff \in \left(\N[\oX^{\pm}]^*\right)^K$ satisfying Property~\eqref{eq:acccond}.
This problem is highly non-trivial due to the polynomials having coefficients in $\N$ instead of $\Z$.
In fact, solving linear equations over the semiring $\N[\oX^{\pm}]$ is known to be \emph{undecidable}~\cite{narendran1996solving}.
Our key to obtaining a decidability result is to exploit the \emph{homogeneity} of our linear equations.

The first step is to generalize Property~\eqref{eq:acccond}.
Given two sets $I, J \subseteq \{1, \ldots, K\}$, our new goal is to decide whether $\mM_{\Z}$ contains an element $\bff \in \left(\N[\oX^{\pm}]^*\right)^K$ satisfying the following condition.
\begin{equation}\label{eq:gen}
        \left(O_{v} \cup J\right) \cap M_{v}(I, \bff) \neq \emptyset, \quad \text{ for every } v \in \Rns.
\end{equation}
Note that Property~\eqref{eq:acccond} can be considered as a special case of Property~\eqref{eq:gen} with $I = \{1, \ldots, K\}, J = \emptyset$.
This generalization will be crucial to our subsequent decidability result.
Intuitively, edges with labels in $J$ can be considered to be ``going out into an $(n+1)$-th dimension''; and edges with labels outside of $I$ can be considered to ``exist in an $(n+1)$-th dimension''.

The second step is to pass from polynomial rings over $\Z$ to polynomial rings over $\R$ in order to facilitate subsequent usage of analytic methods.
Let $\mM$ be the $\R[\oX^{\pm}]$-submodule of $\R[\oX^{\pm}]^K$ generated by $\bg_1, \ldots, \bg_m$ from Lemma~\ref{lem:MZ}, that is,
\[
\mM \coloneqq \left\{\sum_{j = 1}^m h_j \cdot \bg_j \;\middle|\; h_1, \ldots, h_m \in \R[\oX^{\pm}]\right\}.
\]
\begin{restatable}{lem}{lemM}\label{lem:M}
    There exists an element $\tbf \in \mM_{\Z} \cap \left(\N[\oX^{\pm}]^*\right)^K$ satisfying Property~\eqref{eq:gen}, if and only if there exists $\bff \in \mM \cap \left(\Rp[\oX^{\pm}]^*\right)^K$ satisfying Property~\eqref{eq:gen}.
\end{restatable}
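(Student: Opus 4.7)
The forward implication is immediate: since $\mM_{\Z} \subseteq \mM$ and $\N[\oX^{\pm}] \subseteq \Rp[\oX^{\pm}]$, any witness on the integer side is automatically a witness on the real side, and Property~\eqref{eq:gen} reads identically in both settings. For the converse, the plan is to take the given real witness $\bff$, freeze its supports, and perform a density-plus-denominator-clearing argument inside a finite-dimensional rational polyhedral cone.

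The crucial first observation is that Property~\eqref{eq:gen} depends only on the supports $T_i := \supp(f_i)$, not on the actual coefficient values of $\bff$. Indeed, since $f_i \in \Rp[\oX^{\pm}]$ has non-negative coefficients, no cancellation can occur, and so $\deg_v(f_i) = \max_{s \in T_i} v \cdot s$ is determined by $T_i$; hence any $\tbf \in \mM \cap (\Rp[\oX^{\pm}])^K$ with $\supp(\tf_i) = T_i$ for every $i$ automatically witnesses Property~\eqref{eq:gen}. I therefore introduce the finite-dimensional subspace $V := \bigoplus_{i=1}^K \R^{T_i} \subseteq \R[\oX^{\pm}]^K$ containing $\bff$ and consider the convex cone $\mC := \mM \cap V \cap \bigoplus_i \Rp^{T_i}$.

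The next step is to show that $\mC$ is a rational polyhedral cone. The inequalities $\tf_i(s) \geq 0$ are visibly $\Z$-linear on $V$, and the key point is that $\mM \cap V$ is a $\Q$-defined subspace of $V$. Taking $\bg_1, \ldots, \bg_m$ to be the generators of $\mM_\Z$ supplied by Lemma~\ref{lem:MZ}, flatness of $\Z \hookrightarrow \R$ yields the identification $\mM = \mM_\Z \otimes_\Z \R$ inside $\R[\oX^{\pm}]^K$, and the same flatness, applied to the inclusions of $\mM_\Z$ and $V_\Z := \bigoplus_i \Z^{T_i}$ in $\Z[\oX^{\pm}]^K$, gives $\mM \cap V = (\mM_\Z \cap V_\Z) \otimes_\Z \R$; thus $\mM \cap V$ admits a $\Z$-basis. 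Rational points are therefore dense in $\mC$. Because every coordinate of $\bff$ is strictly positive, $\bff$ lies in the relative interior of $\mC$, and I can pick a rational $\tbf_\Q \in \mC$ close enough to $\bff$ that its coordinates remain strictly positive, so that $\supp(\tf_{\Q,i}) = T_i$ for every $i$.

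Finally, I would promote $\tbf_\Q$ to an integer witness. By the same identification $\mM \cap V = (\mM_\Z \cap V_\Z) \otimes_\Z \R$, any $\Q$-rational element of $\mM \cap V$ already lies in $(\mM_\Z \cap V_\Z) \otimes_\Z \Q$ (a linear system with $\Z$-matrix and $\Q$-right-hand-side that admits a real solution also admits a rational one), so multiplying by a suitable $N \in \Zpp$ clears denominators and yields $\tbf := N\tbf_\Q \in \mM_{\Z} \cap (\N[\oX^{\pm}]^*)^K$ with the same supports as $\bff$, which therefore satisfies Property~\eqref{eq:gen}. The main technical point requiring care is the rational description of $\mC$, which ultimately rests on the flatness of $\Z \hookrightarrow \R$: that is what ensures $\mM \cap V$ is defined over $\Z$ rather than merely over $\R$, and hence that $\mC$ is indeed a rational polyhedral cone in which rational points are dense.
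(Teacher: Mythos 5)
Your proposal is correct, and both directions go through. The forward implication is indeed trivial. For the converse, you and the paper start from the same crucial observation — that Property~\eqref{eq:gen} depends only on the supports $\supp(f_i)$, since for polynomials with nonnegative coefficients there is no cancellation and $\deg_v(f_i)$ is determined by $\supp(f_i)$ — but you then package the rest of the argument differently. The paper expands $\bff = \sum_j h_j \cdot \bg_j$, fixes the supports of both $\bff$ and the $h_j$, and observes that the resulting finite linear system in the coefficients $(h_{j,b},\,c_{i,b})$ is homogeneous with $\Z$-coefficients; density of rationals in the solution space and denominator clearing then give an integer witness with the prescribed supports. You instead avoid the $h_j$'s entirely: you work in the finite-dimensional support space $V$, use flatness of $\Z \hookrightarrow \R$ together with the identities $\mM_\Z \otimes_\Z \R = \mM$ and $V_\Z \otimes_\Z \R = V$ (the former requiring the integrality of the $\bg_j$ supplied by Lemma~\ref{lem:MZ}) to get $\mM \cap V = (\mM_\Z \cap V_\Z) \otimes_\Z \R$, and then argue by density in the rational subspace $\mM \cap V$ and openness of the strict-positivity condition. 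The underlying mechanism — a homogeneous $\Z$-defined linear system with a strictly positive real solution has a strictly positive rational, hence integer, solution — is the same; yours is the more conceptual, module-theoretic phrasing, while the paper's is the more hands-on one. Two minor remarks: you don't actually need the full ``rational polyhedral cone / relative interior'' language, since once $\mM \cap V$ is known to be $\Q$-defined and $\bff$ satisfies strict inequalities, a small rational perturbation of $\bff$ inside $\mM \cap V$ already does the job; and your parenthetical justification that a $\Q$-point of $\mM \cap V$ lies in $(\mM_\Z \cap V_\Z) \otimes_\Z \Q$ is really the same density-of-rationals argument applied once more, not an additional fact, so it could be folded into the first application rather than stated separately.
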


Denote $\A \coloneqq \R[\oX^{\pm}], \A^+ \coloneqq \Rp[\oX^{\pm}]^*$.
Given $f \in \A$ and $v \in \Rns$, the \emph{initial polynomial} of $f$ is defined as the sum of all monomials in $f$ having the maximal degree $\deg_v(\cdot)$:
\[
\init_v(f) \coloneqq \sum\nolimits_{\deg_v(\oX^b) = \deg_v(f)} c_b \oX^b, \quad \text{ where } f = \sum c_b \oX^b.
\]
For $\bff = (f_1, \ldots, f_K) \in \A^K$, we naturally denote $\init_v(\bff) \coloneqq (\init_v(f_1), \ldots, \init_v(f_K)) \in \A^K$.
The key result of this subsection is the following local-global principle, which simultaneously generalizes two deep results of Einsiedler, Mouat and Tuncel~\cite[Theorem~1.3]{einsiedler2003does} and of Dong~\cite[Proposition~3.4]{dong2023identity}.

\begin{restatable}{thrm}{thmlocglob}\label{thm:locglob}
Let $\mM$ be a $\A$-submodule of $\A^K$ and $I, J$ be two subsets of $\{1, \ldots, K\}$.
There exists $\bff \in \mM \cap \ApK$ satisfying 
\begin{equation}\label{eq:globv}
        \left(O_{v} \cup J\right) \cap M_{v}(I, \bff) \neq \emptyset, \quad \text{ for every } v \in \Rns,
\end{equation}
if and only if the two following conditions are satisfied:
\begin{enumerate}[noitemsep, label = \arabic*.]
    \item \label{item:locr} \emph{\textbf{(LocR):}} For every $r \in \Rpp^n$, there exists $\bff_{r} \in \mM$ such that $\bff_{r}(r) \in \Rpp^K$.
    \item \label{item:locinf} \emph{\textbf{(LocInf):}} For every $v \in \Rns$, there exists $\bff_{v} \in \mM$, such that 
    \begin{enumerate}[noitemsep]
        \item $\init_{v}\left(\bff_{v}\right) \in \ApK$.
        \item Denote $I' \coloneqq M_{v}(I, \bff_{v}), J' \coloneqq O_{v} \cup J$.
        We have 
        \begin{equation}\label{eq:locv}
        \left(O_{w} \cup J'\right) \cap M_{w}(I', \init_{v}(\bff_{v})) \neq \emptyset \quad \text{ for every $w \in \Rns$}.
        \end{equation}
    \end{enumerate}
\end{enumerate}
\end{restatable}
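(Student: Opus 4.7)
For necessity I would take $\bff_r = \bff_v = \bff$ throughout. Condition LocR is immediate because each $f_i \in \A^+$ satisfies $f_i(r) > 0$ for every $r \in \Rpp^n$. For LocInf(a), $\init_v$ preserves non-negativity of coefficients and sends nonzero polynomials to nonzero polynomials, so $\init_v(\bff) \in \ApK$. For LocInf(b), I would perturb the test vector to $v + \epsilon w$ with $\epsilon > 0$ small and apply the global property there: using $\deg_{v + \epsilon w}(f_i) = \deg_v(f_i) + \epsilon \, \deg_w(\init_v(f_i))$ for small enough $\epsilon$, a short calculation gives $M_{v + \epsilon w}(I, \bff) = M_w(M_v(I, \bff), \init_v(\bff)) = M_w(I', \init_v(\bff))$, and a case split on whether $a_i \perp v$ gives $O_{v + \epsilon w} = O_v \cup O_w$, so the global property at $v + \epsilon w$ is exactly condition (b).

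The backward direction is the substantive one and I would proceed by induction on the dimension $n$, with the base case $n = 0$ being immediate: $\Rns$ is empty, LocInf is vacuous, $\A^+ = \Rpp$, and LocR directly produces $\bff \in \mM \cap \Rpp^K$. For the inductive step the plan has two phases. First, I would use a compactness and convex-duality argument in the spirit of Einsiedler--Mouat--Tuncel~\cite{einsiedler2003does} to assemble the pointwise positivity data of LocR into a single element $\bff_R \in \mM$ that is strictly positive on all of $\Rpp^n$ simultaneously. Second, for each ``direction at infinity'' $v$, LocInf provides $\bff_v \in \mM$ whose $v$-initial part witnesses the required property on the face dual to $v$; since $\init_v(\bff_v)$ is supported on this face of dimension at most $n - 1$, after a change of variables it becomes a module over a polynomial ring in one fewer variable, so the induction hypothesis applies there to guarantee that $\init_v(\bff_v)$ can be chosen to globally satisfy the property on its face. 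The target $\bff$ is then constructed as a finite $\R$-combination $\mu \bff_R + \sum_v \lambda_v \bff_v$ with weights tuned so that the leading part of $\bff$ at each face coincides with $\init_v(\bff_v)$, while the strictly positive $\bff_R$ dominates the interior so that $\bff \in \ApK$.

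The main obstacle is that $\Rns$ is uncountable, so the combination above cannot literally range over all $v$. To overcome this I would invoke Gr\"{o}bner basis machinery over $\A$-modules, as flagged in the introduction: a Gr\"{o}bner basis of $\mM$ with respect to a generic term order induces a finite polyhedral fan on $\Rns$ (the Gr\"{o}bner fan) on whose open cells the operator $\init_v$ restricted to $\mM$ is constant. This reduces LocInf to a finite set of rational cell representatives and reduces the combination to finitely many terms. A secondary issue is that LocInf is posed for arbitrary, possibly irrational, $v \in \Rns$; a density and continuity argument analogous to Dong~\cite[Proposition~3.4]{dong2023identity} would match each irrational $v$ to a rational representative of its cell, allowing the whole problem to be discretised and the induction to close.
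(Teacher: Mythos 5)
The necessity direction of your proposal is correct and is essentially the paper's argument: taking $\bff_r = \bff_v = \bff$, LocR and LocInf(a) are immediate, and LocInf(b) follows from a continuity lemma showing $M_{v+\epsilon w}(I,\bff)=M_w(M_v(I,\bff),\init_v(\bff))$ and $O_{v+\epsilon w}=O_v\cup O_w$ for small $\epsilon>0$; this is exactly Lemma~\ref{lem:tweakinit} in the paper.

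For the sufficiency direction, however, there are three substantive gaps. First, the proposed combination $\mu\bff_R + \sum_v \lambda_v\bff_v$ with \emph{scalar} weights $\mu,\lambda_v\in\R$ cannot work: if $\bff_v$ and $\bff_{v'}$ have comparable $\deg_w$ for some third direction $w$, their $w$-initial parts simply add, and no choice of scalars will make the $w$-initial part of the sum match a prescribed $\init_w(\bff_w)$. The paper's gluing (Lemma~\ref{lem:finf}) instead uses \emph{monomial} weights $\bff=\sum_j \oX^{w_j}\bF_j$ with $w_j\approx r v_j$ for a large scale $r$; the multiplicative shift $\oX^{w_j}$ is what forces the $j$-th summand to strictly dominate in directions near $v_j$ (Inequality~\eqref{eq:maxmin}), so only nearby summands contribute to any given $\init_v(\bff)$. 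This is the crux of the construction and it is missing from your plan. Second, your plan never explains how to pass from an $\bff$ whose \emph{values} on $\Rpp^n$ and whose initial parts are positive to an $\bff$ whose \emph{coefficients} are positive, i.e.\ $\bff\in\ApK$. This gap is nontrivial and the paper closes it with Handelman's theorem (Theorem~\ref{lem:Handelman} and Corollary~\ref{cor:Handelman}), multiplying by a suitable $g\in\A^+$; omitting this leaves the final membership claim unproved. Third, you frame the argument as an induction on $n$, but the paper proves Theorem~\ref{thm:locglob} directly without such induction --- the induction on $n$ and the Gr\"obner-fan/cell decomposition you invoke appear only in the separate decidability argument (Theorem~\ref{thm:dec}), where LocInf is reduced to finitely many checks. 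Conflating the two theorems means your inductive step would still need a concrete mechanism for recombining the face data, which your sketch does not supply. Finally, the interior positivity is not handled by $\bff_R$ alone: the paper must further interpolate $\bff_\infty$ and $\bff_C$ via $\epsilon q^N\bff_\infty+\bff_C$ with the specific polynomial $q=\frac{1}{2nc}\sum_i(X_i+X_i^{-1})$, tuning $N$ so that $\bff_\infty$ dominates outside a compact box while $\bff_C$ dominates inside; a scalar $\mu$ cannot achieve this graded dominance.
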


The full proof of Theorem~\ref{thm:locglob} is highly non-trivial and is given in Section~\ref{sec:locglob}.
Compared to the cited results~\cite{dong2023identity, einsiedler2003does}, the new element here is the inclusion of Property~\eqref{eq:globv}.
This property is crucial due to our characterization of face-accessibility.
The main difficulty in this generalization is the complex interaction between the sets $M_v, O_v$ and $\bff$, which is absent from the two cited results.

\subsection{Decidability}\label{subsec:dec}
Theorem~\ref{thm:locglob} is the key to an algorithm that finds $\bff \in \mM \cap \ApK$ satisfying Property~\eqref{eq:gen}.
Indeed, we have:
\begin{restatable}{thrm}{thmdec}\label{thm:dec}
Fix $n \in \N$.
Suppose we are given as input a set of generators $\bg_1, \ldots, \bg_m \in \A^K$ with integer coefficients, as well as the vectors $a_1, \ldots, a_K \in \Z^n$ and two subsets $I, J$ of $\{1, \ldots, K\}$.
Denote by $\mM$ be the $\A$-submodule of $\A^K$ generated by $\bg_1, \ldots, \bg_m$.
It is decidable whether there exists $\bff \in \mM \cap \ApK$ satisfying
\begin{equation}\label{eq:deccond}
        \left(O_{v} \cup J\right) \cap M_{v}(I, \bff) \neq \emptyset, \quad \text{ for every } v \in \Rns.
\end{equation}
Here, if $n = 0$ then $\A$ is understood as $\R$, and Property~\eqref{eq:deccond} is considered trivially true.
\end{restatable}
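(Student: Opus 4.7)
The approach is to appeal to the local-global principle of Theorem~\ref{thm:locglob}, which reduces the existence of $\bff \in \mM \cap \ApK$ satisfying Property~\eqref{eq:deccond} to the separate task of deciding whether each of the two conditions \textbf{(LocR)} and \textbf{(LocInf)} holds. I would therefore give decision procedures for these two conditions independently.

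For \textbf{(LocR)}, the key observation is that, since $\mM$ is an $\A$-submodule, for every $r \in \Rpp^n$ the evaluation set $\mM(r) \coloneqq \{\bff(r) \mid \bff \in \mM\}$ is exactly the $\R$-linear subspace of $\R^K$ spanned by $\bg_1(r), \ldots, \bg_m(r)$. Hence the statement ``$\exists \bff_r \in \mM,\; \bff_r(r) \in \Rpp^K$'' is equivalent to $\mM(r) \cap \Rpp^K \neq \emptyset$, which may be written as the first-order formula
\[
\exists c_1, \ldots, c_m \in \R\; \bigwedge_{i=1}^K \sum_{j=1}^m c_j\, g_{j,i}(r) > 0.
\]
The universal statement ``this holds for all $r \in \Rpp^n$'' is then a first-order sentence in the language of ordered rings with the $g_{j,i}$ as coefficients, which is decidable by Tarski--Seidenberg quantifier elimination.

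For \textbf{(LocInf)}, I would proceed by induction on $n$. The base case $n = 0$ is vacuous, as $\Rns$ is empty. For the inductive step, compute a universal Gröbner basis for the $\A$-module $\mM$: this partitions $\Rns$ into finitely many rational, relatively open polyhedral cones $C_1, \ldots, C_N$ on which the initial module $\init_v(\mM)$ is constant, and on each $C_j$ one selects a rational representative $v_j$. After a unimodular change of coordinates that sends $v_j$ to a standard basis direction, every element of $\init_{v_j}(\mM)$ has, in each coordinate, all of its monomials on a common hyperplane of $\Z^n$; the free directions along that hyperplane give an identification of the inner Property~\eqref{eq:locv} with an instance of Theorem~\ref{thm:dec} in $n - 1$ variables, so decidability follows from the inductive hypothesis. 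Running through the finitely many cones $C_j$ then decides (LocInf).

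The main obstacle is the circular dependence inside (LocInf): the augmented index set $I' = M_{v_j}(I, \bff_{v_j})$ that appears in the inner condition is itself a function of the element $\bff_{v_j}$ we seek. I would resolve this by branching over all candidate subsets $I' \subseteq I$ and, for each choice, imposing the conditions ``$\deg_{v_j}(f_i)$ equals a common maximum value for $i \in I'$ and is strictly less for $i \in I \setminus I'$'' on the entries of $\bff_{v_j}$. Each branch cuts out a refined subspace of $\init_{v_j}(\mM)$ to which the reduction to $n - 1$ variables still applies. The Gröbner basis theory over modules and the number-theoretic inputs flagged in the introduction are then needed to ensure that the refined modules encountered along the recursion remain finitely generated by elements with integer coefficients, so that the recursive call of the algorithm is well-defined.
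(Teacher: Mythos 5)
The treatment of \textbf{(LocR)} via Tarski--Seidenberg is correct and matches Proposition~\ref{prop:decr}. The treatment of \textbf{(LocInf)}, however, has two gaps, the second of which is fatal.

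First, the proposal works only with the plain initial $\init_v(\bff)$ and never introduces the \emph{shifted} initial $\init_{v,\balpha}$ with $\balpha\in\left(\sum_k\Z v_k\right)^K$, which is the central device of Section~\ref{sec:dec} (Proposition~\ref{prop:inftoshift} and Lemma~\ref{lem:dimred}). The shift is not a cosmetic refinement: it is what turns ``$\init_v(\bff_v)\in\A^+{}^K$'' into a statement about the initial \emph{module} $\init_{v,\balpha}(\mM)$, and the constraint $\balpha\in\left(\sum_k\Z v_k\right)^K$ is what encodes the arithmetic of $v$ into the problem. Without it you cannot correctly replace ``$\exists\bff_v\in\mM$'' by ``$\exists\bff^d$ in a smaller-dimensional module'', and the branching over candidate $I'$ that you propose does not substitute for this, since the set of admissible shifts depends on the $\Q$-linear dependence structure of $v$, not merely on which cell $v$ lies in.

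Second, and more seriously, the proposal assumes that \textbf{(LocInf)} can be decided by checking one rational representative per cell of a finite Gr\"obner fan. This is false: within a single cell the truth of the local condition is \emph{not} constant across different $\Q$-dependence structures of $v$. The fan cell fixes $\init_{v,\balpha}(\mM)$, but whether a suitable $\balpha\in\left(\sum_k\Z v_k\right)^K$ exists is a Diophantine condition that depends on $d$ (the $\Q$-rank defect of $v$) and on the choice of $A\in\GL(n,\Z)$ putting $v$ in canonical form; there are infinitely many such pairs $(A,d)$, and a rational representative always gives $d=n-1$, so your check never sees the cases $d<n-1$. The paper does not, in fact, decide (LocInf): it shows only that (LocInf) is equivalent to \textbf{(LocInfCell)} (Propositions~\ref{prop:shifttod} and \ref{prop:dtocell}), which is a universally quantified statement over all $A\in\GL(n,\Z)$ and is thus only \emph{co}-semi-decidable (Lemmas~\ref{lem:compop} and \ref{lem:checkcell}). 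Decidability of Theorem~\ref{thm:dec} is then obtained by running, in parallel, a semi-decision procedure that enumerates $\mM_{\Z}$ looking for a witness $\bff\in\ApK$ and a co-semi-decision procedure that enumerates $(A,d)$ looking for a failure of (LocInfCell); Theorem~\ref{thm:locglob} guarantees one of the two halts. Your proposal needs this parallel dovetailing argument, and without it the inductive step does not give a terminating algorithm.
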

The proof of Theorem~\ref{thm:dec} will be given Section~\ref{sec:dec}.
The main idea is as follows.
We use an induction on the number of variables $n$.
Theorem~\ref{thm:locglob} allows us to reduce the decision problem into verifying two conditions \hyperref[item:locr]{(LocR)} and \hyperref[item:locinf]{(LocInf)}.
Condition~\hyperref[item:locr]{(LocR)} can be decided using the first order theory of reals.
The key part is to show that it suffices to decide Condition~\hyperref[item:locinf]{(LocInf)} for \emph{countably} many $v$.
For each $v$ we can decide \hyperref[item:locinf]{(LocInf)} by a clever application of the induction hypothesis.
We then run two parallel procedures, one enumerates all elements in $\mM_{\Z}$ and checks if any one of them is in $\ApK$ and satisfies Property~\eqref{eq:deccond}, the other enumerates countably many $v \in \Rns$ and checks if Condition~\hyperref[item:locinf]{(LocInf)} is false.
Theorem~\ref{thm:locglob} guarantees that one of the two procedures must terminate. 

Putting together Lemma~\ref{lem:subsume}, Proposition~\ref{prop:grouptoeuler}, Proposition~\ref{prop:eulertoeq}, Lemma~\ref{lem:MZ}, Lemma~\ref{lem:M} and Theorem~\ref{thm:dec}, we obtain our main technical result:

\thmtec*
\begin{proof} 
    Note that tuples of polynomials in $\N[\oX^{\pm}]^K$ have one-to-one correspondence with $\mG$-graphs.
    Therefore, Proposition~\ref{prop:grouptoeuler} and \ref{prop:eulertoeq} show it suffices to decide whether the module $\mM_{\Z}$ (defined in \eqref{eq:MZ}) contains an element $\bff \in \left(\N[\oX^{\pm}]^*\right)^K$ satisfying Property~\eqref{eq:acccond}.
    We use Lemma~\ref{lem:MZ} to compute a basis of $\mM_{\Z}$. Lemma~\ref{lem:M} then shows it suffices to decide whether there exists $\bff \in \mM \cap \left(\Rp[\oX^{\pm}]^*\right)^K$ satisfying Property~\eqref{eq:acccond}.
    But Property~\eqref{eq:acccond} is simply Property~\eqref{eq:globv} with $I = \{1, \ldots, K\}, J = \emptyset$.
    So Theorem~\ref{thm:dec} shows this is decidable.
\end{proof}

Our main result follows from this technical result.

\thmmain*
\begin{proof}
    Let $G$ be a finitely generated metabelian group.
    By Lemma~\ref{lem:subsume} it suffices to decide the Group Problem in $G$.
    Given a finite subset $\mG$ in $G$, we use Proposition~\ref{prop:metatoZ} to construct a a finitely presented $\Z[X_1^{\pm}, \ldots, X_n^{\pm}]$-module $\mY$ for some $n \in \N$, as well as a subset $\widetilde{\mG}$ of the group $\mY \rtimes \Z^n$, such that $\sgmG$ is a group if and only if $\langle \widetilde{\mG} \rangle$ is a group.
    Furthermore, the constructed set $\widetilde{\mG}$ satisfies $\pi(\langle \widetilde{\mG} \rangle_{grp}) = \Z^n$ under the canonical projection $\pi \colon \mY \rtimes \Z^n \rightarrow \Z^n$.
    Theorem~\ref{thm:tec} shows we can decide whether $\langle \widetilde{\mG} \rangle$ is a group.
    Therefore, it is decidable whether $\sgmG$ is a group.
\end{proof}

\section{From face-accessibility to connectivity}\label{sec:graph}
In this section we give the proof of Theorem~\ref{thm:acctocon}.
Let $\Gamma$ be a $\mG$-graph that is symmetric, face-accessible and $\Z^n$-generating.
Note that in a symmetric graph, there exists a path from vertex $v$ to $w$ if and only if there exists a path from $w$ to $v$.
Therefore it will suffice to proof connectivity for the undirected version of the graph $\hG$.

Let $x$ be an arbitrary point in $\R^n$.
Given $c \in \R^n$ and $r \in \Rpp$, denote by $scale(x, c, r)$ the \emph{scaling} of $x$ with centre $c$ by the ratio $r$.
That is,
\[
scale(x, c, r) \coloneqq c + r \cdot (x - c).
\]
Let $S$ be an arbitrary set in $\R^n$, define 
\[
scale(S, c, r) \coloneqq \{scale(x, c, r) \mid x \in S\}.
\]
When the centre is the origin $0$, we simplify the notation by defining
\[
NS \coloneqq scale(S, 0, N)
\]

Let $C$ be the convex hull of $V(\Gamma)$.
Since $\Gamma$ is $\Z^n$-generating, the polytope $C$ is of dimension $n$.
For any $N \in \N$, let $NC \coloneqq scale(C, 0, N)$.
Define 
\[
S_N \coloneqq \{z \in \Z^n \mid z + C \subset NC\}.
\]
That is, $S_N$ is the set of translation vectors $z$ that make $C + z$ stay in $NC$.
Consider the graph 
\[
\Gamma_N \coloneqq \sum_{z \in S_N} (\Gamma + z).
\]
We have $V(\Gamma_N) \subseteq NC$.
Intuitively, $\Gamma_N$ is the union of translations of the graph $\Gamma$ whose convex hull is contained in $NC$.
See Figure~\ref{fig:Gamma2} and \ref{fig:Gamma22} for an illustration.
Our goal is to prove that for some large $N$, the graph $\Gamma_N$ is connected.

\begin{figure}[h!]
    \centering
    \begin{minipage}[t]{.47\textwidth}
        \centering
        \includegraphics[width=1\textwidth,height=0.6\textheight,keepaspectratio, trim={3cm 0cm 3cm 0cm},clip]{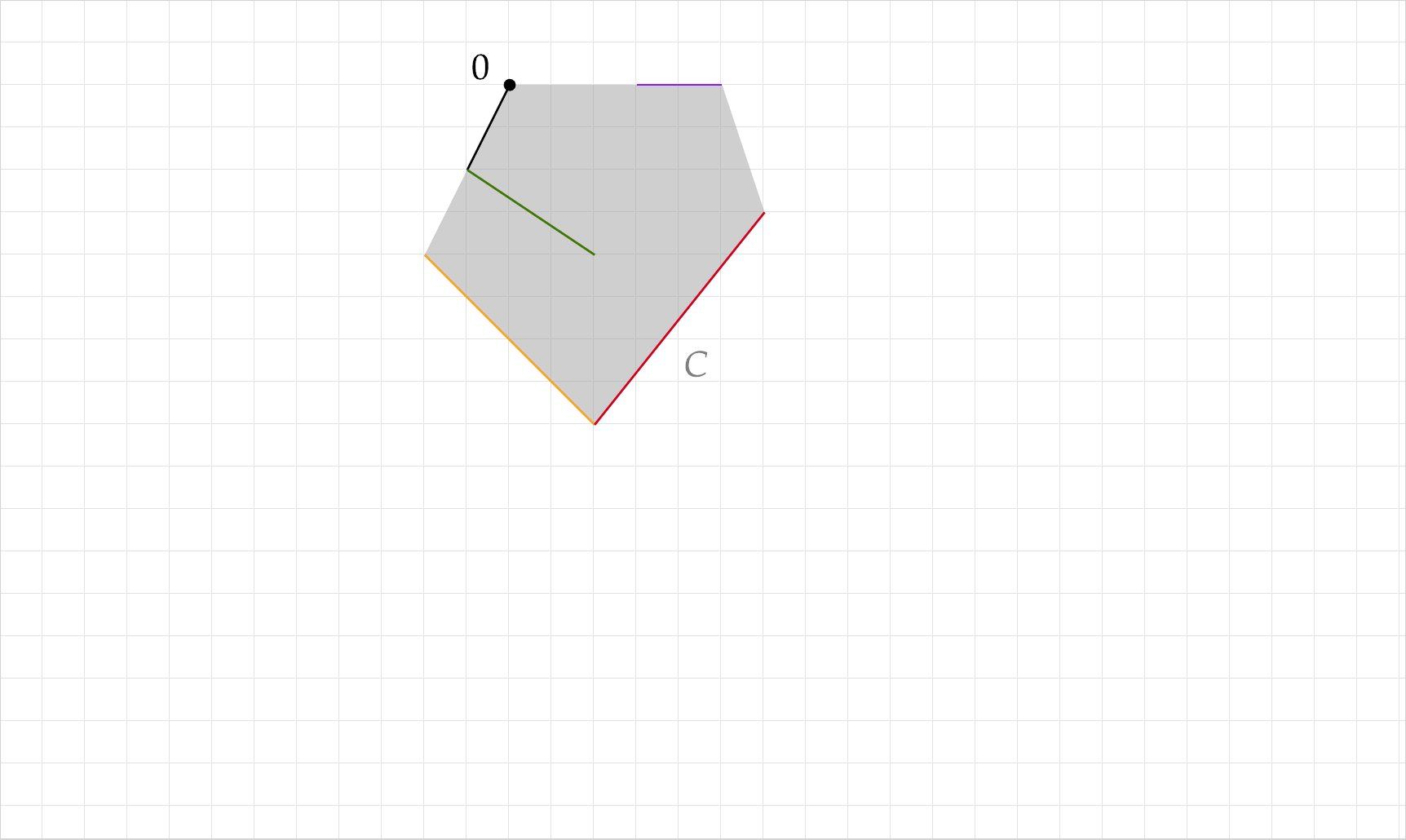}
        \caption{A graph $\Gamma$ with convex hull $C$ covered in grey, and each edge denoted with a different colour.}
        \label{fig:Gamma2}
    \end{minipage}
    \hfill
    \begin{minipage}[t]{0.47\textwidth}
        \centering
        \includegraphics[width=1\textwidth,height=0.6\textheight,keepaspectratio, trim={3cm 0cm 3cm 0cm},clip]{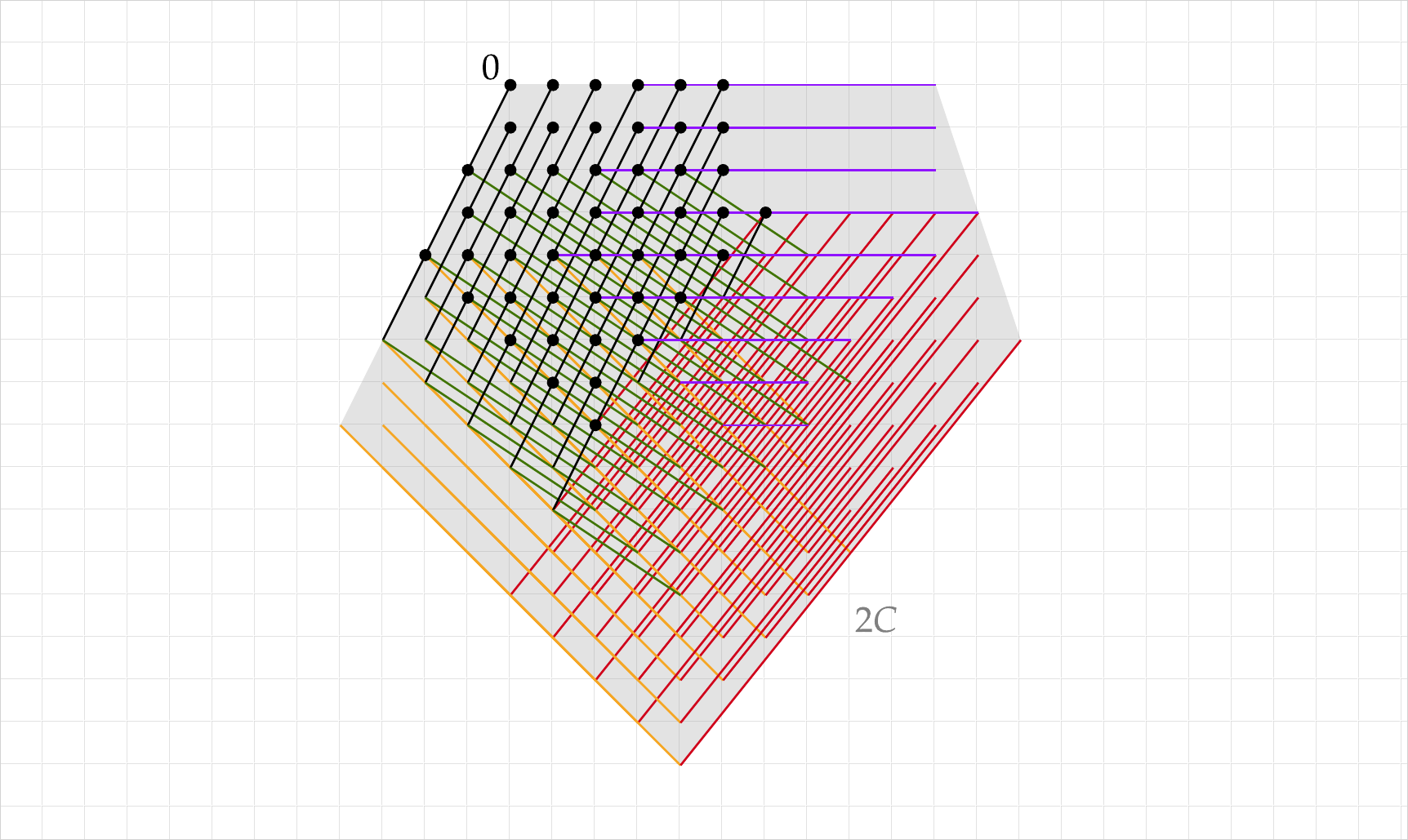}
        \caption{The graph $\Gamma_{N}$ with $N = 2$, consisting of translations of $\Gamma$. The polytope $2C$ is covered in grey.}
        \label{fig:Gamma22}
    \end{minipage}
\end{figure}

First we define the (infinite) graph $\gq$ as follows.
The vertices of $\gq$ are $V(\gq) \coloneqq C \cap \Q^n$. 
The edges of $\gq$ are
\[
E(\gq) \coloneqq \{scale(e, c, r) \mid e \in E(\Gamma), c \in C, r \in (0, 1) \cap \Q\}.
\]
That is, $\gq$ is the union of all scaled versions of $\Gamma$ that completely falls inside $C$.
Intuitively, $\gq$ can be seen as the ``limit'' of $\Gamma_N$ when $N$ tends towards infinity. 
See Figure~\ref{fig:GammaQ} for an illustration.

\begin{figure}[h!]
    \centering
    \begin{minipage}[t]{.47\textwidth}
        \centering
        \includegraphics[width=1\textwidth,height=1.0\textheight,keepaspectratio, trim={2cm 0cm 2cm 0cm},clip]{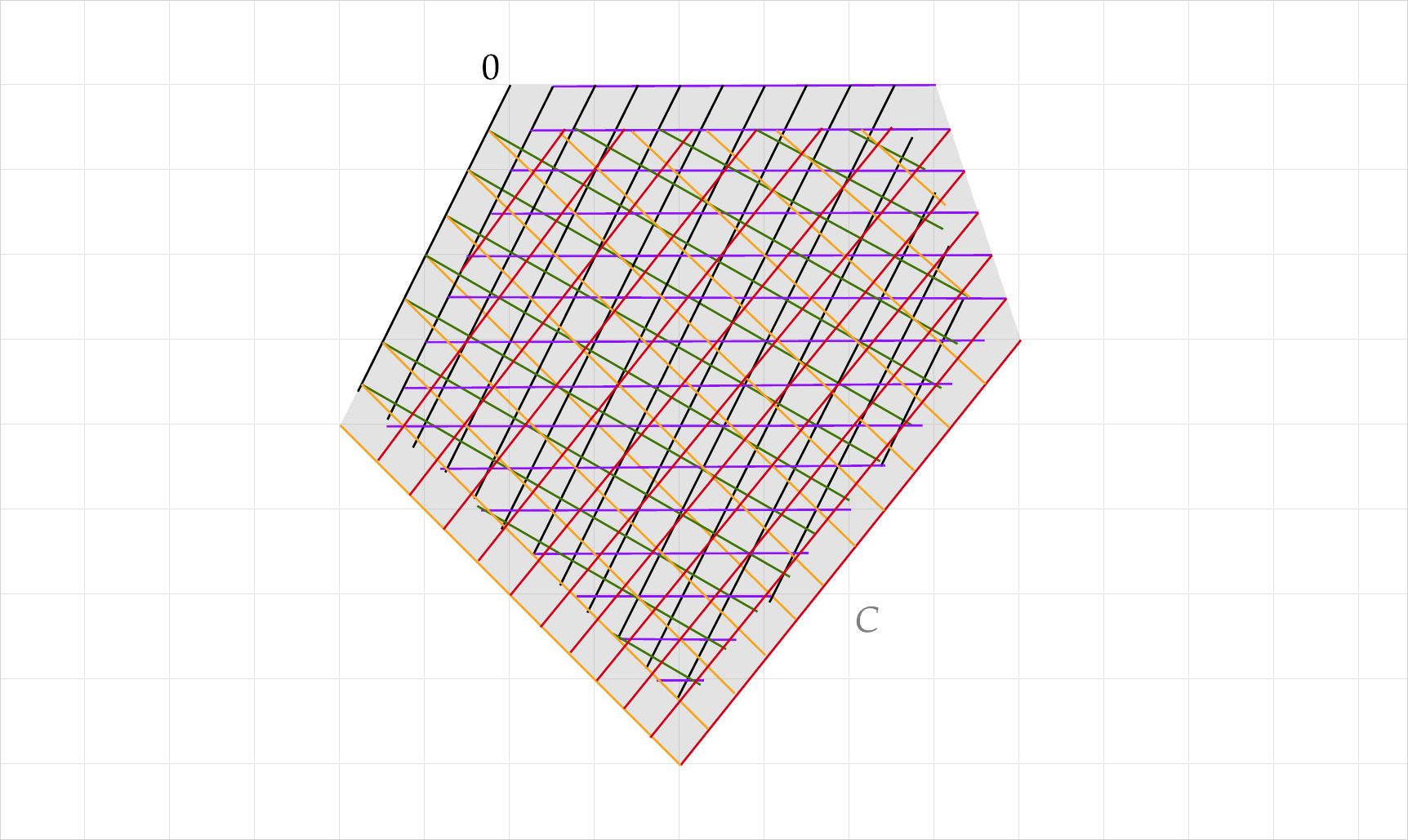}
        \caption{Illustration for $\Gamma_{\Q}$, where $\Gamma$ is as in Figure~\ref{fig:Gamma2}.}
        \label{fig:GammaQ}
        \end{minipage}
    \hfill
    \begin{minipage}[t]{0.47\textwidth}
        \centering
        \includegraphics[width=1\textwidth,height=0.6\textheight,keepaspectratio, trim={0cm 0cm 0cm 0cm},clip]{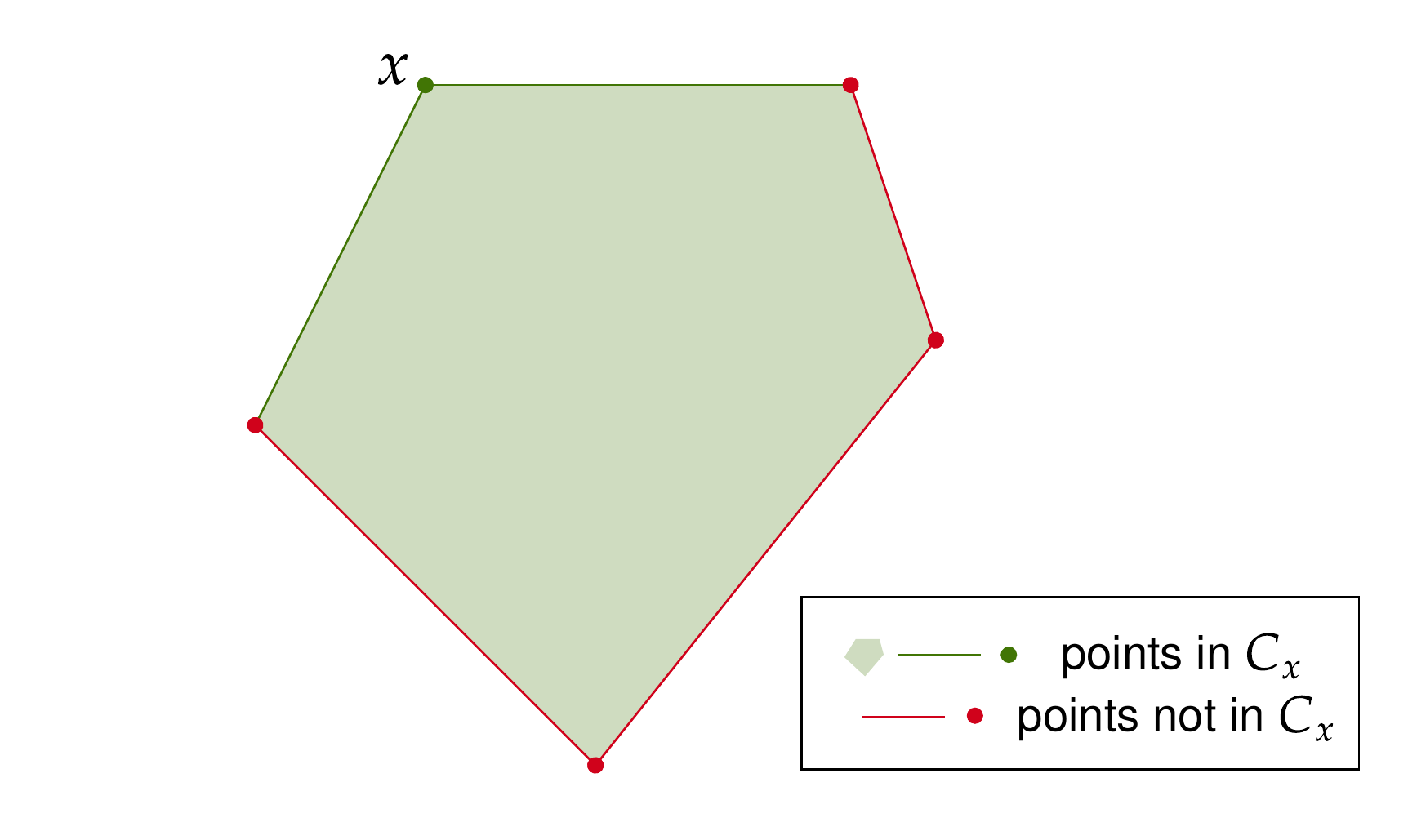}
        \caption{Illustration of the set $C_x$.}
        \label{fig:Cx}
    \end{minipage}
\end{figure}

Define the \emph{face lattice} $\lat(C)$ to be the set of all faces of $C$.
For $F \in \lat(C)$, its \emph{relative interior} $\inte(F)$ is the set of points in $F$ that are not contained in any sub-face of $F$:
\[
\inte(F) \coloneqq \{x \in F \mid \text{ for all faces } F' \subsetneq F, x \not\in F'\}.
\]
Then the relative interiors of faces constitute a partition of $C$:
\[
C = \bigcup_{F \in \lat(C)} \inte(F).
\]

For any point $x \in C$, define $F_x$ to be the face of $C$ such that $x \in \inte(F_x)$.
This is the smallest face containing $x$.
For any face $F$of $C$, we have $x \in F$ if and only if $F_x \subseteq F$.
Define 
\[
C_x \coloneqq \bigcup_{F \in \lat(C), F \supseteq F_x} \inte(F).
\]
That is, the set $C_x$ is the union of the interior of all faces containing $F_x$.
See Figure~\ref{fig:Cx} for an illustration.
This is also the union of the interior of all faces containing $x$.
If $y \in C_x$, then $F_y \supseteq F_x$, so $C_y = \bigcup_{F \in \lat(C), F \subseteq F_y} \inte(F) \subseteq \bigcup_{F \in \lat(C), F \subseteq F_x} \inte(F) = C_x$.

\begin{lem}\label{lem:cx}
    For any $c \in C_x$ and $r \in (0,1)$, we have $scale(C, c, r) \subset C_x$.
\end{lem}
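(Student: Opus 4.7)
The plan is to reduce the claim to the standard fact from convex geometry that strict convex combinations of two points of a polytope land in the relative interior of the smallest face containing both endpoints. Since $c \in C_x$ means $c \in \inte(F_c)$ for some face $F_c \supseteq F_x$ of $C$, and since every point of $scale(C, c, r)$ has the form $p = (1-r)c + r y$ for some $y \in C$, it will suffice to exhibit a face $G$ of $C$ with $F_c \subseteq G$ and $p \in \inte(G)$; then automatically $G \supseteq F_x$, so $p \in C_x$.

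To produce such a $G$, I would take the smallest face of $C$ containing both $c$ and $y$; this is well-defined because the intersection of any family of faces of $C$ is again a face. By construction $F_c \subseteq G$ and $y \in G$, so $p \in G$ by convexity. The core step is then to prove $p \in \inte(G)$. Suppose instead that $p$ lies in some strict face $F' \subsetneq G$. Writing $F'$ as the intersection of $G$ with a supporting hyperplane $\{\phi = 0\}$ of $G$, where $\phi \leq 0$ on $G$, the equality $(1-r)\phi(c) + r\phi(y) = \phi(p) = 0$, combined with $1-r, r > 0$ and $\phi(c), \phi(y) \leq 0$, forces $\phi(c) = \phi(y) = 0$, i.e.\ both $c$ and $y$ lie in $F'$. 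Since $F'$ is also a face of $C$ (faces of faces are faces), this contradicts the minimality of $G$.

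Hence $p \in \inte(G) \subseteq C_x$, and as $y \in C$ was arbitrary we conclude $scale(C, c, r) \subseteq C_x$. I do not anticipate a real obstacle: the only non-trivial ingredient is the supporting-hyperplane argument in the previous paragraph, which is a textbook fact about the face lattice of a convex polytope; the content specific to the present setting is the trivial observation that enlarging $F_c$ to the bigger face $G$ preserves the inclusion of $F_x$.
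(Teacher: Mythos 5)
Your proposal is correct and takes essentially the same route as the paper: both arguments fix an arbitrary point $y \in C$, pass to the smallest face $G$ of $C$ containing both $c$ and $y$ (which contains $F_c \supseteq F_x$ because $c \in \inte(F_c)$), and observe that the strict convex combination $p = (1-r)c + ry$ lies in $\inte(G) \subseteq C_x$. The only difference is cosmetic: you spell out the supporting-hyperplane argument for ``strict convex combinations of endpoints in a polytope land in the relative interior of the smallest face containing both'', whereas the paper invokes it silently via the inclusion $\inte(seg(c,v)) \subset \inte(F)$; conversely, your claim that $F_c \subseteq G$ holds ``by construction'' is itself a small elision of the same standard fact (any face containing a relatively interior point of $F_c$ contains all of $F_c$).
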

\begin{proof}
    Let $v \in C$, we show that $scale(v, c, r) \in C_x$.
    Since $c \in C_x = \bigcup_{F \in \lat(C), F \supseteq F_x} \inte(F)$, we have $\inte(F_c) \subset C_x$.
    Therefore $F_c \supseteq F_x$.

    Denote by $seg(c, v)$ the closed segment that connects $c$ and $v$, and by $\inte(seg(c, v)) \coloneqq seg(c, v) \setminus \{c, v\}$.
    Let $F$ be the smallest face containing the $seg(c, v)$, then $F \supseteq F_c \supseteq F_x$.
    Hence, $scale(v, c, r) \in \inte(seg(c, v)) \subset \inte(F) \subset C_x$.
\end{proof}

Since $C = \bigcup_{x \in C} C_x$, we have $E(\gq) = \bigcup_{x \in C} E_x(\gq)$, where
\[
E_x(\gq) \coloneqq \{scale(e, c, r) \mid e \in E(\Gamma), c \in C_x, r \in (0, 1) \cap \Q\}.
\]
Every edge in $E_x(\gq)$ is contained in $C_x$ by Lemma~\ref{lem:cx}.

\begin{lem}\label{lem:congq}
    Every vertex $x$ of $\gq$ is connected to $\inte(C)$ by a finite path $P_x$ consisting of edges in $E_x(\gq)$.
\end{lem}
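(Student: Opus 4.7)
The plan is to induct on the codimension $\dim(C) - \dim(F_x)$ of the smallest face containing $x$. In the base case $F_x = C$, the point $x$ already lies in $\inte(C)$, so the trivial one-vertex path suffices. For the inductive step I must produce a single edge in $E_x(\gq)$ emanating from $x$ and landing at a vertex $y$ whose smallest containing face strictly contains $F_x$. Applying the induction hypothesis to $y$, together with the inclusion $E_y(\gq) \subseteq E_x(\gq)$ (immediate from $C_y \subseteq C_x$, as noted after Lemma~\ref{lem:cx}), then yields the desired path by concatenation.

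To construct this edge I exploit face-accessibility. Since $F_x$ is a strict face of $C$, there exists $e \in E(\Gamma)$ with $s(e) \in F_x \cap V(\Gamma)$ and $d(e) \in (C \setminus F_x) \cap V(\Gamma)$. I rescale $e$ so that its image starts at $x$: solving $(1-r)c + r \cdot s(e) = x$ gives $c = x + \tfrac{r}{1-r}\bigl(x - s(e)\bigr)$. Because $s(e)$ and $x$ both lie in $F_x$, the point $c$ belongs to the affine hull of $F_x$; because $x$ lies in the relative interior of $F_x$, for every sufficiently small rational $r \in (0,1) \cap \Q$ the point $c$ remains in $\inte(F_x) \subseteq C_x$. (If $s(e) = x$ the formula degenerates to $c = x$, and the same conclusion holds trivially.) Hence $scale(e,c,r) \in E_x(\gq)$, and its endpoint $y = (1-r)c + r \cdot d(e)$ is a rational point in $C$, i.e.\ a vertex of $\gq$.

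The main obstacle I anticipate is verifying that $F_y$ strictly contains $F_x$. My plan is to invoke the standard polytopal fact that any face of $C$ whose interior meets the relative interior of a segment in $C$ must contain that entire segment. Applied to the open segment $[c, d(e)]$ and the face $F_y$ that contains its interior point $y$, this forces $F_y$ to coincide with the smallest face $F'$ containing both endpoints. Then $c \in \inte(F_x)$ gives $F_x \subseteq F'$, while $d(e) \notin F_x$ makes this inclusion strict, so $F_x \subsetneq F_y$ and consequently $\dim(F_y) > \dim(F_x)$. With this key geometric step in hand, the induction terminates in at most $\dim(C)$ rounds, and concatenating $scale(e,c,r)$ with the inductively obtained path from $y$ produces the required finite path $P_x$.
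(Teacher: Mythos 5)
Your proof is correct and follows essentially the same strategy as the paper's: use face-accessibility to pick an edge $e$ from $F_x$ into $C\setminus F_x$, rescale it by a small rational ratio about a center $c$ on the line through $x$ and $s(e)$ (chosen so the scaled edge starts at $x$ and $c\in\inte(F_x)\subseteq C_x$), land at a point whose smallest face strictly contains $F_x$, and iterate. Your explicit induction on codimension and the segment-interior argument for $F_y\supsetneq F_x$ are just cleaner packagings of what the paper does via Lemma~\ref{lem:cx} and the phrase ``repeating this process \dots increasingly higher dimensional faces.''
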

\begin{proof}
    See Figure~\ref{fig:Px} for an illustration of the proof.
    We first show that $x$ is connected by an edge $e_x \in E_x(\gq)$ to some $x' \in C_x$ where $F_{x'} \supsetneq F_x$.

    Since $\Gamma$ is face-accessible, there exists an edge $e \in E(\Gamma)$ connecting $w \in F_x$ and $w' \in C \setminus F_x$.
    Since $x \in \inte(F_x)$ and $w \in F_x$, there exists $\varepsilon \in \Qpp$ such that $c \coloneqq scale(w, x, 1 + \varepsilon) \in \inte(F_x) \subseteq C_x$.
    Then $x = scale(w, c, \frac{\varepsilon}{1+\varepsilon})$, and $x' \coloneqq scale(w', c, \frac{\varepsilon}{1+\varepsilon}) \in C_x$ by Lemma~\ref{lem:cx}.
    We also have $x' \not\in F_x$ since $w' \not\in F_x$, so $F_{x'} \supsetneq F_x$.
    Therefore, the edge $e_x \coloneqq scale(e, c, \frac{\varepsilon}{1+\varepsilon})$ is in $E_x(\gq)$ and connects $x$ and $x'$.

    Note that $x' \in C_x$, so $C_{x'} \subseteq C_x$ and $E_{x'}(\gq) \subseteq E_x(\gq)$.
    Repeating this process for $x'$, we can find a sequence of edges $e_x, e_{x'}, \ldots,$ respectively in $E_x(\gq) \subseteq E_{x'}(\gq) \subseteq \cdots$ that gradually connects $x$ to the interiors of increasingly higher dimensional faces.
    Eventually $x$ is connected to $\inte(C)$ by a path $P_x$.
\end{proof}

\begin{figure}[ht]
    \centering
    \includegraphics[width=0.7\textwidth,height=1.0\textheight,keepaspectratio, trim={3cm 0.5cm 1cm 0.5cm},clip]{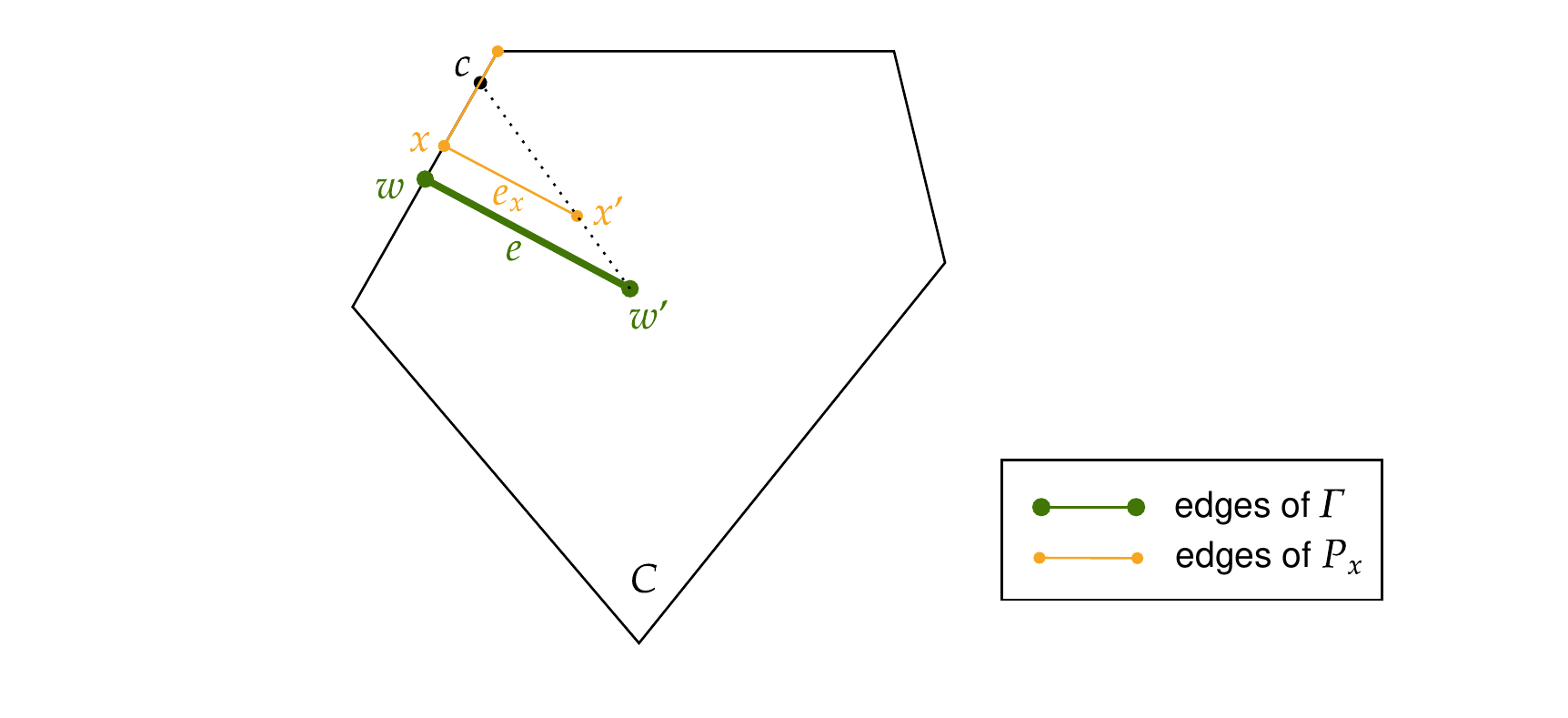}
    \caption{Illustration for Lemma~\ref{lem:congq}.}
    \label{fig:Px}
\end{figure}

Let $x$ be an arbitrary point in $C$.
For each edge $e'$ in $P_x$ (the path defined in Lemma~\ref{lem:congq}), write $e' = scale(e, c, r)$ where $e \in E(\Gamma), c \in C_x, r \in (0,1) \cap \Q$, define the polytope 
\begin{equation}\label{eq:Ce}
    C(e') \coloneqq scale(C, c, r).
\end{equation}
Then $e' \subset C(e') \subset C_x$ by Lemma~\ref{lem:cx}.
Therefore, defining the finite union of polytopes
\[
U_x \coloneqq \bigcup_{e' \in P_x} C(e'),
\]
we have $P_x \subset U_x \subset C_x$ and $U_x$ is compact.
See Figure~\ref{fig:Ux} for an illustration.

\begin{figure}[ht]
        \centering
        \includegraphics[width=0.5\textwidth,height=0.6\textheight,keepaspectratio, trim={3.5cm 0cm 3.5cm 0cm},clip]{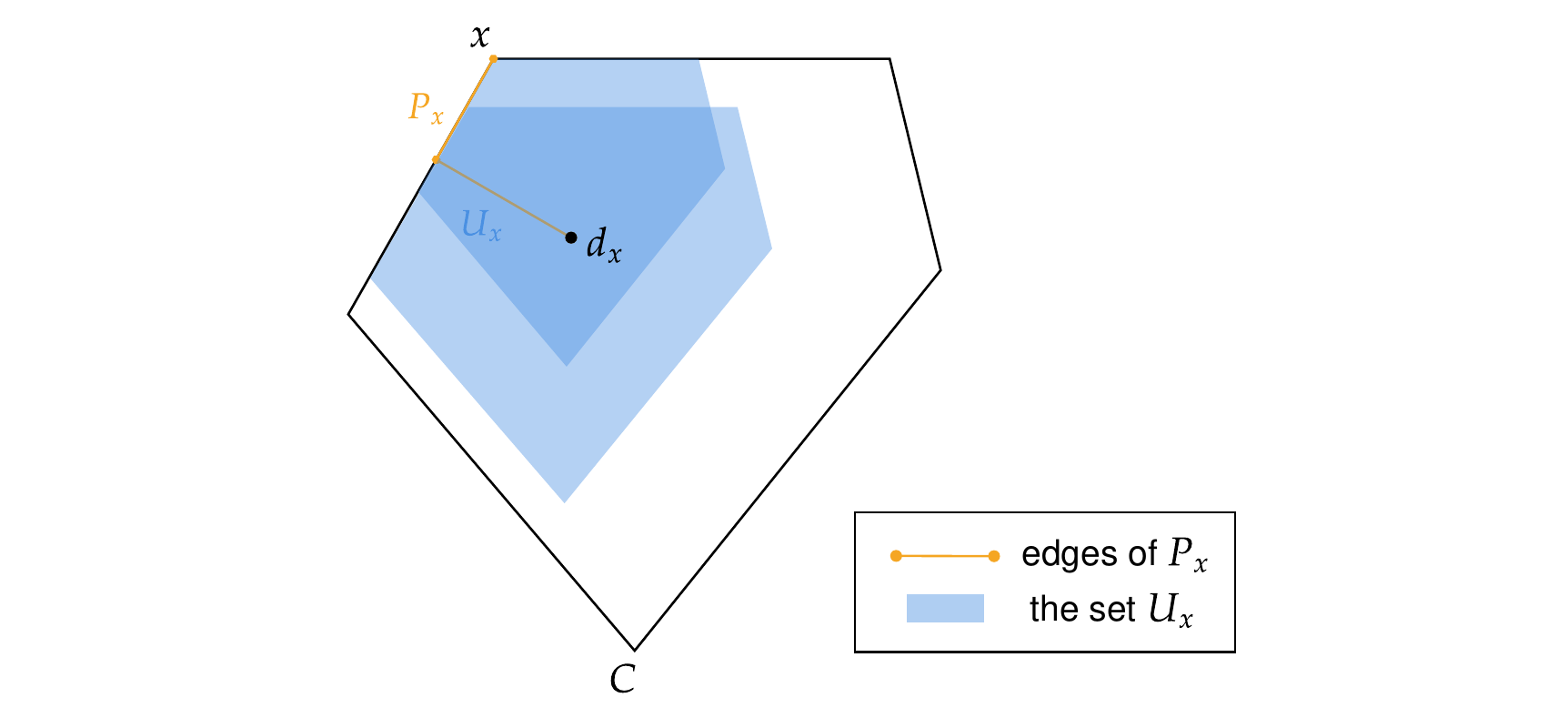}
        \caption{Illustration of $U_x$.}
        \label{fig:Ux}
\end{figure}

\begin{figure}[ht]
    \centering
    \begin{minipage}[t]{.47\textwidth}
        \centering
        \includegraphics[width=1\textwidth,height=0.6\textheight,keepaspectratio, trim={3.5cm 0cm 3.5cm 0cm},clip]{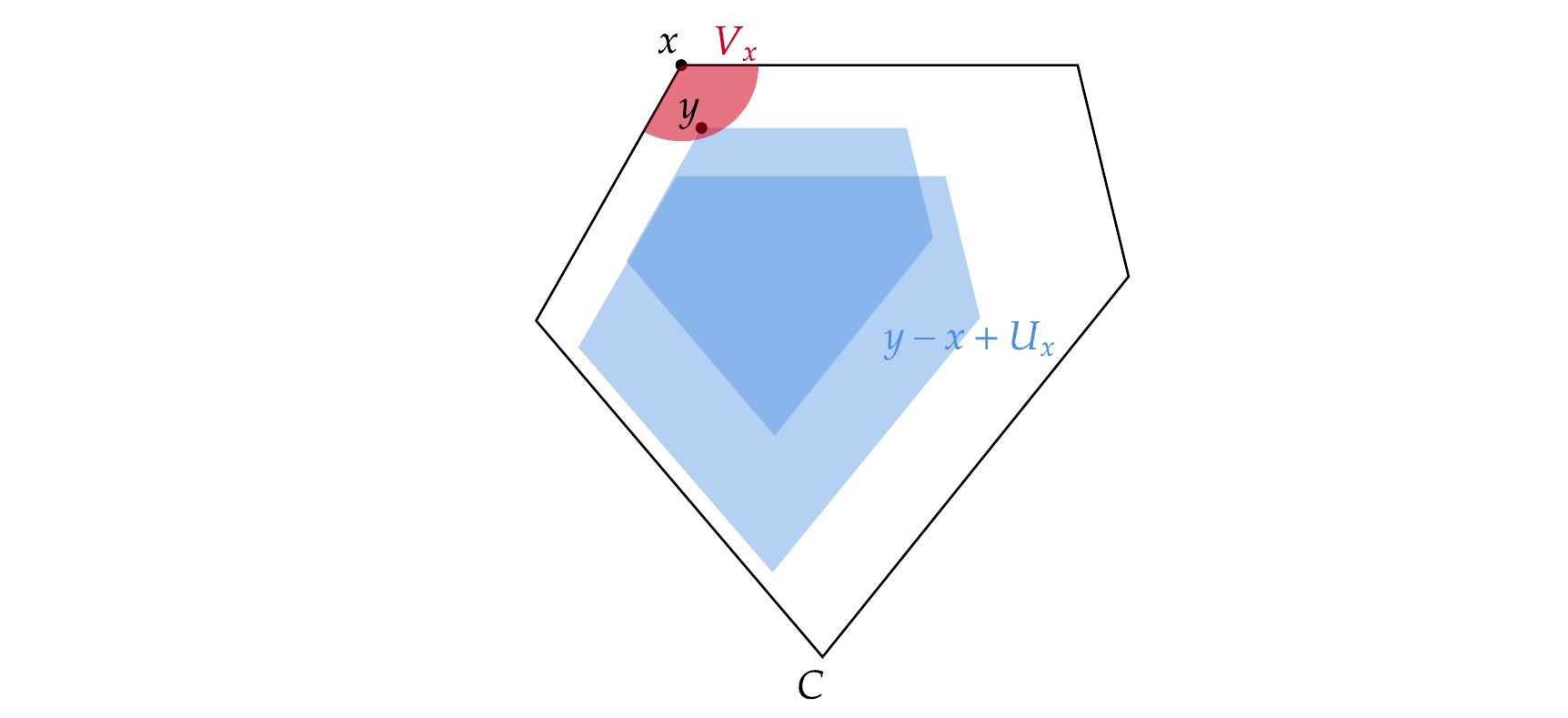}
        \caption{Illustration of $V_x$.}
        \label{fig:Vx}
    \end{minipage}
    \hfill
    \begin{minipage}[t]{0.47\textwidth}
        \centering
        \includegraphics[width=1\textwidth,height=0.6\textheight,keepaspectratio, trim={3.5cm 0cm 3.5cm 0cm},clip]{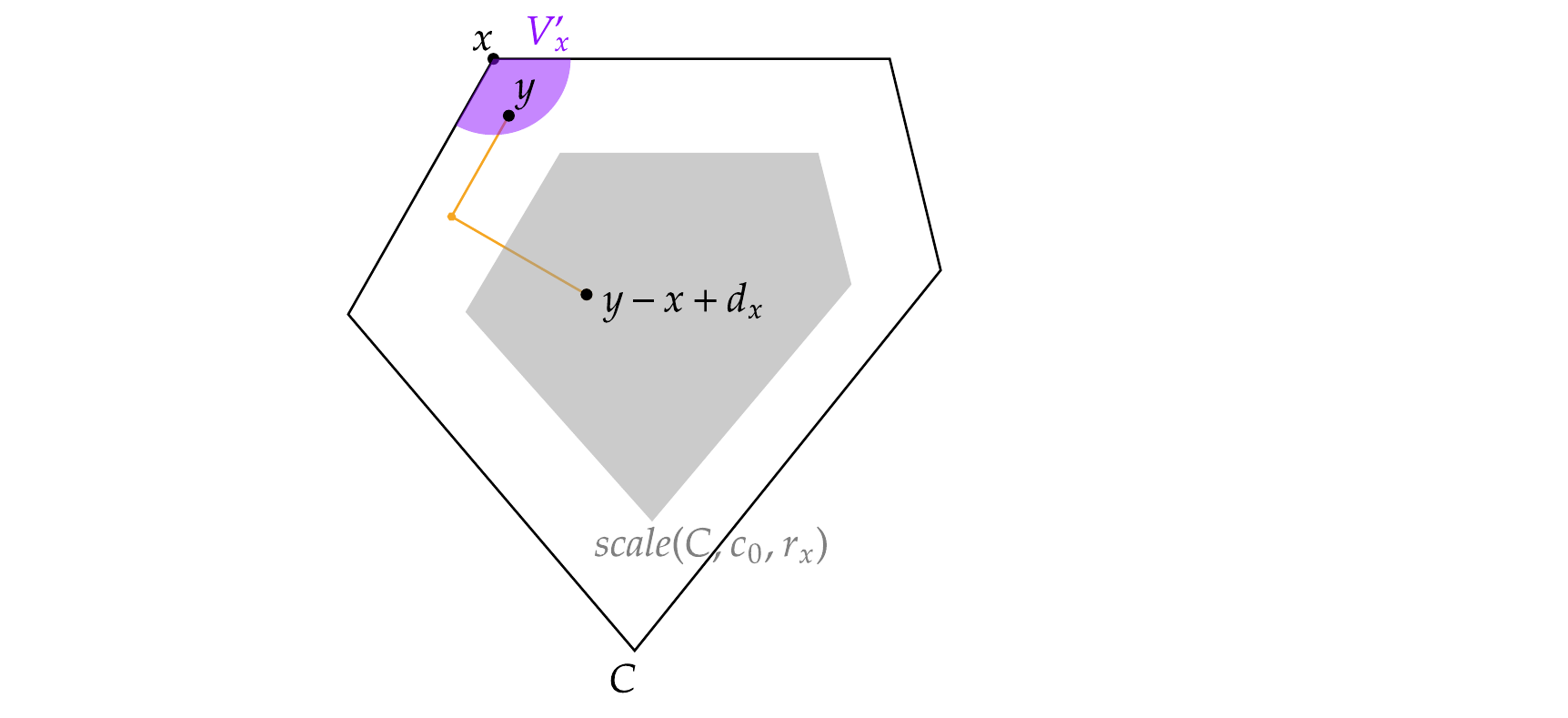}
        \caption{Illustration of $V'_x$.}
        \label{fig:Vxp}
    \end{minipage}
\end{figure}

Consider the topology of $C$ inherited from the Euclidean topology of $\R^n$ (that is, the open subsets of $C$ are of the form $C \cap U$ where $U$ is an open subset of $\R^n$).
Then $C$ is compact under this topology.

\begin{fct}
    For each $x \in C$, the set $C_x$ is an open subset of $C$.
\end{fct}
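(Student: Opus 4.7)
The plan is to prove the fact by showing that the complement $C \setminus C_x$ is a closed subset of $C$, using the combinatorial structure of the face lattice. Recall from the partition $C = \bigsqcup_{F \in \lat(C)} \inte(F)$ that each point $y \in C$ lies in the relative interior of a unique face $F_y$, and $y \in C_x$ iff $F_y \supseteq F_x$. Hence
\[
C \setminus C_x \;=\; \bigcup_{F \in \lat(C),\, F \not\supseteq F_x} \inte(F).
\]

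First I would upgrade this union of relative interiors to a union of whole faces. The key observation is that if $F$ is a face of $C$ with $F \not\supseteq F_x$, then no sub-face $F' \subseteq F$ can contain $F_x$ either (since $F_x \subseteq F' \subseteq F$ would contradict $F \not\supseteq F_x$). Using $F = \bigcup_{F' \subseteq F} \inte(F')$, this gives $F \subseteq C \setminus C_x$ whenever $F \not\supseteq F_x$, so
\[
C \setminus C_x \;=\; \bigcup_{F \in \lat(C),\, F \not\supseteq F_x} F.
\]

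Now I would conclude by a compactness/closedness argument. The face lattice $\lat(C)$ of a convex polytope is finite, so the right-hand side is a \emph{finite} union. Each face $F$ of $C$ is closed in $\R^n$ (being the intersection of $C$ with a closed half-space, hence itself a compact convex polytope), and therefore closed in the subspace topology on $C$. A finite union of closed sets is closed, so $C \setminus C_x$ is closed in $C$, and consequently $C_x$ is open in $C$. The argument is essentially routine once the combinatorial identity above is in hand; the only subtlety to be careful with is ensuring that we are consistently using the subspace topology on $C$, for which no further work beyond what is stated is required.
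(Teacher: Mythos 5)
Your proof is correct and follows essentially the same route as the paper: both show $C \setminus C_x$ is closed by upgrading the union of relative interiors $\bigcup_{F \not\supseteq F_x} \inte(F)$ to the finite union of closed faces $\bigcup_{F \not\supseteq F_x} F$, using the observation that the set of faces not containing $F_x$ (equivalently, not containing $x$) is downward-closed in the face lattice. The paper phrases the defining condition as $x \notin F$ rather than $F \not\supseteq F_x$, but as it notes earlier these are equivalent, so the two arguments coincide.
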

\begin{proof}
    It suffices to show that $C \setminus C_x$ is closed.
    Indeed, $C \setminus C_x = \bigcup_{F \in \lat(C), x \not\in F} \inte(F)$.
    For any $F \in \lat(C), x \not\in F$, we have $x \not\in F'$ for all faces $F' \subseteq F$.
    Therefore $\bigcup_{F \in \lat(C), x \not\in F} \inte(F) = \bigcup_{F \in \lat(C), x \not\in F} \bigcup_{F' \in \lat(C), F' \subseteq F} \inte(F') = \bigcup_{F \in \lat(C), x \not\in F} F$.
    So $C \setminus C_x = \bigcup_{F \in \lat(C), x \not\in F} F$ is a finite union of (closed) faces, and is hence closed.
\end{proof}

Denote by $d_x \coloneqq d(P_x) \in \inte(C)$ the destination of $P_x$.
Fix a point $c_0$ in the interior of $C$. Then $d_x$ is contained in the interior of $scale(C, c_0, r_x)$ for some rational $0 < r_x < 1$.

\begin{lem}
    \begin{enumerate}[noitemsep, label = (\arabic*)]
        \item There exists an open neighbourhood $V_x \subset C$ of $x$, such that for all $y \in V_x$, we have $(y - x) + U_x \subseteq C_x$.
        \item There exists an open neighbourhood $V'_x \subset C$ of $x$, such that for all $y \in V'_x$, we have $(y - x) + d_x \subseteq scale(C, c_0, r_x)$.
    \end{enumerate}
    See Figures~\ref{fig:Vx} and \ref{fig:Vxp} for illustration.
\end{lem}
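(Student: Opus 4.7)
The plan is to prove both parts as continuity-of-translation statements, with part (1) requiring a uniformity upgrade via compactness of $U_x$.

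For part (2), observe that $0 < r_x < 1$ together with $d_x \in \inte(C)$ implies that $d_x$ lies in the $\R^n$-interior of $scale(C, c_0, r_x)$. The map $y \mapsto d_x + (y-x)$ from $C$ to $\R^n$ is continuous and sends $x$ to $d_x$, so pulling back an open $\R^n$-neighbourhood of $d_x$ contained in $scale(C, c_0, r_x)$ produces the desired open $V'_x \subseteq C$.

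For part (1), I would take $V_x$ to be a small metric ball around $x$ intersected with $C$, with the radius chosen via two ingredients. First, $U_x = \bigcup_{e' \in P_x} C(e')$ is a finite union of polytopes, hence compact, while $C_x$ is open in the subspace topology on $C$ (by the fact stated just before the lemma). Compactness then yields $\delta_1 \coloneqq \operatorname{dist}(U_x, C \setminus C_x) > 0$. Second, for each $u \in U_x \subseteq C_x$ one has $u \in \inte(F_u)$ for some face $F_u \supseteq F_x$ (by the definition of $C_x$); consequently $T_x C \subseteq T_u C$ for the respective tangent cones, so that for a sufficiently small $y - x \in C - x \subseteq T_x C$ the translate $u + (y-x)$ remains in $C$. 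Compactness of $U_x$ makes this smallness uniform in $u$, yielding some $\delta_2 > 0$. Setting $V_x \coloneqq \{y \in C : |y - x| < \min(\delta_1, \delta_2)\}$ then delivers $(y-x) + U_x \subseteq C_x$ for every $y \in V_x$: any such translate lies in $C$ by the tangent cone argument and is within distance $\delta_1$ of $U_x$, hence cannot belong to $C \setminus C_x$.

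The main obstacle is the subtle mix of topologies in part (1): $C_x$ is open in $C$ but not in $\R^n$, so a direct application of the tube lemma to the continuous map $(y, u) \mapsto u + (y-x)$ from $C \times U_x$ to $\R^n$ does not close the argument; the preimage of $C_x$ is an intersection of an open with a closed set and is not automatically open. One must separately ensure that the translated points do not escape $C$, and the tangent cone inclusion $T_x C \subseteq T_u C$ for $u$ in a face above $F_x$ is the geometric fact that supplies this. Beyond this point the proof is routine continuity and compactness.
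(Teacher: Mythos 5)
Your proof is correct. Part (2) coincides with the paper's proof: take $V'_x = B_C(x, \rho_x)$ for a radius $\rho_x$ with $B(d_x, \rho_x)$ inside $scale(C, c_0, r_x)$. For part (1), the quantity you call $\delta_1 = dist(U_x, C\setminus C_x)$ is exactly what the paper uses (it is $r_{min}$, the minimum over the compact $U_x$ of $r_y = dist(y, C\setminus C_x)$), so that ingredient matches. Your second ingredient, the tangent-cone bound $\delta_2$, however, supplies a step the paper's argument leaves unaddressed. With $V_x = B_C(x, r_{min}/2)$ and $z = (y-x)+u$, the inequality $|z-u|<r_u$ only rules out $z\in C\setminus C_x$ (since $B_C(u,r_u)=C\cap B(u,r_u)$); it says nothing about whether $z\in C$, and this is not automatic because $U_x$ can contain points of $\partial C$ (the scalings $C(e')$ with centre $c\in C_x\cap\partial C$ touch the boundary). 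Your observation that $F_u\supseteq F_x$ forces the constraints of $C$ active at $u$ to be a subset of those active at $x$, hence $C - x \subseteq T_xC \subseteq T_uC$, and that compactness of $U_x$ bounds the slack in the remaining constraints uniformly below, giving a uniform $\delta_2>0$ keeping the translate inside $C$ — this is exactly the missing piece, and $V_x = B_C(x, \min(\delta_1,\delta_2))$ then closes the argument. So you have not only reproduced the paper's compactness skeleton but also correctly identified and filled a real gap in the published proof of part (1).
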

\begin{proof}
For $y \in C, r \in \Rpp$, denote by $B(y, r)$ the open ball centered at $y$ with radius $r$. Denote $B_C(y, r) \coloneqq C \cap B(y, r)$; it is an open subset of $C$.

(1) For each $y \in U_x \subset C_x$, since $C_x$ is open in $C$, let $r_y > 0$ be the supremum of real numbers $r$ such that $B_C(y, r) \subset C_x$. The function $f : y \mapsto r_y$ is continuous on $U_x$ since $|r_y - r_{y'}| \leq |y - y'|$.
Since $U_x$ is compact, $f$ attains a minimum $r_{min} > 0$ on $U_x$.
Then let $V_x \coloneqq B_C(x, r_{min}/2)$, we have $(y - x) + U_x \subseteq C_x$.

(2) Since $scale(C, c_0, r_x)$ is an $n$-dimensional polytope, its interior is an open set. 
Since $d_x$ is in the interior of $scale(C, c_0, r_x)$, there exists $\rho_x > 0$ such that $B(d_x, \rho_x)$ is contained in the interior of $scale(C, c_0, r_x)$.
We then simply take $V_x \coloneqq B_C(x, \rho_x)$.
\end{proof}

For each $x \in C$, denote $W_x \coloneqq V_x \cap V'_x$. The open sets $W_x, x \in C$ cover the compact set $C$, so we can choose a finite number of representatives $x_1, \ldots, x_m$ such that $W_{x_1} \cup \cdots \cup W_{x_m} = C$.
Denote respectively by $P_1, \ldots, P_m$ the paths $P_{x_1}, \ldots, P_{x_m}$ and by $U_1, \ldots, U_m$ the sets $U_{x_1}, \ldots, U_{x_m}$.
Let $R \coloneqq \max\{r_{x_1}, \ldots, r_{x_m}\}$.

Therefore, for each $y \in C$, there exists $x_i$ such that $y \in W_{x_i}$; the set $U_i + (y - x_i)$ is contained in $C$, and the path $P_i + (y - x_i)$ leads from $y$ to the interior of $scale(C, c_0, R)$.

For each edge $e' = scale(e, c, r), e \in E(\Gamma), c \in C_x, r \in (0,1) \cap \Q$ in each of the paths $P_1, \ldots, P_m$, let $n(e') \in \N$ be such that $n(e') \cdot r \in \N$ and $n(e') \cdot c \in \Z^n$.
Furthermore, let $n_0 \in \N$ be such that $n_0 \cdot c_0 \in \Z^n$.
Define
\[
N_0 \coloneqq n_0 \cdot \prod_{e' \in P_1 \cup \cdots \cup P_m} n(e').
\]
Then in particular, $N_0 x_i$ has only integer entries for all $i = 1, \ldots, m$; and $N_0 \cdot s(e')$ has only integer entries for all $e' \in P_1 \cup \cdots \cup P_m$.

\begin{lem}\label{lem:coverPv}
    Let $N \in \N$ be such that $N_0 \mid N$.
    Then every vertex in $\Gamma_N$ is connected to some vertex in $scale(NC, N c_0, R) \cap V(\Gamma_N)$.
\end{lem}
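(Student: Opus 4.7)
Given an arbitrary $v \in V(\Gamma_N)$, the idea is to scale down to $y \coloneqq v/N \in C$, use the finite cover $\{W_{x_j}\}_{j=1}^{m}$ to pick a representative $x_i$ with $y \in W_{x_i}$, transport the rational path $P_i$ from Lemma~\ref{lem:congq} by the small translation $y - x_i$, and then rescale the result by a factor of $N$ to produce a genuine walk in $\Gamma_N$ starting at $v = Ny$. The translated path $P_i + (y - x_i)$ stays inside $C$ because each edge $e' \in P_i$ lies in $U_i$ and $y \in V_{x_i}$ gives $(y - x_i) + U_i \subseteq C_{x_i} \subseteq C$.

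The main step, and principal obstacle, is to realise the $N$-scaled translated path as a sequence of actual edges of $\Gamma_N$. Writing $e' = scale(e, c, r) \in P_i$ and $k \coloneqq Nr \in \N$, the divisibility $N_0 \mid N$ ensures $Nc, Nx_i \in \Z^n$ and $Nr \in \N$, so $N \cdot (e' + (y - x_i))$ is a segment with integer endpoints pointing in direction $k \cdot a_{\ell(e)}$, which I would chop into $k$ consecutive unit-length edges $e + z_j$ with explicit translations $z_j = (N-k)c + (k-1)s(e) + v - Nx_i + j\, a_{\ell(e)}$ for $j = 0, \ldots, k-1$. The hard part is to verify each $z_j \in S_N$, i.e., $C + z_j \subseteq NC$. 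I would deduce this by scaling up $(y - x_i) + C(e') \subseteq C$ to $v - Nx_i + N \cdot C(e') \subseteq NC$, identifying $N \cdot C(e') = (N - k)c + kC$ by direct computation, and showing $(k-1)s(e) + j\, a_{\ell(e)} + C \subseteq kC$ via the convex-combination identity $\tfrac{k-1-j}{k}s(e) + \tfrac{j}{k}d(e) + \tfrac{1}{k}c_1 \in C$ valid for any $c_1 \in C$ and $0 \leq j \leq k-1$.

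Concatenating the unit edges across all $e' \in P_i$ yields a walk in $\Gamma_N$ from $v$ to $N(d_{x_i} + (y - x_i)) = N d_{x_i} + v - N x_i$. Since $y \in V'_{x_i}$, we have $d_{x_i} + (y - x_i) \in scale(C, c_0, r_{x_i})$, so multiplying by $N$ places the endpoint inside $scale(NC, Nc_0, r_{x_i}) \subseteq scale(NC, Nc_0, R)$, delivering the required vertex of $V(\Gamma_N) \cap scale(NC, Nc_0, R)$ to which $v$ is connected.
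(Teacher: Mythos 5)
Your proof is correct and follows essentially the same route as the paper: scale down to $y = v/N$, pick $x_i$ with $y \in W_{x_i}$, translate $P_i$ by $y - x_i$, rescale by $N$, and chop each scaled edge into $Nr$ unit edges whose translation vectors lie in $S_N$. The only cosmetic difference is that the paper verifies $r_k + C \subseteq NC$ via a moving-centre scaling identity for $scale(\widetilde{C}, \cdot, \tfrac{1}{rN})$ whereas you do it via the explicit convex combination $\tfrac{k-1-j}{k}s(e) + \tfrac{j}{k}d(e) + \tfrac{1}{k}c_1 \in C$ — these encode the same geometric fact.
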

\begin{proof}
    See Figure~\ref{fig:GammaN} and \ref{fig:coverPv} for an illustration of the proof.

\begin{figure}[ht!]
    \centering
    \begin{minipage}[t]{0.9\textwidth}
        \centering
        \includegraphics[width=0.7\textwidth,height=0.6\textheight,keepaspectratio, trim={5cm 0cm 4cm 0cm},clip]{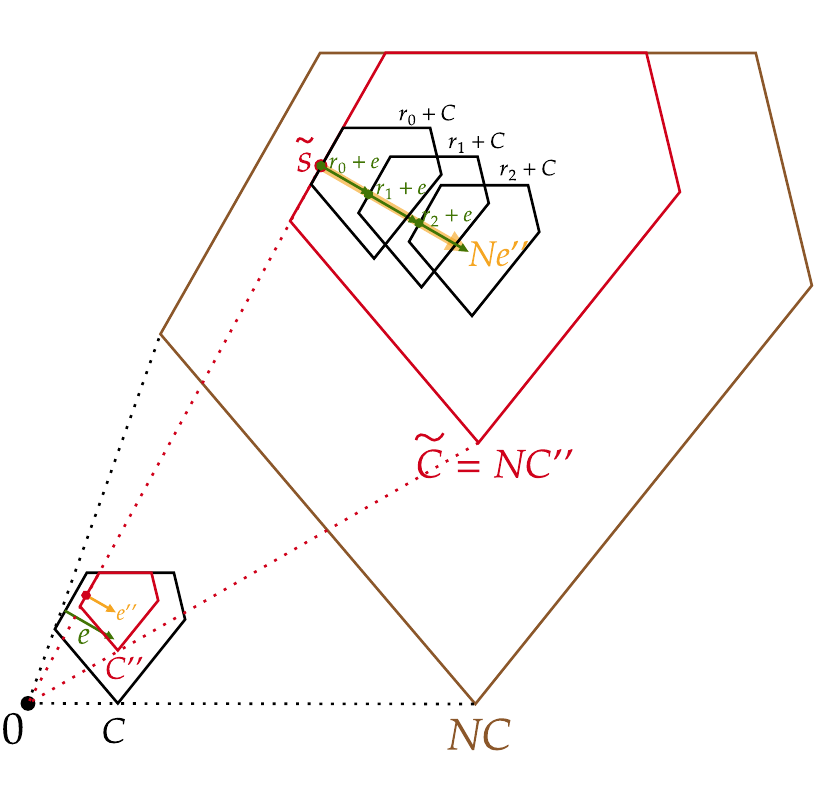}
        \caption{Illustration 1 of Lemma~\ref{lem:coverPv}.}
        \label{fig:GammaN}
    \end{minipage}
    \hfill
    \begin{minipage}[t]{0.9\textwidth}
        \centering
        \includegraphics[width=0.7\textwidth,height=0.6\textheight,keepaspectratio, trim={5cm 0cm 4cm 0cm},clip]{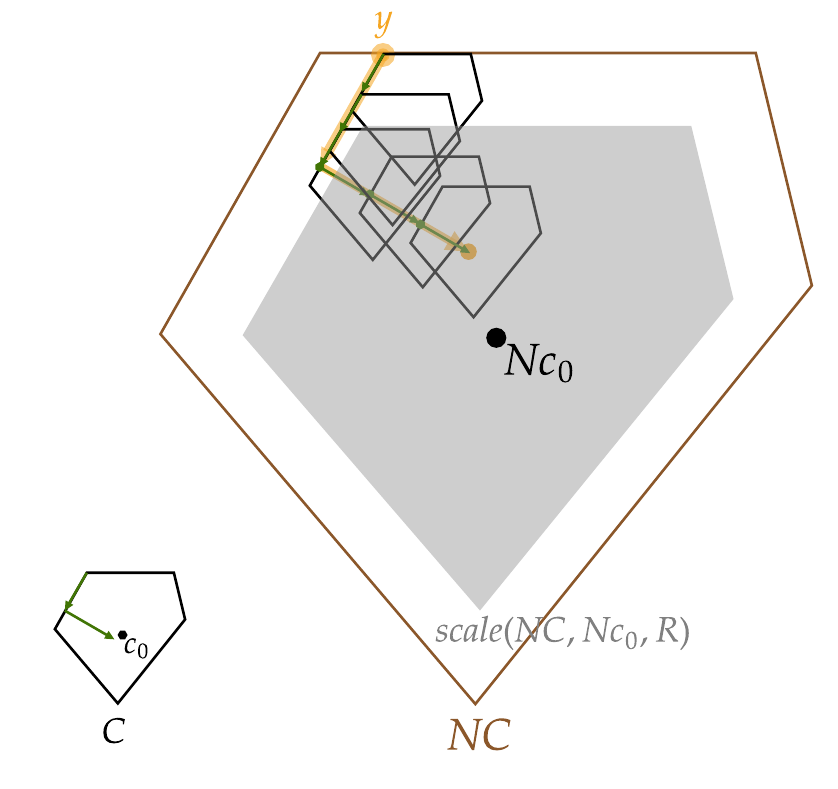}
        \caption{Illustration 2 of Lemma~\ref{lem:coverPv}.}
        \label{fig:coverPv}
    \end{minipage}
\end{figure}

    For each $y \in \conv(\Gamma_N) \subseteq NC$, the point $y' \coloneqq \frac{y}{N} \in \frac{1}{N} C$ is contained in one of $W_{x_1}, \ldots, W_{x_m}$.
    Without loss of generality suppose $y' \in W_{x_1}$, then $y'$ is connected to $(y' - x_1) + d(P_1) \in scale(C, c_0, R)$ by the path $(y' - x_1) + P_1$ in $\gq$.
    We will show that $scale((y' - x_1) + P_1, 0, N)$ is a path in $\Gamma_N$. If this is the case, then it connects $Ny' = y$ to $N(y' - x_1 + d(P_1)) \in scale(NC, N c_0, R)$ and we are done.

    We now show that $scale((y' - x_1) + P_1, 0, N)$ is a path in $\Gamma_N$.
    It suffices to show that for each edge $e' \in P_1$, the segment $scale((y' - x_1) + e', 0, N)$ is a concatenation of edges in $\Gamma_N$.
    Again write $e' = scale(e, c, r), e \in E(\Gamma), c \in C_x, r \in (0,1) \cap \Q$.
    Consider the polytope $C(e')$ defined in~\eqref{eq:Ce}. By definition of the set $U_1$, the translation $C'' \coloneqq (y' - x_1) + C(e')$ is contained in $C$.
    Also, the edge $e'' \coloneqq (y' - x_1) + e'$ is contained in $C''$.
    Therefore, $\widetilde{C} \coloneqq NC''$ is contained in $NC$.
    Note that the relative positive of $s(e'')$ in $C''$ is the same as the relative positive of $s(e)$ in $C$, so the the relative positive of $Ns(e'')$ in $NC''$ is the same as the relative positive of $s(e)$ in $C$.
    Denote $\widetilde{s} \coloneqq Ns(e'') = y - Nx_1 + Ns(e') \in \widetilde{C} \cap \Z^n$.
    For every $k = 0, 1, \ldots, rN - 1$, define
    \[
        r_k \coloneqq k(d(e) - s(e)) + \widetilde{s} - s(e) \in \Z^n.
    \]
    Consider the polytopes $r_0 + C, \ldots, r_{rN-1} + C$.
    For every $k = 0, 1, \ldots, rN - 1$,
    \begin{equation*}
         \widetilde{s} + \frac{krN}{rN - 1}(d(e) - s(e)) 
        \in  \widetilde{s} + rN(e - s(e)) 
        =  \widetilde{s} + N(e'' - s(e''))
        =  Ne'' 
        \subset  NC''
        =  \widetilde{C}. \\
    \end{equation*}
    Therefore
    \[
    scale(\widetilde{C}, \widetilde{s} + \frac{krN}{rN - 1}(d(e) - s(e)), \frac{1}{rN}) \subset \widetilde{C} \subset NC.
    \]
    Hence for $k = 0, 1, \ldots, rN - 1$ we have
    \begin{multline*}
         r_k + C 
        =  k(d(e) - s(e)) + \widetilde{s} - s(e) + C 
        =  k(d(e) - s(e)) + scale(\widetilde{C}, \widetilde{s}, \frac{1}{rN}) \\
        =  scale(\widetilde{C}, \widetilde{s} + \frac{krN}{rN - 1}(d(e) - s(e)), \frac{1}{rN}) 
        \subset  NC,
    \end{multline*}
    where the second equality comes from the fact that the relative positive of $\widetilde{s}$ in $\widetilde{C}$ is the same as the relative positive of $s(e)$ in $C$, so $scale(\widetilde{C}, \widetilde{s}, \frac{1}{rN})$ is a translation of $\frac{1}{rN} \widetilde{C} = C$ where the relative positive of $\widetilde{s}$ is the same as that of $s(e)$ in $C$.
    Thus, $r_k \in S_N$, and $r_k + e$ is an edge of $\Gamma_N$ for $k = 0, 1, \ldots rN-1$.
    Concatenating these $rN$ edges, we obtain the segment from $\widetilde{s}$ to $\widetilde{s} + rN(d(e) - s(e))$, which is exactly $Ne'' = scale((y' - x_1) + e', 0, N)$.
\end{proof}

For $k = 1, \ldots, n$, let $e_k$ denote the $k$-th element in the canonical basis of $\Z^n$.
Then since $\Gamma$ is $\Z^n$-generating, we have that for $k = 1, \ldots, n$, there exists a concatenation $Q_k$ of translations of edges in $E(\Gamma)$ that connects from $0$ to $e_k$.
Let $M_{k}$ be the total length of the edges appearing in $Q_{k}$, and let $M \coloneqq \max_{1 \leq k \leq n}\{M_{k}\}$.

Denote by $\partial C$ the boundary of $C$, that is, the union of strict faces of $C$.
For two sets $S, T \in \R^n$, define their \emph{distance} to be $dist(S, T) \coloneqq \inf\{|t - s| \mid s \in S, t \in T\}$.
The \emph{diameter} of $C$ is defined as $diam(C) \coloneqq \sup\{|t - s| \mid s, t \in C\}$.
Let $N_1 \in \N$ be such that
\begin{equation}\label{eq:defN1}
N_1 \cdot dist(scale(C, c_0, R), \partial C) > M + \sqrt{n} + diam(C).
\end{equation}
Such an $N_1$ exists because $scale(C, c_0, R)$ and $\partial C$ are disjoint closed sets, so their distance is larger than zero.
\begin{lem}\label{lem:mix}
    Let $N \in \N$ be such that $N > N_1$. Every two vertices in $scale(NC, N c_0, R) \cap V(\Gamma_N)$ are connected in $\Gamma_N$.
\end{lem}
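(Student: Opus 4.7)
The plan is to exhibit an explicit path in $\Gamma_N$ connecting any two vertices $u, v \in scale(NC, N c_0, R) \cap V(\Gamma_N)$. First I would construct a lattice path $L$ from $u$ to $v$ using only steps $\pm e_k$ ($k = 1, \ldots, n$), with the property that every lattice vertex of $L$ lies within Euclidean distance $\sqrt{n}/2$ of the line segment $[u,v]$. This is a routine ``rounded staircase'' construction: parametrize the segment by $t \in [0,1]$, round its image coordinate by coordinate, and interpolate between consecutive rounded points by unit steps.

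Since $\Gamma$ is symmetric and $\{a_1, \ldots, a_K\}$ generates $\Z^n$, the paths $Q_1, \ldots, Q_n$ (and their reverses, which exist by symmetry) allow me to replace each unit step $p \to p \pm e_k$ of $L$ by a translated copy of $Q_k$, resp.\ its reverse, anchored at $p$. Concatenating these produces a sequence $\tilde L$ of edges, each a translate of some $e \in E(\Gamma)$ by a lattice vector $z \in \Z^n$. The lemma then reduces to the claim that every such edge actually lies in $E(\Gamma_N)$, i.e., that $z \in S_N$.

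To verify this, fix one edge of $\tilde L$ and let $p^* = s(e) + z$ denote its starting vertex. By construction $p^*$ lies on a translated copy of some $Q_k$ whose start is a lattice vertex $p$ of $L$, so $|p^* - p| \leq M$; since in addition $p$ is within $\sqrt{n}/2$ of $[u,v]$, and $[u,v] \subset scale(NC, N c_0, R)$ by convexity, I obtain
\[
d(p^*, \partial(NC)) \;\geq\; N \cdot dist(scale(C, c_0, R), \partial C) - M - \sqrt{n}/2 \;>\; diam(C),
\]
where the last inequality uses \eqref{eq:defN1} and $N > N_1$. Because $s(e) \in V(\Gamma) \subset C$, one has $C - s(e) \subset B(0, diam(C))$, hence $z + C = p^* + (C - s(e)) \subset B(p^*, diam(C)) \subset NC$, giving $z \in S_N$. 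The main conceptual subtlety is precisely this last estimate: one must verify containment in $S_N$ of $z = p^* - s(e)$ rather than of $p^*$ itself, and the summand $diam(C)$ built into \eqref{eq:defN1} is there to absorb this shift.
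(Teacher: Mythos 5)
Your proof is correct and follows the paper's argument essentially verbatim: construct a grid staircase from $u$ to $v$ close to the segment $[u,v]$, splice in translated copies of the $Q_k$, and use the $diam(C)$ margin built into \eqref{eq:defN1} to verify that every resulting edge lies in $E(\Gamma_N)$. One small clarification: since connectivity of $\Gamma_N$ is taken in the undirected sense (as the paper notes at the start of Section~\ref{sec:graph}), you do not actually need to invoke symmetry to produce ``reverses'' of the $Q_k$'s --- traversing each $Q_k$ backward as an undirected path already suffices.
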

\begin{proof}
    See Figure~\ref{fig:mix} for an illustration of the proof.

    Let $v_1, v_2$ be two arbitrary vertices in $scale(NC, N c_0, R) \cap V(\Gamma_N)$.
    There exists a path $P_{\Z^n}(v_1, v_2)$ in the grid $\Z^n$ from $v_1$ to $v_2$, such that each point in $P_{\Z^n}(v_1, v_2)$ is at most of distance $\sqrt{n}$ from $seg(v_1, v_2)$.
    The path $P_{\Z^n}(v_1, v_2)$ consists of translations of the segments $seg(0, e_k), k = 1, \ldots, n$.
    For $k = 1, \ldots, n$, replacing each translation $seg(0, e_k) + z$ of the segment $seg(0, e_k)$ in $P_{\Z^n}(v_1, v_2)$ with the translation $Q_k + z$ of the path $Q_k$, we obtain a path $P_{\Gamma}(v_1, v_2)$.
    We now show that each edge of $P_{\Gamma}(v_1, v_2)$ is in $E(\Gamma_N)$.

    Each point in $P_{\Gamma}(v_1, v_2)$ is at most of distance $\sqrt{n} + M$ from the segment $seg(v_1, v_2) \subset scale(NC, N c_0, R)$, so it is at most of distance $\sqrt{n} + M$ from $scale(NC, N c_0, R)$.
    By the definition~\eqref{eq:defN1} of $N_1$, we have 
    \[
    dist\left(scale(NC, N c_0, R), \partial \left(NC\right)\right) = N \cdot dist(scale(C, c_0, R), \partial C) > M + \sqrt{n} + diam(C).
    \]
    Therefore each point in $P_{\Gamma}(v_1, v_2)$ is at least of distance $diam(C)$ from the boundary $\partial \left(NC\right)$.
    Take an arbitrary edge $e$ in $P_{\Gamma}(v_1, v_2)$, it comes from some translation $\Gamma + z$ of the graph $\Gamma$.
    Therefore $e$ is contained in $C + z$. Since $e$ is of distance at least $diam(C)$ from $\partial \left(NC\right)$, the polytope $C + z$ must be contained in $NC$.
    Hence $z \in S_N$ and so $e$ is an edge of $\Gamma_N$.
    We have shown that each edge of $P_{\Gamma}(v_1, v_2)$ is in $E(\Gamma_N)$.
    Therefore every two vertices in $scale(NC, N c_0, R) \cap V(\Gamma_N)$ are connected in $\Gamma_N$.
\end{proof}

    \begin{figure}[h!]
        \centering
        \includegraphics[width=0.4\textwidth,height=0.6\textheight,keepaspectratio, trim={5cm 0cm 4cm 0cm},clip]{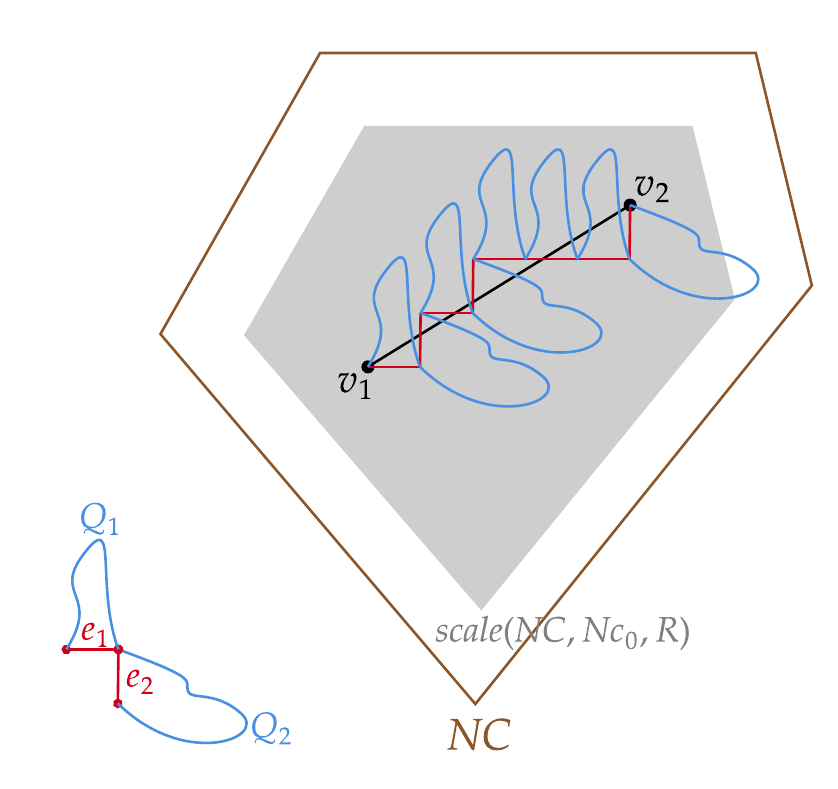}
        \caption{Illustration of Lemma~\ref{lem:mix}.}
        \label{fig:mix}
    \end{figure}

\thmacctoeul*
\begin{proof}
    See Figure~\ref{fig:connect} for an illustration of the proof.
    Let $N \in \N$ be such that $N_0 \mid N$ and $N > N_1$. 
    We show that the graph $\Gamma_N$ is connected.
    Take any two vertices $v, w$ of the graph $\Gamma_N$.
    Since $N_0 \mid N$, Lemma~\ref{lem:coverPv} shows that $v$ and $w$ are respectively connected to two vertices $v_1$ and $v_2$ in $scale(NC, N c_0, R) \cap V(\Gamma_N)$.
    Since $N > N_1$, Lemma~\ref{lem:mix} shows that $v_1$ and $v_2$ are connected in $\Gamma_N$.
    Therefore, $v$ and $w$ are connected in $\Gamma_N$.
\end{proof}

\begin{figure}[h!]
        \centering
        \includegraphics[width=0.4\textwidth,height=0.6\textheight,keepaspectratio, trim={5cm 0cm 4cm 0cm},clip]{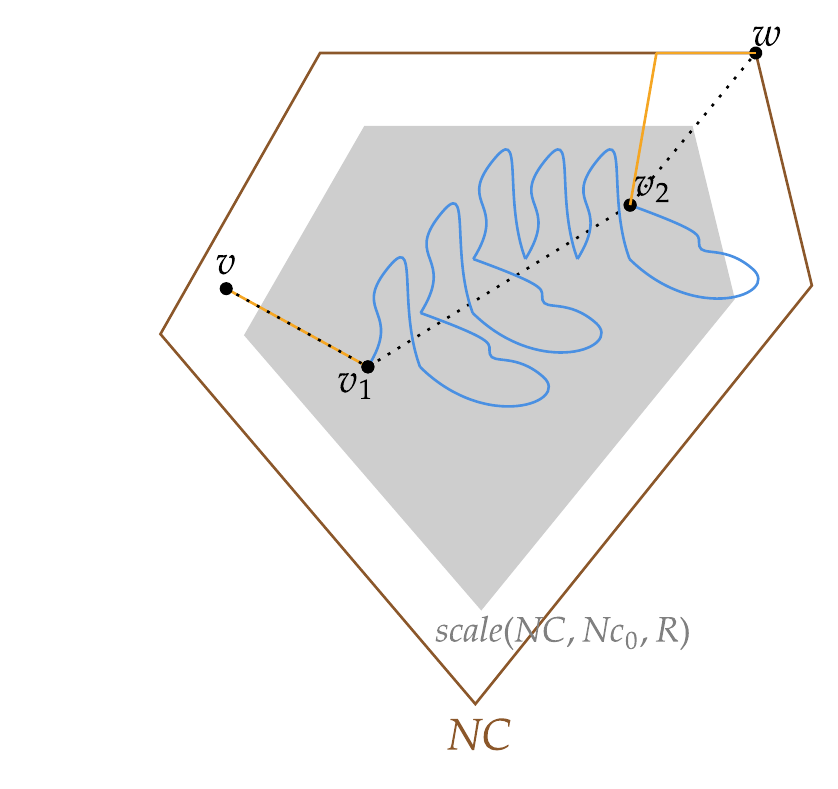}
        \caption{Illustration of proof of Theorem~\ref{thm:acctocon}.}
        \label{fig:connect}
\end{figure}

\section{Proving the local-global principle}\label{sec:locglob}
In this section we prove Theorem~\ref{thm:locglob}:

\thmlocglob*

Theorem~\ref{thm:locglob} can be considered as a generalization of Einsiedler, Mouat and Tuncel's local-global principle~\cite[Theorem~1.3]{einsiedler2003does}.
The difference in our paper is the additional constraint \eqref{eq:globv}.
Recall that this property stems from the face-accessibility constraint in Proposition~\ref{prop:eulertoeq}, and is hence essential for our purpose of studying sub-semigroups of metabelian groups.
Many components of the original proof in \cite{einsiedler2003does} fail when integrating  Property~\eqref{eq:globv}, notably~\cite[Lemma~3.2, Lemma~5.2]{einsiedler2003does}.
In order to take into account this extra property, we need to introduce new arguments to rework many parts of the original proof.
The key new component will be the following Lemma~\ref{lem:tweakinit}, which shows a certain ``continuity'' of Property~\eqref{eq:globv} when changing the direction $v$ by a small amount.

Before stating Lemma~\ref{lem:tweakinit}, we will make a few observations.
Define the quotient 
\[
D_n \coloneqq \Rns / \Rpp.
\]
That is, elements of $D_n$ are of the form $v \Rpp, v \in \Rns$, where $v \Rpp = v' \Rpp$ if and only if $v = r \cdot v'$ for some $r \in \Rpp$.
The quotient $D_n$ can be identified with the unit sphere of dimension $n$ since every $v \Rpp$ is equal to exactly one $v' \Rpp$ with $||v'|| = 1$.
We equip $D_n$ with the standard topology of the unit sphere.
Note that $\init_v(\cdot), M_v(\cdot)$ and $O_v$ are invariant when scaling $v$ by any positive real number.

\begin{restatable}{lem}{lemtweak}\label{lem:tweakinit}
    Fix $v \in \Rns$, a set $I \subseteq \{1, \ldots, K\}$ and $\bff \in \A^K$. 
    There exists an open neighbourhood $U \subseteq D_n$ of $v \Rpp$, such that for every $w \in \Rns$ with $(v + w)\Rpp \in U$, we have
    \begin{equation}\label{eq:vw}
    \init_{v + w}\left(\bff\right) = \init_{w}\left(\init_{v}(\bff)\right), \quad M_{v + w}(I, \bff) = M_{w}(M_{v}(I, \bff), \init_{v}(\bff)), \quad \text{and} \quad O_{v + w} = O_{v} \cup O_{w}.
    \end{equation}
\end{restatable}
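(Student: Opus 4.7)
The plan is to reduce all three equalities of~\eqref{eq:vw} to conditions depending only on the ray of $v + w$, and then verify these conditions by finitely many elementary continuity estimates when $(v+w)\Rpp$ is close to $v\Rpp$ on the unit sphere.

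First I would observe that although the right-hand sides of~\eqref{eq:vw} formally involve $w$, they are in fact determined by the ray $(v+w)\Rpp$ alone. Fixing a representative $u$ of this ray on the unit sphere, the admissible $w$ are precisely $w = \mu u - v$ for $\mu > 0$ (excluding at most one value of $\mu$ where $w$ would vanish). On $\supp(\init_v(f_i))$ the value $v \cdot a$ is constantly equal to $\deg_v(f_i)$, so $w \cdot a = \mu \, u \cdot a - \deg_v(f_i)$ for $a$ in that support, and taking argmaxes gives $\init_w(\init_v(\bff)) = \init_u(\init_v(\bff))$ and $M_w(M_v(I,\bff), \init_v(\bff)) = M_u(M_v(I,\bff), \init_v(\bff))$, both independent of $\mu$. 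A short case split on whether $v \cdot a_i$ vanishes similarly gives $O_v \cup O_w = O_v \cup O_u$. Hence~\eqref{eq:vw} reduces to the three conditions
\[
\init_u(\bff) = \init_u(\init_v(\bff)), \quad M_u(I,\bff) = M_u(M_v(I,\bff), \init_v(\bff)), \quad O_v \subseteq O_u,
\]
each depending only on $u\Rpp$.

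Next I would verify each of these three reduced conditions for $u$ close to $v$ on the unit sphere. For each $i$ and each $a \in \supp(f_i) \setminus \supp(\init_v(f_i))$, set $\delta_{i,a} \coloneqq \deg_v(f_i) - v \cdot a > 0$; for any $a' \in \supp(\init_v(f_i))$ the identity $u \cdot a' - u \cdot a = \delta_{i,a} + (u - v) \cdot (a' - a)$ is positive whenever $\|u - v\|$ lies below the (finite) threshold $\min \delta_{i,a}/\|a' - a\|$, which forces the $u$-maximum of $f_i$ to lie inside $\supp(\init_v(f_i))$ and hence gives the first equality. An analogous argument using the positive gap between $\max_{i' \in I}\deg_v(f_{i'})$ and $\deg_v(f_i)$ for each $i \in I \setminus M_v(I,\bff)$ yields $M_u(I,\bff) \subseteq M_v(I,\bff)$, from which the second equality follows by combining with the first restricted to $M_v(I,\bff)$. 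For the third condition, for each $i \in O_v$ the bound $|u \cdot a_i - v \cdot a_i| \leq \|u - v\| \cdot \|a_i\|$ forces $u \cdot a_i \neq 0$ once $\|u - v\| < |v \cdot a_i|/\|a_i\|$. Taking $U$ to be an $\varepsilon$-neighbourhood of $v/\|v\|$ on the unit sphere, with $\varepsilon$ smaller than all these finitely many thresholds, then yields the desired $U$.

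The main subtlety I expect is not in the estimates themselves, which are routine, but in the initial reduction step. Without the observation that the right-hand sides of~\eqref{eq:vw} are invariant under the scaling $w \mapsto \mu u - v$, the lemma would be false: as $\mu$ varies over $(0, \infty)$, the ray $w\Rpp$ sweeps through an entire arc of $D_n$, and no single neighbourhood of $v\Rpp$ could uniformly control $\init_w$ and $M_w$ along that arc. This invariance ultimately rests on the defining property that $v \cdot a$ is constantly $\deg_v(f_i)$ on $\supp(\init_v(f_i))$, which is precisely how the initial polynomial construction encodes the direction $v$ combinatorially.
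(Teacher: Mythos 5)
Your proof is correct and follows essentially the same line of argument as the paper's: both proofs hinge on the observation that $v \cdot a$ is constant on $\supp(\init_v(f_i))$ (and that $\deg_v(f_i)$ is constant over $i \in M_v(I, \bff)$), so that maximizing $w \cdot a$ there is the same as maximizing $(v+w) \cdot a$, followed by finitely many continuity estimates on the unit sphere. Your explicit preliminary reduction — observing that both sides of~\eqref{eq:vw} depend on $(v+w)\Rpp$ alone, so the whole statement becomes a condition on a single ray $u\Rpp$ — is a somewhat cleaner organization of what the paper handles in passing via the in-line identity $\init_{v+w}(g_i) = \init_w(g_i)$, but it is not a different method.
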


\begin{figure}[ht!]
    \centering
    \begin{minipage}[t]{.47\textwidth}
        \centering
        \includegraphics[width=1\textwidth,height=1.0\textheight,keepaspectratio, trim={3.5cm 0cm 2cm 0cm},clip]{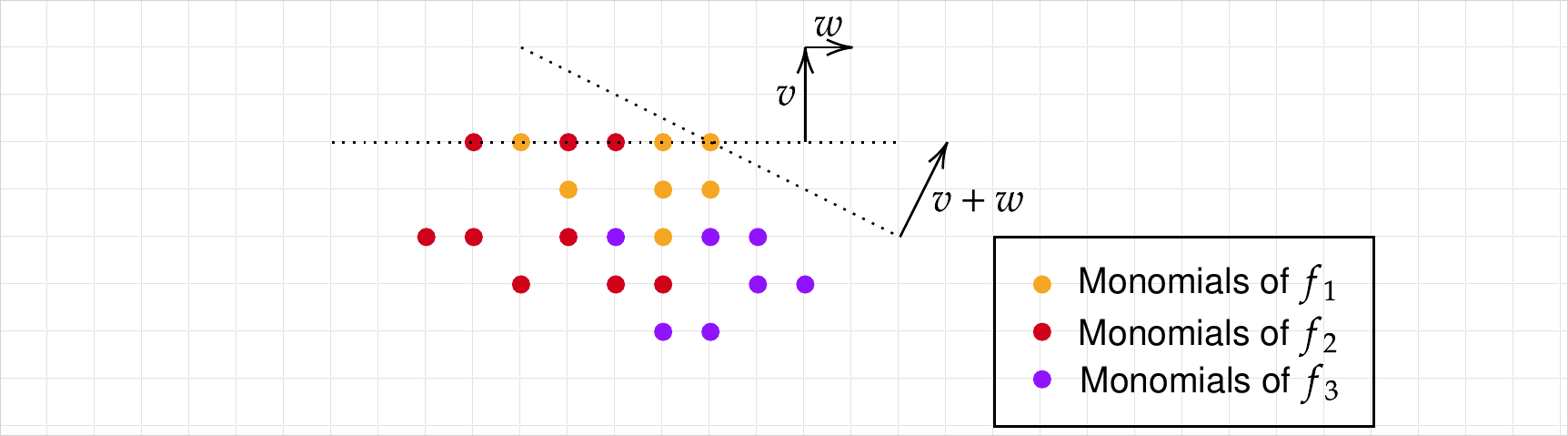}
        \caption{Illustration of Lemma~\ref{lem:tweakinit}. Here, $M_{v}(\{1, 2, 3\}, \bff) = \{1,2\}$.}
        \label{fig:tweak}
    \end{minipage}
    \hfill
    \begin{minipage}[t]{0.47\textwidth}
        \centering
        \includegraphics[width=1\textwidth,height=1.0\textheight,keepaspectratio, trim={3.5cm 0cm 2cm 0cm},clip]{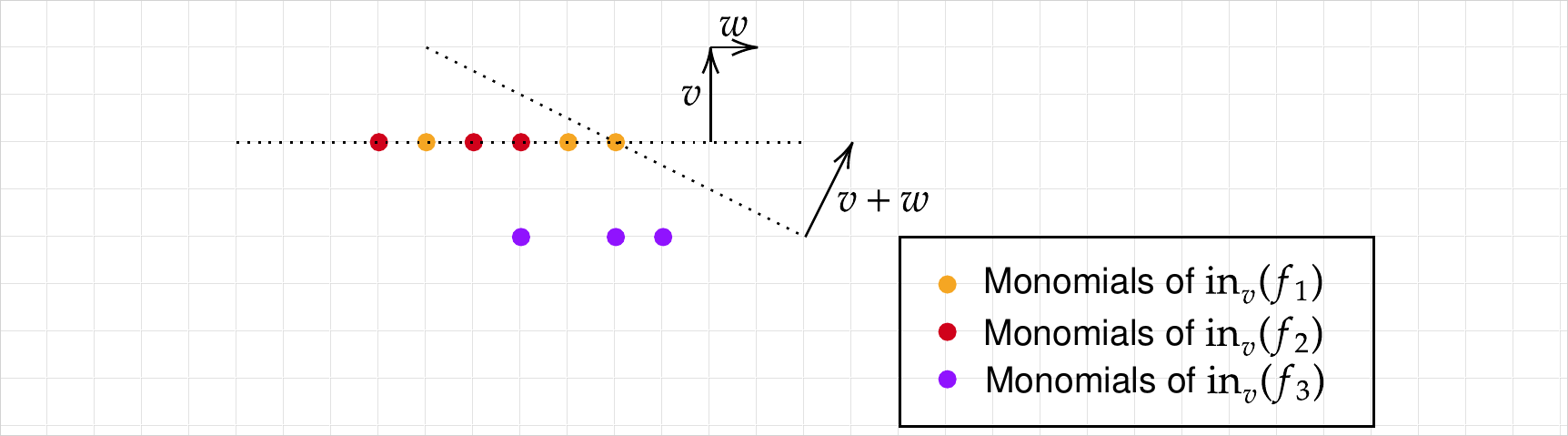}
        \caption{Illustration of Lemma~\ref{lem:tweakinit}. Here, $M_{v+w}(\{1, 2, 3\}, \bff) = \{1\} = M_{w}(\{1, 2\}, \init_v(\bff))$.}
        \label{fig:tweak2}
    \end{minipage}
\end{figure}
\begin{proof}
	See Figures~\ref{fig:tweak} and \ref{fig:tweak2} for an illustration.
    For each $i = 1, \ldots, K$ such that $f_i \neq 0$, write $f_i = g_i + h_i$ where $g_i \coloneqq \init_v(f_i)$ and $\deg_{v}(h_i) < \deg_{v}(g_i)$.
    
    Since $\deg_{v}(h_i)$ and $\deg_{v}(g_i)$ vary continuously when $v \Rpp$ varies in $D_n$, there exists an open neighbourhood $U_i \subseteq D_n$ of $v \Rpp$ such that $\deg_{v'}(h_i) < \deg_{v'}(g_i)$ for every $v'\Rpp \in U_i$.
    Therefore, for $(v + w)\Rpp \in U_i$ we have
    \[
    \init_{v + w}\left(f_i\right) = \init_{v + w}(g_i) = \init_{w}\left(g_i\right) = \init_{w}\left(\init_{v}(f_i)\right).
    \]
    Where $\init_{v + w}(g_i) = \init_{w}\left(g_i\right)$ can be justified as follows.
    For every monomials $c \oX^b$ appearing in $g_i$, we have $\deg_{v+w}(c \oX^b) = v \cdot b + w \cdot b = \deg_v g_i + w \cdot b = \deg_v g_i + \deg_{w}(c \oX^b)$; therefore the monomials in $g_i$ with maximal $\deg_{v+w}$ are exactly those with maximal $\deg_{w}$.

    Note that for each $i \in M_{v}(I, \bff), i' \in I \setminus M_{v}(I, \bff)$, we have $\deg_v(f_i) > \deg_v(f_{i'})$.
    Again by the continuity of $\deg_v$ with respect to $v$, there exists an open neighbourhood $U'$ of $v \Rpp$ such that for all $v'\Rpp \in U'$, we have $\deg_{v'}(f_i) > \deg_{v'}(f_{i'})$ for $i \in M_{v}(I, \bff), i' \in I \setminus M_{v}(I, \bff)$.
    Then for every $(v + w)\Rpp \in \cap_{i \in I, f_i \neq 0} U_i \cap U'$, we have
    \begin{align*}
        & M_{v + w}\left(I, \bff\right) \\
        = & \left\{i \in I \;\middle|\; \deg_{v + w}(f_i) = \max_{i' \in I}\{\deg_{v + w}(f_{i'})\} \right\} \\
        = & \left\{i \in M_{v}(I, \bff) \;\middle|\; \deg_{v + w}(f_i) = \max_{i' \in M_{v}(I, \bff)}\{\deg_{v + w}(f_{i'})\} \right\} \quad \text{ (since $(v+w)\Rpp \in U'$) }\\
        = & \left\{i \in M_{v}(I, \bff) \;\middle|\; \deg_{w}(\init_{v}(f_i)) = \max_{i' \in M_{v}(I, \bff)}\{\deg_{w}(\init_{v}(f_{i'})) \right\} \quad \text{ (since $(v+w)\Rpp \in U_i, i \in I$) } \\
        = & \, M_{w}(M_{v}(I, \bff), \init_{v}(\bff_{v})).
    \end{align*}
    Finally, take any $i \in \{1, \ldots, K\}$. 
    If $a_i \not\perp v$ then there exists an open neighbourhood $U''_i \subseteq D_n$ of $v \Rpp$ such that for every $v'\Rpp \in U''_i$ we have $a_i \not\perp v'$.
    If $a_i \perp v$ then for every $w \in \Rns$ we have $a_i \perp (v + w) \iff a_i \perp w$.
    Take $U'' \coloneqq \cup_{i \in \{1, \ldots, K\}, a_i \not\perp v} U''_i$.
    For all $(v + w)\Rpp \in U''$, we have
    \begin{align*}
        O_{v + w} & = \{i \in \{1, \ldots, K\} \mid a_i \not\perp (v + w)\} \\
        & = \left\{i \in \{1, \ldots, K\} \mid (a_i \not\perp v \text{ and } a_i \not\perp (v+w)) \text{ or } \left(a_i \perp v \text{ and } a_i \not\perp (v+w) \right) \right\} \\
        & = \left\{i \in \{1, \ldots, K\} \mid (a_i \not\perp v) \text{ or } \left(a_i \perp v \text{ and } a_i \not\perp (v+w) \right) \right\} \quad \text{ (since $(v+w)\Rpp \in U''_i$) } \\
        & = \left\{i \in \{1, \ldots, K\} \mid (a_i \not\perp v) \text{ or } \left(a_i \perp v \text{ and } a_i \not\perp w \right) \right\} \\
        & = \left\{i \in \{1, \ldots, K\} \mid (a_i \not\perp v) \text{ or } \left(a_i \not\perp w \right) \right\} \\
        & = O_{v} \cup O_{w}.
    \end{align*}
    We conclude the proof by taking $U = \cap_{i \in I, f_i \neq 0} U_i \cap U' \cap U''$.
\end{proof}

As an illustration of how to integrate Property~\eqref{eq:globv} into the local-global principle using Lemma~\ref{lem:tweakinit}, we first give a proof of the ``only if'' part of Theorem~\ref{thm:locglob}.
This is the easier implication in Theorem~\ref{thm:locglob}.
As a comparison, the ``only if'' parts of the cited results \cite[Theorem~1.3]{einsiedler2003does} and \cite[Proposition~3.4]{dong2023identity} are both immediate.

\begin{proof}[Proof of ``only if'' part of Theorem~\ref{thm:locglob}]
    Suppose we have $\bff \in \mM \cap \ApK$ satisfying Property~\eqref{eq:globv}.
    For \hyperref[item:locr]{(LocR)}, simply take $\bff_{r} \coloneqq \bff$ for all $r \in \Rpp^n$, then $\bff(r) \in \Rpp^K$.
    As for \hyperref[item:locinf]{(LocInf)}, for every $v \in \Rns$ we show that $\bff_{v} \coloneqq \bff$ satisfies Properties \hyperref[item:locinf]{(LocInf)}(a) and (b). 
    Property \hyperref[item:locinf]{(LocInf)}(a) is satisfied by the definition of $\bff$.
    We now show Property \hyperref[item:locinf]{(LocInf)}(b).
    
    When $w \in v \Rpp$, we have $O_{w} \cup J' = O_{v} \cup J$ and $M_{w}(I', \init_{v}(\bff)) = M_{v}(M_{v}(I, \bff), \init_{v}(\bff)) = M_{v}(I, \bff)$, so Property~\hyperref[item:locinf]{(LocInf)}(b) is equivalent to $\left(O_{v} \cup J\right) \cap M_{v}(I, \bff) \neq \emptyset$.
    This is satisfied by the definition of $\bff$.
    
    When $w \not\in v \Rpp$, let $U \subseteq D_n$ be the open neighbourhood of $v \Rpp$ defined in Lemma~\ref{lem:tweakinit}.
    Scaling $w$ by a small enough positive real we can suppose $(v + w)\Rpp \in U$.
    We have
    \[
    \init_{v + w}\left(\bff\right) = \init_{w}\left(\init_{v}(\bff)\right), \quad M_{v + w}(I, \bff) = M_{w}(I', \init_{v}(\bff)), \quad \text{and} \quad O_{v + w} = O_{v} \cup O_{w},
    \]
    where $I' = M_{v}(I, \bff)$.
    Therefore
    $
    \left(O_{v + w} \cup J\right) \cap M_{v + w}(I, \bff) = \left(O_{w} \cup O_{v} \cup J\right) \cap M_{w}(I', \init_{v}(\bff))
     = \left(O_{w} \cup J'\right) \cap M_{w}(I', \init_{v}(\bff)).
    $
    Since $\bff$ satisfies Property~\eqref{eq:globv}, we have $\left(O_{v + w} \cup J\right) \cap M_{v + w}(I, \bff) \neq \emptyset$.
    Therefore we also have $\left(O_{w} \cup J'\right) \cap M_{w}(I', \init_{v}(\bff)) \neq \emptyset$ for all $w \not\in v \Rpp$.
\end{proof}

We now start working towards proving the ``if'' part of Theorem~\ref{thm:locglob}.
The main idea is a ``gluing'' procedure inspired by the original proof of Einsiedler et al.~\cite{einsiedler2003does}.
The following lemma is the foundation of this gluing argument.

\begin{lem}\label{lem:Uv}
    Suppose $v \in \Rns$ and $\bff_{v} \in \mM$ satisfy properties \hyperref[item:locinf]{(LocInf)}(a) and (b) of Theorem~\ref{thm:locglob}.
    Then there exists an open neighbourhood $U_{v} \subseteq D_n$ of $v \Rpp$ such that for every $v'\Rpp \in U_v$,
    \begin{enumerate}[noitemsep, label = (\roman*)]
        \item $\init_{v'}\left(\bff_{v}\right) \in \ApK$.
        \item $(O_{v'} \cup J) \cap M_{v'}(I, \bff_{v}) \neq \emptyset$.
    \end{enumerate}
\end{lem}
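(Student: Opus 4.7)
The plan is to take $U_v$ to be exactly the open neighborhood $U \subseteq D_n$ of $v\Rpp$ supplied by Lemma~\ref{lem:tweakinit} applied to $\bff_v$, $I$, and $v$. The lemma provides, for every $w \in \Rns$ with $(v+w)\Rpp \in U$, the key identities
\[
\init_{v+w}(\bff_v) = \init_w(\init_v(\bff_v)), \qquad M_{v+w}(I, \bff_v) = M_w(I', \init_v(\bff_v)), \qquad O_{v+w} = O_v \cup O_w,
\]
where $I' = M_v(I, \bff_v)$. Given any representative $v'$ of a class $v'\Rpp \in U_v$ with $v'\Rpp \neq v\Rpp$, we set $w := v' - v \in \Rns$, so $v' = v + w$ and these identities apply.

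For conclusion (i), hypothesis \hyperref[item:locinf]{(LocInf)}(a) says each coordinate of $\init_v(\bff_v)$ is a nonzero polynomial with nonnegative coefficients. Since $\init_w$ of a polynomial simply extracts the monomials of maximal $w$-degree, it preserves both nonzeroness and the property of having nonnegative coefficients. Thus $\init_{v'}(\bff_v) = \init_w(\init_v(\bff_v)) \in \ApK$. For conclusion (ii), writing $J' = O_v \cup J$, the identities above give
\[
(O_{v'} \cup J) \cap M_{v'}(I, \bff_v) \;=\; (O_v \cup O_w \cup J) \cap M_w(I', \init_v(\bff_v)) \;=\; (O_w \cup J') \cap M_w(I', \init_v(\bff_v)),
\]
and this is nonempty by hypothesis \hyperref[item:locinf]{(LocInf)}(b) applied to $w$.

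The remaining case $v'\Rpp = v\Rpp$ (which Lemma~\ref{lem:tweakinit} excludes since it requires $w \neq 0$) is handled directly: here $\init_{v'} = \init_v$, $M_{v'}(I, \bff_v) = M_v(I, \bff_v) = I'$, and $O_{v'} = O_v$, so (i) is precisely \hyperref[item:locinf]{(LocInf)}(a), while (ii) follows by applying \hyperref[item:locinf]{(LocInf)}(b) with $w = v$, noting that on $I'$ the degrees $\deg_v(\init_v(f_i))$ and $\deg_v(f_i)$ agree so $M_v(I', \init_v(\bff_v)) = I'$, and $O_v \cup J' = J' \supseteq J$.

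No serious obstacle is anticipated: the lemma is essentially a repackaging of Lemma~\ref{lem:tweakinit} combined with \hyperref[item:locinf]{(LocInf)}(a),(b), the only mild subtlety being the separate treatment of the direction $v'\Rpp = v\Rpp$ forced by the nonzero requirement on $w$ in Lemma~\ref{lem:tweakinit}.
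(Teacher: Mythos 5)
Your proof is correct and follows essentially the same route as the paper: both invoke Lemma~\ref{lem:tweakinit} to obtain the neighbourhood $U_v$, then use (LocInf)(a) together with the fact that initial polynomials of positive polynomials remain positive for (i), and (LocInf)(b) via the three displayed identities for (ii). The separate treatment of $v'\Rpp = v\Rpp$ is unnecessary: Lemma~\ref{lem:tweakinit} only requires $w \in \Rns$, not $(v+w)\Rpp \neq v\Rpp$, so taking $w = v$ (which is nonzero) already covers that direction; your alternative handling is nonetheless correct.
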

\begin{proof}
    We use Lemma~\ref{lem:tweakinit} on $v, I$ and $\bff_{v}$ to obtain an open neighbourhood $U_{v} \subseteq D_n$ of $v \Rpp$,
    where for all $(v + w)\Rpp \in U_{v}$ we have
    \[
    \init_{v + w}\left(\bff_{v}\right) = \init_{w}\left(\init_{v}(\bff_{v})\right), \; M_{v + w}(I, \bff_{v}) = M_{w}(M_{v}(I, \bff_{v}), \init_{v}(\bff_{v})), \; \text{and} \; O_{v + w} = O_{v} \cup O_{w}.
    \] 
    Note that $\init_{v}(\bff_{v}) \in \ApK$ by Property~\hyperref[item:locinf]{(LocInf)}(a) of $\bff_{v}$.
    Since taking the initial polynomial of any polynomial in $\A^+$ yields an element of $\A^+$, we have $\init_{v + w}\left(\bff_{v}\right) = \init_{w}\left(\init_{v}(\bff_{v})\right) \in \ApK$.
     Furthermore, $(O_{v + w} \cup J) \cap M_{v + w}(I, \bff_{v}) = \left(O_{w} \cup J'\right) \cap M_{w}(I', \init_{v}(\bff_{v}))$, which is non-empty by Property~\hyperref[item:locinf]{(LocInf)}(b) of $\bff_{v}$.
    Therefore, both (i) and (ii) are satisfied for $v'\Rpp \in U_{v}$.
\end{proof}

The following lemma is a strengthening of \cite[Lemma~5.2]{einsiedler2003does}.
\begin{lem}\label{lem:finf}
    Suppose Condition \hyperref[item:locinf]{(LocInf)} of Theorem~\ref{thm:locglob} is satisfied.
    Then there exists $\bff \in \mM$ that satisfies 
    \begin{enumerate}[noitemsep, label = (\roman*)]
        \item $\init_{v}\left(\bff\right) \in \ApK$ for all $v \in \Rns$.
        \item $(O_{v} \cup J) \cap M_{v}(I, \bff) \neq \emptyset$ for all $v \in \Rns$.
    \end{enumerate}
\end{lem}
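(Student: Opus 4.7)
The strategy is a compactness-plus-gluing argument in the spirit of \cite[Lemma~5.2]{einsiedler2003does}. First, applying Condition~\hyperref[item:locinf]{(LocInf)} at every $v \in \Rns$ yields $\bff_v \in \mM$ satisfying (LocInf)(a) and (b); Lemma~\ref{lem:Uv} then produces an open neighbourhood $U_v \subseteq D_n$ of $v\Rpp$ on which both desired properties (i) and (ii) hold for $\bff_v$. Since $D_n$ is homeomorphic to the unit sphere and hence compact, I extract a finite subcover $U_{v_1}, \ldots, U_{v_m}$, and abbreviate $\bff_i \coloneqq \bff_{v_i}$. In particular $\init_v(\bff_i) \in \ApK$ for every $v\Rpp \in U_{v_i}$, so every component $f_{i,k}$ of $\bff_i$ is non-zero.

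The next step is to glue these pieces by setting
\[
\bff \;\coloneqq\; \sum_{i=1}^m \oX^{c_i}\,\bff_i \;\in\; \mM,
\]
for carefully chosen integer shift vectors $c_1, \ldots, c_m \in \Z^n$. The key convex-geometric input will be an integer polytope $P \subset \R^n$ having exactly $c_1, \ldots, c_m$ as its vertices, such that for each $i$ the closed normal cone $\overline{N_P(c_i)}$, projected into $D_n$ modulo $\Rpp$, is contained in $U_{v_i}$. Such a polytope can be produced by first taking a sufficiently fine simplicial refinement of $D_n$ subordinate to the cover $\{U_{v_i}\}$ and then realising its dual fan as the normal fan of an integer polytope. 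I expect this to be the main technical obstacle of the proof.

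With the $c_i$ so chosen, fix an arbitrary $v \in \Rns$ and let $F(v) \coloneqq \{i : v \cdot c_i = \max_j v \cdot c_j\}$ index the vertices of $P$ lying on its $v$-supporting face. For every $i \in F(v)$, the normal-cone condition forces $v\Rpp \in U_{v_i}$, so Lemma~\ref{lem:Uv} gives both $\init_v(\bff_i) \in \ApK$ and $(O_v \cup J) \cap M_v(I, \bff_i) \neq \emptyset$. For each component index $k$ one then computes
\[
\init_v(\bff_k) \;=\; \sum_{i \in F(v)} \oX^{c_i}\,\init_v(f_{i,k}),
\]
because the terms with $i \notin F(v)$ have strictly smaller $v$-degree and, among the dominant terms, there is no cancellation since each $\init_v(f_{i,k})$ has non-negative coefficients (being a component of an element of $\ApK$). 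Since at least one $\init_v(f_{i,k})$ is non-zero per index $k$, we obtain $\init_v(\bff) \in \ApK$, proving (i).

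For (ii), pick any $i^* \in F(v)$ that maximises $\max_{l \in I} \deg_v(f_{i,l})$ over $i \in F(v)$. Using that $v \cdot c_i$ is constant over $F(v)$, a short direct degree computation shows $M_v(I, \bff) \supseteq M_v(I, \bff_{i^*})$; applying Lemma~\ref{lem:Uv}(ii) to $\bff_{i^*}$ at $v$ then yields $(O_v \cup J) \cap M_v(I, \bff_{i^*}) \neq \emptyset$, whence (ii) follows. The only non-routine ingredient is the existence of the polytope $P$ with normal fan refining the open cover $\{U_{v_i}\}$ of $D_n$; once $P$ is in hand the argument is essentially formal, and the new feature compared to \cite{einsiedler2003does} is the automatic preservation of (ii), which depends only on the $v$-supporting face of $P$ and on $M_v(I,\cdot)$, both of which behave well under the monomial shifts $\oX^{c_i}$.
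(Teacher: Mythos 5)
Your proposal shares the paper's high-level strategy — glue the local solutions $\bff_i$ together with monomial shifts $\oX^{c_i}$ anchored at a finite subcover of $D_n$ — but the degree bookkeeping at the heart of the gluing step is incorrect as written. You assert
\[
\init_v(\bff_k) \;=\; \sum_{i \in F(v)} \oX^{c_i}\,\init_v(f_{i,k}),
\]
on the grounds that terms with $i \notin F(v)$ have strictly smaller $v$-degree. But the $v$-degree of $\oX^{c_i} f_{i,k}$ is $v \cdot c_i + \deg_v(f_{i,k})$, not just $v \cdot c_i$, so a non-maximal vertex $c_i$ may still contribute a dominant monomial whenever $\deg_v(f_{i,k})$ is large. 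More seriously, this cannot be repaired merely by scaling $P$ by a large factor $r$: as $v\Rpp$ approaches a wall of the normal fan, the gap $\max_j v\cdot c_j - v\cdot c_{i'}$ for a vertex $c_{i'}$ adjacent to but off the $v$-face shrinks to $0$, while the polynomial-degree spread stays bounded away from $0$, so no single $r$ gives a uniform domination for all $v$.

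The paper's proof of this lemma handles exactly this with a two-radius Lebesgue-number argument: the outer radius $\lambda$ guarantees $v\Rpp \in U_{v'_j}$ for every $j$ with $\|v - v_j\| < \lambda$, the inner radius $\kappa$ produces a uniform lower bound $\delta \geq \kappa(\lambda - \kappa/2) > 0$ on the separation $v\cdot v_j - v\cdot v_{j'}$ between indices inside $B(v,\lambda-\kappa)$ and outside $B(v,\lambda)$, and $r$ is then chosen so large that the shifts $\oX^{w_j}$ (with $w_j$ an integer point near $rv_j$) make indices outside $B(v,\lambda)$ strictly subdominant. Crucially, the paper never claims the contributing set is a single normal cone or face; it only shows the contributing set is \emph{contained} in the good set $\{j : \|v-v_j\|<\lambda\}$, and that is enough. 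To rescue your normal-fan formulation you would need an analogous buffer: replace $F(v)$ with the set of vertices whose closed normal cone lies within a uniform angular distance of $v\Rpp$, check these are still inside the corresponding $U_{v_i}$, and then prove (after a suitable scaling) that all other vertices are strictly dominated. As a secondary point, the existence of an integer polytope whose normal fan refines an arbitrary fine simplicial subdivision of the sphere is also stated too casually — not every complete simplicial fan is the normal fan of a polytope, so one must pass to a regular (coherent) rational refinement; this part is fixable but not free.
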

\begin{proof}
    The main steps of our proof follow that of \cite[Lemma~5.2]{einsiedler2003does}.
    See Figure~\ref{fig:finf} for an illustration of this proof.
    For each $v \in \Rns, ||v|| = 1$, let $\bff_{v} \in \mM$ satisfy Properties \hyperref[item:locinf]{(LocInf)}(a) and (b).
    Let $U_{v} \subseteq D_n$ be an open neighbourhood of $v\Rpp$ defined in Lemma~\ref{lem:Uv}.
    The sets $\{U_{v} \mid v \in \Rns, ||v|| = 1\}$ form an open cover of the compact set $D_n$.
    We identify $D_n$ with the unit sphere in $\R^n$, and consider the metric on $D_n$ inherited from $\R^n$.

    Let $2 \lambda < 1$ be a Lebesgue number~\cite[Lemma~7.2]{munkres1974topology} of the open covering $\{U_{v} \mid v \in \Rns, ||v|| = 1\}$, meaning every ball of radius $\lambda$ in $D_n$ is contained in some $U_{v}$.
    Take a finite collection of balls of radius $\lambda$ which cover $D_n$, and label their centers $v_1, \ldots, v_m$. 
    Note that each ball $B(v_j, \lambda), j = 1, \ldots, m$, is contained in some $U_{v'_j}$.
    Let $\bF_j \coloneqq \bff_{v'_j}$, so that $\init_{v}(\bF_j) \in \ApK$ and $(O_{v} \cup J) \cap M_{v}(I, \bF_j) \neq \emptyset$ for all $v \in B(v_j, \lambda)$ (by Lemma~\ref{lem:Uv}). 
    Let $2 \kappa$ be a Lebesgue number for the cover $\{B(v_j, \lambda) \mid j = 1, \ldots, m\}$ of $D_n$. 
    Then for any $v \in D_n$ there exists $j \in \{1, \ldots, m\}$ such that $B(v, \kappa) \subset B(v_j, \lambda)$ and, in particular, $||v_j - v|| < \lambda - \kappa$.

    Let $\delta$ be the infimum of
    \[
    \left\{v \cdot  v_j - v \cdot  v_{j'} \;\middle|\; ||v|| = 1, ||v_j - v|| < \lambda - \kappa, ||v_{j'} - v|| \geq \lambda, j, j' = 1, \ldots, m \right\}.
    \]
    Note that $\delta \geq \kappa(\lambda - \frac{\kappa}{2})$ since for all $v, w, w'$ of norm one we have
    \[
    v \cdot  w - v \cdot  w' = \frac{1}{2} \left(||w' - v||^2 - ||w - v||^2\right).
    \]
    Choose $r$ large enough so that $\deg_{v}(F_{j, i}) < \frac{\delta}{2} r - \frac{\sqrt{n}}{2}$ for all $v \in D_n$, $i = 1, \ldots, K$ and $j = 1, \ldots, m$.

    For $j = 1, \ldots, m$ pick $w_j \in \Z^n$ such that $||w_j - r v_j|| \leq \frac{\sqrt{n}}{2}$.
    Let
    \[
    \bff \coloneqq \sum_{j = 1}^m \oX^{w_j} \bF_j.
    \]
    We show that $\bff$ satisfies both conditions (i) and (ii).
    Consider any $v \in \Rns$ with norm one.
    Let $j \in \{1, \ldots, m\}$ be such that $||v - v_j|| < \lambda - \kappa$.
    For a $j' \in \{1, \ldots, m\}$ with $||v - v_{j'}|| \geq \lambda$ we have
    \begin{align}\label{eq:maxmin}
    \max_{1 \leq i \leq K}\{\deg_{v}(\oX^{w_{j'}} F_{j', i})\} & = v \cdot w_{j'} + \max_{1 \leq i \leq K}\{\deg_{v}(F_{j', i})\} \nonumber\\
    & < v \cdot w_{j'} + \frac{\delta}{2} r - \frac{\sqrt{n}}{2} \nonumber\\
    & \leq rv \cdot  v_{j'} + \frac{\delta}{2} r \nonumber \\
    & \leq rv \cdot  v_{j} - r ||v|| \cdot ||v_{j} - v_{j'}|| + \frac{\delta}{2} r \nonumber \\
    & \leq rv \cdot  v_{j} - \frac{\delta}{2} r \nonumber \\
    & \leq v \cdot w_{j} + \frac{\sqrt{n}}{2} - \frac{\delta}{2} r \nonumber\\
    & < v \cdot w_{j} + \min_{1 \leq i \leq K}\{\deg_{v}(F_{j, i})\} \nonumber\\
    & = \min_{1 \leq i \leq K}\{\deg_{v}(\oX^{w_j} F_{j, i})\}.
    \end{align}
    For the remaining indices $j'$ with $||v - v_{j'}|| < \lambda$ we already know that $\init_{v}(\bF_{j'}) \in \ApK$.
    Since there can be no cancellation with those initial parts, we get
    \[
    \init_{v}(\bff) = \init_{v}\left(\sum_{j' \colon ||v - v_{j'}|| < \lambda} \oX^{w_{j'}} \bF_{j'} \right) \in \ApK.
    \]
    Therefore $\bff$ satisfies condition (i).
    For condition (ii), take any $v \in \Rns$, let $j' \in \{1, \ldots, m\}$ be such that $\max_{i \in I}\{\deg_{v}(\oX^{w_{j'}} F_{j', i})\} = \max_{1 \leq j \leq m} \max_{i \in I}\{\deg_{v}(\oX^{w_{j}} F_{j, i})\}$.
    We must have $||v - v_{j'}|| < \lambda$.
    Indeed, if we had $||v - v_{j'}|| \geq \lambda$ then there exists $j \in \{1, \ldots, m\}$ be such that $||v - v_j|| < \lambda - \kappa$ and the contradiction $\max_{i \in I}\{\deg_{v}(\oX^{w_{j'}} F_{j', i})\} < \max_{i \in I}\{\deg_{v}(\oX^{w_j} F_{j, i})\}$ follows from Inequality~\eqref{eq:maxmin}.
    
    We will now show $M_{v}(I, \bF_{j'}) \subseteq M_{v}(I, \bff)$.
    Take any $i' \in M_{v}(I, \bF_{j'})$, we show $i' \in M_{v}(I, \bff)$.
    
    On one hand, since $||v - v_{j'}|| < \lambda$, so $\init_{v}(\bF_{j'}) \in \ApK$.
    We have
    \begin{multline}\label{eq:maxi}
        \deg_{v}(\oX^{w_{j'}} F_{j', i'}) = \max_{i \in I}\{\deg_{v}(\oX^{w_{j'}} F_{j', i})\} = \max_{1 \leq j \leq m} \max_{i \in I} \{\deg_{v}(\oX^{w_{j}} F_{j, i})\} \\
        = \max_{i \in I} \max_{j : ||v - v_j|| < \lambda} \{\deg_{v}(\oX^{w_{j}} F_{j, i})\} = \max_{i \in I} \deg_{v}\left(\sum_{j : ||v - v_j|| < \lambda} \oX^{w_j} F_{j,i}\right) = \max_{i \in I} \deg_{v}\left(\sum_{j = 1}^m \oX^{w_j} F_{j,i}\right) \\
        = \max_{i \in I} \deg_{v}(f_i),
    \end{multline}
    since there can be no cancellation when summing $\init_{v}(\oX^{w_j} F_{j,i}) \in \A^+$ for $j, ||v - v_j|| < \lambda$.

    On the other hand,
    \[
        \deg_{v}(\oX^{w_{j'}} F_{j', i'}) = \max_{1 \leq j \leq m} \max_{i \in I} \{\deg_{v}(\oX^{w_{j}} F_{j, i})\} \geq \max_{1 \leq j \leq m}\{\deg_{v}(\oX^{w_{j}} F_{j, i'})\}.
    \]
    So $\deg_{v}(\oX^{w_{j'}} F_{j', i'}) = \max_{1 \leq j \leq m}\{\deg_{v}(\oX^{w_{j}} F_{j, i'})\}$.
    Hence,
    \begin{multline*}
    \deg_{v}(\oX^{w_{j'}} F_{j', i'}) = \max_{1 \leq j \leq m}\{\deg_{v}(\oX^{w_{j}} F_{j, i'})\} = \max_{j : ||v - v_j|| < \lambda}\{\deg_{v}(\oX^{w_{j}} F_{j, i'})\} \\
    = \deg_{v}\left( \sum_{j : ||v - v_j|| < \lambda}\oX^{w_{j}} F_{j, i'}\right)
    = \deg_{v}\left( \sum_{j = 1}^m \oX^{w_{j}} F_{j, i'}\right) = \deg_{v}(f_{i'})
    \end{multline*}
    since there can be no cancellation when summing $\init_{v}(\oX^{w_{j}} F_{j, i'}) \in \A^+$ for $j, ||v - v_j|| < \lambda$.
    
    Hence $\deg_{v}(f_{i'}) = \deg_{v}(\oX^{w_{j'}} F_{j', i'}) = \max_{i \in I} \deg_{v}(f_i)$, which yields $i' \in M_{v}(I, \bff)$.
    Since this holds for all $i' \in M_{v}(I, \bF_{j'})$, we have shown $M_{v}(I, \bF_{j'}) \subseteq M_{v}(I, \bff)$.
    Thus, $(O_{v} \cup J) \cap M_{v}(I, \bff) \supseteq (O_{v} \cup J) \cap M_{v}(I, \bF_{j'}) \neq \emptyset$.
    Therefore $\bff$ satisfies condition (ii).
\end{proof}

\begin{figure}[h!]
        \centering
        \includegraphics[width=1\textwidth,height=0.6\textheight,keepaspectratio, trim={0cm 0cm 0cm 0cm},clip]{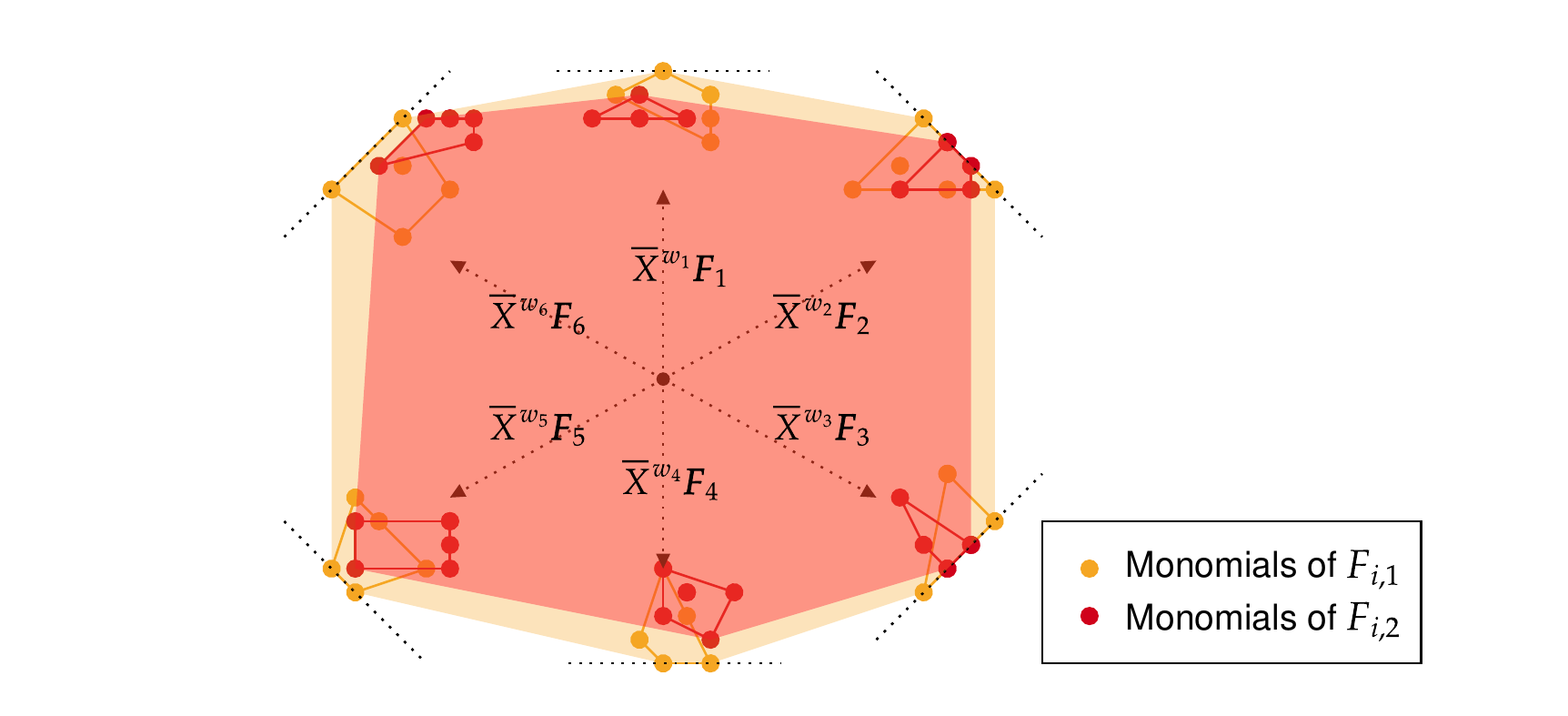}
        \caption{Illustration of proof of Lemma~\ref{lem:finf}. In this example, $a_1 = a_2 = (0,0), I = \{1, 2\}, J = \{1\}$. That is, condition~(ii) requires $1 \in M_v(\{1, 2\}, \bff)$ for all $v \in \Rns$}.
        \label{fig:finf}
\end{figure}

Denote by $\bff_{\infty}$ the element $\bff \in \mM$ obtained in Lemma~\ref{lem:finf}.
Since $\init_{v}\left(\bff_{\infty}\right) \in \ApK$ for all $v \in \Rns$, there exists $c > 1$ such that $\bff_{\infty}(x) \in \Rpp^K$ for all $x \in \Rpp^n \setminus [1/c, c]^n$.
Define the compact set $C \coloneqq [1/(4nc), 4nc]^n$.

\begin{lem}\label{lem:fC}
    Let $\mM$ be an $\A$-submodule of $\A^K$ and $C \subset \Rpp^n$ be a compact set.
    Suppose for all $r \in C$ there exists $\bff_{r} \in \mM$ with $\bff_{r}(r) \in \Rpp^K$.
    Then there exists $\bff \in \mM$ such that 
    $
        \bff(x) \in \Rpp^{n}
    $
    for all $x \in C$.
\end{lem}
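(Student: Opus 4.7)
The plan is to combine a compactness argument with a Stone--Weierstrass density argument to turn the pointwise positivity data into a single element of $\mM$.

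First, for each $r \in C$, by continuity of polynomial evaluation, the strict positivity $\bff_r(r) \in \Rpp^K$ extends to an open neighbourhood $U_r \subseteq C$ of $r$ on which $\bff_r$ remains in $\Rpp^K$ coordinate-wise. Since $C$ is compact, I extract a finite subcover $U_{r_1}, \ldots, U_{r_m}$. The task now reduces to ``gluing'' the finitely many elements $\bff_{r_1}, \ldots, \bff_{r_m} \in \mM$ into a single $\bff \in \mM$ whose coordinates are all strictly positive on the whole of $C$.

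Next, I pick a continuous partition of unity $\phi_1, \ldots, \phi_m \colon C \to \Rp$ subordinate to the cover, with $\sum_j \phi_j \equiv 1$ and $\mathrm{supp}(\phi_j) \subseteq U_{r_j}$. Then the continuous map $\bg(x) \coloneqq \sum_{j=1}^m \phi_j(x)\, \bff_{r_j}(x)$ takes values in $\Rpp^K$ on $C$: at each $x$ some $\phi_{j_0}(x) > 0$ and the corresponding summand lies in $\Rpp^K$, while the remaining summands are in $\Rp^K$. By compactness every coordinate of $\bg$ is bounded below by some $\mu > 0$.

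The main step is to replace the continuous weights $\phi_j$ by Laurent polynomials. Viewed as functions on the compact set $C \subset \Rpp^n$, the algebra $\A = \R[\oX^\pm]$ contains the constants and separates points (via the coordinate monomials $X_1, \ldots, X_n$), so by the real Stone--Weierstrass theorem it is dense in $C(C, \R)$ under the uniform norm. Thus for any $\epsilon > 0$ I can choose $h_j \in \A$ with $\|h_j - \phi_j\|_\infty < \epsilon$. Setting $B \coloneqq \max_{j, i} \|f_{r_j, i}\|_\infty$ (finite by compactness) and taking $\epsilon < \mu/(mB)$, each coordinate of $\sum_j h_j(x)\, \bff_{r_j}(x)$ differs from the corresponding coordinate of $\bg(x)$ by at most $mB\epsilon < \mu$, hence remains strictly positive. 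Defining $\bff \coloneqq \sum_{j=1}^m h_j \bff_{r_j}$, I get $\bff \in \mM$ (because $\mM$ is an $\A$-module) and $\bff(x) \in \Rpp^K$ for every $x \in C$, as required.

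The only real subtlety is the Stone--Weierstrass approximation: one must verify that $\A$, restricted to $C$, is indeed a point-separating unital subalgebra of $C(C, \R)$, which is immediate since $C \subset \Rpp^n$ and the single monomials $X_i$ already separate points. No deep ingredient beyond this is needed; the module structure of $\mM$ does all the remaining work.
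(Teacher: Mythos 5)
Your proof is correct and follows essentially the same strategy as the paper's: both reduce by compactness to finitely many $\bff_{r_j}$ and then glue them using polynomial weights in $\A$ that approximate bump-like functions, exploiting that a sufficiently small uniform perturbation preserves strict positivity on the compact set $C$. The paper directly asserts the existence of suitable approximating polynomials $q_i$, while you make this step explicit via a continuous partition of unity followed by a Stone--Weierstrass approximation; this is a cosmetic rather than substantive difference.
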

\begin{proof}
    For each $r \in C$, by the continuity of polynomial functions, there is an open ball $B(r, b_{r})$, centered at $r$, with radius $b_{r}$, such that $\bff_{r}(x) \in \Rpp^{n}$ for all $x \in B(r, b_{r})$.
    
    Consider the open cover $B(r, \frac{b_{r}}{2}), r \in C$ of the set $C$.
    Since $C$ is compact, there is a finite subcover, which we denote by $B(r_1, \frac{b_{r_1}}{2}), \cdots, B(r_m, \frac{b_{r_m}}{2})$.

    Fix any small enough $\delta > 0$. 
    For each $1 \leq i \leq m$, there exists a  polynomial $q_i \in \A$ such that $|q_i (x)| < \delta$ for all $x \in C \setminus B(r_i, b_{r_i})$ and $|q_i (x)| > 1 - \delta$ for all $x \in B(r_i, \frac{b_{r_i}}{2})$.
    Therefore, for a small enough $\delta$, the sum $\bff \coloneqq \sum_{i = 1}^m q_i \cdot \bff_{r_i} \in \mM$ satisfies $\bff(x) \in \Rpp^{n}$ for all $x \in C$.
\end{proof}

Denote by $\bff_{C}$ the element $\bff \in \mM$ obtained in Lemma~\ref{lem:fC}.
We also need the following theorem from Handelman:
\begin{thrm}[{Handelman's Theorem~\cite{de2001handelman}, \cite[V.6.\ Theorem C]{handelman1985positive}}]\label{lem:Handelman}
    Let $f \in \A$ be a polynomial.
    There exists $g \in \A^+$ such that $fg \in \A^+$ if and only if the two following conditions are satisfied:
    \begin{enumerate}[noitemsep, label = (\roman*)]
        \item For all $r \in \Rpp^n$, we have $f(r) > 0$.
        \item For all $v \in \Rns$ and $r \in \Rpp^n$, we have $\init_{v}(f)(r) > 0$.
    \end{enumerate}
\end{thrm}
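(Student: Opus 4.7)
The plan is to dispatch the ``only if'' direction by direct evaluation, and to attack the ``if'' direction by recasting the two hypotheses as strict positivity of $f$ on a natural compactification of $\Rpp^n$, to which a P\'olya-style density theorem can be applied.

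For the ``only if'' direction, assume $g \in \A^+$ with $fg \in \A^+$. Since $\A = \R[\oX^{\pm}]$ is an integral domain and $\init_v(f), \init_v(g)$ are both non-zero, no cancellation among the top-$v$-degree monomials of $fg$ can occur, so the identity $\init_v(fg) = \init_v(f)\init_v(g)$ holds. Any non-zero element of $\A^+$ is strictly positive on $\Rpp^n$ (all coefficients are non-negative and not all zero). Evaluating $g$ and $fg$ at $r \in \Rpp^n$ gives $g(r) > 0$ and $(fg)(r) > 0$, hence $f(r) > 0$, establishing (i). The same evaluation applied to the identity above, with $\init_v(fg), \init_v(g) \in \A^+$, forces $\init_v(f)(r) > 0$, establishing (ii).

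For the ``if'' direction, the key observation is that the two hypotheses together express strict positivity of $f$ on a compactification of $\Rpp^n$ built from the Newton polytope $N(f)$: each proper face $F$ of $N(f)$ corresponds to a boundary stratum, and the asymptotic behaviour of $f$ along that stratum is controlled by $\init_v(f)$ for any $v \in \Rns$ that exposes $F$. Thus (ii) ensures strict positivity on every boundary stratum, while (i) covers the open part, so a normalized form of $f$ is strictly positive on a compact set. One then invokes a suitable P\'olya-type theorem: a polynomial strictly positive on a compact polytope, after multiplication by a sufficiently high power of a ``uniformly positive'' polynomial (e.g.\ by a polynomial whose Newton polytope is a sufficiently large dilate of $N(f)$), acquires only non-negative coefficients. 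Unwinding the normalization yields $g \in \A^+$ with $fg \in \A^+$.

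The main obstacle I anticipate is the simultaneous control of all boundary strata. A natural route is induction on $\dim N(f)$: apply the inductive hypothesis to each initial polynomial $\init_v(f)$ (which, after dividing by an appropriate monomial, lives on a proper face and hence has lower-dimensional Newton polytope) to obtain a face-wise certificate $g_v \in \A^+$, and then glue these certificates into a single $g$ via a partition-of-unity construction $g = \sum_v q_v g_v$, with each $q_v \in \A^+$ localising the contribution to a neighbourhood of the face exposed by $v$. The quantitative content of the P\'olya-type theorem is what guarantees that the unwanted cancellations introduced by this sum are dominated by a high enough power of an auxiliary element of $\A^+$, yielding a product that is still in $\A^+$.
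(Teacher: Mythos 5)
The paper does not prove this statement; it is cited from \cite{de2001handelman} and \cite[V.6.\ Theorem~C]{handelman1985positive} as Handelman's theorem, so there is no internal proof to compare against. Taking your sketch on its own terms: the ``only if'' direction is correct and complete. Because $\A$ is a domain, $\init_v(f)\init_v(g)$ is a nonzero element concentrated in the top weighted degree, so $\init_v(fg) = \init_v(f)\init_v(g)$; every element of $\A^+$ is strictly positive on $\Rpp^n$; and evaluating $g$, $fg$, $\init_v(g)$, $\init_v(fg) \in \A^+$ at $r$ forces $f(r) > 0$ and $\init_v(f)(r) > 0$.

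The ``if'' direction, however, is not a proof: the step where all the content lies is discharged by an appeal to ``a suitable P\'olya-type theorem'' that, in this Laurent setting, \emph{is} the theorem you are trying to prove. Classical P\'olya requires a homogeneous polynomial positive on the \emph{entire} closed simplex; here, after linearising $f$ by treating its monomials $\oX^a$ as fresh variables $y_a$, positivity is only known on the closure of the non-polyhedral, non-semialgebraic image of $\Rpp^n$ cut out by the toric relations $y_a y_b = y_c y_d$ ($a+b=c+d$). Converting positivity on that set into the existence of $g \in \A^+$ with $fg \in \A^+$ is exactly what Handelman's argument accomplishes and is not supplied by any off-the-shelf P\'olya statement. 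The alternative route you name --- induction on $\dim N(f)$ together with a partition-of-unity gluing $g = \sum_v q_v g_v$ of face-wise multipliers --- is only gestured at: you give no control of the cross-terms this sum introduces into $fg$, and the assertion that a high power of an auxiliary positive element dominates them is precisely the quantitative estimate that would need proving. The strategy is in the right spirit (face-by-face analysis plus a compactness argument does underlie the known proofs), but as written the ``if'' direction does not close.
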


\begin{cor}\label{cor:Handelman}
    Let $\bff \in \A^{K}$.
    There exists $g \in \A^+$ such that $g \bff \in \left(\A^+\right)^{K}$ if and only if the two following conditions are satisfied:
    \begin{enumerate}[noitemsep, label = (\roman*)]
        \item For all $r \in \Rpp^n$, we have $\bff(r) \in \Rpp^{K}$.
        \item For all $v \in \Rns$ and $r \in \Rpp^n$, we have $\init_{v}(\bff)(r) \in \Rpp^{K}$.
    \end{enumerate}
\end{cor}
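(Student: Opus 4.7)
The plan is to derive this corollary coordinate-by-coordinate from Handelman's Theorem, using the fact that non-zero Laurent polynomials with non-negative real coefficients form a multiplicatively closed set that takes strictly positive values on $\Rpp^n$.

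For the ``only if'' direction, I would assume $g \in \A^+$ with $g\bff \in (\A^+)^K$ and recover both conditions (i) and (ii) by a standard non-vanishing argument. Since any element $h \in \A^+$ is a non-zero Laurent polynomial with non-negative coefficients, $h(r) > 0$ for every $r \in \Rpp^n$. Applying this to $g$ and to each $gf_i$, the equality $f_i(r) = (gf_i)(r)/g(r)$ yields $f_i(r) > 0$, giving (i). For (ii), I would use the standard identity $\init_v(gf_i) = \init_v(g)\,\init_v(f_i)$, which holds because there is no cancellation among the top-degree terms when multiplying two polynomials in $\A^+$. Since $\init_v(g f_i) \in \A^+$ and $\init_v(g) \in \A^+$ (initial parts of polynomials in $\A^+$ are again in $\A^+$), the same non-vanishing argument applied to $\init_v(f_i)$ gives (ii).

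For the ``if'' direction, I would first observe that (i) forces $f_i \neq 0$ for every $i$, since otherwise $f_i(r) = 0 \not\in \Rpp$. Then I apply Theorem~\ref{lem:Handelman} to each individual polynomial $f_i$: conditions (i) and (ii) of that theorem are simply the $i$-th coordinate projections of our hypotheses, so for each $i \in \{1, \ldots, K\}$ there exists $g_i \in \A^+$ with $g_i f_i \in \A^+$. The natural candidate is then $g \coloneqq g_1 g_2 \cdots g_K$, which lies in $\A^+$ because $\A^+$ is closed under products (the coefficients of a product of two polynomials with non-negative coefficients are again non-negative, and the product of two non-zero Laurent polynomials over $\R$ is non-zero). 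For each $i$, one writes $g f_i = \bigl(\prod_{j \neq i} g_j\bigr) \cdot (g_i f_i)$, which is a product of elements of $\A^+$ and hence lies in $\A^+$. Therefore $g \bff \in (\A^+)^K$, as required.

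There is no substantial obstacle here: the work is already done by Handelman's Theorem, and the only point that needs care is passing from a separate denominator $g_i$ for each coordinate to a single common $g$, which is resolved by the multiplicative closure of $\A^+$. The argument does not require any of the more intricate machinery (e.g.\ Gr\"{o}bner bases or the face structure) developed elsewhere in the paper; it is essentially a bookkeeping lemma that packages Handelman's scalar criterion into the vector-valued form needed for the subsequent decidability arguments in Section~\ref{sec:dec}.
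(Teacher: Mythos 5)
Your proposal is correct and takes essentially the same route as the paper: apply Handelman's scalar theorem coordinate-by-coordinate to produce $g_1, \ldots, g_K \in \A^+$ and multiply them to obtain a single common $g$. The paper dispatches the ``only if'' direction with the word ``obviously,'' and your argument simply fills in the routine details (positivity of $\A^+$ polynomials on $\Rpp^n$ and $\init_v(gf_i) = \init_v(g)\init_v(f_i)$, the latter holding because $\A$ is an integral domain rather than by any special property of $\A^+$); the ``if'' direction is identical to the paper's.
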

\begin{proof}
    If there exists $g \in \A^+$ such that $g \cdot \bff \in \left(\A^+\right)^{K}$, then obviously (i) and (ii) are satisfied.

    One the other hand, let $\bff \in \A^{k}$ satisfying (i) and (ii).
    By Handelman's theorem (Theorem~\ref{lem:Handelman}), there exist $g_1, \ldots, g_{K} \in \A^{+}$ such that $f_1 g_1 \in \A^+, \ldots, f_{K} g_{K} \in \A^+$.
    Let $g \coloneqq g_1 g_2 \cdots g_{n}$, then $g \bff \in \left(\A^+\right)^{K}$.
\end{proof}

We are now ready to prove the ``if'' part of Theorem~\ref{thm:locglob} by ``gluing'' together the elements $\bff_{\infty}, \bff_C \in \mM$ obtained respectively in Lemma~\ref{lem:finf} and \ref{lem:fC}.
\begin{proof}[Proof of ``if'' part of Theorem~\ref{thm:locglob}]
    Let $\bff_{\infty}, \bff_C \in \mM$ be the elements obtained respectively in Lemma~\ref{lem:finf} and \ref{lem:fC}.
    Define the  polynomial
    \[
    q \coloneqq \frac{1}{2nc}\sum_{i = 1}^n(X_i + X_i^{-1}) \in \A.
    \]
    Let $\epsilon > 0$ be such that 
    \begin{equation}\label{eq:ineqepapp}
        \epsilon \cdot \bff_{\infty}(x) + \bff_{C}(x) \in \Rpp^{n}
    \end{equation}
    for all $x \in C$.
    Such a $\epsilon$ exists by the compactness of $C$.
    We claim that there exists $N \in \N$ such that the vector $\bff \coloneqq \epsilon q^N \cdot \bff_{\infty} + \bff_{C}$ satisfies Conditions~(i) and (ii) in Corollary~\ref{cor:Handelman} simultaneously.

    Let $M \in \N$ be such that $\deg_{v}(f_{\infty, i}) + M \cdot \min_{||w|| = 1} \deg_w(q) > \deg_{v}(f_{C, i})$ for all $v \in \Rns, ||v|| = 1$ and $i = 1, \ldots, K$.
    Such an $M$ exists by the compactness of the unit sphere.
    Let $\bg \coloneqq \epsilon q^M \cdot \bff_{\infty} + \bff_{C}$.
    Then for all $v \in \Rns, i = 1, \ldots, K,$ we have $\deg_v(\epsilon q^M \cdot f_{\infty, i}) = M \cdot \deg_v(q) + \deg_v(f_{\infty, i}) > \deg_v(f_{C, i})$.
    Therefore $\init_{v}(\bg) = \init_{v}(\bff_{\infty}) \in \ApK$ for all $v \in \Rns$.
    Therefore, there exists another compact set $[1/d, d]^n \supset C$ such that $\bg(x) \in \Rpp^{K}$ for all $x \in \Rpp^n \setminus [1/d, d]^n$.
    Since $[1/d, d]^n \supset C = [1/(4nc), 4nc]^n$, we have $d \geq 4nc$.
    Since $[1/d, d]^n$ is compact, there exists $N > M$ such that
    \begin{equation}\label{eq:ineq2Napp}
    \epsilon f_{\infty, i}(x) \cdot 2^{N} + f_{C, i}(x) > 0
    \end{equation}
    for all $i = 1, \ldots, K$, and all $x \in [1/d, d]^n$.
    We prove that for this $N$, the vector $\bff \coloneqq \epsilon q^N \cdot \bff_{\infty} + \bff_{C}$ satisfies Conditions~(i) and (ii) in Corollary~\ref{cor:Handelman}  simultaneously.

    Fix any $i \in \{1, \ldots, K\}$.
    For every $x \in \Rpp^n \setminus [1/d, d]^n$, we have $q(x) > \frac{d}{2nc} \geq 1$, so
    \[
    f_{i}(x) \coloneqq \epsilon q(x)^N \cdot f_{\infty, i}(x) + f_{C, i}(x) \geq \epsilon q(x)^M \cdot f_{\infty, i}(x) + f_{C, i}(x) = g_{i}(x) > 0.
    \]
    
    For every $x \in [1/d, d]^n \setminus C$, we have $x_{i'} \geq 4nc$ for at least one $i' \in \{1, \ldots, K\}$, so
    \[
    f_{i}(x) = \epsilon f_{\infty, i}(x) \cdot \left(\sum_{i' = 1}^n\frac{x_{i'} + x_{i'}^{-1}}{2nc}\right)^N + f_{C, i}(x) \geq \epsilon f_{\infty, i}(x) \cdot 2^N + f_{C, i}(x) > 0
    \]
    by Inequality~\eqref{eq:ineq2Napp} and $\sum_{i = 1}^n(x_i + x_i^{-1}) > x_{i'} \geq 4nc$.
    
    For every $x \in C \setminus [1/c, c]^n$,
    \[
    f_{i}(x) = \epsilon q(x)^N \cdot f_{\infty, i}(x) + f_{C, i}(x) > 0
    \]
    since $f_{\infty, i}(x) > 0$ for all $x \not\in [1/c, c]^n$ and $f_{C, i}(x) > 0$ for all $x \in C$.
    
    For every $x \in [1/c, c]^n$,
    \[
    f_{i}(x) = \epsilon f_{\infty, i}(x) \cdot \left(\sum_{i = 1}^n\frac{x_i + x_i^{-1}}{2nc}\right)^N + f_{C, i}(x) \geq \min\{\epsilon f_{\infty, i}(x), 0\} + f_{C, i}(x) > 0
    \]
    by $\sum_{i = 1}^n(x_i + x_i^{-1}) < 2nc$.
    In fact, if $f_{\infty, i}(x) \geq 0$ then $f_{\infty, i}(x) \cdot \left(\sum_{i = 1}^n\frac{x_i + x_i^{-1}}{2nc}\right)^N \geq 0$, otherwise $\left(\sum_{i = 1}^n\frac{x_i + x_i^{-1}}{2nc}\right)^N \leq 1$ so $f_{\infty, i}(x) \cdot \left(\sum_{i = 1}^n\frac{x_i + x_i^{-1}}{2nc}\right)^N \geq f_{\infty, i}(x)$.
    Therefore, for every $x \in \Rpp^n$, we have $f_i(x) > 0$.
    In other words, $\bff$ satisfies Conditions~(i) in Corollary~\ref{cor:Handelman}.
    
    Furthermore, since $N > M$ we have $\deg_{v}(q^N \cdot f_{\infty, i}) > \deg_{v}(f_{C, i})$ for $i = 1, \ldots, K, v \in \Rns$.
    Hence $\init_{v}(\bff) = \init_{v}(\bff_{\infty}) \in \ApK$ and $M_{v}(I, \bff) = M_{v}(I, \bff_{\infty})$ for all $v \in \Rns$.
    Therefore, $\bff$ satisfies Conditions~(ii) in Corollary~\ref{cor:Handelman}.
    
    Therefore, by Corollary~\ref{cor:Handelman}, we have find $g \in \A^+$ such that $g \bff \in \ApK$.
    We have at the same time $g \bff \in \mM$ as well as $\left(O_{v} \cup J\right) \cap M_{v}(I, g \bff) = \left(O_{v} \cup J\right) \cap M_{v}(I, \bff) = \left(O_{v} \cup J\right) \cap M_{v}(I, \bff_{\infty}) \neq \emptyset$ for all $v \in \Rns$.
    We have thus found the required element $\bff_v \coloneqq g \bff$.
\end{proof}

\section{Decidability of local conditions}\label{sec:dec}

This section is dedicated to the proof of Theorem~\ref{thm:dec}.
By the local-global principle (Theorem~\ref{thm:locglob}), this amounts to showing decidability of the two ``local'' Conditions~\hyperref[item:locr]{(LocR)} and \hyperref[item:locinf]{(LocInf)}.

\subsection{Decidability of local condition at positive reals \hyperref[item:locr]{(LocR)}}
In this subsection we show that the Condition~\hyperref[item:locr]{(LocR)} of Theorem~\ref{thm:locglob} is decidable.
Let $\mM$ be a $\A$-submodule of $\A^K$.

\begin{lem}\label{lem:equivr}
    Let $\bg_1, \ldots, \bg_m$ be the generators for $\mM$.
    Condition~\hyperref[item:locr]{(LocR)} of Theorem~\ref{thm:locglob} is equivalent to the following:
    \begin{enumerate}[noitemsep]
        \item[1.] \emph{\textbf{(LocRLin)}}\label{item:locrp} For every $r \in \Rpp^n$, there exist $x_1, \ldots, x_m \in \R$ such that $\sum_{i = 1}^m x_i \bg_i(r) \in \Rpp^K$.
    \end{enumerate}
\end{lem}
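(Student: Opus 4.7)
The plan is to show both implications are essentially routine applications of the definition of $\mM$ as the module generated by $\bg_1, \ldots, \bg_m$, the key observation being that evaluation at a point $r \in \Rpp^n$ turns a polynomial-coefficient linear combination into a real-coefficient linear combination, and conversely, real scalars can be regarded as constant polynomials.

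For the direction \hyperref[item:locr]{(LocR)} $\Rightarrow$ (LocRLin), I would fix $r \in \Rpp^n$ and apply \hyperref[item:locr]{(LocR)} to obtain $\bff_r \in \mM$ with $\bff_r(r) \in \Rpp^K$. By definition of $\mM$, there exist $p_1, \ldots, p_m \in \A$ such that $\bff_r = \sum_{i=1}^m p_i \cdot \bg_i$. Evaluating at $r$ (which is a ring homomorphism $\A \to \R$, extended componentwise to $\A^K \to \R^K$), we obtain
\[
\bff_r(r) = \sum_{i=1}^m p_i(r) \cdot \bg_i(r) \in \Rpp^K.
\]
Setting $x_i \coloneqq p_i(r) \in \R$ then yields the witnesses for \hyperref[item:locrp]{(LocRLin)}.

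For the converse direction (LocRLin) $\Rightarrow$ \hyperref[item:locr]{(LocR)}, I would fix $r \in \Rpp^n$ and let $x_1, \ldots, x_m \in \R$ be the real scalars provided by \hyperref[item:locrp]{(LocRLin)}. Since $\R \subseteq \A$ as the subring of constant polynomials, the element $\bff_r \coloneqq \sum_{i=1}^m x_i \cdot \bg_i$ lies in $\mM$. Evaluating at $r$ gives $\bff_r(r) = \sum_{i=1}^m x_i \cdot \bg_i(r) \in \Rpp^K$, which is exactly \hyperref[item:locr]{(LocR)}.

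There is no real obstacle here; the content of the lemma lies not in the equivalence itself, but in its use downstream: \hyperref[item:locrp]{(LocRLin)} is expressible in the first-order theory of the reals (quantifying over $r$ and over $x_1, \ldots, x_m$, the condition $\sum_i x_i \bg_i(r) \in \Rpp^K$ is a finite conjunction of strict polynomial inequalities in the variables), and is therefore decidable by Tarski--Seidenberg quantifier elimination. Thus the reformulation will be the point exploited in the subsequent decidability argument for \hyperref[item:locr]{(LocR)}.
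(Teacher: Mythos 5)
Your proof is correct and follows essentially the same two-line argument as the paper: evaluate the polynomial coefficients at $r$ for one direction, and lift real scalars to constant polynomials for the other. The only trivial difference is that the paper's proof has a harmless typo writing $x_i \in \Rpp$ where it means $x_i \in \R$, which you avoid.
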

\begin{proof}
    \hyperref[item:locr]{(LocR)}$\implies$\hyperref[item:locrp]{(LocRLin)}:
    Suppose Condition~\hyperref[item:locr]{(LocR)} of Theorem~\ref{thm:locglob} is true.
    For $r \in \Rpp^n$, Condition~\hyperref[item:locr]{(LocR)} shows there exist $p_1, \ldots, p_m \in \A$ such that $\sum_{i = 1}^m p_i \bg_i = \bff_{r}$ with $\bff_{r}(r) \in \Rpp^K$.
    Then letting $x_1 \coloneqq p_1(r), \ldots, x_m \coloneqq p_m(r)$ we have $\sum_{i = 1}^m x_i \bg_i(r) = \bff_{r}(r) \in \Rpp^K$.

    \hyperref[item:locrp]{(LocRLin)}$\implies$\hyperref[item:locr]{(LocR)}:
    Suppose Condition~\hyperref[item:locrp]{(LocRLin)} is true.
    For any $r \in \Rpp^n$, Condition~\hyperref[item:locrp]{(LocRLin)} shows there exist $x_1, \ldots, x_m \in \Rpp$ such that $\sum_{i = 1}^m x_i \bg_i(r) \in \Rpp^K$.
    Then $\bff_{r} \coloneqq \sum_{i = 1}^m x_i \bg_i \in \mM$ satisfies $\bff_{r}(r) \in \Rpp^K$.
\end{proof}

\begin{prop}\label{prop:decr}
    Given the generators $\bg_1, \ldots, \bg_m$ for $\mM$, it is decidable whether Condition~\hyperref[item:locr]{(LocR)} of Theorem~\ref{thm:locglob} is satisfied.
\end{prop}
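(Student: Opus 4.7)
The plan is to apply Lemma~\ref{lem:equivr} to reduce Condition~\hyperref[item:locr]{(LocR)} to the equivalent linear condition \hyperref[item:locrp]{(LocRLin)}, and then to express \hyperref[item:locrp]{(LocRLin)} as a sentence in the first-order theory of real closed fields, whose decidability is Tarski's theorem. Writing each generator as $\bg_i = (g_{i,1}, \ldots, g_{i,K})$, Condition~\hyperref[item:locrp]{(LocRLin)} becomes
\[
\forall r_1, \ldots, r_n \in \R,\ \left(\bigwedge_{j=1}^n r_j > 0\right) \implies \exists x_1, \ldots, x_m \in \R,\ \bigwedge_{k=1}^K \sum_{i=1}^m x_i\, g_{i,k}(r_1, \ldots, r_n) > 0.
\]

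Each $g_{i,k}$ is a Laurent polynomial with integer coefficients; to turn the atoms into ordinary polynomial inequalities I would multiply by a common monomial $\oX^N$ with $N \in \Zp^n$ chosen large enough that $\oX^N \cdot g_{i,k}$ is an ordinary polynomial for every $i,k$. Since $r_j > 0$ we have $\oX^N(r) > 0$, so the sign of $\sum_i x_i g_{i,k}(r)$ agrees with the sign of $\sum_i x_i (\oX^N g_{i,k})(r)$, and \hyperref[item:locrp]{(LocRLin)} becomes equivalent to a $\forall\exists$ sentence over the ordered field $\R$ whose atoms are strict integer-coefficient polynomial inequalities in the variables $r_1, \ldots, r_n, x_1, \ldots, x_m$.

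Such a sentence is algorithmically decidable by Tarski's theorem on the decidability of the first-order theory of real closed fields (realized concretely by, for instance, the Tarski--Seidenberg procedure or cylindrical algebraic decomposition), which completes the decision procedure for Condition~\hyperref[item:locr]{(LocR)}. I do not expect a substantive obstacle here: clearing denominators, rendering strict positivity of the $r_j$ as an implication under the universal quantifier, and writing the inner Boolean combination are all purely syntactic bookkeeping, and the appeal to Tarski's theorem is standard. The genuine difficulty in Theorem~\ref{thm:dec} therefore lies not in \hyperref[item:locr]{(LocR)} but in deciding Condition~\hyperref[item:locinf]{(LocInf)}, which is addressed separately.
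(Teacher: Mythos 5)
Your proof is correct and follows essentially the same approach as the paper: reduce \hyperref[item:locr]{(LocR)} to \hyperref[item:locrp]{(LocRLin)} via Lemma~\ref{lem:equivr}, encode the latter as a first-order sentence over the reals, and invoke Tarski's theorem. The extra remark about clearing Laurent denominators by multiplying through by a positive monomial is a minor but legitimate bit of bookkeeping that the paper leaves implicit.
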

\begin{proof}
    By Lemma~\ref{lem:equivr}, it suffices to decide Condition~\hyperref[item:locrp]{(LocRLin)}.
    This is expressible in the first order theory of the reals:
    \[
    \forall r_1 > 0, \cdots \forall r_n > 0, \exists x_1 \cdots \exists x_m, \left(\sum_{i = 1}^m x_i g_{i, 1}(r_1, \ldots, r_n) > 0\right) \land \cdots \land \left(\sum_{i = 1}^m x_i g_{i, K}(r_1, \ldots, r_n) > 0\right).
    \]
    By Tarski's theorem~\cite{Tarski1949}, the truth of this sentence is decidable.
\end{proof}

\subsection{Local condition at infinity: from \hyperref[item:locinf]{(LocInf)} to shifted initials \hyperref[item:locshift]{(LocInfShift)}}

In this subsection we introduce the \emph{shifted initials}, in order to replace Condition~\hyperref[item:locinf]{(LocInf)} of Theorem~\ref{thm:locglob} with a new Condition~\hyperref[item:locshift]{(LocInfShift)}.
Our definition follows that of~\cite[Section~1]{einsiedler2003does}.

Suppose we are given $\bff \in \A^K$, $v \in \Rns$ and $\balpha = (\alpha_1, \ldots, \alpha_{K}) \in \R^{K}$.
Then the \emph{shifted initials}
$\init_{v, \alpha}(\bff) = (\init_{v, \alpha}(\bff)_1, \ldots, \init_{v, \alpha}(\bff)_K)$ is defined as
\begin{align*}
\init_{v, \alpha}(\bff)_i \coloneqq 
\begin{cases}
\init_{v}(f_i) & \text{ if } \deg_v(f_i) + \alpha_i = \max_{1 \leq i' \leq K} \{\deg_v(f_{i'}) + \alpha_{i'}\},\\
0 & \text{ if } \deg_v(f_i) + \alpha_i < \max_{1 \leq i' \leq K} \{\deg_v(f_{i'}) + \alpha_{i'}\}. \\
\end{cases}
\end{align*}

\begin{lem}\label{lem:initpos}
    Let $\bff \in \A^K$ and $v \in \Rns$.
    We have $\init_v(\bff) \in \ApK$ if and only if there exists $\balpha \in \R^{K}$ such that $\init_{v, \balpha}(\bff) \in \ApK$.
    Furthermore, in this case, $\init_v(\bff) = \init_{v, \balpha}(\bff)$ and $\alpha_1 + \deg_v(f_1) = \cdots = \alpha_K + \deg_v(f_K)$.
\end{lem}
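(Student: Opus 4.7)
The lemma reduces to unwinding the two definitions, using the fact that membership in $\ApK$ forces every coordinate to be a \emph{nonzero} element of $\A^+$.

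For the forward direction, suppose $\init_v(\bff) \in \ApK$. Then each $\init_v(f_i)$ is a nonzero polynomial with nonnegative coefficients, so in particular $f_i \neq 0$ and $\deg_v(f_i)$ is a (finite) real number. I would simply set $\alpha_i \coloneqq -\deg_v(f_i)$ for every $i$. Then $\alpha_i + \deg_v(f_i) = 0$ for all $i$, so this common value is trivially the maximum of the quantities $\deg_v(f_{i'}) + \alpha_{i'}$. By the definition of the shifted initial, $\init_{v,\balpha}(\bff)_i = \init_v(f_i)$ for every $i$, hence $\init_{v,\balpha}(\bff) = \init_v(\bff) \in \ApK$, and the equality of the shifted degrees holds by construction.

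For the reverse direction, suppose $\balpha \in \R^K$ is such that $\init_{v,\balpha}(\bff) \in \ApK$. By definition of $\ApK$ each component $\init_{v,\balpha}(\bff)_i$ is a nonzero element of $\A^+$. Inspecting the case distinction in the definition of the shifted initial, the only way $\init_{v,\balpha}(\bff)_i$ can be nonzero is the first case, which forces $\deg_v(f_i) + \alpha_i = \max_{i'}\{\deg_v(f_{i'}) + \alpha_{i'}\}$ and $\init_{v,\balpha}(\bff)_i = \init_v(f_i)$. Since this must hold for every $i$, the shifted degrees $\alpha_1 + \deg_v(f_1), \ldots, \alpha_K + \deg_v(f_K)$ are all equal (in particular, all finite, so every $f_i$ is nonzero), and $\init_v(f_i) = \init_{v,\balpha}(\bff)_i \in \A^+$ for each $i$. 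Hence $\init_v(\bff) = \init_{v,\balpha}(\bff) \in \ApK$.

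There is no real obstacle here: the argument is a direct verification, and the only mildly subtle point is handling the convention $\deg_v(0) = -\infty$. That convention is precisely what makes the two ``common value'' equalities equivalent, since any $f_i = 0$ would give $\deg_v(f_i) + \alpha_i = -\infty$, which cannot equal a finite maximum, and therefore forces $\init_{v,\balpha}(\bff)_i = 0 \notin \A^+$, contradicting $\init_{v,\balpha}(\bff) \in \ApK$.
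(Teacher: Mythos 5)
Your proof is correct and follows essentially the same route as the paper's: in the forward direction you take $\alpha_i \coloneqq -\deg_v(f_i)$ so that all shifted degrees coincide, and in the reverse direction you observe that nonvanishing of $\init_{v,\balpha}(\bff)_i$ forces the first case of the definition of the shifted initial for every $i$. Your explicit remark about the $\deg_v(0) = -\infty$ convention is a reasonable clarification but adds nothing beyond the paper's argument.
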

\begin{proof}
If $\init_{v}\left(\bff\right) \in \ApK$, then let
$
\alpha_1 \coloneqq - \deg_v(f_1), \ldots, \alpha_K \coloneqq - \deg_v(f_K)
$.
We have $\deg_v(f_1) + \alpha_1 = \cdots = \deg_v(f_K) + \alpha_K = \max_{1 \leq i \leq K} \{\deg_v(f_i) + \alpha_i\}$, so $\init_{v, \balpha}\left(\bff\right) = \init_{v}\left(\bff\right) \in \ApK$.

If $\init_{v, \balpha}\left(\bff\right) \in \ApK$, then $\init_{v}\left(f_i\right) = \init_{v, \balpha}(\bff)_i \in \A^+$ for $i = 1, \ldots, K$.
Therefore $\init_{v}\left(\bff\right) = \init_{v, \balpha}(\bff) \in \ApK$.

Furthermore, in this case, since $\init_{v, \balpha}\left(\bff\right)_i \neq 0$ for $i = 1, \ldots, K$, we have $\deg_v(f_i) + \alpha_i = \max_{1 \leq i' \leq K} \{\deg_v(f_{i'}) + \alpha_{i'}\}$.
Hence $\alpha_1 + \deg_v(f_1) = \cdots = \alpha_K + \deg_v(f_K)$.
\end{proof}

Given $v = (v_1, \ldots, v_n) \in \Rns$, define the following set of real numbers:
\[
\sum_{k = 1}^n \Z v_k \coloneqq \left\{\sum_{k = 1}^n z_k v_k \;\middle|\; z_1, \ldots, z_n \in \Z \right\}.
\]
Then for every $f \in \A$, we have $\deg_{v}(f) \in \sum_{k = 1}^n \Z v_k$.

\begin{prop}\label{prop:inftoshift}
Condition~\emph{\hyperref[item:locinf]{(LocInf)}} of Theorem~\ref{thm:locglob} is equivalent to the following:

\begin{enumerate}[noitemsep]
    \item[2.] \label{item:locshift} \emph{\textbf{(LocInfShift):}}  For every $v \in \Rns$, there exists $\bff \in \mM$ as well as $\balpha \in \left(\sum_{k = 1}^n \Z v_k\right)^K$ satisfying the following properties: 
    \begin{enumerate}[noitemsep]
        \item $\init_{v, \balpha}\left(\bff\right) \in \ApK$.
        \item Denote $I' \coloneqq \{i \in I \mid \alpha_{i} = \min_{i' \in I} \alpha_{i'}\}, J' \coloneqq O_{v} \cup J$.
        We have 
        \[
        \left(O_{w} \cup J'\right) \cap M_{w}(I', \init_{v, \balpha}(\bff)) \neq \emptyset \quad \text{ for every $w \in \Rns$}.
        \]
    \end{enumerate}
\end{enumerate}
\end{prop}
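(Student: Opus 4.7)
The plan is to observe that both directions reduce to a direct application of Lemma~\ref{lem:initpos}. The key point is that whenever $\init_{v,\balpha}(\bff) \in \ApK$, every coordinate of this vector is non-zero, which by that lemma forces $\alpha_i + \deg_v(f_i)$ to equal a constant $c$ independent of $i$, and also forces $\init_{v,\balpha}(\bff) = \init_v(\bff)$. Thus the shift $\balpha$ is essentially determined (up to a global additive constant) by $\bff$ itself, and in both directions I would take $\bff$ and $\bff_v$ to be the \emph{same} element of $\mM$.

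For the direction \hyperref[item:locinf]{(LocInf)}$\implies$\hyperref[item:locshift]{(LocInfShift)}, given $\bff_v \in \mM$ satisfying \hyperref[item:locinf]{(LocInf)}(a),(b), I would set $\bff \coloneqq \bff_v$ and $\alpha_i \coloneqq -\deg_v(f_{v,i})$ for each $i = 1, \ldots, K$. Each $\deg_v(f_{v,i})$ equals $v \cdot b$ for some exponent $b \in \Z^n$ appearing in $f_{v,i}$, so $\alpha_i \in \sum_{k=1}^n \Z v_k$ as required. With $\alpha_i + \deg_v(f_{v,i}) = 0$ constant in $i$, every coordinate of $\init_{v,\balpha}(\bff)$ survives and equals $\init_v(f_{v,i})$, hence $\init_{v,\balpha}(\bff) = \init_v(\bff_v) \in \ApK$, verifying (a). Moreover the set $\{i \in I \mid \alpha_i = \min_{i' \in I}\alpha_{i'}\}$ coincides with $M_v(I, \bff_v)$, since $-\deg_v(f_{v,i}) = \min_{i' \in I}\bigl(-\deg_v(f_{v,i'})\bigr)$ is equivalent to $\deg_v(f_{v,i}) = \max_{i' \in I}\deg_v(f_{v,i'})$. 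Property (b) of (LocInfShift) therefore becomes identical to property (b) of (LocInf).

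For the converse direction, given $\bff \in \mM$ and $\balpha \in \bigl(\sum_{k=1}^n \Z v_k\bigr)^K$ satisfying (LocInfShift), I would set $\bff_v \coloneqq \bff$. Lemma~\ref{lem:initpos} applied to $\init_{v,\balpha}(\bff) \in \ApK$ yields $\init_v(\bff) = \init_{v,\balpha}(\bff) \in \ApK$, verifying property (a) of (LocInf). The same lemma gives $\alpha_i + \deg_v(f_i) = c$ for some constant $c$, so $\alpha_i = c - \deg_v(f_i)$, and again $\alpha_i = \min_{i' \in I}\alpha_{i'}$ is equivalent to $\deg_v(f_i) = \max_{i' \in I}\deg_v(f_{i'})$. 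Hence the set $I'$ defined in (LocInfShift) equals $M_v(I, \bff)$, so property (b) of (LocInf) follows from property (b) of (LocInfShift).

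There is no serious technical obstacle: the equivalence is essentially a reparametrization that exposes the exponents $\deg_v(f_i)$ as an explicit parameter $\balpha$ rather than keeping them implicit in $\bff$. The payoff of this reformulation is not in the proof above but in the decidability argument to follow, where $\balpha$ may be searched over the countable set $\bigl(\sum_{k=1}^n \Z v_k\bigr)^K$ and, once fixed, prescribes in advance which coordinates of the shifted initial must be non-zero; this is what will allow an induction on the number of variables to handle the local condition at infinity.
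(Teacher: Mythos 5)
Your proof is correct and takes essentially the same route as the paper: both directions pick the same $\bff$, set $\alpha_i = -\deg_v(f_i)$ (which lies in $\sum_k \Z v_k$ automatically) for the forward implication, and invoke Lemma~\ref{lem:initpos} for the converse to pin down $\init_{v,\balpha}(\bff) = \init_v(\bff)$ and to identify $I'$ with $M_v(I,\bff)$. In fact your writeup is slightly more careful than the paper's, which has a small typo in the converse direction (writing $\min$ where it should write $\max$ when rewriting $I'$); your version uses the correct $\max$.
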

\begin{proof}
    \textbf{\hyperref[item:locinf]{(LocInf)}}$\implies$\textbf{\hyperref[item:locshift]{(LocInfShift)}}.
    Suppose Condition~\hyperref[item:locinf]{(LocInf)} of Theorem~\ref{thm:locglob} is true.
    Fix a vector $v \in \Rns$.
    Then there exists $\bff \in \mM$, such that $\init_{v}\left(\bff\right) \in \ApK$ satisfies Property~\hyperref[item:locinf]{(LocInf)}(b).
    As in Lemma~\ref{lem:initpos}, we can let $\alpha_i \coloneqq - \deg_v(f_{i})$ for $i = 1, \ldots, K$, then $\init_{v, \balpha}\left(\bff\right) = \init_{v}\left(\bff\right) \in \ApK$, satisfying \hyperref[item:locshift]{(LocInfShift)}(a).
    Furthermore, we have $\balpha \in \left(\sum_{k = 1}^n \Z v_k\right)^K$ by the definition of $\alpha_i = - \deg_v(f_{i})$.
    Finally, $\{i \in I \mid \alpha_{i} = \min_{i' \in I} \alpha_{i'}\} = \{i \in I \mid \deg_v(f_{i}) = \max_{i' \in I} \deg_v(f_{i'})\} = M_v(I, \bff)$, so \hyperref[item:locinf]{(LocInf)}(b) implies \hyperref[item:locshift]{(LocInfShift)}(b).

    \textbf{\hyperref[item:locshift]{(LocInfShift)}}$\implies$\textbf{\hyperref[item:locinf]{(LocInf)}}.
    Suppose Condition~\hyperref[item:locshift]{(LocInfShift)} is true.
    Fix a vector $v \in \Rns$.
    Then there exists $\bff \in \mM$ as well as $\balpha \in \Rns$, such that $\init_{v, \balpha}\left(\bff\right) \in \ApK$ satisfies Property~\hyperref[item:locshift]{(LocInfShift)}(b).
    By Lemma~\ref{lem:initpos}, we have $\init_{v}\left(\bff\right) = \init_{v, \balpha}\left(\bff\right) \in \ApK$, and $\alpha_1 + \deg_v(f_{1}) = \cdots = \alpha_K + \deg_v(f_{K})$.
    Therefore we have $\{i \in I \mid \alpha_{i} = \min_{i' \in I} \alpha_{i'}\} = \{i \in I \mid \deg_v(f_{i}) = \min_{i' \in I} \deg_v(f_{i'})\} = M_v(I, \bff)$, so \hyperref[item:locshift]{(LocInfShift)}(b) implies \hyperref[item:locinf]{(LocInf)}(b).
\end{proof}

\subsection{Dimension reduction: a special case}
In this and the following subsections we will further reduce Condition~\hyperref[item:locshift]{(LocInfShift)} to a Condition~\hyperref[item:locd]{(LocInfD)} (see Proposition~\ref{prop:shifttod}).
In this subsection we first consider the special case where the vector $v \in \Rns$ in Condition~\hyperref[item:locshift]{(LocInfShift)} is of the form $(0, \ldots, 0, v_{d+1}, \ldots, v_n)$, where $v_{d+1}, \ldots, v_n \in \R$ are $\Q$-linearly independent.

We now define the \emph{super Gr\"{o}bner basis} of an $\A$-module $\mM$.
Note that our definition is different from~\cite[Section~2]{einsiedler2003does}, although the intuition is the same.
Let $v \in \Rns, \balpha \in \R^K$.
Define $\init_{v, \alpha}(\mM)$ to be the $\A$-module generated by the elements $\init_{v, \alpha}(\bff), \bff \in \mM$:
\[
\init_{v, \alpha}(\mM) \coloneqq \sum_{\bff \in \mM} \A \cdot \init_{v, \alpha}(\bff) = \left\{\sum_{j = 1}^q p_j \cdot \init_{v, \alpha}(\bff_j) \;\middle|\; q \in \N, p_1, \ldots p_q \in \A, \bff_1, \ldots, \bff_q \in \mM\right\}.
\]

\begin{defn}[Super Gr\"{o}bner basis]\label{def:grob}
    A set of generators $\bg_1, \ldots, \bg_m$ for the module $\mM$ is called a \emph{super Gr\"{o}bner basis} if for all $v \in \Rns, \balpha \in \R^K$, the set $\{\init_{v, \alpha}(\bg_1), \ldots, \init_{v, \alpha}(\bg_m)\}$ generates $\init_{v, \alpha}(\mM)$ as an $\A$-module.
\end{defn}

\begin{lem}[{Reformulation of~\cite[Lemma~2.1]{einsiedler2003does}}]\label{lem:grob}
    Suppose we are given an arbitrary set of generators for a module $\mM$. Then a super Gr\"{o}bner basis of $\mM$ is effectively computable.
\end{lem}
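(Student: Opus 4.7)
The plan is to adapt classical Gröbner basis theory to the universal setting where the weight pair $(v, \balpha)$ ranges over all of $\Rns \times \R^K$. The key observation is that, although this parameter space is continuous, the assignment $(v, \balpha) \mapsto \init_{v, \balpha}(\mM)$ takes only finitely many distinct values and partitions $\Rns \times \R^K$ into the relatively open cones of a finite polyhedral fan, the Gröbner fan of $\mM$. A super Gröbner basis can then be assembled cell by cell.

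First I would fix a generic reference weight $(v_0, \balpha_0)$ refining a monomial order on $\A^K$ and compute a standard Gröbner basis $G_0$ of $\mM$ with respect to it via a Buchberger-style algorithm on the Laurent polynomial module. Termination follows from Noetherianity of $\A^K$ as an $\A$-module together with standard S-pair reduction, with the Laurent aspect handled by clearing negative exponents against a common denominator monomial, a technique that does not affect the initial forms since $\init_{v, \balpha}$ is equivariant under multiplication by monomials (up to a shift in $\balpha$). The outcome is a basis whose shifted initials generate $\init_{v_0, \balpha_0}(\mM)$.

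Next I would traverse the Gröbner fan by the Gröbner walk: starting from the top-dimensional cone containing $(v_0, \balpha_0)$, I flip across each codimension-one facet to obtain a Gröbner basis of the adjacent cone, reduce the new generators to keep the data bounded, and iterate until every top cone has been visited. The concatenation $\bg_1, \ldots, \bg_m$ of all generators encountered is the desired super Gröbner basis: an arbitrary $(v, \balpha)$ lies in the closure of some top cone, and by semicontinuity of $\init_{v, \balpha}$ the shifted initials of that cone's basis generate $\init_{v, \balpha}(\mM)$, so a fortiori those of the whole union do.

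The main obstacle is proving that the Gröbner fan is both finite and effectively computable in the shifted Laurent setting. Finiteness rests on the classical theorem that only finitely many initial ideals/modules arise as the weight varies, which in the polynomial case is due to Mora--Robbiano and carries over to $\A^K$ after the clearing step above. The extra shift parameter $\balpha \in \R^K$ enlarges the weight space but does not break the argument, since one can view $(v, \balpha)$ as a single weight on the $(n+K)$-graded module obtained by tagging the $i$-th component of $\A^K$ with an auxiliary indeterminate of weight $\alpha_i$. This reduction is precisely what Einsiedler, Mouat and Tuncel carry out in~\cite[Lemma~2.1]{einsiedler2003does}; our statement is a direct reformulation of theirs, and the proof amounts to translating their construction from ideals in a polynomial ring into the module $\A^K$ framework used here.
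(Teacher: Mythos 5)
Your overall strategy — view the super Gr\"obner basis as a union of Gr\"obner bases over the finitely many cones of a Gr\"obner fan in the weight space $\Rns \times \R^K$, and compute that union by traversing cones — is sound and is, in effect, the algorithmic content behind the paper's notion of a universal Gr\"obner basis. The reduction of the shift $\balpha$ to an auxiliary weight on the $K$ module components is also the right move, and you correctly attribute it to Einsiedler--Mouat--Tuncel.

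The gap is in the Laurent-polynomial step, which you dispose of with a single phrase: ``clearing negative exponents against a common denominator monomial.'' A single clearing $\oX^u$ translates the Newton polytopes so that all generators lie in $\R[X_1,\dots,X_n]^K$, but the Buchberger algorithm and the Gr\"obner walk require a \emph{term order}, and $\deg_v$ is compatible with a term order on $\R[X_1,\dots,X_n]$ only when $v$ points into the non-negative orthant (up to refinement). For a $v$ with, say, $v_1 < 0$, the monomial $1$ has maximal $\deg_v$-weight among powers of $X_1$, which is incompatible with $1 \prec X_1$ in any term order. Consequently the Gr\"obner basis you compute after one clearing controls $\init_{v,\balpha}(\mM)$ only for $v$ in one closed orthant, not for all $v \in \Rns$. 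Your appeal to Mora--Robbiano finiteness ``after the clearing step'' therefore does not cover the full weight space, and the walk as you describe it would never reach the cones lying over the other $2^n - 1$ orthants. The paper's proof handles exactly this point: for each sign pattern $\delta \in \{-1,1\}^n$ it clears denominators into the ring $\R[X_1^{\delta_1},\dots,X_n^{\delta_n}]$, computes a \emph{universal} Gr\"obner basis there (a standard finite object, with a known algorithm), and then takes the union over all $2^n$ choices of $\delta$. If you restate your walk as a traversal done separately inside each of these $2^n$ polynomial rings and then take the union, your argument becomes correct and essentially coincides with the paper's.
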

\begin{proof}
	Let $\R[\oX] \coloneqq \R[X_1, \ldots, X_n]$ be the regular polynomial ring over $n$ variables (note that for the moment, we are not considering the Laurent polynomial ring $\A = \R[\oX^{\pm}]$).
    Let $e_1, \ldots, e_K$ be the canonical $\R[\oX]$-basis of $\R[\oX]^K$.
    A \emph{monomial} of $\R[\oX]^K$ is an element of the form $\oX^u e_i$ for some $u \in \Zp^n, i \in \{1, \ldots, K\}$.
    A \emph{term order} on the monomials of $\R[\oX]^K$ is a total order $\prec$ satisfying 
	\begin{enumerate}[noitemsep, label = (\roman*)]
	\item $e_i \prec \oX^u e_i$,
	\item $\oX^u e_i \prec \oX^{u'} e_{i'} \implies \oX^{u+w} e_i \prec \oX^{u'+w} e_{i'}$,
	\end{enumerate}	    
    for all $i, i' \in \{1, \ldots, K\}$ and $u, u', w \in \Zp^n$.
    
    Let $N$ be an $\R[\oX]$-submodule of $\R[\oX]^K$. An element $\bff \in \R[\oX]^K$ can be written uniquely as a sum $\sum_{u, i} c_{u, i} \oX^u e_i$ with coefficients $c_{u, i}$ in $\R$.
    Among the finitely many monomials of $\R[\oX]^K$ that have nonzero coefficients in this sum, the one that is maximal according to the term order $\prec$ is denoted $\init_{\prec}(\bff)$.
    Define $\init_{\prec}(N)$, the \emph{initial module} of $N$ with respect to $\prec$, to be the $\R[\oX]$-module generated by all $\init_{\prec}(\bff), \bff \in N$.
    We say that the elements $\bff_1, \ldots, \bff_l \in N$ form a \emph{Gr\"{o}bner basis} for $N$ with respect to $\prec$ if $\init_{\prec}(N)$ is generated as an $\R[\oX]$-module by $\init_{\prec}(\bff_1), \ldots, \init_{\prec}(\bff_l)$.
    
    A \emph{universal Gr\"{o}bner basis} of $N$ is given by elements $\bff_1, \ldots, \bff_l$ that form a Gr\"{o}bner basis of $N$ with respect to \emph{every} term order.
    A universal Gr\"{o}bner basis of $N$ exists and can be effectively computed from a set of generators of $N$ as described in~\cite[Lemma~2.1]{einsiedler2003does}.
    
    Fix an $\R[\oX^{\pm}]$-submodule $\mM$ of $\left(\R[\oX^{\pm}]\right)^K$.
    Find $\bff_1, \ldots, \bff_l$ that generate $\mM$ as an $\R[\oX^{\pm}]$-module.
    Let $\delta = (\delta_1, \ldots, \delta_n) \in \{-1, 1\}^n$.
    Pick $u \in \Z^n$ such that 
    \[
    \oX^u \bff_1, \ldots, \oX^u \bff_l \in \left(\R[X_1^{\delta_1}, \ldots, X_n^{\delta_n}]\right)^K,
    \]
    and let $\bff_{\delta, 1}, \ldots, \bff_{\delta, l_{\delta}}$ be a universal Gr\"{o}bner basis for the $\R[X_1^{\delta_1}, \ldots, X_n^{\delta_n}]$-submodule generated by $\oX^u \bff_1, \ldots, \oX^u \bff_l$.
    List the union of $\{\bff_{\delta, 1}, \ldots, \bff_{\delta, l_{\delta}}\}$ over $\delta \in \{-1, 1\}^n$ as $\bg_1, \ldots, \bg_m$.
    By~\cite[Lemma~2.1]{einsiedler2003does}, this is a super Gr\"{o}bner basis for $\mM$.
\end{proof}

It is easy to see the following from the proof: if the given generators for $\mM$ contain only polynomials with integer coefficients, then Lemma~\ref{lem:grob} computes a super Gr\"{o}bner basis containing only polynomials with integer coefficients.

Let $0 \leq d \leq n-1$ be an integer.
From now on we denote 
\[
\A_d \coloneqq \R[X_1^{\pm}, \ldots, X_d^{\pm}], \quad \A_d^+ \coloneqq \Rp[X_1^{\pm}, \ldots, X_d^{\pm}]^*.
\]
In particular, $\A_0 = \R, \A_0^+ = \Rpp$.

As stated in the beginning of this subsection, we now consider the vectors $v \in \Rns$ with the special form $(0, \ldots, 0, v_{d+1}, \ldots, v_n)$ where $v_{d+1}, \ldots, v_n$ are $\Q$-linearly independent.
The following lemma can be seens as a generalization of~\cite[Lemma~6.2]{einsiedler2003does}.

\begin{lem}[{Generalization of~\cite[Lemma~6.2]{einsiedler2003does}}]\label{lem:ing}
    Let $\bg_1, \ldots, \bg_m$ be a super Gr\"{o}bner basis of $\mM$.
    Let $v = (0, \ldots, 0, v_{d+1}, \ldots, v_n) \in \Rns$ be such that $0 \leq d \leq n-1$ and $v_{d+1}, \ldots, v_n$ are $\Q$-linearly independent.
    Let $\balpha \in \R^K$.
    Then there exists $b_i \in \{0\}^d \times \Z^{n-d}$ and $c_j \in \{0\}^d \times \Z^{n-d}$ such that $\oX^{b_i} \oX^{c_j} \init_{v, \balpha}(\bg_j)_i \in \A_d$ for $i = 1, \ldots, K$ and $j = 1, \ldots, m$.
\end{lem}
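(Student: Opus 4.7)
My approach is to unpack the structure of $\init_{v,\balpha}(\bg_j)_i$ under the $\Q$-linear independence of $v_{d+1},\ldots,v_n$, and then to exhibit $b_i,c_j$ via a consistency argument on a bipartite graph.

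First, because the first $d$ coordinates of $v$ vanish, $\deg_v(f)$ depends only on the last $n-d$ coordinates of each exponent appearing in $f$. Writing every $f\in\A$ uniquely as $f=\sum_{\gamma\in\Z^{n-d}}h_{\gamma}(X_1,\ldots,X_d)\cdot X_{d+1}^{\gamma_1}\cdots X_n^{\gamma_{n-d}}$ with $h_\gamma\in\A_d$, one has $\deg_v(f)=\max\{v\cdot\gamma\mid h_\gamma\neq 0\}$. The $\Q$-linear independence of $v_{d+1},\ldots,v_n$ makes the map $z\mapsto v\cdot z$ injective on $\Z^{n-d}$, so the maximum is attained at a unique $\gamma$; consequently $\init_v(f)$ is a single monomial in $X_{d+1},\ldots,X_n$ times an element of $\A_d$. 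Applying this to each nonzero $g_{j,i}$, I extract $\gamma_{j,i}\in\Z^{n-d}$ with $\init_v(g_{j,i})=X_{d+1}^{\gamma_{j,i,1}}\cdots X_n^{\gamma_{j,i,n-d}}\cdot h_{j,i}$ for some $h_{j,i}\in\A_d$.

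Set $C_j:=\max_{i'}\{\deg_v(g_{j,i'})+\alpha_{i'}\}$ and $S_j:=\{i:\deg_v(g_{j,i})+\alpha_i=C_j\}$, so $\init_{v,\balpha}(\bg_j)_i=\init_v(g_{j,i})$ for $i\in S_j$ and $0$ otherwise. For $i\in S_j$ one has $v\cdot\gamma_{j,i}=C_j-\alpha_i$, hence for $i,i'\in S_j$, $v\cdot(\gamma_{j,i}-\gamma_{j,i'})=\alpha_{i'}-\alpha_i$. The right-hand side depends only on $(i,i')$, not on $j$; reinvoking the injectivity of $z\mapsto v\cdot z$ on $\Z^{n-d}$, I conclude that whenever $i,i'\in S_j\cap S_{j'}$, $\gamma_{j,i}-\gamma_{j,i'}=\gamma_{j',i}-\gamma_{j',i'}$.

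Now form a bipartite graph $G$ with left vertices $\{1,\ldots,K\}$, right vertices $\{1,\ldots,m\}$, and an edge $(i,j)$ whenever $i\in S_j$. The problem becomes: choose $b_i,c_j\in\Z^{n-d}$, embedded into $\{0\}^d\times\Z^{n-d}\subset\Z^n$ by zero-padding, so that $b_i+c_j=-\gamma_{j,i}$ for every edge of $G$; then $\oX^{b_i}\oX^{c_j}\init_{v,\balpha}(\bg_j)_i=h_{j,i}\in\A_d$, while for non-edge pairs $\init_{v,\balpha}(\bg_j)_i=0$ and any choice works (set $b_i=0$ or $c_j=0$ for isolated vertices). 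On each connected component of $G$ I pick a spanning tree, assign $0$ to a root, and propagate the constraint along tree edges; non-tree edges impose compatibility along fundamental cycles, which in a bipartite graph decompose into $4$-cycles, and the required $4$-cycle equality $\gamma_{j,i}-\gamma_{j,i'}=\gamma_{j',i}-\gamma_{j',i'}$ is exactly what I just established. The crux, and the main obstacle I expect, is this cycle consistency: without $\Q$-linear independence of $v_{d+1},\ldots,v_n$ the equation $v\cdot z=\alpha_{i'}-\alpha_i$ could admit multiple $z\in\Z^{n-d}$, the $\gamma$-differences could vary with $j$, and the decomposition $b_i+c_j=-\gamma_{j,i}$ would fail to exist.
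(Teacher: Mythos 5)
Your proof is essentially the same as the paper's: both reduce to the observation that $\init_v$ of each nonzero $g_{j,i}$ is a single monomial in $X_{d+1},\ldots,X_n$ times an element of $\A_d$, define a pair-indexed exponent for each $(i,j)$ with $\init_{v,\balpha}(\bg_j)_i\neq 0$, establish a cycle-consistency relation using $\Q$-linear independence (injectivity of $z\mapsto v\cdot z$ on $\Z^{n-d}$), and then split the exponent additively as $b_i+c_j$.

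There is one genuine flaw in the write-up, though the underlying argument already contains the fix. You claim that in a bipartite graph fundamental cycles ``decompose into $4$-cycles.'' This is false: the $6$-cycle $C_6$ is bipartite and contains no $4$-cycle at all, so a fundamental $6$-cycle cannot be expressed in terms of $4$-cycle relations. Luckily you do not need that claim. The identity you derived, $v\cdot(\gamma_{j,i}-\gamma_{j,i'})=\alpha_{i'}-\alpha_i$ for $i,i'\in S_j$, gives consistency around \emph{any} even closed walk $i_0{-}j_0{-}i_1{-}j_1{-}\cdots{-}i_l{-}j_l{-}i_0$: the alternating sum
\[
\sum_{s=0}^{l}\left(\gamma_{j_s,i_{s+1}}-\gamma_{j_s,i_s}\right)
\]
(with $i_{l+1}=i_0$) dots with $v$ to the telescoping sum $\sum_{s=0}^{l}(\alpha_{i_s}-\alpha_{i_{s+1}})=0$, hence by injectivity the alternating sum itself vanishes. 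This is exactly the cycle consistency you need for the spanning-tree construction, and it is precisely the relation (\ref{eq:sumzero}) in the paper's proof. Replace the $4$-cycle decomposition sentence with this direct verification for the fundamental cycle of each non-tree edge, and your proof is complete and matches the paper's approach.
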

\begin{proof}
    Let $j \in \{1, \ldots, m\}, i \in \{1, \ldots, K\},$ be such that $\init_{v, \balpha}(\bg_j)_i \neq 0$.
    Since $v_{d+1}, \ldots, v_n$ are $\Q$-linearly independent, there exists an open neighbourhood $U \subseteq \R^{n-d}$ of $(v_{d+1}, \ldots, v_n)$, such that every $v' \in \{0\}^d \times U$ satisfies $\init_{v'}(\init_{v, \balpha}(\bg_j)_i) = \init_{v}(\init_{v, \balpha}(\bg_j)_i)$.
    This shows that $\oX^{z_{ij}} \init_{v, \balpha}(\bg_j)_i \in \A_d$ for some $z_{ij} \in \{0\}^d \times \Z^{n-d}$.

    Letting $F \coloneqq \{(i, j) \mid \init_{v, \balpha}(\bg_j)_i \neq 0\}$, this defines $z_{ij}$ for all $(i, j) \in F$.
    Note that for $(i, j) \in F$ we have
    \[
    \max_{1 \leq i' \leq K} \{\deg_{v}(\bg_{j, i'}) + \balpha_{i'}\} = - v \cdot z_{ij} + \alpha_i.
    \]
    Considering a sequence
    \begin{equation}\label{eq:allowseq}
        (i_0, j_0), (i_1, j_0), (i_1, j_1), \ldots, (i_l, j_{l-1}), (i_l, j_l), (i_0, j_l)
    \end{equation}
    in $F$, and writing $i_{l+1} = i_0$, we find that
    \[
    0 = \sum_{s = 0}^l \left(\max_{1 \leq i' \leq K} \{\deg_{v}(\bg_{j, i'}) + \balpha_{i'}\} - \max_{1 \leq i' \leq K} \{\deg_{v}(\bg_{j, i'}) + \balpha_{i'}\}\right) = \sum_{s = 0}^l \left(v \cdot z_{i_{s+1} j_s} - v \cdot z_{i_s j_s}\right).
    \]
    Since $v = (0, \ldots, 0, v_{d+1}, \ldots, v_n)$ with $v_{d+1}, \ldots, v_n$ being $\Q$-linearly independent and $z_{ij} \in \{0\}^d \times \Z^{n-d}$,
    the above equation yields
    \begin{equation}\label{eq:sumzero}
    \sum_{s = 0}^l \left(z_{i_{s+1} j_s} - z_{i_s j_s}\right) = 0
    \end{equation}
    for every allowed sequence~\eqref{eq:allowseq} in $F$.

    We now extend $z_{ij}$ and Equation~\eqref{eq:sumzero} to all pairs $(i, j) \in \{1, \ldots, K\} \times \{1, \ldots, m\}$.
    Assume $z_{ij}$ is already defined on a set $E \supseteq F$ and \eqref{eq:sumzero} is valid on $E$.
    Pick $(i,j) \not\in E$.
    If there exists a sequence
    \[
    (i_1, j), (i_1, j_1), \ldots, (i_l, j_{l-1}), (i_l, j_l), (i, j_l) \in E,
    \]
    we put $i_{l+1} = i$ and define
    \[
    z_{ij} \coloneqq z_{i_1 j} - \sum_{s = 1}^l \left(z_{i_s j_s} - z_{i_{s+1} j_s}\right).
    \]
    One easily verifies that Equation~\eqref{eq:sumzero} then holds for every allowed sequence~\eqref{eq:allowseq} in $E \cup \{(i,j)\}$.

    If there is no sequence
    \[
    (i_1, j_0), (i_1, j_1), \ldots, (i_l, j_{l-1}), (i_l, j_l), (i_0, j_l)
    \]
    in $E$, we can take $z_{ij}$ to be any element of $\{0\}^d \times \Z^{n-d}$ and have Equation~\eqref{eq:sumzero} hold for all sequences~\eqref{eq:allowseq} in $E \cup \{(i, j)\}$.

    Having thus extended $z_{ij}$ to all pairs $(i, j) \in \{1, \ldots, K\} \times \{1, \ldots, m\}$, we define
    \[
    b_i \coloneqq z_{i1},
    \]
    and
    \[
    c_j \coloneqq z_{ij} - z_{i1},
    \]
    which is independent of $i$ thanks to Equation~\eqref{eq:sumzero}. Indeed, using Equation~\eqref{eq:sumzero} on the allowed sequence $(i, j), (i', j), (i', 1), (i, 1)$ we get $z_{ij} - z_{i1} = z_{i'j} - z_{i'1}$.
    Hence, $z_{ij} = b_i + c_j$ and the lemma follows.
\end{proof}

Suppose $v \in \Rns$ is such that $v = (0, \ldots, 0, v_{d+1}, \ldots, v_n)$ with $v_{d+1}, \ldots, v_n$ being $\Q$-linearly independent.
Let $\balpha \in \R^K$.
For each $j = 1, \ldots, m$, define $\init_{v, \balpha}^{d}(\bg_j) = (\init_{v, \balpha}^{d}(\bg_j)_1, \ldots, \init_{v, \balpha}^{d}(\bg_j)_K)$ where
\[
\init_{v, \balpha}^{d}(\bg_j)_i \coloneqq \oX^{b_i} \oX^{c_j} \init_{v, \balpha}(\bg_j)_i \in \A_d, \quad i = 1, \ldots, K.
\]
Here, $b_i$ and $c_j$ are defined as in Lemma~\ref{lem:ing}.
Note that the vectors $b_i, c_j \in \{0\}^d \times \Z^{n-d}$ are not necessarily uniquely determined. However, when $d, v, \balpha$ are fixed, the polynomials $\init_{v, \balpha}^{d}(\bg_j)_i$ are uniquely determined by $\bg_1, \ldots, \bg_m$.
In fact, by Lemma~\ref{lem:ing} each $\init_{v, \balpha}(\bg_j)_i$ can be uniquely written as $\oX^{s} \cdot p$ for some $\oX^{s} \in \R[X_{d+1}^{\pm}, \ldots, X_{n}^{\pm}]$ and $p \in \R[X_{1}^{\pm}, \ldots, X_{d}^{\pm}]$.
Therefore $\init_{v, \balpha}^{d}(\bg_j)_i$ is uniquely determined as the polynomial $p$ in the decomposition.

Note that if $\init_{v, \balpha}^{d}(\bg_j)_i \neq 0$ then $\deg_{v}( g_{j, i}) = - v \cdot (b_i + c_j)$, otherwise $\deg_{v}(g_{j, i}) < - v \cdot (b_i + c_j)$.
In both cases,
\begin{equation}\label{eq:vbc}
    \deg_{v}(g_{j, i}) \leq - v \cdot (b_i + c_j),
\end{equation}
where the equality holds if and only if $\init_{v, \balpha}^{d}(\bg_j)_i \neq 0$.

\begin{lem}\label{lem:align}
    If there exists $j \in \{1, \ldots, m\}$ such that $\init^{d}_{v, \balpha}(\bg_{j})_i \neq 0, \init^{d}_{v, \balpha}(\bg_{j})_{i'} \neq 0$, then $\alpha_i - v \cdot b_i = \alpha_{i'} - v \cdot b_{i'}$.
\end{lem}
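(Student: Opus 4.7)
The plan is to unpack the definitions and apply the equality case of inequality~\eqref{eq:vbc}. The hypothesis $\init^{d}_{v, \balpha}(\bg_{j})_i \neq 0$ means that $\init_{v, \balpha}(\bg_{j})_i = \oX^{-b_i - c_j} \init^{d}_{v, \balpha}(\bg_{j})_i$ is nonzero, so by the definition of $\init_{v, \balpha}$ we have both $g_{j,i} \neq 0$ and
\[
\deg_v(g_{j,i}) + \alpha_i = \max_{1 \leq i'' \leq K}\{\deg_v(g_{j,i''}) + \alpha_{i''}\}.
\]
The same holds with $i$ replaced by $i'$, so in particular
\[
\deg_v(g_{j,i}) + \alpha_i = \deg_v(g_{j,i'}) + \alpha_{i'}.
\]

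Next I would invoke the equality case of \eqref{eq:vbc}: since $\init_{v, \balpha}(\bg_{j})_i \neq 0$ and $\init_{v, \balpha}(\bg_{j})_{i'} \neq 0$, we have
\[
\deg_v(g_{j,i}) = -v \cdot (b_i + c_j), \qquad \deg_v(g_{j,i'}) = -v \cdot (b_{i'} + c_j).
\]
Substituting into the previous equality and cancelling the common term $-v \cdot c_j$ yields $\alpha_i - v \cdot b_i = \alpha_{i'} - v \cdot b_{i'}$, as desired.

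The argument is essentially bookkeeping on the definitions; there is no real obstacle. The only subtlety to flag is that $b_i, c_j$ are not uniquely determined by $v, \balpha$ themselves, but the conclusion $\alpha_i - v \cdot b_i = \alpha_{i'} - v \cdot b_{i'}$ is consistent with this non-uniqueness because any valid choice must satisfy the equality case of \eqref{eq:vbc} precisely on the indices where $\init_{v, \balpha}(\bg_j)$ is nonzero.
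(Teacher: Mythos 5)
Your proof is correct and matches the paper's argument essentially step for step: both unpack the nonvanishing of $\init^{d}_{v,\balpha}(\bg_j)_i$ to deduce that the shifted degrees $\deg_v(g_{j,i}) + \alpha_i$ and $\deg_v(g_{j,i'}) + \alpha_{i'}$ coincide, then substitute the equality case of \eqref{eq:vbc} and cancel the common $-v \cdot c_j$ term. The only cosmetic difference is that the paper computes $\deg_v(\init_{v,\balpha}(\bg_j)_i) = -v\cdot(b_i+c_j)$ directly from the factorization $\oX^{-b_i-c_j}\init^d_{v,\balpha}(\bg_j)_i$ with $\init^d_{v,\balpha}(\bg_j)_i \in \A_d$, whereas you cite the equality case of \eqref{eq:vbc}, which packages the same computation.
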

\begin{proof}
    If $\init^{d}_{v, \balpha}(\bg_{j})_i \neq 0, \init^{d}_{v, \balpha}(\bg_{j})_{i'} \neq 0$, then $\init_{v, \balpha}(\bg_{j})_i \neq 0, \init_{v, \balpha}(\bg_{j})_{i'} \neq 0$, so $\deg_{v}(\init_{v, \balpha}(\bg_{j})_i) + \alpha_i = \deg_{v}(\init_{v, \balpha}(\bg_{j})_{i'}) + \alpha_{i'}$.

    But $\deg_{v}(\init_{v, \balpha}(\bg_{j})_i) = \deg_{v}(\oX^{-b_i} \oX^{-c_j} \cdot \init_{v, \balpha}^{d}(\bg_{j})_i) =  - v \cdot(b_i + c_j)$.
    Deriving the same equation for $i'$ we have $- v \cdot(b_i + c_j) + \alpha_i = - v \cdot(b_{i'} + c_j) + \alpha_{i'}$.
    This yields $\alpha_i - v \cdot b_i = \alpha_{i'} - v \cdot b_{i'}$.
\end{proof}

Define by $\init_{v, \balpha}^{d}(\mM)$ the $\A_d$-module generated by $\init_{v, \balpha}^{d}(\bg_1), \ldots, \init_{v, \balpha}^{d}(\bg_m) \in \A_d^K$:
\[
\init_{v, \balpha}^{d}(\mM) \coloneqq \sum_{j = 1}^m \A_d \cdot \init_{v, \balpha}^{d}(\bg_j) =\left\{\sum_{j = 1}^m p_j \cdot \init_{v, \balpha}^{d}(\bg_j) \;\middle|\; p_1, \ldots p_m \in \A_d \right\}.
\]
A key component of proving the original result of Einsiedler et al.~\cite{einsiedler2003does} is \cite[Lemma~3.2]{einsiedler2003does}, which shows that $\init_{v, \balpha}^{d}(\mM) \cap \left(\A_d^+\right)^K \neq \emptyset$ implies $\mM \cap \ApK \neq \emptyset$.
However, this fails when we additionally impose Property~\eqref{eq:globv}: an element in $\mM \cap \ApK$ satisfying Property~\eqref{eq:globv} might not be obtained from an element in $\init_{v, \balpha}^{d}(\mM) \cap \left(\A_d^+\right)^K$ satisfying a similar property.
Indeed, if we directly apply \cite[Lemma~3.2]{einsiedler2003does} to our situation, the main caveat would be in the last paragraph of the proof, where different ``levels'' of polynomials are combined together to create a positive element.
This no longer works if we add in degree constraints.
The following lemma shows that \cite[Lemma~3.2]{einsiedler2003does} can still be made partially compatible with Property~\eqref{eq:globv}, if we impose the additional constraint $\balpha \in \left(\sum_{k = d+1}^n \Z v_k\right)^K$.

\begin{lem}\label{lem:dimred}
    Let $\bg_1, \ldots, \bg_m$ be a super Gr\"{o}bner basis of $\mM$.
    Let  $v = (0, \ldots, 0, v_{d+1}, \ldots, v_n)$ be such that $0 \leq d \leq n-1$ and $v_{d+1}, \ldots, v_n$ are $\Q$-linearly independent.
    Let $\balpha \in \left(\sum_{k = d+1}^n \Z v_k\right)^K$.
    Denote $I' \coloneqq \{i \in I \mid \alpha_{i} = \min_{i' \in I} \alpha_{i'}\}, J' \coloneqq O_{v} \cup J$.
    Denote by $\pi_d \colon \Z^n \rightarrow \Z^d$ the projection onto the first $d$ coordinates.
    For every $u \in (\R^{d})^*$, define $O'_u \coloneqq \{i \in \{1, \ldots, K\} \mid \pi_d(a_i) \not\perp u\}$.
    Then the two following conditions are equivalent:
        \begin{enumerate}[label = (\roman*)]
            \item (Condition in \emph{\hyperref[item:locshift]{(LocInfShift)}}): There exists $\bff \in \mM$ such that $\init_{v, \balpha}\left(\bff\right) \in \ApK$ and
            \begin{equation}\label{eq:condirrshift}
            \left(O_{w} \cup J'\right) \cap M_{w}(I', \init_{v, \balpha}(\bff)) \neq \emptyset \quad \text{ for every $w \in \Rns$}.
            \end{equation}
            \item We have $J' \cap I' \neq \emptyset$, and there exists $\bff^d \in \init_{v, \balpha}^{d}(\mM) \cap \left(\A_d^+\right)^K$, such that
            \begin{equation}\label{eq:condirr2}
            \left(O'_{u} \cup J'\right) \cap M_{u}(I', \bff^d) \neq \emptyset \quad \text{ for every $u \in (\R^{d})^*$}.
            \end{equation}
        \end{enumerate}
    When $d = 0$, Property~\eqref{eq:condirr2} is considered trivially true.
\end{lem}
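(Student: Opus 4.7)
The plan is to prove both directions of the equivalence, repeatedly exploiting the $\Q$-linear independence of $v_{d+1}, \ldots, v_n$. Its crucial consequence is that the map $z \mapsto v \cdot z$ from $\{0\}^d \times \Z^{n-d}$ to $\R$ is a $\Z$-linear bijection onto $\sum_{k=d+1}^n \Z v_k$; in particular, any integer vector in $\{0\}^d \times \Z^{n-d}$ orthogonal to $v$ must vanish, and conversely every value in $\sum_{k=d+1}^n \Z v_k$ is uniquely realized as some $v \cdot z$. Since $\balpha \in \left(\sum_{k=d+1}^n \Z v_k\right)^K$, I will first use the freedom in the $b_i, c_j$ supplied by Lemma~\ref{lem:ing} (common shifts on each connected component of Lemma~\ref{lem:align}, and arbitrary values on isolated indices $i$ for which $\init_{v,\balpha}^d(\bg_j)_i = 0$ for every $j$) to arrange $\alpha_i - v \cdot b_i = M'$ for a common $M'$ and all $i \in \{1, \ldots, K\}$; in particular $b_i$ takes a common value $b^*$ on $I'$, since $\alpha_i$ is constant there.

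For direction $(i) \Rightarrow (ii)$, plugging $w = v$ into Property~\eqref{eq:condirrshift} yields $J' \cap I' \neq \emptyset$: indeed $O_v \subseteq J'$ and $\deg_v(\init_{v,\balpha}(\bff)_i) = M - \alpha_i$ is constant on $I'$, so $M_v(I', \init_{v,\balpha}(\bff)) = I'$. I will then factor each $\init_{v,\balpha}(\bff)_i = \oX^{s_i} \tilde h_i$ with $s_i \in \{0\}^d \times \Z^{n-d}$ (unique by injectivity) and $\tilde h_i \in \A_d^+$, and define $\bff^d_i := \tilde h_i$. To show $\bff^d \in \init_{v,\balpha}^d(\mM)$, I will use the super Gr\"obner basis property to write $\init_{v,\balpha}(\bff) = \sum_j q_j \init_{v,\balpha}(\bg_j)$ with $q_j \in \A$, substitute $\init_{v,\balpha}(\bg_j)_i = \oX^{-b_i - c_j} \init_{v,\balpha}^d(\bg_j)_i$, use the common shift $b_i + s_i = \sigma$ (forced by $v \cdot (b_i + s_i) = M - M'$ being constant in $i$) to peel off a global factor $\oX^{-\sigma}$, and finally decompose each $q_j$ by the $\Z^{n-d}$-grading of $\A$ over $\A_d$ and retain only the $\oX^0$-graded component; this produces $\bff^d = \sum_j r_j \init_{v,\balpha}^d(\bg_j)$ with $r_j \in \A_d$. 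Property~\eqref{eq:condirr2} then follows from plugging $w = (u, 0, \ldots, 0)$ into Property~\eqref{eq:condirrshift}, using $O_w = O'_u$ and $M_w(I', \init_{v,\balpha}(\bff)) = M_u(I', \bff^d)$, the latter being an immediate consequence of $s_i \in \{0\}^d \times \Z^{n-d}$ (so $w \cdot s_i = 0$) and $\tilde h_i \in \A_d$.

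For direction $(ii) \Rightarrow (i)$, write $\bff^d = \sum_j r_j^d \init_{v,\balpha}^d(\bg_j)$ with $r_j^d \in \A_d$ and set $\bff := \sum_j r_j^d \oX^{c_j} \bg_j \in \mM$. A $v$-degree computation combining~\eqref{eq:vbc} with constancy of $\alpha_i - v \cdot b_i = M'$ shows each summand $r_j^d \oX^{c_j} g_{j,i}$ has $\balpha$-shifted $v$-degree at most $M'$, with equality precisely when $\init_{v,\balpha}^d(\bg_j)_i \neq 0$, and the sum of leading terms equals $\oX^{-b_i} \bff^d_i \in \A^+ \setminus \{0\}$, so no cancellation occurs. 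Hence $\init_{v,\balpha}(\bff)_i = \oX^{-b_i} \bff^d_i$ and $\init_{v,\balpha}(\bff) \in \ApK$. For Property~\eqref{eq:condirrshift} at $w = (u, \eta) \in \Rns$, constancy of $b_i = b^*$ on $I'$ shows that $M_w(I', \init_{v,\balpha}(\bff))$ depends only on $u$: it equals $I'$ when $u = 0$ (handled by $J' \cap I' \neq \emptyset$) and $M_u(I', \bff^d)$ when $u \neq 0$. In the $u \neq 0$ case, Property~\eqref{eq:condirr2} supplies a witness $i \in (O'_u \cup J') \cap M_u(I', \bff^d)$; if $i \in J'$ we are done, and if $i \in O'_u \setminus J'$ then $i \notin O_v$ forces the last $n-d$ entries of $a_i$ to be orthogonal to $(v_{d+1}, \ldots, v_n)$, hence zero by $\Q$-linear independence, giving $a_i \cdot w = \pi_d(a_i) \cdot u \neq 0$ and $i \in O_w$.

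The main obstacle will be establishing $\bff^d \in \init_{v,\balpha}^d(\mM)$ in direction $(i) \Rightarrow (ii)$: it requires simultaneously reconciling three families of shifts, namely $b_i, c_j$ chosen once-and-for-all from the super Gr\"obner basis, $s_i$ arising from the specific $\bff$, and the arbitrary $q_j \in \A$ in the super Gr\"obner decomposition, and then invoking the $\Z^{n-d}$-grading of $\A$ over $\A_d$ to land inside $\init_{v,\balpha}^d(\mM)$ rather than merely inside the larger $\init_{v,\balpha}(\mM)$. The other new ingredient, absent from the simpler setting of~\cite[Lemma~3.2]{einsiedler2003does}, is the observation used in direction $(ii) \Rightarrow (i)$ that $i \notin O_v$ forces the last $n-d$ entries of $a_i$ to vanish; this is exactly the leverage needed to transfer the extra Property~\eqref{eq:globv} from dimension $d$ back to dimension $n$.
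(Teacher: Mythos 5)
Your proposal is correct and follows essentially the same blueprint as the paper: use the super Gr\"{o}bner basis to write $\init_{v,\balpha}(\bff) = \sum_j q_j \init_{v,\balpha}(\bg_j)$, factor out the pure-$\{0\}^d\times\Z^{n-d}$-monomial part via $\Q$-linear independence, define $\bff^d$ accordingly, verify positivity by checking that the initial parts do not cancel, and verify the face-accessibility constraint by specializing $w$ to $(u,0^{n-d})$ (for $(i)\Rightarrow(ii)$) or by splitting $w$ into $(u,\eta)$ and using $J'\cap I' \ne \emptyset$ when $u=0$ and the $d$-dimensional Property~\eqref{eq:condirr2} when $u\ne 0$ (for $(ii)\Rightarrow(i)$).

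The two bookkeeping choices you make differ from the paper and are arguably cleaner: (1) You normalize the $b_i$ upfront so that $\alpha_i - v\cdot b_i$ is a single constant $M'$; this uses the component-wise freedom in Lemma~\ref{lem:ing} and the hypothesis $\balpha\in(\sum_k\Z v_k)^K$, and it simplifies the later verification that $b_i$ is constant on $I'$ and that $b_i + s_i$ is globally constant. The paper instead tracks the individual $z_i'$ with $\alpha_i = v\cdot z_i'$ throughout. (2) In $(i)\Rightarrow(ii)$ you project $q_j\oX^{-\sigma-c_j}$ onto the $\oX^0$-graded component of the $\Z^{n-d}$-grading of $\A$ over $\A_d$ to land in $\init_{v,\balpha}^d(\mM)$; the paper instead performs a WLOG reduction replacing each $h_j$ by $\init_v(h_j) = \oX^{z_j}p_j$ with $p_j\in\A_d$ and discarding non-top-degree terms. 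Your grading-projection argument avoids that iterative reduction. Both your observations that $O_w = O'_u$ when $w = (u,0^{n-d})$ and that $i\notin O_v$ forces the last $n-d$ entries of $a_i$ to vanish match the paper's computations, even if the paper phrases the former as $O'_u\cup J' = O_w\cup J'$. No gap.
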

\begin{proof}
    \textbf{(i)$\implies$(ii).}
    Suppose (i) holds. Let $\bff \in \mM \cap \ApK$ satisfy \eqref{eq:condirrshift}.
    We now show (ii).
    
    The property $J' \cap I' \neq \emptyset$ follows from $\init_{v, \balpha}\left(\bff\right) \in \ApK$ and \eqref{eq:condirrshift} by taking $w \coloneqq v$.
    Indeed, we have $O_v \cup J' = O_v \cup J$ and 
    \begin{multline*}
    M_{v}(I', \init_{v, \balpha}(\bff)) = \left\{i \in I' \;\middle|\; \deg_v(\init_{v, \balpha}(\bff)_i) = \max_{i' \in I'} \deg_v(\init_{v, \balpha}(\bff)_{i'})\right\} \\
    = \left\{i \in I' \;\middle|\; - \alpha_i = \max_{i' \in I'} (- \alpha_{i'})\right\} = I',
    \end{multline*}
    where the second equality comes from $\init_{v, \balpha}\left(\bff\right) \in \ApK$.
    Therefore, Property~\eqref{eq:condirrshift} yields $J' \cap I' \neq \emptyset$ by taking $w \coloneqq v$.
    
    Since $\bg_1, \ldots, \bg_m$ is a super Gr\"{o}bner basis, we can write 
    \begin{equation}\label{eq:suminit}
    \init_{v, \balpha}(\bff) = \sum_{j = 1}^m h_j \cdot \init_{v, \balpha}(\bg_j).
    \end{equation}
    Let 
    \[
    S \coloneqq \left\{1 \leq j \leq m \;\middle|\; \deg_v(h_j) + \max_{1 \leq i \leq K}(\deg_{v}(\bg_{j,i}) + \alpha_i) \text{ is maximal} \right\}.
    \]
    Without loss of generality suppose $\sum_{j \in S} \init_v(h_j) \cdot \init_{v, \balpha}(\bg_j) \neq 0$, otherwise we can replace each $h_j$ with $j \in S$ by $h_j - \init_v(h_j)$ while \eqref{eq:suminit} still holds.
    Then we have 
    \[
    \init_{v, \balpha}(\bff) = \init_{v, \balpha}\left(\sum_{j = 1}^m h_j \cdot \init_{v, \balpha}(\bg_j)\right) = \sum_{j \in S} \init_v(h_j) \cdot \init_{v, \balpha}(\bg_j).
    \]
    Indeed, by the definition of the shifted initial $\init_{v, \balpha}$, the right hand side above are the only elements that can contribute to the shifted initial of the sum in the middle.
    Hence, without loss of generality we can suppose $S = \{1, \ldots, m\}$ and $h_j = \init_v(h_j)$ for all $j = 1, \ldots, m$.
    Denote $D \coloneqq \deg_v(h_j) + \max_{1 \leq i' \leq K}(\deg_{v}(\bg_{j,i'}) + \alpha_{i'})$; this does not depend on the choice of $j \in \{1, \ldots, m\}$.


    Since $v = (0, \ldots, 0, v_{d+1}, \ldots, v_n)$ such that $v_{d+1}, \ldots, v_n$ are $\Q$-linearly independent and $h_j = \init_v(h_j)$,
    we can write $h_j = \oX^{z_{j}} p_{j}$, where $p_{j} \in \A_d$ and $z_{j} \in \{0\}^d \times \Z^{n-d}$.
    Note that $D = \deg_v(h_j) + \max_{1 \leq i' \leq K}(\deg_{v}(\bg_{j,i'}) + \alpha_{i'}) = v \cdot z_j - v \cdot (b_i + c_j) + \alpha_i$ for all $(i, j)$ satisfying $\init^d_{v, \balpha}(\bg_j)_i \neq 0$.
    Since $\balpha \in \left(\sum_{k = d+1}^n \Z v_k\right)^K$, each $\alpha_i, i = 1, \ldots, K,$ can be written as $\alpha_i = v \cdot z'_i$ for some $z'_i \in \{0\}^d \times \Z^{n-d}$.
    So $D = v \cdot (z_j - b_i - c_j + z'_i)$ for all $(i, j)$ satisfying $\init^d_{v, \balpha}(\bg_j)_i \neq 0$.
    
    By the $\Q$-linear independence of the entries of $v$, there exists a single $z \in \{0\}^d \times \Z^{n-d}$ such that $z = z_j - b_i - c_j + z'_i$ for all $(i, j)$ satisfying $\init^d_{v, \balpha}(\bg_j)_i \neq 0$.
    Then 
    \begin{equation*}
    \init_{v, \balpha}(\bff)_i = \sum_{j = 1}^m \oX^{z_j} p_{j} \init_{v, \balpha}(\bg_j)_i = \sum_{j = 1}^m p_{j} \oX^{z_j -b_i - c_j} \init_{v, \balpha}^{d}(\bg_j)_i
    = \oX^{z - z'_i} \sum_{j = 1}^m p_{j} \init_{v, \balpha}^{d}(\bg_j)_i
    \end{equation*}
    Let
    \[
    \bff^d \coloneqq \sum_{j = 1}^m p_{j} \init_{v, \balpha}^{d}(\bg_j) \in \init_{v, \balpha}^{d}(\mM).
    \]
    Then for $i = 1, \ldots, K$,
    \[
    f^d_i = \oX^{z'_i -z} \init_{v, \balpha}(\bff)_i \in \A^+ \cap \A_d = \A_d^+.
    \]

    Therefore $\bff^d \in \left(\A_d^+\right)^K$.
    It is left to show that $\bff^d$ satisfies Property~\eqref{eq:condirr2}.
    If $d = 0$ Property~\eqref{eq:condirr2} is trivially true.
    Suppose $d \geq 1$.
    For each $u \in (\R^{d})^*$, let $w \coloneqq (u, 0^{n-d})$ in~\eqref{eq:condirrshift}.
    Then
    \[
    \left(O_{w} \cup J'\right) \cap M_{w}(I', \init_{v, \balpha}(\bff)) \neq \emptyset.
    \]

    Also, for each $i \in \{1, \ldots, K\}$, because $(z - z'_i) \in \{0\}^d \times \Z^{n-d}$ and $w \in \Z^d \times \{0\}^{n-d}$ we have $\deg_w(\init_{v, \balpha}(\bff)_i) = \deg_w(\oX^{z - z'_i} \bff^d_i) = \deg_u(\bff^d_i)$.
    Hence,
    \begin{multline*}
    M_{w}(I', \init_{v, \balpha}(\bff)) = \left\{i \in I' \;\middle|\; \deg_w(\init_{v, \balpha}(\bff)_i) = \max_{i' \in I'} \deg_w(\init_{v, \balpha}(\bff)_{i'})\right\} \\
    = \left\{i \in I' \;\middle|\; \deg_u(f^d_i) = \max_{i' \in I'} \deg_u(f^d_{i'})\right\} = M_{u}(I', \bff^d).
    \end{multline*}
    Also, since the last $n-d$ entries of $v$ are $\Q$-linearly independent, we have $a_i \perp v$ if and only if $a_i \in \sum_{k = 1}^d \Z e_k$, and
    \begin{multline}\label{eq:Ou}
        O'_u \cup J' = O'_u \cup O_v \cup J = \{i \mid \neg (\pi_d(a_i) \perp u \land a_i \perp v) \} \cup J  \\
        = \left\{i \;\middle|\; \neg \left(\pi_d(a_i) \perp u \land a_i \in \sum_{i = 1}^d \Z e_i\right) \right\} \cup J = \left\{i \;\middle|\; \neg \left(a_i \perp w \land a_i \in \sum_{i = 1}^d \Z e_i \right) \right\} \cup J \\
        = \{i \mid \neg (a_i \perp w \land a_i \perp v) \} \cup J = O_w \cup O_v \cup J = O_w \cup J'
    \end{multline}
    Therefore $\left(O'_{u} \cup J'\right) \cap M_{u}(I', \bff^d) = \left(O_{w} \cup J'\right) \cap M_{w}(I', \init_{v, \balpha}(\bff))$, which is non-empty by \eqref{eq:condirrshift}.
    We have thus shown that $\bff^d$ satisfies Property~\eqref{eq:condirr2}.

    \textbf{(ii)$\implies$(i).}
    Suppose (ii) holds.
    Write $\bff^d = \sum_{j = 1}^m p_j \init_{v, \balpha}^{d}(\bg_j)$ where $p_j \in \A_d$ for $j = 1, \ldots, m$.

    For each $i \in \{1, \ldots, K\}$, write $\alpha_i = v \cdot z_i$ for some $z_i \in \{0\}^d \times \Z^{n-d}$.
    By the $\Q$-linear independence of the entries of $v$, such a $z_i$ is unique.

    For each $j \in \{1, \ldots, m\}$, take any $i_j \in \{1, \ldots, K\}$ such that $\init_{v, \balpha}^{d}(\bg_j)_{i_j} \neq 0$, note that the vector $c_j + b_{i_j} - z_{i_j}$ does not depend on the choice of $i_j$.
    Indeed, take any other $i'_j \in \{1, \ldots, K\}$ such that $\init_{v, \balpha}^{d}(\bg_j)_{i'_j} \neq 0$, then by Lemma~\ref{lem:align} we have $\alpha_{i_j} - v \cdot b_{i_j} = \alpha_{i'_j} - v \cdot b_{i'_j}$.
    Since $\alpha_{i_j} = v \cdot z_{i_j}, \alpha_{i'_j} = v \cdot z_{i'_j}$ we have $v \cdot (z_{i_j} - b_{i_j}) =  v \cdot (z_{i'_j} - b_{i'_j})$.
    By the $\Q$-linear independence of the entries of $v$, we have $z_{i_j} - b_{i_j} = z_{i'_j} - b_{i'_j}$.
    So the vector $c_j + b_{i_j} - z_{i_j}$ does not depend on the choice of $i_j$.
    
    For all $i \in \{1, \ldots, K\}$, we have
    \begin{equation}\label{eq:va}
    \deg_v(g_{j, i}) + \alpha_{i} \leq \deg_v(g_{j, i_j}) + \alpha_{i_j},
    \end{equation}
    where the equality holds if and only if $\init_{v, \balpha}^d(\bg_j)_{i} \neq 0$.

    Take
    \[
    \bff \coloneqq \sum_{j = 1}^m \oX^{c_j + b_{i_j} - z_{i_j}} p_j \cdot \bg_j \in \mM.
    \]
    For each $i \in \{1, \ldots, K\}$ and $j \in \{1, \ldots, m\}$, we have
    \begin{multline}\label{eq:vg}
        \deg_v(\oX^{c_j + b_{i_j} - z_{i_j}} p_j g_{ji}) + \alpha_i = v\cdot (c_j + b_{i_j}) - v\cdot z_{i_j} + \deg_v(g_{j,i}) + \alpha_i \\
        \leq - \deg_v(g_{j, i_j}) - \alpha_{i_j} + \deg_v(g_{j,i}) + \alpha_i \leq 0
    \end{multline}
    The first inequality comes from \eqref{eq:vbc} and $\alpha_{i_j} = v \cdot z_{i_j}$, while the second inequality comes from \eqref{eq:va}.
    Furthermore, the equality in \eqref{eq:vg} holds if and only if $\init^d_{v, \balpha}(\bg_j)_{i} \neq 0$ by the equality conditions in \eqref{eq:vbc} and \eqref{eq:va}.

    Hence, for each $i \in \{1, \ldots, K\}$ we have
    \begin{multline}\label{eq:zd}
        \init_{v, \balpha}(\bff)_i = \sum_{j: \init_{v, \balpha}^d(\bg_j)_{i} \neq 0} \oX^{c_j + b_{i_j} - z_{i_j}} \init_v(p_j g_{j, i}) = \sum_{j: \init_{v, \balpha}^d(\bg_j)_{i} \neq 0} \oX^{c_j + b_{i} - z_{i}} p_j \init_v(g_{j, i}) \\
        = \sum_{j: \init_{v, \balpha}(\bg_j)_{i} \neq 0} \oX^{c_j + b_{i} - z_{i}} p_j \init_{v, \balpha}(\bg_{j})_i = \sum_{j=1}^m \oX^{c_j + b_{i} - z_{i}} p_j \init_{v, \balpha}(\bg_{j})_i = \oX^{- z_{i}} \sum_{j=1}^m p_j \init^{d}_{v, \balpha}(\bg_{j})_i \\
        = \oX^{- z_{i}} f^d_i \in \A^+.
    \end{multline}
    In the first equality, the initial polynomials do not cancel each other because their sum is $\oX^{- z_{i}} f^d_i \in \A_d^+$.
    Therefore $\init_{v, \balpha}(\bff) \in \ApK$.

    We now prove Property~\eqref{eq:condirrshift}.
    Recall $I' \coloneqq \{i \in I \mid \alpha_{i} = \min_{i' \in I} \alpha_{i'}\}$.
    For $i, i' \in I'$, we have $v \cdot z_i = \alpha_i = \alpha_{i'} = v \cdot z_{i'}$.
    By the $\Q$-linear independence of the entries of $v$, we have
    \begin{equation}\label{eq:iiprime}
        z_i = z_{i'} \text{ for all } i, i' \in I'.
    \end{equation}
    Take any $w \in \Rns$.
    
    If $w \in \sum_{k = d+1}^n \R e_{k}$, then 
    \begin{align*}
        M_w(I', \init_{v, \balpha}(\bff)) & = \left\{i \in I' \;\middle|\;  \deg_w(\init_{v, \balpha}(\bff)_i) = \max_{i' \in I'} \deg_w(\init_{v, \balpha}(\bff)_{i'}) \right\} \\
        & = \left\{i \in I' \;\middle|\;  - w \cdot z_i = \max_{i' \in I'} \{- w \cdot z_{i'}\} \right\} \quad \quad \text{ (by \eqref{eq:zd})} \\
        & = I'. \quad \quad \text{ (by \eqref{eq:iiprime})} 
    \end{align*}
    So 
    \[
    \left(O_w \cup J'\right) \cap M_w(I', \init_{v, \balpha}(\bff)) = \left(O_w \cup J'\right) \cap I' \supseteq J' \cap I' \neq \emptyset.
    \]

    If $w \not\in \sum_{k = d+1}^n \R e_{k}$ and $d \geq 1$, write $w = w' + u$ where $w' \in\sum_{k = d+1}^n \R e_{k}$ and $u \in \sum_{k = 1}^d \R e_{k}$.
    Then
    \begin{align*}
        & M_w(I', \init_{v, \balpha}(\bff)) \\
        = & \left\{i \in I' \;\middle|\;  \deg_w(\init_{v, \balpha}(\bff)_i) = \max_{i' \in I'} \deg_w(\init_{v, \balpha}(\bff)_{i'}) \right\} \\
        = & \left\{i \in I' \;\middle|\;  - w' \cdot z_i + \deg_u(f^d_i) = \max_{i' \in I'} \{- w' \cdot z_{i'} + \deg_u(f^d_{i'})\} \right\} \quad \quad \text{ (by \eqref{eq:zd})} \\
        = & \left\{i \in I' \;\middle|\;  \deg_u(f^d_i) = \max_{i' \in I'} \{\deg_u(f^d_{i'})\} \right\} \quad \quad \text{ (by \eqref{eq:iiprime})}\\
        = &\; M_{u}(I', \bff^d).
    \end{align*}
    Since the last $n-d$ entries of $v$ are $\Q$-linearly independent, we have $a_i \not\in O_v$ if and only if $a_i \in \sum_{k = 1}^d \Z e_k$.
    Hence, $O'_u \cup J' = O_w \cup J'$ as in \eqref{eq:Ou}.
    Therefore,
    \[
    \left(O_w \cup J'\right) \cap M_w(I', \init_{v, \balpha}(\bff)) = \left(O'_u \cup J\right) \cap M_{u}(I', \bff^d) \neq \emptyset.
    \]
    
    If $w \not\in \sum_{k = d+1}^n \R e_{k}$ and $d = 0$.
    We have $f^{d}_i \in \R$ for $i = 1, \ldots, K$, so
    \begin{align*}
        M_w(I', \init_{v, \balpha}(\bff)) & = \left\{i \in I' \;\middle|\;  \deg_w(\init_{v, \balpha}(\bff)_i) = \max_{i' \in I'} \deg_w(\init_{v, \balpha}(\bff)_{i'}) \right\} \\
        & = \left\{i \in I' \;\middle|\;  - w' \cdot z_i = \max_{i' \in I'} \{- w' \cdot z_{i'}\} \right\} \quad \quad \text{ (by \eqref{eq:zd})} \\
        & = I' \quad \quad \text{ (by \eqref{eq:iiprime})}
    \end{align*}
    So $\left(O_{w} \cup J'\right) \cap M_{w}(I', \init_{v, \balpha}(\bff)) = \left(O_{w} \cup J'\right) \cap I' \supset J' \cap I' \neq \emptyset$.
    
    This proves Property~\eqref{eq:condirrshift}.
\end{proof}

\subsection{Dimension reduction: the general case}
This subsection continues the work of the previous one.
Our goal is to Condition~\hyperref[item:locshift]{(LocInfShift)} to a Condition~\hyperref[item:locd]{(LocInfD)} (see Proposition~\ref{prop:shifttod}).
In the previous subsection we considered the special case where the vector $v \in \Rns$ in Condition~\hyperref[item:locshift]{(LocInfShift)} is of the form $(0, \ldots, 0, v_{d+1}, \ldots, v_n)$.
In this subsection we consider the general case.
The key idea when dealing with the general case of $v \in \Rns$ is the following \emph{coordinate change}.

Given a matrix $A = (a_{ij})_{1 \leq i, j \leq n} \in \GL(n, \Z)$, define the new variables $X'_1, \ldots, X'_n$ where $X'_i \coloneqq X_1^{a_{i1}} X_2^{a_{i2}} \cdots X_n^{a_{in}}$.
Then 
\[
\R[X_1, \ldots, X_n] = \R[X'_1, \ldots, X'_n].
\]
In other words, we can define the ring automorphism
\[
\varphi_A \colon \A \rightarrow \A, \quad X_i \mapsto X_1^{a_{i1}} X_2^{a_{i1}} \cdots X_n^{a_{in}},
\]
such that $\varphi_A(\oX^{b}) = \oX^{b A}$.
The automorphism $\varphi_A$ extends entry-wise to $\A^K \rightarrow \A^K$.

For each $A \in \GL(n, \Z)$, denote by $A^{- \top}$ the inverse of its transpose.
Then $(v A^{-\top}) \cdot (b A) = v \cdot b$ for all $v \in \Rns, b \in \Z^n$.
Hence, for any $f \in \A$ we have $\init_{v A^{-\top}}(\varphi_A(f)) = \varphi_A(\init_{v}(f))$, and for any $\bff \in \A^K$ we have $M_v(I, \bff) = M_{v A^{-\top}}(I, \varphi_A(\bff))$.
Furthermore, if we replace the vectors $a_1, \ldots, a_K \in \Z^n$ by the vectors $a_1 A, \ldots, a_K A \in \Z^n$, then the set $O_v$ becomes $O_{v A^{-\top}}$.
It is easy to verify that if $\bg_1, \ldots, \bg_m$ is a super Gr\"{o}bner basis for $\mM$, then $\varphi_A(\bg_1), \ldots, \varphi_A(\bg_m)$ is still a super Gr\"{o}bner basis for $\varphi_A(\mM) \coloneqq \{\varphi_A(\bff) \mid \bff \in \mM\}$. 

Let $v \in \Rns$ and let $A \in \GL(n, \Z)$ be such that $v A^{-\top} = (0, \ldots, 0, v_{d+1}, \ldots, v_n)$ where $v_{d+1}, \ldots, v_n$ are $\Q$-linearly independent.
Then as in the previous section we define the module $\init_{v A^{-\top}, \balpha}^{d}(\varphi_A(\mM))$ to be the module generated by $\init_{v A^{-\top}, \balpha}^{d}(\varphi_A(\bg_1)), \ldots, \init_{v A^{-\top}, \balpha}^{d}(\varphi_A(\bg_m))$.

The above observation shows the following.
Fix $v \in \Rns$ in \hyperref[item:locshift]{(LocInfShift)} of Theorem~\ref{thm:locglob}.
Given any change of coordinates $A \in \GL(n, \Z)$, we can simultaneously (right-)multiply $A^{-\top}$ to $v$ and $A$ to all $a_1, \ldots, a_K$, while applying $\varphi_A$ to the super Gr\"{o}bner basis $\bg_1, \ldots, \bg_m$ of $\mM$.
Then the original properties \hyperref[item:locshift]{(LocInfShift)}(a)(b) are satisfied by $\bff$ if and only if they are satisfied by $\varphi_A(\bff)$ after the change of coordinates.
We will use this observation to reduce the general case for $v$ to the special case considered in the previous subsection.

\begin{fct}\label{fct:A}
    For every $v \in \Rns$, there exists $A \in \GL(n, \Z)$ such that $v A^{-\top} = (0, \ldots, 0, v_{d+1}, \ldots, v_n)$ with $v_{d+1}, \ldots, v_n$ being $\Q$-linearly independent.
\end{fct}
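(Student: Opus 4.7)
The plan is to construct $A$ via a change of $\Z$-basis argument. Let $n - d$ denote the $\Q$-dimension of $\operatorname{span}_\Q(v_1, \ldots, v_n) \subseteq \R$; since $v \neq 0$ we have $0 \leq d \leq n - 1$. The construction reduces to producing a $\Z$-basis $b_1, \ldots, b_n$ of $\Z^n$ such that $v \cdot b_i = 0$ for $1 \leq i \leq d$ and $v \cdot b_{d+1}, \ldots, v \cdot b_n$ are $\Q$-linearly independent. Indeed, letting $B$ be the matrix whose $i$-th row is $b_i$ (so $B \in \GL(n, \Z)$) and setting $A \coloneqq B^{-1}$, a direct computation gives $v A^{-\top} = v B^{\top} = (v \cdot b_1, \ldots, v \cdot b_n)$, which has exactly the required shape.

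The key object is the sublattice $L \coloneqq \{b \in \Z^n \mid v \cdot b = 0\}$. I will establish two properties of $L$. First, $L$ has rank exactly $d$: the $\Q$-linear map $\Q^n \to \R$, $b \mapsto v \cdot b$, has image $\operatorname{span}_\Q(v_1, \ldots, v_n)$ of $\Q$-dimension $n - d$, so its kernel has $\Q$-dimension $d$, and intersecting this kernel with $\Z^n$ yields a sublattice of the same rank. Second, $L$ is saturated in $\Z^n$ (i.e.\ $\Z^n / L$ is torsion-free), since $m \cdot (v \cdot b) = 0$ with $m \in \Z \setminus \{0\}$ immediately forces $v \cdot b = 0$.

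Saturatedness implies that $\Z^n / L$ is a finitely generated torsion-free abelian group of rank $n - d$, hence free; therefore any $\Z$-basis $b_1, \ldots, b_d$ of $L$ lifts together with a basis of the quotient to a full $\Z$-basis $b_1, \ldots, b_n$ of $\Z^n$. It then remains to verify the $\Q$-linear independence of $v \cdot b_{d+1}, \ldots, v \cdot b_n$: after clearing denominators, any relation $\sum_{i = d+1}^n c_i (v \cdot b_i) = 0$ with $c_i \in \Z$ gives $\sum_{i = d+1}^n c_i b_i \in L = \operatorname{span}_\Z(b_1, \ldots, b_d)$, and the $\Z$-independence of the basis $b_1, \ldots, b_n$ of $\Z^n$ forces $c_{d+1} = \cdots = c_n = 0$. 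The only mildly delicate step is the saturation-plus-basis-extension argument, but it is a standard fact about free abelian groups; once this is in place, taking $A \coloneqq B^{-1}$ produces the required element of $\GL(n, \Z)$.
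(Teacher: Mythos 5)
Your proof is correct, but it takes a genuinely different route from the paper's. The paper proceeds iteratively: starting from a vector of the form $(0,\ldots,0,v_d,\ldots,v_n)$ with $v_d,\ldots,v_n$ $\Q$-linearly dependent, it extracts one integer relation $z_d v_d + \cdots + z_n v_n = 0$, uses Gaussian pivoting to find $\widetilde{A_d}\in\GL(n-d+1,\Z)$ mapping $(z_d,\ldots,z_n)$ to $(z,0,\ldots,0)$, and thereby kills one more coordinate; it repeats until the surviving coordinates are $\Q$-linearly independent and sets $A$ equal to the inverse transpose of the accumulated product. Your argument instead is a single-shot lattice construction: you identify the orthogonal sublattice $L=\{b\in\Z^n \mid v\cdot b=0\}$, show it has rank $d$ and is saturated (the quotient $\Z^n/L$ is torsion-free), extend a $\Z$-basis of $L$ to a $\Z$-basis $b_1,\ldots,b_n$ of $\Z^n$, and take $A=B^{-1}$ where $B$ stacks the $b_i$ as rows, so that $vA^{-\top}=(v\cdot b_1,\ldots,v\cdot b_n)$. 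The verification that $v\cdot b_{d+1},\ldots,v\cdot b_n$ are $\Q$-independent via clearing denominators and invoking $\Z$-independence of the $b_i$ is clean and correct. Your approach is more conceptual and avoids the step-by-step reduction, at the cost of relying on the (standard, but slightly less self-contained) fact that a saturated sublattice is a direct summand of $\Z^n$; the paper's iterative pivoting is more hands-on and stays closer to explicit unimodular row operations. Either approach can be made effective if needed.
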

\begin{proof}
    It suffices to show the following.
    If $v \in \Rns$ is of the form $(0, \ldots, 0, v_{d}, \ldots, v_n)$, $1 \leq d \leq n$ where $v_{d}, \ldots, v_n$ being $\Q$-linearly \emph{dependent}, then we can find a matrix $A_d \in \GL(n, \Z)$ such that $v A_d = (0, \ldots, 0, v_{d+1}, \ldots, v_n)$.
    Indeed, if this is true, then we can find a series of matrices $A_{r}, \ldots, A_{r+s}$ such that $v A_{r} A_{r+1} \cdots A_{r+s}$ is of the form $(0, \ldots, 0, v_{r+s+1}, \ldots, v_n)$ with $v_{r+s+1}, \ldots, v_n$ being $\Q$-linearly \emph{independent}. We would then let $A \coloneqq \left(A_{r} A_{r+1} \cdots A_{r+s}\right)^{-\top}$.

    Suppose now that $v = (0, \ldots, 0, v_{d}, \ldots, v_n)$, $1 \leq d \leq n$ where $v_{d}, \ldots, v_n$ are $\Q$-linearly dependent.
    Let $z_d, \ldots, z_n \in \Q$, not all zero, be such that $z_d v_d + \cdots + z_n v_n = 0$.
    Multiplying them by a common denominator we can suppose $z_d, \ldots, z_n \in \Z$.
    Using Gaussian pivoting, we can find a matrix $\widetilde{A_d} \in \GL(n-d+1, \Z)$ such that $(z_d, \ldots, z_n) \widetilde{A_d} = (z, 0, \ldots, 0)$ for some $z \in \Z^*$.
    Then we have 
    \begin{multline*}
    0 = (z_d, \ldots, z_n) \cdot (v_{d}, \ldots, v_n) = (z_d, \ldots, z_n) \widetilde{A_d} \cdot (v_{d}, \ldots, v_n) \widetilde{A_d}^{-\top} \\
    = (z, 0, \ldots, 0) \cdot (v_{d}, \ldots, v_n) \widetilde{A_d}^{-\top}.
    \end{multline*}
    Therefore $(v_{d}, \ldots, v_n) \widetilde{A_d}^{-\top}$ is of the form $(0, v'_{d+1}, \ldots, v'_n)$.
    We then let $A_d \coloneqq diag(I_{d-1}, \widetilde{A_d}^{-\top})$.
    That is, $A_d$ is the block diagonal matrix consisting of the block $I_{d-1}$ of $(d-1)$-dimensional identity matrix and the block $\widetilde{A_d}^{-\top}$ of $(n-d+1)$-dimensional matrix.
    Then $v A_d = (0, \ldots, 0, v'_{d+1}, \ldots, v'_n)$.
\end{proof}

\begin{prop}\label{prop:shifttod}
    Condition~\emph{\hyperref[item:locshift]{(LocInfShift)}} of Proposition~\ref{prop:inftoshift} is equivalent to the following:
    \begin{enumerate}
        \item[2.] \label{item:locd} \emph{\textbf{(LocInfD):}}
        For every $v \in \Rns, A \in \GL(n, \Z),$ such that $v A^{-\top} = (0, \ldots, 0, v_{d+1}, \ldots, v_n)$, $0 \leq d \leq n-1$ with $v_{d+1}, \ldots, v_n$ being $\Q$-linearly independent,
        there exists $\balpha \in \left(\sum_{k = d+1}^n \Z v_k\right)^K$ and $\bff^d \in \init_{v A^{-\top}, \balpha}^{d}(\varphi_A(\mM))$ satisfying the following properties:
        \begin{enumerate}
            \item[(a)] $\bff^d \in \left(\A_{d}^+\right)^K$.
            \item[(b1)] Denote $I' \coloneqq \{i \in I \mid \alpha_{i} = \min_{i' \in I} \alpha_{i'}\}, J' \coloneqq \left(O_{v} \cup J\right)$, we have
            \begin{equation}\label{eq:condirr}
                J' \cap I' \neq \emptyset.
            \end{equation}
            \item[(b2)] Denote by $\pi_d \coloneqq \Z^n \rightarrow \Z^d$ the projection onto the first $d$ coordinates.
            For $u \in (\R^{d})^*$, define $O'_u \coloneqq \{i \in \{1, \ldots, K\} \mid \pi_d(A a_i) \not\perp u\}$, we have
            \begin{equation}\label{eq:condirrD}
            \left(O'_{u} \cup J'\right) \cap M_{u}(I', \bff^d) \neq \emptyset \quad \text{ for every $u \in (\R^{d})^*$}.
            \end{equation}
        \end{enumerate}
    \end{enumerate}
    As in Lemma~\ref{lem:dimred}, Property~\eqref{eq:condirrD} is considered trivially true when $d = 0$.
\end{prop}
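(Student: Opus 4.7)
The strategy is to combine Lemma~\ref{lem:dimred}, which already establishes the required equivalence in the special case $v = (0,\ldots,0,v_{d+1},\ldots,v_n)$ with $v_{d+1},\ldots,v_n$ being $\Q$-linearly independent, with the coordinate-change discussion preceding the proposition. Fact~\ref{fct:A} guarantees that an arbitrary $v \in \Rns$ can always be brought into this special form via $v \mapsto vA^{-\top}$ for some $A \in \GL(n,\Z)$, and under this change of variables the module $\mM$ is replaced by $\varphi_A(\mM)$ (for which a super Gr\"obner basis is produced by Lemma~\ref{lem:grob}) and the vectors $a_i$ by $a_iA$, in a way that preserves all the relevant invariants.

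For the forward direction (LocInfShift) $\Rightarrow$ (LocInfD), I would fix $v$ and $A$ as in the statement of (LocInfD) and apply (LocInfShift) at $v$ to obtain $\bff \in \mM$ and $\balpha \in \bigl(\sum_{k=1}^n \Z v_k\bigr)^K$. Since $\varphi_A$ is a ring automorphism sending monomials to monomials with the same coefficients, it preserves $\A^+$, so $\init_{vA^{-\top},\balpha}(\varphi_A(\bff)) = \varphi_A(\init_{v,\balpha}(\bff)) \in \ApK$; analogous identities translate the constraint on $M_w$ and $O_w$ into the corresponding constraint at $wA^{-\top}$ for $\varphi_A(\mM)$. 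The equality $\sum_{k=1}^n \Z v_k = \sum_{k=d+1}^n \Z (vA^{-\top})_k$, which holds because $A^{-\top}$ is unimodular and the first $d$ coordinates of $vA^{-\top}$ vanish, ensures the lattice condition on $\balpha$ matches. Hence condition (i) of Lemma~\ref{lem:dimred} holds for $\varphi_A(\mM)$ at $vA^{-\top}$, and (i)$\Rightarrow$(ii) supplies the desired $\bff^d \in \init_{vA^{-\top},\balpha}^{d}(\varphi_A(\mM)) \cap (\A_d^+)^K$ together with (b1) and (b2). For the reverse direction (LocInfD) $\Rightarrow$ (LocInfShift), I would fix $v \in \Rns$, invoke Fact~\ref{fct:A} to choose $A$ with $vA^{-\top}$ in special form, extract $\balpha$ and $\bff^d$ from (LocInfD), apply Lemma~\ref{lem:dimred} (ii)$\Rightarrow$(i) in $\varphi_A(\mM)$ to obtain some $\widetilde{\bff} \in \varphi_A(\mM)$, and set $\bff := \varphi_{A^{-1}}(\widetilde{\bff}) \in \mM$. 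Translating the resulting condition back via $\varphi_A^{-1}$ then yields (LocInfShift) at $v$.

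The main obstacle is the careful coordinate-change bookkeeping: one must verify that membership in $\mM$, positivity in $\A^+$, the shifted initial $\init_{v,\balpha}$, the maximizer set $M_w$, the orthogonality set $O_w$, and the lattice $\sum_{k} \Z v_k$ all transform consistently under $\varphi_A$ so that conditions (i) and (ii) of Lemma~\ref{lem:dimred} applied to $(\varphi_A(\mM), vA^{-\top})$ align exactly with (LocInfShift) at $(\mM, v)$ and (LocInfD) at $(\mM, v, A)$ respectively. The subtlest point is the behaviour of $O_w$: the identity $(vA^{-\top}) \cdot (bA) = v \cdot b$ ensures that replacing $a_i$ by $a_iA$ converts $O_v$ (computed with the original vectors) into the set computed at $vA^{-\top}$ with the new vectors, which is exactly what is needed to reconcile the $J' = O_v \cup J$ appearing in both conditions and the $O'_u$ appearing in (b2) with the $O_w$ appearing inside the condition of (LocInfShift).
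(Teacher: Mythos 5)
Your proposal is correct and follows essentially the same route as the paper: after establishing that the lattice $\sum_{k=1}^n \Z v_k$ equals $\sum_{k=d+1}^n \Z (vA^{-\top})_k$ and that $\varphi_A$ transports the super Gr\"obner basis, the shifted initials, the sets $M_w$ and $O_w$, and positivity in $\A^+$, one simply applies Lemma~\ref{lem:dimred} to the transformed data $(\varphi_A(\mM), vA^{-\top}, a_iA)$. The paper states this as a single application of the lemma without separating the two directions, but the substance — Fact~\ref{fct:A} for existence of $A$, the coordinate-change bookkeeping, and the reduction to Lemma~\ref{lem:dimred} — is the same.
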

\begin{proof}
    Fix a $v = (v_1, \ldots, v_n) \in \Rns$.
    Take any $A \in \GL(n, \Z)$ with $v A^{-\top} = (0, \ldots, 0, v'_{d+1}, \ldots, v'_n)$ such that $v'_{d+1}, \ldots, v'_n$ are $\Q$-linearly independent.
    Note that $\sum_{k = 1}^n \Z v_k = \sum_{k = d+1}^n \Z v'_k$ because $A \in \GL(n, \Z)$.
    Therefore, we can apply Lemma~\ref{lem:dimred} to the super Gr\"{o}bner basis $\varphi_A(\bg_1), \ldots, \varphi_A(\bg_m)$, the vector $v A^{-\top} = (0, \ldots, 0, v'_{d+1}, \ldots, v'_n)$ and the vectors $a_1 A, \ldots, a_K A \in \Z^n$.
    This shows that there exists $\balpha \in \left(\sum_{k = d+1}^n \Z v'_k\right)^K$ and $\bff^d \in \init_{v A^{-\top}, \balpha}^{d}(\varphi_A(\mM))$ satisfying \hyperref[item:locd]{(LocInfD)}(a)(b1)(b2) if and only if there exists $\balpha \in \left(\sum_{k = 1}^n \Z v_k\right)^K$ and $\bff \in \mM$ satisfying \hyperref[item:locshift]{(LocInfShift)}(a)(b).
\end{proof}

\subsection{Local condition at infinity: computing cells \hyperref[item:loccell]{(LocInfCell)}}\label{subsec:cell}
In this subsection we further reduce the Condition~\hyperref[item:locd]{(LocInfD)} to a Condition~\hyperref[item:loccell]{(LocInfCell)} which consists of verifying a \emph{finite} number of $v \in \Rns$ for each coordinate-change matrix $A \in \GL(n, \Z)$.

Let $v \in \Rns, \balpha \in \R^K$.
Denote by $e_1, \ldots, e_K$ the canonical basis of the $\A$-module $\A^K$.
We introduce the new variables $T_1, \ldots, T_K$ and define an $\A$-module homomorphism
\[
\phi: \A^K \rightarrow \R[X_1^{\pm}, \ldots, X_n^{\pm}, T_1^{\pm}, \ldots, T_K^{\pm}], \quad \oX^{u} e_i \mapsto \oX^{u} T_i.
\]
We have $\phi(\init_{v, \balpha}(\bff)) = \init_{(v, \balpha)}(\phi(\bff))$ for every $\bff \in \A^K$.

As in the previous subsections let $\bg_1, \ldots, \bg_m$ be a super Gr\"{o}bner basis of $\mM$.
Since $\phi(\bg_i)$ is a polynomial in $\R[X_1^{\pm}, \ldots, X_n^{\pm}, T_1^{\pm}, \ldots, T_K^{\pm}]$, there exists a partition of $\Rns \times \R^K$ such that for any two directions in the same partition element the initial parts of $\phi(\bg_i)$ are the same.
Let $\mL_{\mM}$ be the common refinement of the partitions associated to the polynomials $\phi(\bg_1), \ldots, \phi(\bg_m)$.

From now on we use the term ``\emph{cell}'' to call elements of a given partition.
Fix $I \subseteq \{1, \ldots, K\}$.
There exists a partition $\mL_I$ of $\R^K$ such that for any two vectors $(\alpha_1, \ldots, \alpha_K)$, $(\alpha'_1, \ldots, \alpha'_K)$ in the same cell, we have $\alpha_i > \alpha_j \iff \alpha'_i > \alpha'_j$ and $\alpha_i < \alpha_j \iff \alpha'_i < \alpha'_j$ for all $i, j \in I$.
Define the partition $\mL'_I \coloneqq \Rns \times \mL_I$ of $\Rns \times \R^K$ where each cell is of the form $\Rns \times P, P \in \mL_I$.

Finally, there exists a partition $\mL_O$ of $\Rns$ such that any two vectors $v, v'$ in the same cell satisfy $v \perp a_i \iff v' \perp a_i$ for all $i \in \{1, \ldots, K\}$.
By subdividing $\mL_O$ we can suppose that each cell is a convex polyhedron.
Similar to the definition of $\mL'_I$, we define the partition $\mL'_O \coloneqq \mL_O \times \R^K$ of $\Rns \times \R^K$.

For any two partition $\mA, \mB$ of the same set $S$, define $\mA \vee \mB$ to be the partition of $S$ whose elements are of the form $A \cap B, A \in \mA, B \in \mB$.
Consider the partition $\mL$ of $\Rns \times \R^K$ defined by
\[
\mL \coloneqq \mL_{\mM} \vee \mL'_I \vee \mL'_O.
\]
We point out that from the definition of the partitions $\mL_{\mM}, \mL'_I, \mL'_O$, the cells of $\mL$ are invariant under scaling by a positive real, meaning $x \in Q \iff r \cdot x \in Q$ for all cells $Q \in \mL$ and $r \in \Rpp$.

Let $\pi \colon \Rns \times \R^K \rightarrow \Rns, (v, \balpha) \mapsto v$ be the canonical projection.
For each $Q \in \mL$, define the two-element partition $\{\pi(Q), \Rns \setminus \pi(Q)\}$, and define
\[
\mP \coloneqq \bigvee_{Q \in \mL} \{\pi(Q), \Rns \setminus \pi(Q)\}.
\]
By this definition, take any $P \in \mP$ and $Q \in \mL$ with $\pi^{-1}(P) \cap Q \neq \emptyset$; then for $v, v' \in P$, there exists $\balpha \in \R^K$ with $(v, \balpha) \in Q$ if and only if there exists $\balpha' \in \R^K$ with $(v', \balpha') \in Q$.

It is important to note that the partitions $\mL_{\mM}, \mL_{I}, \mL_{O}$ are all defined using equalities and inequalities with \emph{rational} coefficients.
Also, each inequality is strict, so every cell $Q \in \mL$ and $P \in \mP$ is relatively open (a polyhedron is called relative open if it is open in the smallest linear space containing it).
In other words, each cell is defined by a combination of equalities and \emph{strict} inequalities.
We also point out that, like the cells of $\mL$, the cell of $\mP$ are invariant under scaling by a positive real, meaning $x \in P \iff r \cdot x \in P$ for all cells $P \in \mP$ and $r \in \Rpp$.


Let $Q \in \mL$. For $(v, \balpha), (v', \balpha') \in Q$, we have
\[
    \init_{v, \balpha}(\bg_j) = \init_{v', \balpha'}(\bg_j)
\]
for all $j = 1, \ldots, d$.
Thus, if $v = (0, \ldots, 0, v_{d+1}, \ldots, v_n)$ is such that $v_{d+1}, \ldots, v_n$ are $\Q$-linearly independent, then $\init^{d}_{v, \balpha}(\mM)$ depends only on the cell $Q \in \mL$ containing $(v, \balpha)$.
Hence, we can denote
\[
\init^{d}_{Q}(\bg_j) \coloneqq \init^{d}_{v, \balpha}(\bg_j), \quad j = 1, \ldots, m, \quad \init^{d}_{Q}(\mM) \coloneqq \init^{d}_{v, \balpha}(\mM), \quad \text{ where } (v, \balpha) \in Q.
\]

For any coordinate change $A \in \GL(n, \Z)$, we similarly define the partitions $\mL A^{-\top}$ and $\mP A^{-\top}$ based on the super Gr\"{o}bner basis $\varphi_A(\bg_1), \ldots, \varphi_A(\bg_m)$ and the vectors $a_1 A, \ldots, a_K A$.
In particular, each cell of $\mL A^{-\top}$ is of the form $Q \cdot diag(A^{-\top}, I_K), Q \in \mL$, and each cell of $\mP A^{-\top}$ is of the form $P \cdot A^{-\top}, P \in \mP$.
If $v A^{- \top} = (0, \ldots, 0, v_{d+1}, \ldots, v_n)$ is such that $v_{d+1}, \ldots, v_n$ are $\Q$-linearly independent, then $\init^{d}_{v A^{- \top}, \balpha}(\varphi_A(\mM))$ depends only on the cell $Q \in \mL A^{- \top}$ containing $(v A^{- \top}, \balpha)$.
Similarly, for $j = 1, \ldots, m$, we can denote
\[
\init^{d}_{Q}(\varphi_A(\bg_j)) \coloneqq \init^{d}_{v A^{- \top}, \balpha}(\varphi_A(\bg_j)), \; \init^{d}_{Q}(\varphi_A(\mM)) \coloneqq \init^{d}_{v A^{- \top}, \balpha}(\varphi_A(\mM)), \; \text{ where } (v A^{- \top}, \balpha) \in Q.
\]

The inputs in Theorem~\ref{thm:dec} are generators for modules $\mM$ over $\A = \R[X_1^{\pm}, \ldots, X_n^{\pm}]$, vectors $a_1, \ldots, a_K$ in $\Z^n$ and two sets $I, J$.
Our strategy is to use induction on $n$ to prove Theorem~\ref{thm:dec}.
The base case $n = 0$ reduces to linear programming.
Indeed, when $n = 0$, $\A = \R, \A^+ = \Rpp$, the Property~\eqref{eq:deccond} is trivially true; and the problem becomes the following: given an $\R$-submodule $\mM$ of $\R^K$, decide whether $\mM \cap \Rpp^K$ contains an element.
Since the given generators of $\mM$ all have integer coefficients, this is decidable using linear programming.

The following observation shows that a decision procedure for Theorem~\ref{thm:dec} with smaller $n$ can help us decide which cells $Q \in \mL$ contain $\bff^d$ satisfying the Properties \hyperref[item:locd]{(LocInfD)}(a)(b1)(b2).

\begin{lem}\label{lem:compop}
    Fix a change of coordinates $A \in \GL(n, \Z)$ and a number $0 \leq d \leq n-1$.  
    Suppose Theorem~\ref{thm:dec} is true for all $n_0$, $0 \leq n_0 \leq n - 1$.
    Then for each cell $Q \in \mL A^{- \top}$, we can decide whether $\init_{Q}^{d}(\varphi_A(\mM))$ contains an element $\bff^d$ satisfying the Properties \emph{\hyperref[item:locd]{(LocInfD)}}(a)(b1) and (b2).
\end{lem}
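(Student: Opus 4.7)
The plan is to reduce the decision problem directly to the induction hypothesis for Theorem~\ref{thm:dec} at dimension $n_0 = d < n$, using the structural information encoded in the cell $Q$ to extract the correct lower-dimensional instance.

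First I would observe that Property~(b1), namely $J' \cap I' \neq \emptyset$, depends only on the cell $Q$ and not on any putative choice of $\bff^d$. Indeed, since $\mL A^{-\top}$ refines both $\mL'_I$ and $\mL'_O$, the relative ordering of $\{\alpha_i \mid i \in I\}$ and the perpendicularity relations between $vA^{-\top}$ and the vectors $a_i A$ are constant on $Q$; hence $I'$ and $J' = O_v \cup J$ are determined by $Q$. This reduces (b1) to a finite combinatorial check on the defining constraints of $Q$.

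Next, assuming (b1) holds, I would compute an explicit finite list of integer-coefficient generators for the $\A_d$-module $\init_Q^d(\varphi_A(\mM))$. Starting from the integer-coefficient super Gr\"obner basis $\bg_1, \ldots, \bg_m$ of $\mM$ produced by Lemma~\ref{lem:grob}, the twisted basis $\varphi_A(\bg_1), \ldots, \varphi_A(\bg_m)$ still consists of Laurent polynomials with integer coefficients. For any representative $(vA^{-\top}, \balpha) \in Q$ of the required form, the shifted initial $\init_{vA^{-\top}, \balpha}(\varphi_A(\bg_j))$ is obtained by retaining exactly those monomials $c\oX^b e_i$ whose shifted weighted degree $(vA^{-\top}) \cdot b + \alpha_i$ is maximal; the set of retained supports depends only on $Q$, so the polynomial $\init_Q(\varphi_A(\bg_j))$ is computable from the defining data of $Q$. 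Finally, by Lemma~\ref{lem:ing} I can compute integer vectors $b_i, c_j \in \{0\}^d \times \Z^{n-d}$ so that multiplying entry $i$ by $\oX^{b_i + c_j}$ pushes each $\init_Q(\varphi_A(\bg_j))$ into $\A_d^K$; the resulting vectors $\init_Q^d(\varphi_A(\bg_j))$ generate $\init_Q^d(\varphi_A(\mM))$ as an $\A_d$-module by definition, and retain integer coefficients.

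I would then invoke the induction hypothesis, Theorem~\ref{thm:dec} at $n_0 = d$, on the following instance: the generators $\init_Q^d(\varphi_A(\bg_j)) \in \A_d^K$ computed above, the projected vectors $\pi_d(A a_1), \ldots, \pi_d(A a_K) \in \Z^d$, and the two sets $I'$ and $J'$. The hypothesis then decides whether there exists $\bff^d \in \init_Q^d(\varphi_A(\mM)) \cap \left(\A_d^+\right)^K$ satisfying $(O'_u \cup J') \cap M_u(I', \bff^d) \neq \emptyset$ for every $u \in (\R^d)^*$, which is precisely the conjunction of Properties (a) and (b2). Combined with the check for (b1) in the first step, this yields the desired decision procedure. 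The main obstacle I anticipate is purely bookkeeping: verifying carefully that the coordinate change, the cell-dependent selection of initial monomials, and the subsequent multiplications by $\oX^{b_i+c_j}$ all preserve integer coefficients, so that the lower-dimensional instance is a valid input to Theorem~\ref{thm:dec}; once this is in place, the reduction to dimension $d < n$ is immediate.
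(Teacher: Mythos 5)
Your proposal is correct and follows essentially the same route as the paper's proof: check Property~(b1) as a finite combinatorial condition determined by the cell $Q$, compute the integer-coefficient generators $\init_Q^d(\varphi_A(\bg_j))$ of the $\A_d$-module $\init_Q^d(\varphi_A(\mM))$, and then invoke the induction hypothesis (Theorem~\ref{thm:dec} at $n_0 = d$) with these generators, the projected vectors $\pi_d(A a_i)$, and the sets $I'$, $J'$ to decide Properties~(a) and~(b2). The extra care you take to verify that the coordinate change, the cell-dependent truncation, and the shifts by $\oX^{b_i + c_j}$ all preserve integrality of coefficients is a sensible addition that the paper leaves implicit.
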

\begin{proof}
    Suppose Theorem~\ref{thm:dec} is true for all $0 \leq n_0 \leq n - 1$.
    Fix a cell $Q \in \mL A^{- \top}$.
    
    By the definition of the partition $\mL$, the sets $I' \coloneqq \{i \in I \mid \alpha_{i} = \min_{i' \in I} \alpha_{i'}\}, J' \coloneqq O_{v} \cup J$ only depend on the cell $Q$ containing $(v, \balpha)$.
    Hence we can compute $I', J'$ and verify whether the Property \hyperref[item:locd]{(LocInfD)}(b1), $J' \cap I' \neq \emptyset$, is satisfied.

    In \hyperref[item:locd]{(LocInfD)}, the $\A_d$-submodule $\init_{v A^{- \top}, \balpha}^{d}(\varphi_A(\mM)) = \init_{Q}^{d}(\varphi_A(\mM))$ of $\A_d^K$ is generated by $\init^{d}_{Q}(\varphi_A(\bg_j)), j = 1, \ldots, m$.
    Recall that $\pi_d \coloneqq \Z^n \rightarrow \Z^d$ denotes the projection onto the first $d$ coordinates.
    Therefore using Theorem~\ref{thm:dec}, replacing $n$ by $d < n$, replacing the $\A$-module $\mM$ by the $\A_d$-module $\init_{Q}^{d}(\varphi_A(\mM))$, and replacing the vectors $a_1, \ldots, a_K$ by the vectors $\pi_d(a_1 A), \ldots, \pi_d(a_K A)$, we can decide whether $\init_{Q}^{d}(\varphi_A(\mM))$ contains an element $\bff^d$ satisfying the Properties~\hyperref[item:locd]{(LocInfD)}(a) and (b2).
\end{proof}

Denote by $Op(A, d)$ the union of all cells $Q \in \mL A^{- \top}$ such that $\init_{Q}^{d}(\varphi_A(\mM))$ contains an element $\bff^d$ satisfying the Properties~\hyperref[item:locd]{(LocInfD)}(a)(b1)(b2).
By Lemma~\ref{lem:compop}, the set $Op(A, d)$ is effectively computable as a finite union of polyhedra defined over rational coefficients (supposing Theorem~\ref{thm:dec} is true for all $0 \leq n_0 \leq n - 1$).
See Figure~\ref{fig:L} for an illustration of $Op(A, d)$.

\begin{prop}\label{prop:dtocell}
    Condition~\emph{\hyperref[item:locd]{(LocInfD)}} of Proposition~\ref{prop:shifttod} is equivalent to the following:
    \begin{enumerate}
        \item[2.] \label{item:loccell} \emph{\textbf{(LocInfCell):}}
        For every $A \in \GL(n, \Z)$ and every number $0 \leq d \leq n-1$, the following is true:
        \begin{enumerate}
            \item[(a)] For every $v = (0, \ldots, 0, v_{d+1}, \ldots, v_n) \in \{0\}^d \times (\R^{n-d})^*$ with $v_{d+1}, \ldots, v_n$ being $\Q$-linearly independent, there exists $\balpha \in \left(\sum_{k = d+1}^n \Z v_k\right)^K$ with $(v, \balpha) \in Op(A, d)$.
        \end{enumerate}
    \end{enumerate}
\end{prop}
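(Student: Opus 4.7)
The proof of Proposition~\ref{prop:dtocell} amounts to unwinding the definition of $Op(A,d)$ and verifying that the predicate appearing in Condition~\hyperref[item:locd]{(LocInfD)} is constant on each cell of $\mL A^{-\top}$. I would first record the relevant cell-invariances. Fix $A \in \GL(n,\Z)$, $0 \leq d \leq n-1$, and a cell $Q \in \mL A^{-\top}$: (i) the module $\init_{vA^{-\top},\balpha}^{d}(\varphi_A(\mM))$ depends only on the cell containing $(vA^{-\top},\balpha)$, as noted in the paragraph preceding the proposition, so it equals $\init_Q^{d}(\varphi_A(\mM))$; (ii) the set $I' = \{i \in I \mid \alpha_i = \min_{i' \in I}\alpha_{i'}\}$ is constant on $Q$ by the construction of $\mL_I$; (iii) the set $J' = O_v \cup J$ is constant on $Q$ --- using the identity $(vA^{-\top}) \cdot (a_i A) = v \cdot a_i$, the orthogonality pattern $v \perp a_i$ transfers to $vA^{-\top} \perp a_i A$, and the cells of $\mL_O$ (hence of $\mL A^{-\top}$) fix these patterns. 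The set $O'_u$ from (b2) depends only on $A$ and $d$. Consequently, the existence of $\bff^d \in \init_Q^{d}(\varphi_A(\mM))$ satisfying (a), (b1), (b2) is a property of $Q$ alone, which is exactly the defining condition of $Op(A,d)$.

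For the forward direction \hyperref[item:locd]{(LocInfD)} $\implies$ \hyperref[item:loccell]{(LocInfCell)}, given $A \in \GL(n,\Z)$, $0 \leq d \leq n-1$, and $v = (0,\ldots,0,v_{d+1},\ldots,v_n)$ with $v_{d+1},\ldots,v_n$ being $\Q$-linearly independent, I would set $v' \coloneqq vA^{\top}$, so that $v'A^{-\top} = v$ has the required special form. Applying \hyperref[item:locd]{(LocInfD)} to $(v',A)$ yields $\balpha \in \left(\sum_{k=d+1}^n \Z v_k\right)^K$ together with $\bff^d \in \init_{v'A^{-\top},\balpha}^{d}(\varphi_A(\mM)) = \init_Q^{d}(\varphi_A(\mM))$ satisfying (a), (b1), (b2), where $Q \in \mL A^{-\top}$ is the cell containing $(v,\balpha)$. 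By the cell-invariance observations, $Q \subseteq Op(A,d)$, so $(v,\balpha) \in Op(A,d)$ as required.

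For the converse \hyperref[item:loccell]{(LocInfCell)} $\implies$ \hyperref[item:locd]{(LocInfD)}, given any $(v',A)$ satisfying the hypothesis of \hyperref[item:locd]{(LocInfD)}, I would set $v \coloneqq v'A^{-\top}$; by assumption $v$ has the special form, so \hyperref[item:loccell]{(LocInfCell)} supplies $\balpha \in \left(\sum_{k=d+1}^n \Z v_k\right)^K$ with $(v,\balpha) \in Op(A,d)$. The defining property of $Op(A,d)$, combined with the cell-invariance, then guarantees some $\bff^d \in \init_Q^{d}(\varphi_A(\mM)) = \init_{v'A^{-\top},\balpha}^{d}(\varphi_A(\mM))$ satisfying (a), (b1), (b2), which is exactly the conclusion of \hyperref[item:locd]{(LocInfD)}. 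The only substantive step is the cell-invariance check in the first paragraph; both implications then reduce to the trivial substitution $v \leftrightarrow v' = vA^{\top}$, so this proposition should pose no significant technical obstacle, being essentially a repackaging of \hyperref[item:locd]{(LocInfD)} that makes explicit the finite data attached to each coordinate change.
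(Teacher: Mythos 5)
Your proposal is correct and follows the same route as the paper, which dismisses this proposition with the one-line justification ``This follows directly from the definition of $Op(A,d)$.'' You have simply made explicit the cell-invariance of the relevant data ($\init_Q^d(\varphi_A(\mM))$, $I'$, $J'$, $O'_u$) and the bijection $v \leftrightarrow v' = vA^\top$, which is precisely the unwinding the paper leaves to the reader.
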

\begin{proof}
This follows directly from the definition of $Op(A, d)$. 
\end{proof}

\begin{figure}[h!]
        \centering
        \includegraphics[width=0.8\textwidth,height=1.0\textheight,keepaspectratio, trim={0cm 0cm 0cm 0cm},clip]{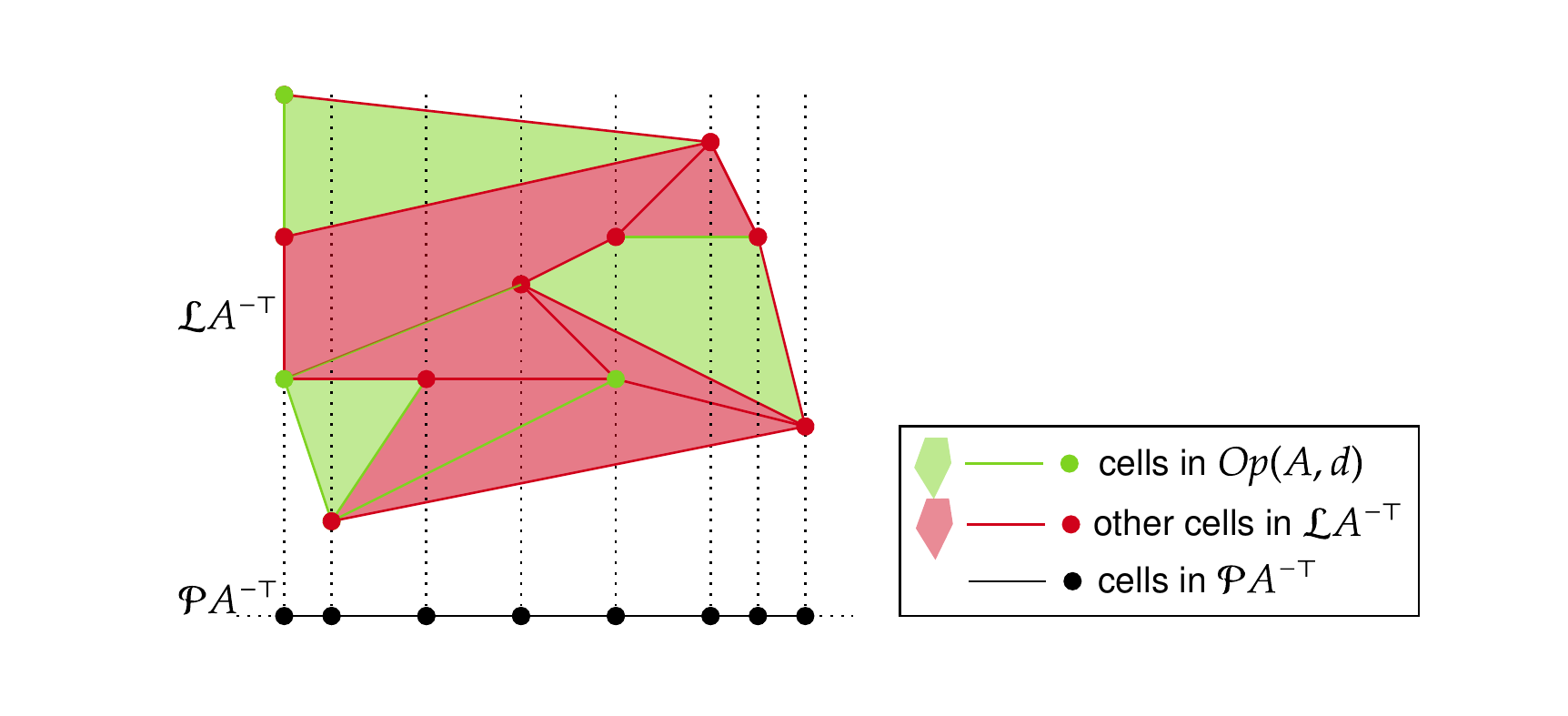}
        \caption{Illustration of $Op(A, d)$.}
        \label{fig:L}
\end{figure}   
\begin{figure}[h!]
        \centering
        \includegraphics[width=0.8\textwidth,height=1.0\textheight,keepaspectratio, trim={0cm 0cm 0cm 0cm},clip]{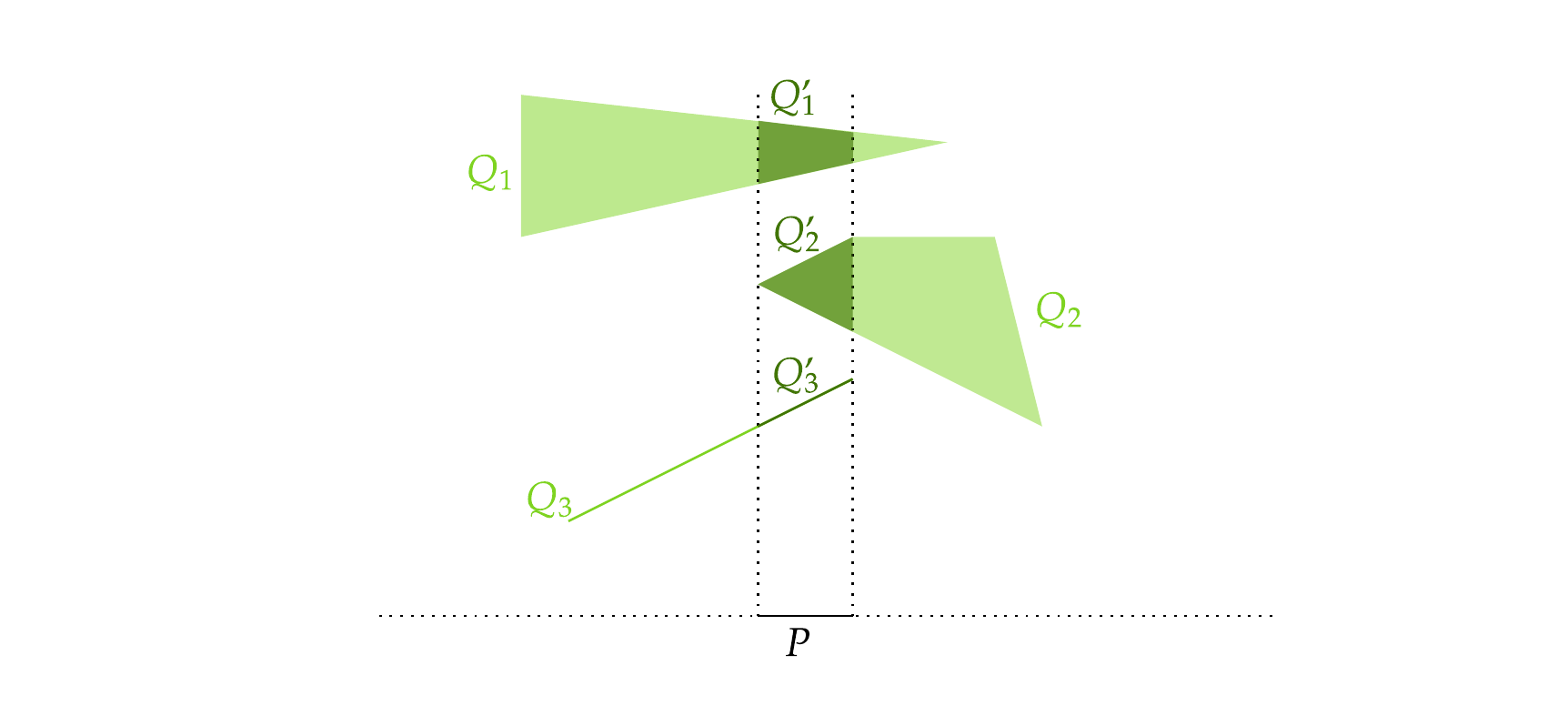}
        \caption{Illustration of Lemma~\ref{lem:checkcell}}
        \label{fig:cells}
\end{figure}

\begin{lem}\label{lem:checkcell}
    Given $A \in \GL(n, \Z), d \in \N$ and given $Op(A, d)$ as a finite union of polyhedra defined over rational coefficients, it is decidable whether the statement \emph{\hyperref[item:loccell]{(LocInfCell)}(a)} is true.
\end{lem}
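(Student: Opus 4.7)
The plan is to reduce (LocInfCell)(a) to a decidable statement in the first-order theory of $(\R;+,\cdot,<)$, using the $\Q$-linear independence of the nonzero coordinates of $v$ as the key simplifier.

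First, I would decompose $Op(A,d)$ as a finite union of relatively open rational polyhedra $Q_1, \ldots, Q_r$ (its constituent cells of $\mL A^{-\top}$); since these cells are invariant under positive scaling, each $Q_j$ is cut out by a finite collection of homogeneous rational equalities and strict inequalities. Writing $w = (v_{d+1}, \ldots, v_n)$ and substituting $v = (0^d, w)$ together with $\alpha_i = \sum_{k=d+1}^n z_{k,i} w_k$ for an integer matrix $Z = (z_{k,i}) \in \Z^{(n-d) \times K}$, each defining linear form $\sum_l c_l v_l + \sum_i c'_i \alpha_i$ of $Q_j$ becomes a form $\sum_{k>d} \mu_k^{(j)}(Z)\, w_k$ whose coefficients $\mu_k^{(j)}(Z) \in \Q$ are affine in $Z$.

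The crucial observation is that since $v_{d+1}, \ldots, v_n$ are $\Q$-linearly independent, a rational-coefficient equality $\sum_k \mu_k w_k = 0$ holds iff $\mu_k = 0$ for every $k$. This converts each equality constraint defining $Q_j$ into a Diophantine linear equation on $Z$, cutting out an effectively computable affine sublattice $\Lambda_j \subseteq \Z^{(n-d) \times K}$. Each remaining strict inequality $\sum_k \mu_k^{(j,s)}(Z)\, w_k > 0$ then becomes, for $Z \in \Lambda_j$, a pair of sign conditions: the rational vector $\mu^{(j,s)}(Z) \in \Q^{n-d}$ must be non-zero, and $w$ must lie in the open half-space it defines. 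Condition (LocInfCell)(a) therefore reduces to asking whether the union of open rational polyhedral cones
\[
C(j, Z) \;:=\; \bigcap_s \bigl\{w \in (\R^{n-d})^* : \mu^{(j,s)}(Z)\cdot w > 0\bigr\}, \quad j \in \{1,\ldots,r\},\ Z \in \Lambda_j,\ \mu^{(j,s)}(Z)\neq 0\ \forall s,
\]
covers $(\R^{n-d})^*$ up to a countable union of rational hyperplanes (namely the locus of $w$ with $\Q$-linearly dependent coordinates).

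The main obstacle is handling the infinite index set $Z \in \Lambda_j$. The plan is to show that only finitely many distinct cones $C(j,Z)$ need be considered: each $\mu^{(j,s)}(Z)$ depends affine-linearly on $Z$, and the cone $C(j,Z)$ depends only on the rational directions of the $\mu^{(j,s)}(Z)$ modulo positive scaling. As $Z$ ranges over $\Lambda_j$, these direction tuples fall into finitely many equivalence classes, each realizable by some bounded representative $Z$ whose size is controlled by the denominators and coefficients appearing in the description of $Op(A,d)$; combined with the finite cell structure from $\mP A^{-\top}$, this yields a finite and effectively enumerable list of cones whose union equals the full union over all $(j, Z)$. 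With this finite representation in hand, the covering question becomes a semi-algebraic statement over $(\R;+,\cdot,<)$ and is decidable by Tarski's theorem; concretely, one verifies that the polyhedral complement of the cone union is contained in a finite union of rational hyperplanes (the "non-generic" locus), which is a routine polyhedral computation.
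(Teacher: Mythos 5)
Your reformulation of \hyperref[item:loccell]{(LocInfCell)}(a) as a covering problem is a reasonable starting point, and the reduction of the equality constraints to a Diophantine system on the integer matrix $Z$ via $\Q$-linear independence is correct and essentially matches the paper. However, there is a genuine gap in the claim that ``only finitely many distinct cones $C(j,Z)$ need be considered.'' Since each $\mu^{(j,s)}(Z)$ is affine-linear in $Z$, as $Z$ ranges over the (infinite) lattice $\Lambda_j$ the direction tuples can take infinitely many distinct values modulo positive scaling, and the cones $C(j,Z)$ can be pairwise distinct. For example, if $\mu^{(j,s)}(Z) = (z_1, z_2)$ with $Z=(z_1,z_2)\in\Z^2$, the direction assumes infinitely many rational slopes; taking inequalities $w_1 > 0$ and $z w_1 - w_2 > 0$ with $z \in \Z$ produces an infinite strictly increasing chain of cones whose union is $\{w_1 > 0\}$ but none of which equals it. Your ``bounded representative'' idea cannot rescue this, because there is no a priori bound on the combinatorial types realized as $Z$ varies, and the restriction to cells of $\mP A^{-\top}$ does not collapse these to finitely many either.

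What actually makes the problem decidable — and what your proof is missing — is the uniformity-plus-density argument that the paper uses. Fix a cell $P \in \mP A^{-\top}$ of dimension $n-d$ and a cell $Q'_t = Q_t \cap \pi^{-1}(P)$. Because $Q'_t$ is relatively open with $\pi(Q'_t) = P$, the strict inequality constraints cut out a non-empty open region in the $\balpha$-fiber over each $v \in P$. If $v_{d+1},\dots,v_n$ are $\Q$-linearly independent, then the set $\bigl\{\sum_{k}(z_{i,k} + M c_{i,k}) v_k : c_{i,k} \in \Z\bigr\}$ is dense in $\R$, so one can always perturb an integer solution of the equality system to land in that open region. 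Consequently, for fixed $(P, Q'_t)$, the answer to ``is there $\balpha \in (\sum_k \Z v_k)^K$ with $(v,\balpha) \in Q'_t$'' is the same for every such $v \in P$: it is governed solely by whether the $v$-independent integer linear system extracted from the equality constraints (by matching coefficients of the $v_k$) is solvable, which is one instance of integer programming per $(P, Q'_t)$, a finite check. Your cone-covering picture would need precisely this fact (that the infinite union over $Z$ of the cones, restricted to $P$, is either all of $P$ or empty, decidably) but you never establish it. Replacing the finiteness claim with the density argument, organized cell-by-cell in $\mP A^{-\top}$ and adding the separate integer-programming treatment of the degenerate case $d = n-1$, would close the gap.
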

\begin{proof}
    Replace each cell $Q$ in $\mL A^{- \top}$ with its intersection with $\{0\}^d \times (\R^{n-d})^* \times \R^K$; and replace each cell $P \in \mP A^{- \top}$ with its intersection with $\{0\}^d \times (\R^{n-d})^*$.
    We can suppose $\mP A^{- \top}$ is a partition of $(\R^{n-d})^*$, $\mL A^{- \top}$ is a partition of $(\R^{n-d})^* \times \R^K$, $Op(A, d) \subseteq (\R^{n-d})^* \times \R^K$ is a union of cells in $\mL A^{- \top}$, and that $v \in \left(\R^{n-d}\right)^*$ for all $v$ we consider.
    We separate two cases.

    \begin{enumerate}[noitemsep, label = (\arabic*)]
        \item
    \textbf{When $d = n-1$.}
    In this case, since the partitions are invariant under scaling, we can suppose $v_n \in \{1, -1\}$.
    Then for each case $v_n = 1$ and $v_n = -1$, decide whether there exists $\balpha \in \left(\Z v_n\right)^K = \Z^K$ with $(v, \balpha) \in Op(A, d)$.
    Since $Op(A, d)$ is a finite union of polyhedra defined using rational coefficients, this is decidable using integer programming.

        \item
    \textbf{When $d \leq n-2$.}
    See Figure~\ref{fig:cells} for an illustration in this case.
    Whenever $v = (v_{d+1}, \ldots, v_n)$ with $v_{d+1}, \ldots, v_n$ being $\Q$-linearly independent, $v$ must fall in a cell $P \in \mP A^{- \top}$ of dimension $n-d$.
    For each cell $P \in \mP A^{- \top}$ of dimension $n-d$, consider all cells $Q \subseteq Op(A, d)$ such that $\pi(Q) \cap P \neq \emptyset$.
    If there is no such cell $Q$ then statement \hyperref[item:loccell]{(LocInfCell)}(a) is false.
    In fact, since $P \in \mP A^{- \top}$ is of dimension $n-d$, it contains an element $v = (v_{d+1}, \ldots, v_n)$ with $v_{d+1}, \ldots, v_n$ being $\Q$-linearly independent.
    Then for this $v$, there does not exist any $\balpha \in \R^K$ such that $(v, \balpha) \in Op(A, d)$, so statement \hyperref[item:loccell]{(LocInfCell)}(a) is false.
    
    Suppose now that for every cell $P \in \mP$ of dimension $n-d$ there exist cells $Q \subseteq Op(A, d)$ such that $\pi(Q) \cap P \neq \emptyset$.
    Fix a cell $P \in \mP$, let $Q_1, \ldots, Q_{\ell}$ denote all cells in $Op(A, d)$ such that $\pi(Q) \cap P \neq \emptyset$.
    Define $Q'_t \coloneqq Q_t \cap \pi^{-1} (P)$ for $t = 1, \ldots, \ell$.
    Each $Q'_t$ is a relatively open polyhedron.

    Take an arbitrary $Q'_t$, it is define by the following equations and inequalities:
    \begin{align}
        & (v_{d+1}, \ldots, v_n) \in P \label{eq:P1} \\
        & \beta_{j, 1} \alpha_1 + \cdots + \beta_{j, K} \alpha_K = \gamma_{j, d+1} v_{d+1} + \cdots + \gamma_{j, n} v_n, \quad j = 1, \ldots, m_0, \label{eq:eq1}\\
        & \delta_{j, 1} \alpha_1 + \cdots + \delta_{j, K} \alpha_K < \epsilon_{j, d+1} v_{d+1} + \cdots + \epsilon_{j, n} v_n, \quad j = 1, \ldots, m_1.  \label{eq:ineq1}
    \end{align}
    Where $\beta_{j, i}, \gamma_{j, i}, \delta_{j, i}, \epsilon_{j, i}$ are all rational numbers.
    Note that by the definition of $P$, for $v, v' \in P$, there exists $\balpha \in \R^K$ with $(v, \balpha) \in Q$ if and only if there exists $\balpha' \in \R^K$ with $(v', \balpha') \in Q$.
    Therefore $\pi(Q'_t) = P$ and we can suppose that the left hand sides of \eqref{eq:eq1} and \eqref{eq:ineq1} do not vanish (so that no extra constraint on $(v_{d+1}, \ldots, v_n)$ other than \eqref{eq:P1} is imposed).
    
    Using Gaussian pivoting and possibly exchanging the orders of $\alpha_i, i = 1, \ldots, K$, we can rewrite the above equations and inequalities into the form
    \begin{align}
        & (v_{d+1}, \ldots, v_n) \in P \\
        & \alpha_{i} = \beta_{i, D+1} \alpha_{D+1} + \cdots + \beta_{i, K} \alpha_K + \gamma_{i, d+1} v_{d+1} + \cdots + \gamma_{i, n} v_n, \quad i = 1, \ldots, D, \label{eq:eqq}\\
        & \delta_{j, D+1} \alpha_{D+1} + \cdots + \delta_{j, K} \alpha_K < \epsilon_{j, d+1} v_{d+1} + \cdots + \epsilon_{j, n} v_n, \quad j = 1, \ldots, m_1. 
    \end{align}
    In particular, the number $D \in \N$ is such that $Q'_t$ is a polyhedron of dimension $n-d + K - D$.

    Let $M \in \N$ be a common denominator of all $\beta_{i, j}, \gamma_{i,k}, i = 1, \ldots, D, j = D+1, \ldots, K, k = d+1, \ldots, n$.
    We multiply both sides of the Equations~\eqref{eq:eqq} by $M$, and suppose $Q'_t$ is defined by
    \begin{align}
        & (v_{d+1}, \ldots, v_n) \in P \label{eq:inP}\\
        & M \alpha_{i} = \beta_{i, D+1} \alpha_{D+1} + \cdots + \beta_{i, K} \alpha_K + \gamma_{i, d+1} v_{d+1} + \cdots + \gamma_{i, n} v_n, \quad i = 1, \ldots, D, \label{eq:eq}\\
        & \delta_{j, D+1} \alpha_{D+1} + \cdots + \delta_{j, K} \alpha_K < \epsilon_{j, d+1} v_{d+1} + \cdots + \epsilon_{j, n} v_n, \quad j = 1, \ldots, m_1 \label{eq:ineq},
    \end{align}
    where $\beta_{i, j}, \gamma_{i,k}, i = 1, \ldots, D, j = D+1, \ldots, K, k = d+1, \ldots, n$, are \emph{integers}.

    Fix any $v = (v_{d+1}, \ldots, v_n) \in P$ with $v_{d+1}, \ldots, v_n$ being $\Q$-linearly independent.
    We claim the following.
    There exists $\balpha \in \left(\sum_{k = d+1}^n \Z v_k\right)^K$ such that $(v, \balpha) \in Q'_t$, if and only if the following system of $(n-d)D$ equations has integer solutions $z_{i, k}, i = 1, \ldots, D, D+1, \ldots, K, k = d+1, \ldots, n$.

    \begin{align}\label{eq:intsys}
        & M z_{i, d+1} = \beta_{i, D+1} z_{D+1, d+1} + \cdots + \beta_{i, K} z_{K, d+1} + \gamma_{i, d+1}, \quad i = 1, \ldots, D, \nonumber\\
        & M z_{i, d+2} = \beta_{i, D+1} z_{D+1, d+2} + \cdots + \beta_{i, K} z_{K, d+2} + \gamma_{i, d+2}, \quad i = 1, \ldots, D, \nonumber\\
        & \quad \vdots \nonumber\\
        & M z_{i, n} = \beta_{i, D+1} z_{D+1, n} + \cdots + \beta_{i, K} z_{K, n} + \gamma_{i, n}, \quad i = 1, \ldots, D.
    \end{align}

    We now prove this claim.
    For the ``only if'' implication, suppose there exists $\balpha \in \left(\sum_{k = d+1}^n \Z v_k\right)^K$ such that $(v, \balpha) \in Q'_t$.
    For each $i \in \{1, \ldots, K\}$, we write $\alpha_i = \sum_{k = d+1}^n z_{i, k} v_k$, then the Equations~\eqref{eq:eq} become
    \begin{multline}\label{eq:vsys}
        M \sum_{k = d+1}^n z_{i, k} v_k = \beta_{i, D+1} \sum_{k = d+1}^n z_{D+1, k} v_k + \cdots + \beta_{i, K} \sum_{k = d+1}^n z_{K, k} v_k + \gamma_{i, d+1} v_{d+1} + \cdots + \gamma_{i, n} v_n, \\ i = 1, \ldots, D.
    \end{multline}
    Since $v_{d+1}, \ldots, v_n$ are $\Q$-linearly independent, Equations~\eqref{eq:vsys} hold if and only if for all $k = d+1, \ldots, n$, the coefficients of $v_{k}$ on both sides are equal.
    That is,
    \[
    M z_{i, k} = \beta_{i, D+1} z_{D+1, k} + \cdots + \beta_{i, K} z_{i, k} + \gamma_{i, k}, \quad i = 1, \ldots, D, \quad k = d+1, \ldots, n.
    \]
    This is exactly the system~\eqref{eq:intsys}.

    For the ``if'' implication, suppose the system of equations~\eqref{eq:intsys} has integer solutions $z_{i, k}, i = 1, \ldots, D, D+1, \ldots, K, k = d+1, \ldots, n$.

    For each tuple of integers $c_{i, k}, i = D+1, \ldots, K, k = d+1, \ldots, n$, we can construct a new solution of the system~\eqref{eq:intsys} by letting $z'_{i, k} \coloneqq z_{i, k} + M c_{i, k}, i = D+1, \ldots, K, k = d+1, \ldots, n$, and
    \begin{align}
        & z'_{i, d+1} \coloneqq z_{i, d+1} + \beta_{i, D+1} c_{D+1, d+1} + \cdots + \beta_{i, K} c_{K, d+1}, \quad i = 1, \ldots, D, \nonumber\\
        & z'_{i, d+2} = z_{i, d+2} + \beta_{i, D+1} c_{D+1, d+2} + \cdots + \beta_{i, K} c_{K, d+2}, \quad i = 1, \ldots, D, \nonumber\\
        & \quad \vdots \nonumber\\
        & z'_{i, n} = z_{i, n} + \beta_{i, D+1} c_{D+1, n} + \cdots + \beta_{i, K} c_{K, n}, \quad i = 1, \ldots, D.
    \end{align}
    Since $z'_{i, k}, i = 1, \ldots, D, D+1, \ldots, K, k = d+1, \ldots, n,$ is a solution for \eqref{eq:intsys}, it is easy to verify that $\alpha_i \coloneqq \sum_{k = d+1}^n z'_{i, k} v_k, i = 1, \ldots, K$ constitute a solution for \eqref{eq:eq}.
    We now show that for every tuple $(v_{d+1}, \ldots, v_n) \in P$, we can actually find integers $c_{i, k}, i = D+1, \ldots, K, k = d+1, \ldots, n$ such that $\alpha_i = \sum_{k = d+1}^n (z_{i, k} + M c_{i, k}) v_k, i = D+1, \ldots, n,$ satisfy also \eqref{eq:ineq}.

    Since $Q'_t$ is relatively open and non-empty, the set $\pi^{-1}((v_{d+1}, \ldots, v_n)) \cap Q'_t$ is also non-empty.
    Therefore, the solution set $\mA$ for $(\alpha_{D+1}, \ldots, \alpha_K) \in \R^{K-D}$ of the inequalities~\eqref{eq:ineq} is non-empty.
    This solution set $\mA$ is an open subset of $\R^{K-D}$ since it is define by strict inequalities.
    Since $v_{d+1}, \ldots, v_n$ are $\Q$-linearly independent, the set 
    \[
    \left\{\alpha_i \coloneqq \sum_{k = d+1}^n (z_{i, k} + M c_{i, k}) v_k \;\middle|\; c_{i, d+1}, \ldots, c_{i, n} \in \Z \right\}
    \]
    is dense in $\R$ for every $i \in \{D+1, \ldots, K\}$.
    Thus we can find $c_{D+1, d+1}, \ldots, c_{D+1, n} \in \Z$ such that $\alpha_{D+1} \coloneqq \sum_{k = d+1}^n (z_{D+1, k} + M c_{D+1, k}) v_k$ satisfies $(\alpha_{D+1}, x_{D+2}, \ldots ,x_{n}) \in \mA$ for some $x_{D+2}, \ldots ,x_{n} \in \R$.
    Similarly, by the openness of $\mA$, we can then find $c_{D+2, d+1}, \ldots, c_{D+2, n} \in \Z$ such that $\alpha_{D+2} \coloneqq \sum_{k = d+1}^n (z_{D+2, k} + M c_{D+2, k}) v_i$ satisfies $(\alpha_{D+1}, \alpha_{D+2}, x_{D+3}, \ldots ,x_{n}) \in \mA$ for some $x_{D+3}, \ldots ,x_{n} \in \R$.
    Continue this way and we will find integers $c_{i, k}, i = D+1, \ldots, K, k = d+1, \ldots, n$ such that $(\alpha_{D+1}, \ldots ,\alpha_{n}) \in \mA$.
    This tuple $(\alpha_{D+1}, \ldots ,\alpha_{n})$ satisfies \eqref{eq:ineq}.
    Since $\alpha_i = \sum_{k = d+1}^n z'_{i, k} v_k, i = 1, \ldots, K,$ is a solution for \eqref{eq:eq} regardless of the choice of $c_{i, k}$, both \eqref{eq:eq} and \eqref{eq:ineq} are now satisfied.
    We have proved the claim.
    
    Note that whether the system~\eqref{eq:intsys} has integer solutions depend only on the coefficients $\beta_{i, k}, \gamma_{i,k}$, $i = 1, \ldots, D, j = D+1, \ldots, K, k = d+1, \ldots, n$.
    These coefficients are determined by the polyhedron $Q'_t$, but not on the choice of $v$.
    For each $Q'_t, t = 1, \ldots, \ell$, we can decide whether its system~\eqref{eq:intsys} has integer solutions.
    If for some $t \in \{1, \ldots, \ell\},$ its system~\eqref{eq:intsys} has integer solutions, then the above claim shows that for all $v = (v_{d+1}, \ldots, v_n) \in P$ with $v_{d+1}, \ldots, v_n$ being $\Q$-linearly independent, there exists $\balpha \in \left(\sum_{k = d+1}^n \Z v_k\right)^K$ such that $(v, \balpha) \in Q'_t$.
    Otherwise, if for all $t \in \{1, \ldots, \ell\},$ its system~\eqref{eq:intsys} has no integer solutions, then for any $v = (v_{d+1}, \ldots, v_n) \in P$ with $v_{d+1}, \ldots, v_n$ being $\Q$-linearly independent, there does not exist $\balpha \in \left(\sum_{k = d+1}^n \Z v_k\right)^K$ such that $(v, \balpha) \in Q'_t$.

    To summarize, in order to decide whether the statement \hyperref[item:loccell]{(LocInfCell)}(a) is true, it suffices to enumerate all cells $P \in \mP A^{- \top}$ of dimension $n-d$.
    For a cell $P$, if there is no cell $Q \subseteq Op(A, d)$ such that $\pi(Q) \cap P \neq \emptyset$. then statement \hyperref[item:loccell]{(LocInfCell)}(a) is false; otherwise for each $Q'_1, \ldots, Q'_{\ell}$ check whether system~\eqref{eq:intsys} has integer solutions, in case an integer solution exists for some $Q'_t$ we call the cell $P$ ``operational''.
    If every cell $P \in \mP$ of dimension $n-d$ is operational, then statement \hyperref[item:loccell]{(LocInfCell)}(a) is true, otherwise it is false.
    \end{enumerate}
\end{proof}



\subsection{Proving Theorem~\ref{thm:dec}: induction and a double procedure}
In this subsection we finally prove Theorem~\ref{thm:dec}.
The overall strategy is to use induction on $n$, while deciding the Conditions \hyperref[item:locr]{(LocR)} and \hyperref[item:locinf]{(LocInf)} from Theorem~\ref{thm:locglob}.
\thmdec*

\begin{proof}
    We use induction on $n$.
    As remarked in Subsection~\ref{subsec:cell}, the base case $n = 0$ degenerates into linear programming (given an $\R$-submodule $\mM$ of $\R^K$, decide whether $\mM \cap \Rpp^K$ contains an element).
    Suppose we have a decision procedure for all $n_0 < n$, we now construct a procedure for $n$.
    
    By Theorem~\ref{thm:locglob} it suffices to decide whether the two conditions \hyperref[item:locr]{(LocR)} and \hyperref[item:locinf]{(LocInf)} are both satisfied.
    First we check if \hyperref[item:locr]{(LocR)} is true using Proposition~\ref{prop:decr}.
    If \hyperref[item:locr]{(LocR)} is false then we return False and conclude there is no $\bff \in \mM \cap \ApK$ satisfying \eqref{eq:deccond}.
    If \hyperref[item:locr]{(LocR)} is true we proceed.

    We now run the two following procedures \emph{in parallel}:

    \begin{enumerate}
        \item \textbf{Procedure A:} We enumerate all elements of the $\Z[X_1^{\pm}, \ldots, X_n^{\pm}]$-module:
        \[
        \mM_{\Z} \coloneqq \left\{\sum_{j = 1}^m h_j \cdot \bg_j \;\middle|\; h_1, \ldots, h_m \in \Z[X_1^{\pm}, \ldots, X_n^{\pm}]\right\}.
        \]
        For each element $\bff \in \mM_{\Z}$, check if $\bff$ is in $\ApK$ and satisfies Property~\eqref{eq:deccond}.
        If $\bff \in \ApK$, then Property~\eqref{eq:deccond} can be checked by looking at the corresponding $\mG$-graph $\Gamma$ of $\bff$.
        Consider the graph $\Gamma_I$ obtained by keeping only the edges of $\Gamma$ whose label is in the set $I$.
        Then $\bff$ satisfies Property~\eqref{eq:deccond} if and only if every strict face $F$ of $\conv(V(\Gamma_I))$ contains the starting point of either an edge with label in $J$ or an edge going out of $F$.
        If some element $\bff \in \mM_{\Z}$ is in $\ApK$ and satisfies Property~\eqref{eq:deccond}, we stop the procedure and return True.
        
        \item \textbf{Procedure B:} We enumerate all $A \in \GL(n, \Z)$ and $d \in \{0, 1, \ldots, n-1\}$.
        For each $A$ and $d$, compute $Op(A, d)$ using Lemma~\ref{lem:compop} and the induction hypothesis.
        Using Lemma~\ref{lem:checkcell}, we check if the statement~\hyperref[item:loccell]{(LocInfCell)}(a) from Proposition~\ref{prop:dtocell} is false.
        If for some $A, d$, statement~\hyperref[item:loccell]{(LocInfCell)}(a) is false, then we stop the procedure and return False.
    \end{enumerate}
    We claim that one of the two above procedures must stop.
    
    Indeed, if $\mM$ contains an element of $\ApK$ satisfying Property~\eqref{eq:deccond}, then there exists an element $\bff \in \mM_{\Z} \cap \ApK$ satisfying Property~\eqref{eq:deccond} (see Lemma~\ref{lem:M}).
    In this case, Procedure A terminates by finding an element of $\mM_{\Z} \cap \ApK$ satisfying Property~\eqref{eq:deccond}.
    
    If $\mM$ does not contain an element of $\ApK$ satisfying Property~\eqref{eq:deccond}, then by Theorem~\ref{thm:locglob}, Condition~\hyperref[item:locinf]{(LocInf)} must be false (since we have already checked \hyperref[item:locr]{(LocR)} to be true).
    By the chain of Propositions~\ref{prop:inftoshift}, \ref{prop:shifttod} and \ref{prop:dtocell}, the statement~\hyperref[item:loccell]{(LocInfCell)}(a) must be false for some $A \in \GL(n, \Z)$ and $d \in \{0, 1, \ldots, n-1\}$.
    In this case, Procedure B terminates by finding $A \in \GL(n, \Z)$ and $d \in \{0, 1, \ldots, n-1\}$ where statement~\hyperref[item:loccell]{(LocInfCell)}(a) is false.
    
    Therefore, by running Procedure A and Procedure B in parallel, we obtain an algorithm that always terminates for $n$.
\end{proof}

\bibliography{metabelian}

\appendix

\section{Proof of Lemma~\ref{lem:subsume} and \ref{lem:grptoeul}}\label{app:prelim}

\lemsubsume*
\begin{proof}
    Fix the group $G$.
    Let $\mG$ be a finite subset of $G$.
    For the Identity Problem, we claim that the semigroup $\sgmG$ contains the neutral element $e$ if and only if there is some non-empty subset $\mH$ of $\mG$ such that $\langle \mH \rangle = \langle \mH \rangle_{grp}$.
    Indeed, if $\sgmG$ contains the neutral element $e$, suppose $e$ is represented by the word $w \in \mG^*$.
    Let $\mH \subseteq \mG$ be the set of letters appearing in $w$, then $w$ is a full-image word in the alphabet $\mH$ and hence $\langle \mH \rangle = \langle \mH \rangle_{grp}$ by Lemma~\ref{lem:word}.
    For the opposite implication, if $\langle \mH \rangle = \langle \mH \rangle_{grp}$ then $e \in \langle \mH \rangle_{grp} = \langle \mH \rangle \subseteq \langle \mG \rangle$.

    Therefore, to decide the Identity Problem, it suffices to check the Group Problem for every non-empty subset $\mH$ of $\mG$.

    For the Inverse Problem, let $\mG = \{g_1, \ldots, g_K\}$.
    Without loss of generality suppose we want to decide whether $g_1^{-1} \in \sgmG$.
    We claim that $g_1^{-1} \in \sgmG$ if and only if there is some subset $\mH$ of $\mG$, such that $\langle \mH \cup \{g_1\} \rangle = \langle \mH \cup \{g_1\} \rangle_{grp}$.
    Indeed, if $g_1^{-1} \in \sgmG$, suppose $g_1^{-1}$ is represented by the word $w \in \mG^*$.
    Let $\mH$ be the set of letters appearing in $w$, then $g_1 w$ is a full-image word in the alphabet $\mH \cup \{g_1\}$ representing the neutral element, and hence $\langle \mH \cup \{g_1\} \rangle = \langle \mH \cup \{g_1\} \rangle_{grp}$ by Lemma~\ref{lem:word}.
    For the opposite implication, if $\langle \mH \cup \{g_1\} \rangle = \langle \mH \cup \{g_1\} \rangle_{grp}$, then $g_1^{-1} \in \langle \mH \cup \{g_1\} \rangle_{grp} = \langle \mH \cup \{g_1\} \rangle \subseteq \sgmG$.

    Therefore, to decide the Inverse Problem, it suffices to check the Group Problem for every subset $\mH \cup \{g_1\}$ of $\mG$.
\end{proof}

\lemgrp*
\begin{proof}
    If the semigroup $\langle\mG\rangle$ is a group, then by Lemma~\ref{lem:word} there exists a full-image word $w$ representing the neutral element.
    Then its associated $\mG$-graph $\Gamma(w)$ is full-image, Eulerian and represents the neutral element.

    If $\Gamma$ is a full-image Eulerian $\mG$-graph representing the neutral element.
    Let $z \in V(\Gamma)$ be any vertex of $\Gamma$. Then consider the translation $\Gamma - z$: it represents the element $(\oX^z \cdot 0, 0) = (0, 0)$ and contains an Eulerian circuit starting from 0.
    We read from this Eulerian circuit a word $w$, then $w$ represents the neutral element.
    Furthermore, $w$ is full-image because $\Gamma - z$ is full-image.
    Therefore by Lemma~\ref{lem:word} the semigroup $\sgmG$ is a group.
\end{proof}

\section{Proof of Proposition~\ref{prop:metatoZ}}\label{app:metatoZ}

We now give a full proof of Proposition~\ref{prop:metatoZ}.
We start by stating a lemma of Baumslag.

\begin{restatable}[{\cite[Theorem~3.3]{baumslag1994algorithmic}, \cite[Theorem~9.5.3]{lennox2004theory}}]{lem}{lemsubgrp}\label{lem:subgroppres}
There is an algorithm which, when a finitely metabelian
presentation of $G$ is given, together with a finite subset $\mG \subseteq G$, finds a finite metabelian presentation of the subgroup $\sgmG_{grp}$.
\end{restatable}

The proof of~\cite[Theorem~9.5.3]{lennox2004theory} shows that the set $\mG$ is given as the generators of the finite metabelian presentation of $\langle\mG\rangle_{grp}$.\footnote{Indeed, let $H = \sgmG_{grp}$ and $G' = [G, G]$. The proof of~\cite[Theorem~9.5.3]{lennox2004theory} computes a finite presentation of $H \cap G'$ as a $\Z[HG'/G']$-module in terms of the generators of $H \cap G'$.
It then uses this presentation to compute a finite metabelian presentation of $H$.
This computation is done using the procedure outlined at the start of~\cite[Section~9.5]{lennox2004theory}, where the generators of the finite metabelian presentation of $H$ is the generating set $\mG$ of $H$.}
Since our goal is to decide the Group Problem (whether $\sgmG = \sgmG_{grp}$), we can without loss of generality suppose $G = \sgmG_{grp}$ by Lemma~\ref{lem:subgroppres}.

We now recall the definition of the \emph{wreath product}.
Given two groups $A, T$, their (restricted) wreath product $A \wr T$ is defined as a semidirect product $A^T \rtimes T$.
Here, $A^T$ is the direct sum of $A$ over the index set $T$ and is called the \emph{base group}. That is, $A^T$ is the set of sequences $(a_s)_{s \in T}, a_s \in A$ where $a_s = 0$ for all but finitely many $s \in T$.
It is a group by pointwise multiplication.
The wreath product $A \wr T = A^T \rtimes T$ is the set of pairs $((a_s)_{s \in T}, t)$ with $(a_s)_{s \in T} \in A^T, t \in T$, where multiplication is defined by 
\[
\left((a_s)_{s \in T}, t \right) \cdot \left((a'_s)_{s \in T}, t' \right) = \left((a_s a'_{t^{-1}s})_{s \in T}, t t' \right).
\]
The wreath product $A \wr T$ canonically contains as subgroups $T \cong \{(e_{A^T}, t) \mid t \in T\}$, where $e_{A^T}$ is the neutral element of $A^T$, as well as the base group $A^T \cong \{((a_s)_{s \in T}, e_T) \mid (a_s)_{s \in T} \in A^T\}$, where $e_T$ is the neutral element of $T$.

An important special case of the wreath product is when $A = \Z^n$ and $T$ is abelian.
In this case, the base group $A^T$ is isomorphic to the direct power $\left( \Z[T] \right)^n$ of the group ring 
\[
\Z[T] \coloneqq \left\{\sum_{t \in T} z_t t \;\middle|\; z_t \in \Z, \text{ and $z_t = 0$ except for a finite number of $t$} \right\}.
\]
Here, $\sum_{t \in T} z_t t$ denotes a formal sum with finite support.
The wreath product $A \wr T$ then becomes the semidirect product $\left( \Z[T] \right)^n \rtimes T$ consisting of the pairs $(y, t)$, where $y \in \left( \Z[T] \right)^n, t \in T$, with multiplication given by $(y, t) \cdot (y', t') = (y + t \cdot y', t t')$.

Furthermore, if $A = \Z^n$ and $T = \Z^d$, then the wreath product $A \wr T$ is simply the semidirect product $\left( \Z[X_1^{\pm}, \ldots, X_d^{\pm}] \right)^n \rtimes \Z^d$ defined in Equation~\eqref{eq:defsemi}.
The following classic result of Magnus gives an explicit embedding of the quotient of a free group into a wreath product.

\begin{lem}[Magnus Embedding Theorem~{\cite[Lemma~2]{baumslag1973subgroups}, \cite{magnus1939theorem}}]\label{lem:magnusemb}
    Let $F$ be a free group, freely generated by a set $X = \{x_i \mid i \in I\}$, and let $R$ be a normal subgroup of $F$. 
    Let the mapping $x_i R \mapsto t_i, i \in I,$ define an isomorphism from $F /R$ to a group $T$ generated by $t_i, i \in I$. 
    Furthermore, let $A$ be a free abelian group, freely generated by the elements $a_i, i \in I$. 
    Then the mapping 
    \[
    x_i [R, R] \mapsto a_i t_i, \;\; i \in I,
    \]
    defines an injection of $F /[R, R]$ into the wreath product $W = A \wr T$.
\end{lem}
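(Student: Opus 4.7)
The plan is to construct the map as a group homomorphism $\varphi \colon F \to W = A \wr T$, show that it descends to $F/[R,R]$, and then prove injectivity by reducing to the base group and invoking Fox's free differential calculus.

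First, by the universal property of the free group, there exists a unique homomorphism $\varphi \colon F \to W$ with $\varphi(x_i) = a_i t_i$, where we identify $a_i \in A$ with the element of $A^T$ concentrated at $e_T$. Composing $\varphi$ with the canonical projection $\pi \colon W \to T$ gives the homomorphism $x_i \mapsto t_i$, which factors through $F/R \cong T$ by definition; hence $\varphi(R) \subseteq \ker \pi = A^T$. Since $A$ is abelian, so is $A^T$, and therefore $\varphi([R,R]) \subseteq [A^T, A^T] = \{1\}$. Thus $\varphi$ descends to $\bar{\varphi} \colon F/[R,R] \to W$.

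To prove injectivity of $\bar{\varphi}$, consider the commutative diagram with rows $1 \to R/[R,R] \to F/[R,R] \to F/R \to 1$ and $1 \to A^T \to W \to T \to 1$. The rightmost vertical map is the given isomorphism $F/R \cong T$, so by the five lemma, injectivity of $\bar{\varphi}$ reduces to injectivity of the induced map $\psi \colon R/[R,R] \to A^T$ on the base.

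The main obstacle is showing $\psi$ is injective. By induction on word length, for $r \in R$ the image $\psi(r) \in A^T$ can be described coordinate-by-coordinate via Fox free derivatives: its value at $s \in T$ equals $\sum_{i \in I} \lambda_{i,s} a_i$, where $\lambda_{i,s} \in \Z$ is the coefficient of $s$ in the image of $\partial r / \partial x_i$ under the ring homomorphism $\Z[F] \to \Z[T]$ induced by the quotient $F \to T$. Since $A$ is free abelian on $\{a_i\}_{i \in I}$, vanishing of $\psi(r)$ is equivalent to vanishing of all reduced Fox derivatives of $r$. The classical Fox--Magnus theorem then yields that this occurs if and only if $r \in [R,R]$, proved by exploiting the fundamental identity $r - 1 = \sum_i (\partial r / \partial x_i)(x_i - 1)$ in $\Z[F]$ and analysing the embedding of the relation module $R/[R,R]$ into the free $\Z[T]$-module of rank $|I|$ via the Fox Jacobian. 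I would either invoke this last step as a classical black box or carry it out by an explicit normal-form argument on words.
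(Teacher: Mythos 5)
The paper does not prove this lemma at all: it is stated with citations to Baumslag and to Magnus's original 1939 paper and used as a classical black box, so there is no ``paper's own proof'' to compare against. Your proposal is therefore judged on its own merits.

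The scaffolding you set up is correct. The map $\varphi(x_i)=a_i t_i$ does kill $[R,R]$, since $\varphi(R)$ lands in the abelian base group $A^T$, so $\bar\varphi$ is well-defined. The diagram chase (short five lemma) correctly reduces injectivity of $\bar\varphi$ to injectivity of $\psi\colon R/[R,R]\to A^T$, given that $F/R\to T$ is an isomorphism. Your computation identifying the $A^T$-component of $\varphi(w)$ with the reduced Fox derivatives $\bar{\partial w/\partial x_i}\in\Z[T]$ is also correct: the recursion $b_{uv}=b_u+{}^{\bar u}b_v$ in the wreath product matches the Fox product rule, and the base cases for $x_j$ and $x_j^{-1}$ agree. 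All of this is the standard setup.

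The gap is in the final step, and it is not a small one. You reduce everything to the claim that $r\in R$ with $\bar{\partial r/\partial x_i}=0$ for all $i$ forces $r\in[R,R]$, and then dispatch it by citing the ``classical Fox--Magnus theorem'' or promising an ``explicit normal-form argument''. But injectivity of $R/[R,R]\hookrightarrow\Z[T]^{|I|}$ via the Fox Jacobian \emph{is} the Magnus embedding theorem (in its module-theoretic form); the two statements are equivalent under exactly the diagram chase you just performed. Invoking it as a black box is essentially circular, and the ``normal-form argument'' that would actually close the gap --- picking a Schreier transversal for $R$ in $F$, writing $r$ modulo $[R,R]$ as a product of Schreier generators of $R$, and showing that the reduced Fox derivatives recover the exponents (equivalently, that the Fox Jacobian of the Schreier generators has a triangular/unit structure over $\Z[T]$) --- is the genuine mathematical content and is entirely absent. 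Also note that the identity $r-1=\sum_i(\partial r/\partial x_i)(x_i-1)$ you mention holds in $\Z[F]$, but after passing to $\Z[T]$ both sides vanish for any $r\in R$ (since $r\mapsto 1$ in $T$), so by itself it yields no information about $r$ beyond $r\in R$; it cannot substitute for the missing argument.
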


Recall that the semidirect product $\mY \rtimes \Z^n$ canonically contains the subgroup $\Z^n \cong \{(0, a) \mid a \in \Z\}$.
The next lemma shows that a finitely generated metabelian group can be effectively embedded in a quotient $\left(\mY \rtimes \Z^n\right)/H$, where $H$ is a subgroup of $\Z^n \cong \{(0, a) \mid a \in \Z\}$.

\begin{restatable}[Corollary of~{\cite[Lemma~3]{baumslag1973subgroups}}]{lem}{lemembed}\label{lem:embed}
Let $G$ be a finitely generated metabelian group.
Then $G$ is isomorphic to a subgroup $\widetilde{G}$ of the quotient $\left(\mY \rtimes \Z^n\right)/H$, where
\begin{enumerate}[nosep, label = (\roman*)]
    \item $n \in \N$ and $\mY$ is a finitely presented $\Z[X_1^{\pm}, \ldots, X_n^{\pm}]$-module.
    \item $H$ is a subgroup of $\Z^n \leq \mY \rtimes \Z^n$, and elements of $H$ commute with all elements in $\mY \rtimes \Z^n$.
    \item the image $\pi(\widetilde{G})$ under the projection $\pi: \left(\mY \rtimes \Z^n\right)/H \rightarrow \Z^n / H$ is equal to $\Z^n / H$.
\end{enumerate}
Furthermore, given a finite metabelian presentation of $G$, the integer $n$, the finite presentation of $\mY$, the generators of $H$ and the isomorphism $G \xrightarrow{\sim} \widetilde{G}$ can all be effectively computed.
\end{restatable}
\begin{proof}
    This lemma is a simple extension of~\cite[Lemma~3]{baumslag1973subgroups}.
    We give a recount of its proof to show effectiveness and the conditions (ii) and (iii).

    Let $\{g_1, \ldots, g_n\}$ be the generators of $G$, let $F$ be the free group freely generated by $x_1, \ldots, x_n$, such that $M_n = F/[[F, F], [F, F]]$ and $G = M_n / \ncl_{M_n}(\widetilde{r_1}, \ldots, \widetilde{r_m})$ is the given finite metabelian presentation of $G$.
    Here, $\widetilde{r_i} = r_i [[F, F], [F, F]], i = 1, \ldots, m$ where $r_i$ is given as an element of the free group $F$.
    Let $\phi$ be the epimorphism from $F$ to $G$ defined by
    \[
    \phi \colon x_i \mapsto g_i, \quad (i = 1, \ldots, n).
    \]
    Let $K$ be the kernel of $\phi$ and let $R$ be the inverse image of $[G, G]$ under $\phi$.
    Since $\phi([F, F]) = [G, G]$ we have $R = \phi^{-1}([G, G]) = K[F, F]$.
    Also, $K$ is the normal subgroup of $F$ generated by $r_1, \ldots, r_m$ and $[[F, F], [F, F]]$.
    Then $R/K \cong [G, G]$ and hence is abelian.
    Therefore $[R, R] \leq K$, which means that
    \[
    [R, R] \leq K \leq R.
    \]

    Now let $A$ be a free abelian group on $a_1, \ldots, a_n$ and consider the wreath product $A \wr T$, where $T = F/R$.
    The structure of the abelian group $T$ can be effectively computed by $T = F/R = F/\left(K [F, F]\right) \cong \left( F/[F, F] \right)/\left( K[F, F]/[F, F] \right)$.
    In other words, writing $r_j = x_{i_{j_1}}^{e_{j_1}} \cdots x_{i_{j_{\ell_j}}}^{e_{j_{\ell_j}}}, j = 1, \ldots, m$, and writing $t_i = x_i R, i = 1, \ldots, n$, then $T$ is the quotient $F_{ab}/H$, where $F_{ab} = F/[F, F]$ is the free abelian group generated by $t_1, \ldots, t_n$, and $H = K[F, F]/[F, F]$ is the subgroup of $F_{ab}$ generated by $t_{i_{j_1}}^{e_{j_1}} \cdots t_{i_{j_{\ell_i}}}^{e_{j_{\ell_i}}}, j = 1, \ldots, m$.
   
    
    Let $\psi$ be the homomorphism of $F$ into $A \wr T$ defined by
    \[
    \psi(x_i) = a_i t_i, \quad (i = 1, \ldots, n).
    \]
    By Lemma~\ref{lem:magnusemb} the kernel of $\psi$ is $[R, R]$.
    Hence $\psi$ induces an isomorphism $\psi_*: x_i[R, R] \mapsto a_i t_i$, of $F/[R, R]$ into $A \wr T$.

    We put $N = \psi_*(K/[R, R])$. Now $N \leq \psi_*(F/[R, R])$. Therefore $N$ is normalized by the elements $a_i t_i, i = 1, \ldots, n$.
    But it follows from the definition of $\psi_*$ that $N$ is contained in the base group $B = A^T$ of $A \wr T$.
    Since $B$ is abelian, $N$ is normalized by $B$ and hence by the elements $t_i$, and therefore by all of $A \wr T$.
    In other words $N$ is normal, and is normally generated by $\psi_*(r_1[R, R]), \ldots, \psi_*(r_m[R, R])$.
    This is because $N = \psi_*(K/[R, R])$ and $K$ is normally generated by $r_1, \ldots, r_m$ and the set $[[F, F], [F, F]] \leq [R, R]$.
    Note that
    \begin{equation}\label{eq:genN}
    \psi_*(r_j[R, R]) = \left(a_{i_{j_1}}t_{i_{j_1}}\right)^{e_{j_1}} \cdots \left(a_{i_{j_{\ell_j}}}t_{i_{j_{\ell_j}}}\right)^{e_{j_{\ell_j}}}, j = 1, \ldots, m.
    \end{equation}
    Now $G$ is isomorphic to the subgroup $\widetilde{G} \coloneqq \psi_*(F/[R, R])/N$ of $\left(A \wr T\right)/N$ by the map $g_i \mapsto a_i t_i N$.
    
    Note that $\left(A \wr T\right)/N = \left(A^T/N\right) \rtimes T$ where $N$ is normal, so $N$ is the $\Z[T]$-module generated by $\psi_*(r_1[R, R]), \ldots, \psi_*(r_m[R, R])$.
    Furthermore, since $\widetilde{G}$ contains the elements $a_i t_i N, i = 1, \ldots, n$, its projection onto $T$ contains the elements $t_i, i = 1, \ldots, n$ and hence is $T$ itself.

    We write $T = F_{ab}/H = \Z^n / H$ since $F_{ab}$ is the free abelian group over $n$ generators.
    The canonical projection $\Z^n \rightarrow T$ induces a ring homomorphism $\Z[\Z^n] \rightarrow \Z[T]$, so the $\Z[T]$-module $A^T/N$ is naturally also a $\Z[X_1^{\pm}, \ldots, X_n^{\pm}] = \Z[\Z^n]$-module, where elements of the form $\oX^h, h \in H$ act trivially.
    Taking $\mY \coloneqq A^T/N$, we define the semidirect product $\mY \rtimes \Z^n$ by considering $\mY = A^T/N$ as a $\Z[X_1^{\pm}, \ldots, X_n^{\pm}]$-module.
    We show that elements of $H$ commute with every element in $\mY \rtimes \Z^n$.
    This is rather straightforward: since elements of the form $\oX^h, h \in H$ act trivially on $\mY = A^T/N$, we have $(0, h) (y, a) (0, h^{-1}) = (\oX^h \cdot y, a) = (y, a)$.
    Therefore, elements of $H$ commute with every element in $\mY \rtimes \Z^n$, proving (ii).
    A fortiori, this shows that $H$ is a normal subgroup of $\mY \rtimes \Z^n$.
    
    Finally, we have
    \[
    \widetilde{G} = \left(A^T/N\right) \rtimes T = \mY \rtimes \left(\Z^n / H\right) = \left(\mY \rtimes \Z^n\right)/H,
    \]
    and (iii) follows directly from the fact that the projection $\widetilde{G} = \left(A^T/N\right) \rtimes T \rightarrow T = \Z^n / H$ has full image.
    We now show that $\mY = A^T/N$ can be effectively written as a finitely presented $\Z[X_1^{\pm}, \ldots, X_n^{\pm}]$-module.
    First, we write $A^T$ as a finitely presented $\Z[X_1^{\pm}, \ldots, X_n^{\pm}]$-module in the following way.
    Let $h_1, \ldots, h_m$ the generators of $H$ as a subgroup of $\Z^n$.
    Since $A \cong \Z^n$, we have $A^T = \left(\Z[T]\right)^n$, and $\Z[T]$ is the quotient of $\Z[X_1^{\pm}, \ldots, X_n^{\pm}] = \Z[\Z^n]$ by the ideal generated by elements $\oX^{h_i} - 1, i = 1, \ldots, m$.
    Hence we obtain a finite presentation of the $\Z[X_1^{\pm}, \ldots, X_n^{\pm}]$-module $\Z[T]$, and from it a finite presentation of the module $A^T = \left(\Z[T]\right)^n$.
    The generators of $N \subseteq A^T$ as a $\Z[X_1^{\pm}, \ldots, X_n^{\pm}]$-module is the same as its generators as a $\Z[T]$-module, which are given by the elements in \eqref{eq:genN}.
    Therefore, we obtain a finite presentation of the $\Z[X_1^{\pm}, \ldots, X_n^{\pm}]$-module $\mY = A^T/N$, and (i) follows.
\end{proof}

We can hence suppose $G$ is given as a subgroup of $\left(\mY \rtimes \Z^n\right)/H$ and the generator set $\mG$ is given as a subset of $\left(\mY \rtimes \Z^n\right)/H$.
Writing $\mG = \{g_1 H, \ldots, g_k H\}$ where $g_1, \ldots, g_k$ are elements of $\mY \rtimes \Z^n$, and let $h_1, \ldots, h_M$ be the generators of $H \subseteq \mY \rtimes \Z^n$ as a \emph{semi}group.
Then
\begin{restatable}{lem}{lemGH}\label{lem:GH}
    The semigroup $\sgmG$ is a group if and only if the semigroup generated by $g_1, \ldots, g_k$, $h_1, \ldots, h_M \in \mY \rtimes \Z^n$ is a group.
\end{restatable}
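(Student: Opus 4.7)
My plan is to reduce both directions to Lemma~\ref{lem:word} (the full-image word characterization of when a semigroup is a group) by passing between the groups $\mY \rtimes \Z^n$ and the quotient $\left(\mY \rtimes \Z^n\right)/H$ via the canonical projection $\pi$. Throughout, let $\widetilde{\mG} \coloneqq \{g_1, \ldots, g_k, h_1, \ldots, h_M\}$. A key preliminary observation is that since $\{h_1, \ldots, h_M\}$ generates $H$ as a semigroup and $H$ itself is a (sub)group of $\Z^n$, Lemma~\ref{lem:word} applied inside $\mY \rtimes \Z^n$ yields a full-image word $u_0$ over $\{h_1, \ldots, h_M\}$ that represents the neutral element $(0,0)$; moreover every element of $H$ (and in particular every inverse $h^{-1}$) is represented by some word over $\{h_1, \ldots, h_M\}$.

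For the ``if'' direction, suppose $\langle \widetilde{\mG}\rangle$ is a group. By Lemma~\ref{lem:word} there is a full-image word $W$ over $\widetilde{\mG}$ representing $(0,0) \in \mY \rtimes \Z^n$. Apply $\pi$ letter by letter: each $g_i$ becomes $g_i H \in \mG$, and each $h_j$ becomes $H$, the neutral element of the quotient. Deleting every occurrence of $h_j$ from $W$ leaves a word $W'$ over $\mG$ whose evaluation is unchanged in the quotient (multiplication by neutral elements), so $W'$ represents the neutral element $H$; and since $W$ was full-image over $\widetilde{\mG}$, $W'$ still contains every $g_i H$, i.e.\ is full-image over $\mG$. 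Lemma~\ref{lem:word} then gives that $\sgmG$ is a group. (The word $W'$ is nonempty as long as $\mG$ is nonempty, which we may assume.)

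For the ``only if'' direction, suppose $\sgmG$ is a group. Lemma~\ref{lem:word} supplies a full-image word $\bar w = (g_{i_1} H) \cdots (g_{i_p} H)$ over $\mG$ representing the neutral element $H$ of the quotient. Lift it to the word $w = g_{i_1} \cdots g_{i_p}$ over $\{g_1, \ldots, g_k\} \subseteq \mY \rtimes \Z^n$; since $\pi(w) = H$, the element $w$ represents some $h \in H$. Now choose a word $v$ over $\{h_1, \ldots, h_M\}$ representing $h^{-1} \in H$ (possible by the preliminary observation), and concatenate to form $W \coloneqq w \cdot v \cdot u_0$. This word is over $\widetilde{\mG}$, represents $h \cdot h^{-1} \cdot (0,0) = (0,0)$, contains every $g_i$ (inherited from the full-image $\bar w$ via $w$), and contains every $h_j$ (inherited from the full-image $u_0$), hence is full-image over $\widetilde{\mG}$. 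A final application of Lemma~\ref{lem:word} shows $\langle \widetilde{\mG}\rangle$ is a group.

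The argument is essentially bookkeeping; the main subtlety to keep in mind is that one must not try to replace $h^{-1}$ by a word in a semigroup that happens to already be a group (that would be circular)---instead one exploits the given fact that $\{h_1, \ldots, h_M\}$ generates $H$ as a semigroup together with the fact that $H$ is a group, which is exactly the content of Lemma~\ref{lem:word} applied to the subgroup $H \leq \mY \rtimes \Z^n$. Centrality of $H$ plays no role in this particular lemma, but the fact that $H$ is a subgroup (not merely a semigroup) of $\Z^n$ is essential.
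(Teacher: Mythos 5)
Your proof is correct and follows the same overall strategy as the paper: in both directions you invoke Lemma~\ref{lem:word}, lift or project full-image words, and use $u_0$ (the paper's $v'$) to make the constructed word full-image over $\{h_1,\ldots,h_M\}$. The one genuine divergence is in the ``if'' direction: the paper uses centrality of $H$ to sort all $h_j$'s to the end of the word before projecting, whereas you simply apply $\pi$ and delete the $h_j$'s since they map to the neutral element of the quotient --- your version is cleaner and makes explicit (as you note) that only normality of $H$, not centrality, is actually used in this lemma, a point the paper's write-up somewhat obscures.
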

\begin{proof}
    Suppose $\sgmG$ is a group.
    Then by Lemma~\ref{lem:word} there exists a full-image word 
    \[
    w = g_{i_1} H g_{i_2} H \cdots g_{i_p} H
    \]
    over the alphabet $\mG$ representing the neutral element in $\left(\mY \rtimes \Z^n\right)/H$.
    Since elements of $H$ commute with every element of $\mY \rtimes \Z^n$, this means $g_{i_1} g_{i_2} \cdots g_{i_p} \in H$.
    Therefore there exists a word $v$ over the alphabet $\mH \coloneqq \{h_1, \ldots, h_M\}$ such that $g_{i_1} g_{i_2} \cdots g_{i_p} \cdot v$ represents the neutral element in $\mY \rtimes \Z^n$.
    Since $\mH$ generates the group $H$ as a semigroup, there exists a full-image word $v'$ over the alphabet $\mH$ representing the neutral element of $H \leq \mY \rtimes \Z^n$.
    Then, the word $g_{i_1} g_{i_2} \cdots g_{i_p} \cdot v \cdot v'$ is full-image over the alphabet $\{g_1, \ldots, g_k, h_1, \ldots, h_M\}$ and it represents the neutral element in $\mY \rtimes \Z^n$.
    By Lemma~\ref{lem:word}, the semigroup generated by $\{g_1, \ldots, g_k, h_1, \ldots, h_M\}$ is a group.

    For the other implication, suppose now the semigroup generated by $\{g_1, \ldots, g_k, h_1, \ldots, h_M\}$ is a group.
    By Lemma~\ref{lem:word} there exists a full-image word $\widetilde{w}$ over the alphabet $\{g_1, \ldots, g_k, h_1, \ldots, h_M\}$ representing the neutral element.
    Since the elements $h_1, \ldots, h_M$ commute with all other elements, we can move them to the rightmost side of $\widetilde{w}$ and suppose
    \[
    \widetilde{w} = g_{i_1} g_{i_2} \cdots g_{i_p} h_{j_1} h_{j_2} \cdots h_{j_q}.
    \]
    Then the word $g_{i_1} H \cdot g_{i_2} H \cdots \cdot g_{i_p} H$ is full-image over the alphabet $\mG$ and represents the neutral element of $\left(\mY \rtimes \Z^n\right)/H$.
    By Lemma~\ref{lem:word}, the semigroup $\sgmG$ is a group.
\end{proof}

Proposition~\ref{prop:metatoZ} follows from Lemmas~\ref{lem:subgroppres}, \ref{lem:embed} and \ref{lem:GH}:

\propmetatoZ*
\begin{proof}
    By Lemma~\ref{lem:subgroppres}, we can compute a finitely metabelian presentation for the group $\sgmG_{grp}$.
    Since the generators $\mG$ are explicitly given under this presentation, we can without loss of generality suppose $G = \sgmG_{grp}$.
    By Lemma~\ref{lem:embed}, $G$ can be effectively embedded as a subgroup of a quotient $\left(\mY \rtimes \Z^n\right)/H$, where $H$ is a subgroup of $\Z^n \leq \mY \rtimes \Z^n$, and elements of $H$ commute with all elements of $\mY \rtimes \Z^n$.
    We can hence suppose $G = \widetilde{G}$ is a subgroup of $\left(\mY \rtimes \Z^n\right)/H$ and the generator set $\mG$ is given as $\{g_1 H, \ldots, g_k H\}$ where $g_1, \ldots, g_k \in \mY \rtimes \Z^n$.
    Let $h_1, \ldots, h_M$ be the generators of $H \subseteq \mY \rtimes \Z^n$ as a \emph{semi}group.

    Let $\widetilde{\mG} \coloneqq \{g_1, \ldots, g_k, h_1, \ldots, h_M \} \subseteq \mY \rtimes \Z^n$.
    By Lemma~\ref{lem:embed}, the image of $\sgmG_{grp} = \widetilde{G}$ under the projection $\pi: \left(\mY \rtimes \Z^n\right)/H \rightarrow \Z^n / H$ is equal to $\Z^n / H$.
    Since $h_1, \ldots, h_M$ generate $H$ as a semigroup, the group generated by $\widetilde{\mG}$ admits image $\left(\Z^n/H\right) + H = \Z^n$ under the canonical projection $\mY \rtimes \Z^n \rightarrow \Z^n$.
    Finally, by Lemma~\ref{lem:GH}, the semigroup $\sgmG$ is a group if and only if the semigroup generated by $\widetilde{\mG}$ is a group.
    This proves the proposition.
\end{proof}

\section{Omitted proofs from Section~\ref{sec:main}}\label{app:main}
\propgtoe*
\begin{proof}
    By Lemma~\ref{lem:grptoeul} and Observation~\ref{obs:contoacc}, it suffices to prove the ``if'' direction.
    Suppose $\Gamma$ is a full-image symmetric face-accessible $\mG$-graph representing the neutral element, we construct a full-image Eulerian $\mG$-graph that represents the neutral element.
    By Theorem~\ref{thm:acctocon}, there exist $z_1, \ldots, z_m \in \Z^n$, such that $\hG \coloneqq \bigcup_{i = 1}^m \left( \Gamma + z_i \right)$ is an Eulerian graph.
    Since $\hG$ contains a translation of $\Gamma$, it is full-image.
    Each translation $\Gamma + z_i$ represents the element $\oX^{z_i} \cdot 0 = 0$, so the union $\hG$ also represents the neutral element.
\end{proof}

\propeulertoeq*
\begin{proof}
    (i) $\Gamma$ is full-image if and only if each label appears at least once, meaning $f_i \neq 0$ for all $i$.

    (ii) We have 
    \begin{multline*}
        \sum_{i = 1}^K f_i \cdot (\oX^{a_i} - 1) = \sum_{i = 1}^K \sum_{e \in E(\Gamma), \ell(e) = i} \oX^{s(e)} (\oX^{a_i} - 1)
        = \sum_{i = 1}^K \sum_{e \in E(\Gamma), \ell(e) = i} (\oX^{s(e) + a_i} - \oX^{s(e)}) \\
        = \sum_{e \in E(\Gamma)} (\oX^{d(e)} - \oX^{s(e)}) = \sum_{e \in E(\Gamma)} \oX^{d(e)} - \sum_{e \in E(\Gamma)} \oX^{s(e)}.
    \end{multline*}
    This is equal to zero if and only if the in-degree equals the out-degree at every vertex.

    (iii) Let $C$ be the convex hull of $V(\Gamma)$. 
    For every strict face $F$ of $C$ there is a vector $v \in \Rns$ such that $F$ consists of all points $x$ in $C$ where $v \cdot  x$ is maximal.
    Conversely, for every vector $v \in \Rns$, the set $F$ of all points $x$ in $C$ such that $v \cdot  x$ is maximal forms a strict face of $C$.

    Let $F$ be a strict face, then $F$ is accessible if and only if some edge starting in $F$ does not end in $F$.
    Let $e$ be an edge starting in $F$, with label $\ell(e)$.
    Then $v \cdot  s(e)$ is maximal among all $e \in E(\Gamma)$.
    Since the monomial $\oX^{s(e)}$ is contained in $f_{\ell(e)}$, this means $\ell(e) \in M_{v}(\{1, \ldots, K\}, \bff)$.
    
    Observe that $d(e) \in F$ if and only if $a_{\ell(e)} = d(e) - s(e)$ is orthogonal to $v$, which is equivalent to $\ell(e) \not\in O_{v}$.
    Therefore, $F$ is accessible if and only if an edge $e$ exists such that $\ell(e) \in M_{v}(\{1, \ldots, K\}, \bff)$ and $\ell(e) \in O_{v}$;
    that is, $O_{v} \cap M_{v}(\{1, \ldots, K\}, \bff) \neq \emptyset$.

    By the definition of face-accessibility, $\Gamma$ is face-accessible if and only if $O_{v} \cap M_{v}(\{1, \ldots, K\}, \bff) \neq \emptyset$ holds for every $v \in \Rns$.

    (iv) Suppose $\Gamma$ is symmetric, then $\sum_{e \in E(\Gamma)} a_{\ell(e)} = 0$. By Equation~\eqref{eq:edges}, $\Gamma$ represents the element 
    \[
    \left(\sum_{e \in E(\Gamma)} \oX^{s(e)} \cdot y_{\ell(e)}, \sum_{e \in E(\Gamma)} a_{\ell(e)}\right) = \left(\sum_{i = 1}^K \sum_{e \in E(\Gamma), \ell(e) = i} \oX^{s(e)} \cdot y_{i}, 0\right) = \left(\sum_{i = 1}^K f_i \cdot y_i, 0\right),
    \]
    which is the neutral element if and only if $\sum_{i = 1}^K f_i \cdot y_i = 0$.
\end{proof}

\lemMZ*
\begin{proof}
    Recall that $\mY$ is given as a quotient $M/N$ where $N$ and $M$ are $\Z[\oX^{\pm}]$-submodules of $\Z[\oX^{\pm}]^d$ respectively generated by $\bm_1, \ldots, \bm_{\ell'}$ and $\bn_1, \ldots, \bn_{\ell} \in \Z[\oX^{\pm}]^d$.
    For $i = 1, \ldots, K$, the element $y_i$ is given as $y_i = \tby_i + N$ where $\tby_i \in M \subseteq \Z[\oX^{\pm}]^d$.
    
    The equation $\sum_{i = 1}^K f_i \cdot y_i = 0$ can be written as $\sum_{i = 1}^K f_i \cdot \tby_i \in N$, which is equivalent to
    \begin{equation}
        \sum_{i = 1}^K f_i \cdot \tby_i = \sum_{j = 1}^{\ell} g_j \cdot \bn_j \text{ for some } g_1, \ldots, g_{\ell} \in \Z[\oX^{\pm}].
    \end{equation}
    Let $\widetilde{\mM}$ be the set of solutions $(f_1, \ldots, f_K, g_1, \ldots, g_{\ell}) \in \Z[\oX^{\pm}]^{K + \ell}$ of the following system of homogeneous linear equations:  
    \begin{align*}
        & \sum_{i = 1}^K f_i \cdot \tby_i - \sum_{j = 1}^{\ell} g_j \cdot \bn_j = 0\\
        & \sum_{i = 1}^K f_i \cdot (\oX^{a_i} - 1) = 0.
    \end{align*}
    The set $\widetilde{\mM}$ is also known as the syzygy module.
    It is a classic result from linear algebra over Noetherian rings that a finite set of generators $(\bff_1, \bg_1), \ldots, (\bff_s, \bg_s)$ for $\widetilde{\mM}$ can be effectively computed (see~\cite{berkesch2015syzygies, schreyer1980berechnung} or \cite[Theorem~15.10]{eisenbud2013commutative}).
    Let $\pi \colon \Z[\oX^{\pm}]^{K + \ell} \rightarrow \Z[\oX^{\pm}]^{K}, (f_1, \ldots, f_K, g_1, \ldots, g_{\ell}) \mapsto (f_1, \ldots, f_K)$ be the projection onto the first $K$ coordinates.
    Then $\mM_{\Z} = \pi(\mM)$, and a finite set of generators for $\mM_{\Z}$ is simply $\left\{\pi((\bff_1, \bg_1)), \ldots, \pi((\bff_s, \bg_s))\right\} = \left\{\bff_1, \ldots, \bff_s \right\}$.
\end{proof}

\lemM*
\begin{proof}
    An element $\tbf \in \mM_{\Z} \cap \left(\N[\oX^{\pm}]^*\right)^K$ satisfying Property~\eqref{eq:gen} is obviously an element in $\mM \cap \left(\Rp[\oX^{\pm}]^*\right)^K$.
    Therefore it suffices to prove the ``if'' implication.

    Suppose we have an element $\bff \in \mM \cap \left(\Rp[\oX^{\pm}]^*\right)^K$ satisfying Property~\eqref{eq:gen}, we show that there is an element $\tbf \in \mM_{\Z} \cap \left(\N[\oX^{\pm}]^*\right)^K$ satisfying Property~\eqref{eq:gen}.

    Write $\bff = (f_1, \ldots, f_K)$ where for $i = 1, \ldots, K$,
    \[
    f_i = \sum_{b \in B_i} c_{i, b} \oX^{b}.
    \]
    Here, the \emph{support} $B_i$ is a non-empty finite subset of $\Z^n$, and $c_{i, b} \in \Rpp$ for all $b \in B_i$.
    Since Property~\eqref{eq:gen} depends only on the supports $B_1, \ldots, B_K$, it suffices to show that there exists $\tbf = (\tf_1, \ldots, \tf_K) \in \mM_{\Z} \cap \left(\N[\oX^{\pm}]^*\right)^K$ where
    \[
    \tf_i = \sum_{b \in B_i} \tc_{i, b} \oX^{b}
    \]
    and $\tc_{i, b} \in \Zpp$ for all $b \in B_i$.
    
    Since $\bff \in \mM$, we have $\bff = \sum_{j = 1}^m h_j \cdot \bg_j$ for some $h_1, \ldots, h_m \in \R[\oX^{\pm}]$.
    For each $j \in \{1, \ldots, m\}$, write $h_j = \sum_{b \in H_j} h_{j, b} \oX^{b}$, where $H_j$ is a finite subset of $\Z^n$.
    Then the equation $\bff = \sum_{j = 1}^m h_j \cdot \bg_j$ can be rewritten as a finite system of linear equations over $\R$, where the left hand sides are $0$ or $c_{i, b}, b \in B_i, i = 1, \ldots, K$, and the right hand sides are $\Z$-linear combinations of the variables $h_{j, b}, j \in \{1, \ldots, m\}, b \in H_j$ (because the coefficients of $\bg_j$ are integers for all $j$).

    Since this system of linear equations is homogeneous and the coefficients are all in $\Z$, it has a solution $h_{j, b} \in \R, j \in \{1, \ldots, m\}, b \in H_j$ and $c_{i, b} \in \Rpp, b \in B_i, i = 1, \ldots, K,$ if and only if it has a solution with $h_{j, b} \in \Q, c_{i, b} \in \Qpp$ for all $i, j, b$.
    By multiplying all $h_{j, b}, c_{i, b}$ with their common denominator, we obtain a solution $\widetilde{h}_{j, b} \in \Z, \tc_{i, b} \in \Zpp$ for all $i, j, b$.
    Then, $\tf_i \coloneqq \sum_{b \in B_i} \tc_{i, b} \oX^{b}, i = 1, \ldots, K$ and $\widetilde{h}_j = \sum_{b \in H_j} \widetilde{h}_{j, b} \oX^{b}, j = 1, \ldots, m,$ satisfy $\tbf = \sum_{j = 1}^m \widetilde{h}_j \cdot \bg_j$.
    Hence, $\tbf = (\tf_1, \ldots, \tf_K) \in \mM_{\Z} \cap \left(\N[\oX^{\pm}]^*\right)^K$.
    The element $\tbf$ satisfies Property~\eqref{eq:gen} since the condition depends only on the supports $B_1, \ldots, B_K$.
\end{proof}

\end{document}